\RequirePackage{rotating}
\documentclass[a4paper,onepage,reqno,12pt]{amsart}
\usepackage[top=30mm,right=30mm,bottom=30mm,left=30mm]{geometry}

\vfuzz2pt 
\hfuzz2pt 
\usepackage{pdfsync}
\usepackage{eurosym}
\usepackage{comment}
\usepackage{tabularx}
\usepackage{graphicx}
\usepackage{amsmath}
\usepackage{amssymb}
\usepackage{amsfonts}
\usepackage{amsthm}
\usepackage{amstext}
\usepackage{amsbsy}
\usepackage{amsopn}
\usepackage{amscd}
\usepackage{enumerate}
\usepackage{color}
\usepackage[colorlinks=true,linkcolor=red,citecolor=blue]{hyperref}
\usepackage[hyperpageref]{backref}
\usepackage[numbers,sort&compress]{natbib}
\usepackage{multirow}
\usepackage{longtable}
\usepackage{float}
\usepackage{lscape}
\usepackage{pdflscape}
\usepackage{url}
\usepackage{adjustbox}
\usepackage{rotating}
\usepackage{longtable}
\usepackage{etoolbox}
\usepackage{lipsum}
\setcounter{MaxMatrixCols}{10}

\usepackage[]{algorithm2e}

\newtheorem{theorem}{Theorem}[section]
\newtheorem{corollary}[theorem]{Corollary}

\newtheorem{lemma}[theorem]{Lemma}
\newtheorem{proposition}[theorem]{Proposition}

\theoremstyle{definition}

\newtheorem{example}[theorem]{Example}
\numberwithin{equation}{section}


\newcommand{\C}{\mathrm{C}}


\newcommand{\Dmc}{\mathcal{D}}




\renewcommand{\leq}{\leqslant}
\renewcommand{\geq}{\geqslant}

\newcommand{\imod}[1]{ \mkern4mu({\operator@font mod}\,\,#1)}



\usepackage{tabularx}
\makeatletter
\def\hlinewd#1{%
\noalign{\ifnum0=`}\fi\hrule \@height #1 %
\futurelet\reserved@a\@xhline}
\makeatother

\begin{document}
\title[Flag-transitive symmetric designs]{The Flag-Transitive and Point-Imprimitive Symmetric $(v,k,\lambda)$ Designs with $v<100$}

\author[M. Galici]{Mario Galici}
\address{Mario Galici, Dipartimento di Matematica e Fisica ``E. De Giorgi'', University of Salento, Lecce, Italy. }%
\email{mario.galici@unisalento.it}
\author[A. Montinaro]{Alessandro Montinaro}%
\thanks{Corresponding author: Alessandro Montinaro}
\address{Alessandro Montinaro, Dipartimento di Matematica e Fisica ``E. De Giorgi'', University of Salento, Lecce, Italy. }%
\email{alessandro.montinaro@unisalento.it}

\subjclass[]{20B25, 05B05, 05B25}%
\keywords{flag-transitive,  point-imprimitive, automorphism group, symmetric design, difference set.}
\date{\today}%

\begin{abstract}
A complete classification of the flag-transitive point-imprimitive symmetric $2$-$(v,k,\lambda )$ designs with $v<100$ is provided. Apart from the known examples with $\lambda \leq 10$, the complementary design of $PG_{5}(2)$, and the $2$-design $\mathcal{S}^{-}(3)$ constructed by Kantor in \cite{Ka75}, we found two non isomorphic $2$-$(64,28,12)$ designs. They were constructed via computer as developments of $(64,28,12)$-difference sets by AbuGhneim in \cite{OAG}. In the present paper, independently from \cite{OAG}, we construct the aforementioned two $2$-designs and we prove that their full automorhpism group is flag-transitive and point-imprimitive. The construction is theoretical and relies on the the absolutely irreducible $8$-dimensional $\mathbb{F}_{2}$-representation of $PSL_{2}(7)$. Our result, together with that about the flag-transitive point-primitive symmetric $2$-designs with $v<2500$ by Brai\'{c}-Golemac-Mandi\'{c}-Vu\v{c}i\v{c}i\'{c} \cite{BGMV}, provides a complete classification of the flag-transitive $2$-designs with $v<100$.  
\end{abstract}
\maketitle

\section{Introduction and Main Theorem}\label{Sec1}

A $2$-$(v,k,\lambda )$ \emph{design} $\mathcal{D}$ is a pair $(\mathcal{P},%
\mathcal{B})$ with a set $\mathcal{P}$ of $v$ points and a set $\mathcal{B}$
of $b$ blocks such that each block is a $k$-subset of $\mathcal{P}$ and each two distinct points are contained in $\lambda $ blocks. The \emph{replication number} $r$ of $\mathcal{D}$ is the number of blocks containing a given point.
We say $\mathcal{D}$ is \emph{non-trivial} if $2<k<v$, and \emph{symmetric} if $v=b$. 
Given a $2$-$(v,k,\lambda )$ design $\mathcal{D}$, the incidence structure $\overline{\mathcal{D}}=(\mathcal{P},%
\mathcal{B}^{\prime})$, where $\mathcal{B}^{\prime}=\{\mathcal{P}\setminus B: B \in \mathcal{B}\}$, is a $2$-$(v,v-k,b-2r+\lambda)$ design, called the \emph{complementary design} to $\mathcal{D}$. 

An automorphism of $%
\mathcal{D}$ is a permutation of the point set which preserves the block
set. The set of all automorphisms of $\mathcal{D}$ with the composition of
permutations forms a group, denoted by $\mathrm{Aut(\mathcal{D})}$. Clearly, an automorphism of $\mathcal{D}$ is also an automorphism of $\overline{\mathcal{D}}$. For a subgroup $G$ of $\mathrm{Aut(\mathcal{D})}$, $G$ is said to be \emph{%
point-primitive} if $G$ acts primitively on $\mathcal{P}$, and said to be 
\emph{point-imprimitive} otherwise. In this setting, we also say that $%
\mathcal{D}$ is either \emph{point-primitive} or \emph{point-imprimitive}, respectively. A \emph{flag} of $\mathcal{D}$ is a pair $(x,B)$ where $x$ is
a point and $B$ is a block containing $x$. If $G\leq \mathrm{Aut(\mathcal{D})%
}$ acts transitively on the set of flags of $\mathcal{D}$, then we say that $%
G$ is \emph{flag-transitive} and that $\mathcal{D}$ is a \emph{%
flag-transitive design}.

This paper is a contribution to the problem of constructing and classifying designs with a rich automorphism group. Although the original motivation for our investigation was the paper by Praeger and Zhou \cite{PZ} as well as that of  Mandi\'{c}-\v{S}uba\v{s}i\'{c} \cite{MS} on the symmetric $2$-designs with $\lambda \leq 10$,  our starting point is different: we assume that $v$ rather than $\lambda$ has an upper bound. More precisely, we assume $v<100$. In this setting, we provide a complete classification of the pair $(\mathcal{D},G)$ and, apart from the known examples with $\lambda \leq 10$, the $2$-design $\overline{PG_{5}(2)}$ provided by Cameron and Kantor in \cite[Theorem III]{CK}, we show that there are three (up to isomorphism) $2$-$(64,28,12)$ designs admitting a flag-transitive point imprimitive automorphism group, one of them being the $2$-design $\mathcal{S}^{-}(3)$ constructed by Kantor in \cite{Ka75}. The remaining two symmetric $2$-design were constructed via computer as developments of $(64,28,12)$-difference sets by AbuGhneim \cite{OAG}, although he did not prove the flag-transitivity. In the present paper, independently from \cite{OAG}, we construct the aforementioned two inequivalent $2$-designs. The construction is theoretical and relies on the the absolutely irreducible $8$-dimensional $\mathbb{F}_{2}$-representation of $PSL_{2}(7)$ (see \cite{AtMod}). Further, we show that any flag-transitive point-imprimitive automorphism group of any of the two $2$-design is $2^{8}:PSL_{2}(7)$, which also is the full automorphism group of each of the two $2$-designs. More precisely, we obtain the following classification result:

\begin{theorem}\label{main}
Let $\mathcal{D}$ be a symmetric $2$-$(v,k,\lambda )$ design admitting a
flag-transitive point-imprimitive automorphism group $G$. If $v<100$, then $(\mathcal{D},G)$ are as in Table \ref{tavola1}.

    \begin{table}[h!]
	\centering
	\caption{Symmetric $(v,k,\lambda)$ designs $\Dmc$ with $v<100$ admitting a flag-transitive point-imprimitive automorphism group.}\label{tavola1}
		\begin{tabular}{ccccccccr}
			\noalign{\smallskip}\hline\noalign{\smallskip}
			Line &
			$v$ &
			$k$ &
			$\lambda$ &
			Design &
			$G$&
            Isom. classes&
			References \\
			\noalign{\smallskip}\hline\noalign{\smallskip}
			\multirow{1}{*}{$1$} &
			$16$ &
			$6$ &
			$2$ &
			    & 
            $G$ is as in Line 1 of Table \ref{telionos} &
            $1$ &
			\cite{Na,Hu,ORR}
			\\
			\multirow{1}{*}{$2$} &
			$16$ &
			$6$ &
			$2$ &
			      &
			$G$ is as in Line 2 of Table \ref{telionos} &
            $1$ &
			\cite{Na,Hu,ORR}
			\\
			\multirow{1}{*}{3} &
			$45$ & 
			$12$ & 
			$3$ &
                &
			$G$ is as in Line 3 of Table \ref{telionos} &
            $1$ &
			\cite{P,sane}\\
			\multirow{1}{*}{$4$} &
			$15$ &
			$8$ &
			$4$ &
			$\overline{PG_{3}(2)}$   &
			$G$ is as in Line 4 of Table \ref{telionos} &
            $1$ &
			\cite{CK,PZ}
			\\
            \multirow{1}{*}{$5$} &
			$63$ &
			$32$ &
			$16$ &
			$\overline{PG_{5}(2)}$   &
			$\Sigma L_{3}(4) \unlhd G \leq \Gamma L_{3}(4)$ &
            $1$ &
			\cite{CK}
            \\
			\multirow{1}{*}{$6$} &
			$64$ &
			$28$ &
			$12$ &
			 $\mathcal{S}^{-}(3)$   &
			$2^{3}:(2^{3}:PSL_{2}(7))$ & 
            $1$ &
            \cite{Ka75}
            \\
            \multirow{1}{*}{} &
               &
               &
               &
               &
            $2^{6}:PSL_{2}(7),2^{6}.PSL_{2}(7)$& 
			\\
            \multirow{1}{*}{} &
               &
               &
               &
               &
            $(2^{6}.2^{3}):7,(2^{6}.2^{3}):(7:3)$& 
			\\
			\multirow{1}{*}{$7$} &
			$64$ &
			$28$ &
			$12$ &
			    &
			$2^{8}:PSL_{2}(7)$ &
            $2$ &
			Section \ref{Sec2}, \cite{OAG}
            \\
            \multirow{1}{*}{$8$} &
			$96$ &
			$20$ &
			$4$ &
                &
			$G$ is as in Line 5 of Table \ref{telionos} &
            $1$ &
			\cite{OPS,LPR}
			\\
			\multirow{1}{*}{$9$} &
			$96$ &
			$20$ &
			$4$ &
			    &
			$G$ is as in Line 6 of Table \ref{telionos} &
            $1$ &
			\cite{LPR,OPS}
			\\
			\multirow{1}{*}{$10$} &
			$96$ &
			$20$ &
			$4$ &
                &
			$G$ is as in Line 7 of Table \ref{telionos} &
            $1$ &
			\cite{LPR,OPS}
			\\
			\multirow{1}{*}{$11$} &
			$96$ &
			$20$ &
			$4$ & 
                &
			$G$ is as in Line 8 of Table \ref{telionos} &
            $1$ &
			\cite{LPR,OPS}
			\\ 

			\noalign{\smallskip}\hline
		\end{tabular}
\end{table}
\end{theorem}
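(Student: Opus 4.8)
The proof consists of an arithmetic reduction of the admissible parameter triples followed by a case-by-case analysis; the substantial new work concerns the two $2$-$(64,28,12)$ designs of Line $7$. I would begin with the reduction. Since $\Dmc$ is symmetric, $k(k-1)=\lambda(v-1)$ and $r=k$, so $1+4\lambda(v-1)$ is a perfect square and $\lambda<k<v$; flag-transitivity gives $|G|=v\,|G_x|$ with $k\mid|G_x|$ (the point stabiliser being transitive on the $k$ blocks through a point), together with the classical divisibility $k\mid\lambda d$ for every subdegree $d$ of $G$. Running these constraints over all $v<100$ and keeping only composite $v$ (a class of imprimitivity has size a proper divisor of $v$) leaves a short list of triples $(v,k,\lambda)$. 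For $\lambda\leq 10$ the classification of flag-transitive point-imprimitive symmetric designs of Praeger--Zhou \cite{PZ} and Mandi\'{c}--\v{S}uba\v{s}i\'{c} \cite{MS}, restricted to $v<100$, yields precisely Lines $1$--$4$ and $8$--$11$ of Table~\ref{tavola1}; the triples with $\lambda\geq 11$ that remain are $(63,32,16)$ and $(64,28,12)$, together with a handful such as $(64,36,20)$ which are discarded because every flag-transitive automorphism group of a design with those parameters is point-primitive, hence already covered by \cite{BGMV}.

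For $(63,32,16)$, \cite[Theorem III]{CK} shows that a flag-transitive point-imprimitive symmetric $2$-$(63,32,16)$ design is the complement $\overline{PG_{5}(2)}$ of $PG_{5}(2)$ on the $63$ nonzero vectors of $\mathbb{F}_{4}^{3}$, with $\Sigma L_{3}(4)\unlhd G\leq\Gamma L_{3}(4)$ and the $21$ classes of scalar multiples as the only nontrivial invariant partition; only that uniqueness and the list of overgroups need checking. The core case is $(64,28,12)$: let $G$ be flag-transitive and point-imprimitive on a symmetric $2$-$(64,28,12)$ design $\Dmc$ with $G$-invariant partition $\Cmc$. From $v=2^{6}$ one has $2^{8}\cdot 7\mid|G|$; analysing the successive layers of $\Cmc$ -- using the restrictions flag-transitivity imposes on how blocks meet the classes, and eliminating $A_{7},A_{8}$ and the like as composition factors -- shows $O_{2}(G)\neq1$ and pins $G$ down to one of the six groups of Lines $6$--$7$, with the top layer acting through $\PSL_{2}(7)$, $7{:}3$ or $7$. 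For the five groups of Line $6$ the design is Kantor's symplectic design $\Smc^{-}(3)$ \cite{Ka75}, which I would identify explicitly, exhibit its point-imprimitivity, and check that it admits each of those groups flag-transitively -- one isomorphism class.

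For $G=2^{8}{:}\PSL_{2}(7)$ I would construct the designs. Let $L=\PSL_{2}(7)$ and let $V$ be the absolutely irreducible $8$-dimensional $L$-module over $\mathbb{F}_{2}$ (the Steinberg module, see \cite{AtMod}). The group $V\rtimes L$ has a transitive action of degree $64$ in which $V$ has eight orbits of size $8$ that $L$ permutes as $\PSL_{2}(7)$ on the projective line $\mathrm{PG}(1,7)$; this furnishes the point set, the partition $\Cmc$ (eight classes of size $8$), and a subgroup $H\cong 7{:}3$. One then exhibits an $H$-invariant $24$-subset of the point set whose $G$-orbit is the block set of a symmetric $2$-$(64,28,12)$ design; verifying the $2$-design property is a counting argument inside $V$, exactly two inequivalent choices of base block yield such a design, producing $\Dmc_{1}\not\cong\Dmc_{2}$, and flag-transitivity of $G$ on each $\Dmc_{i}$ follows from $H$ being transitive on the base block. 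Finally, $\Aut(\Dmc_{i})$ preserves $\Cmc$ -- which I would show is combinatorially determined by $\Dmc_{i}$ -- hence lies in $S_{8}\wr S_{8}$ and normalises $O_{2}$, so that the layered analysis above, now applied to $\Aut(\Dmc_{i})$, together with \cite{BGMV} for the point-primitive alternative, forces $\Aut(\Dmc_{i})=2^{8}{:}\PSL_{2}(7)$.

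The main obstacle I anticipate is this last case: the layered structural analysis pinning $G$ down to the six groups of Lines $6$--$7$ is a hands-on study of the imprimitivity rather than a single appeal to a known classification, and -- hardest of all -- the theoretical construction of $\Dmc_{1},\Dmc_{2}$ from the Steinberg module must be accompanied by proofs that they really are $2$-designs, that exactly two of them are non-isomorphic, and that their full automorphism group is no larger than $2^{8}{:}\PSL_{2}(7)$.
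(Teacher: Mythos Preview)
Your overall strategy---reduce to a short list of parameter triples, quote \cite{MS,PZ} for $\lambda\le 10$, and then handle $(63,32,16)$ and $(64,28,12)$ by hand---matches the paper's shape, but the reduction step and the $(63,32,16)$ case both have genuine gaps.

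\textbf{The reduction.} The paper does \emph{not} enumerate $(v,k,\lambda)$ arithmetically and then discard leftovers such as $(64,36,20)$ by asserting point-primitivity. Instead it applies the Camina--Zieschang theorem (Theorem~\ref{CamZie}) from the outset: a $G$-invariant partition $\Sigma$ forces the equations $\frac{v-1}{k-1}=\frac{v_0-1}{k_0-1}$ and $\frac{v_1-1}{k_1-1}=\frac{k_0(v_0-1)}{v_0(k_0-1)}$, together with flag-transitive $2$-designs $\mathcal{D}_0$ on a class and $\mathcal{D}_1$ on $\Sigma$. Running these constraints with $v<100$ yields the explicit Tables~\ref{smaug}--\ref{sedici}, and filtering for $b=v$ gives Table~\ref{symm}. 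This is what eliminates triples like $(64,36,20)$ and $(66,40,24)$; the latter \emph{does} survive to Table~\ref{symm} (Line~5) and is only killed later by the argument that $G_\Delta^\Delta$ cannot be almost simple (Theorem~\ref{TheList}). Your proposal never produces $(66,40,24)$ and gives no mechanism to exclude it, and your dismissal of $(64,36,20)$ (``every flag-transitive group is point-primitive'') is unjustified---you would need exactly the Camina--Zieschang arithmetic to see this. More importantly, the decomposition gives you the \emph{structure} of $G_\Delta^\Delta$ and $G^\Sigma$ as concrete small groups, which is what drives the whole $(64,28,12)$ analysis; your ``layered analysis'' is this theorem without its name.

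\textbf{The $(63,32,16)$ case.} \cite[Theorem~III]{CK} classifies antiflag-transitive collineation groups of projective spaces; it exhibits $\overline{PG_5(2)}$ with $\Sigma L_3(4)$ acting but does not prove that every flag-transitive point-imprimitive symmetric $2$-$(63,32,16)$ design is isomorphic to it. The paper's Lemma~\ref{SixtyThree} does real work here: it shows $G_{(\Sigma)}$ is cyclic of order $3$ (ruling out $S_3$ via a fixed-point partition argument and \cite[Corollary~3.7]{Land}), deduces $SL_3(4)\unlhd G$, identifies the two degree-$63$ permutation representations of $G$ inside $SL_6(2)$, and pins the blocks down as hyperplane complements by a subdegree argument. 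You cannot replace this by a citation.

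\textbf{Minor.} In your construction the base block must be a $28$-subset, not a $24$-subset; in the paper it arises as a $K$-orbit of length $28$ for $K=T_{W_2^g}:(7{:}3)$ (Lemma~\ref{Proex}(3)).
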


\bigskip

\begin{corollary}\label{cor1}
The flag-transitive symmetric $2$-$(v,k,\lambda)$ designs with $v<100$ are known.   
\end{corollary}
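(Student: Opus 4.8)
The plan is to deduce the corollary immediately from Theorem \ref{main} together with the classification of flag-transitive point-primitive symmetric $2$-designs with $v<2500$ obtained by Brai\'{c}, Golemac, Mandi\'{c} and Vu\v{c}i\v{c}i\'{c} in \cite{BGMV}. Let $\mathcal{D}$ be a flag-transitive symmetric $2$-$(v,k,\lambda)$ design with $v<100$, and let $G\leq\Aut(\mathcal{D})$ be a flag-transitive subgroup. Since a flag-transitive group is in particular transitive on points, $G$ acts transitively on $\mathcal{P}$, and hence is either point-primitive or point-imprimitive on $\mathcal{P}$. These two cases will be handled separately.

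In the point-imprimitive case, Theorem \ref{main} applies directly and forces the pair $(\mathcal{D},G)$ to appear in Table \ref{tavola1}; in particular $\mathcal{D}$ is one of the finitely many designs listed there, all of which are explicitly known. In the point-primitive case, since $v<100<2500$, the pair $(\mathcal{D},G)$ lies within the scope of \cite{BGMV}, where all flag-transitive point-primitive symmetric $2$-$(v,k,\lambda)$ designs with $v<2500$ are explicitly determined; restricting that classification to $v<100$ yields again a finite, explicit list of designs. Combining the two lists shows that every flag-transitive symmetric $2$-design with $v<100$ is known.

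The only point deserving a word of care is that a given design $\mathcal{D}$ could conceivably admit several distinct flag-transitive automorphism groups, some primitive and some imprimitive; this, however, poses no obstacle, since for the statement of the corollary it is enough that $\mathcal{D}$ itself occur in at least one of the two classifications. Accordingly, there is no substantive difficulty here: the entire content of the corollary is carried by Theorem \ref{main} and by \cite{BGMV}, and the argument above is simply the observation that their union covers every flag-transitive symmetric $2$-design with $v<100$.
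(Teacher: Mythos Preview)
Your proof is correct and follows essentially the same approach as the paper's own proof: split according to whether the flag-transitive group is point-primitive or point-imprimitive, invoke \cite{BGMV} in the former case and Theorem~\ref{main} in the latter. Your additional remark about a design possibly admitting both primitive and imprimitive flag-transitive groups is a harmless clarification not present in the paper, but the core argument is identical.
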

\bigskip 






\subsection{Structure of the paper and outline of the proof.} 
The paper consists of 5 sections briefly described below. In Section 1, we introduce the problem and state our main results: Theorem \ref{main} and Corollary \ref{cor1}. Section 2 focuses on the construction of two non isomorphic $2$-$(64,28,12)$ design admitting $2^{8}:PSL_{2}(7)$ as a flag-transitive point-imprimitive (full) automorphism group. As mentioned above, they were constructed via computer as developments of $(64,28,12)$-difference sets by AbuGhneim \cite{OAG}, although he did not prove the flag-transitivity. Here,  we construct them theoretically using the absolutely irreducible $8$-dimensional $\mathbb{F}_{2}$-representation of $PSL_{2}(7)$.

In section 3, we introduce our main analysis tool: the Theorem of Camina-Zieschang \cite[Propositions 2.1 and 2.3]{CZ}, which associates with the flag-transitive point-imprimitive $2$-design $\mathcal{D}$ two possibly trivial $2$-designs $\mathcal{D}_{0}$ and $\mathcal{D}_{1}$. The first one is induced on each block of imprimitivity, the second one on the $G$-invariant partition. The restrictions on the parameters of $\mathcal{D}_{0}$, $\mathcal{D}_{1}$ and $\mathcal{D}$ together with $v<100$ lead to precise parameters tuples of the designs $\mathcal{D}_{0}$, $\mathcal{D}_{1}$ and $\mathcal{D}$. 
The complete determination of the possible $2$-designs isomorphic to $\mathcal{D}_{0}$ or $\mathcal{D}_{1}$, together with their corresponding automorphism groups, is given in the appendix (the final section of the paper).
The determination of such designs is obtained by using the classification of the finite primitive groups up to order $100$ provided in \cite[Table B.4]{DM} together with some specific geometry of the classical groups, and in some very few cases \texttt{GAP}\cite{GAP}. So, all the admissible Camina-Zieschang decompositions of $\mathcal{D}$ are recorded in Tables \ref{smaug}, \ref{smeagol}, and \ref{sedici}.

In Section 4, we focus on the case where $\mathcal{D}$ is symmetric, and we filter the admissible cases provided in the aforementioned tables according to this property. The candidates are then listed in Table \ref{symm}. As the cases with $\lambda \leq 10$ are settled in \cite[Theorem 1]{MS} and \cite[Theorem 1.2]{Mo}, we may assume that $\lambda>10$, thus obtaining either a $2$-$(63,32,16)$ design or a $2$-$(64,28,12)$ design. The final part of this section is devoted to identify such $2$-design by means of group-theoretical methods as well as using the package $\texttt{DESIGN}$ \cite{Design} of \texttt{GAP} \cite{GAP}.

Finally, as mentioned above, Section 5 gathers various classification results on $2$-designs with specific numerical parameters, in order to determine $\mathcal{D}_{0}$, $\mathcal{D}_{1}$, and their automorphism groups, thereby completing Tables \ref{smaug}, \ref{smeagol}, and \ref{sedici}, and ultimately Table \ref{symm}.

\bigskip

\section{The two flag-transitive $2$-$(64,28,12)$ designs as in Line 7 of Table \ref{tavola1}}\label{Sec2}
The aim of this section is to provide a theoretic construction of the two non-isomorphic $%
2$-$(64,28,12)$ designs as in Line 7 of Table \ref{tavola1} admitting $2^{8}:PSL_{2}(7)$ as a flag-transitive,
point-imprimitive (full) automorphism group. These $2$-designs are not new. Indeed, by using \cite{GAP}, all $(64,28,12)$ difference sets were determined in \cite{OAG}. In particular, it was shown in \cite{OAG} that, only $259$ out of the $267$ groups of order $64$ admit a $(64,28,12)$ difference set. Here, we show that only $14$ of these groups admit two non-isomorphic $(64,28,12)$ difference sets whose development is flag-transitive and point-imprimitive $2$-design as in Line 7 of Table \ref{tavola1}. Our proof makes use of some geometry of the absolutely
irreducible $8$-dimensional representation of $PSL_{2}(7)$. 
  
\bigskip

In order to construct the examples, we need to recall the following useful facts about the absolutely
irreducible $8$-dimensional $\mathbb{F}_{2}$-representation of $PSL_{2}(7)$. 

\bigskip

The group $GO_{8}^{-}(2)$ acts naturally on $V=V_{8}(2)$, so let $Q$ be its
invariant quadratic form. From \cite[Table 8.53]{BHRD} we now that, $\Omega
_{8}^{-}(2)$ contains a unique conjugacy class of subgroups isomorphic to $%
G_{0}=PSL_{2}(7)$ that acts absolutely irreducibly on $V$. Moreover, $%
N_{GO_{8}^{-}(2)}(G_{0})=PGL_{2}(7)$.

Let $S$ be a Sylow $7$-subgroup $G_{0}$, then $S$ is also a Sylow $7$%
-subgroup of $GO_{8}^{-}(2)$. It can be deduced from \cite{At} that, $S$
preserves exactly two $1$-subspaces, say $\left\langle a_{1}\right\rangle $
and $\left\langle a_{2}\right\rangle $, and exactly two $3$-subspaces of $V$%
, say $V_{1}$ and $V_{2}$, and all these are singular with respect to $Q$.
Let $W_{1}=V_{1}^{\perp }$ ad $W_{2}=V_{2}^{\perp }$ be the orthogonal
complements of $V_{1}$ and $V_{2}$, respectively. \ Then $V_{1}\leq W_{1}$
and $V_{2}\leq W_{2}$ since both $V_{1}$ and $V_{2}$ are singular, and $\dim
W_{1}=\dim W_{2}=5$. Moreover, $W_{1}$ and $W_{2}$ are the unique $S$%
-invariant $5$-subspaces of $V$ since $S$ consists of isometries of $V$, and $%
V_{1}$ and $V_{2}$ are the unique $S$-invariant $3$-subspaces of $V$. Thus, $%
W_{1}=V_{1}\oplus \left\langle a_{1},a_{2}\right\rangle $ and $%
W_{2}=V_{2}\oplus \left\langle a_{1},a_{2}\right\rangle $ and $V=V_{1}\oplus
\left\langle a_{1},a_{2}\right\rangle \oplus V_{2}$. Also, $N_{G_{0}}(S)=S:C$
with $C$ cyclic of order $3$, preserves both $W_{1}$ and $W_{2}$. Therefore, 
$W_{1}^{G_{0}}$ and $W_{2}^{G_{0}}$ are the unique two (distinct) $2$%
-transitive $G_{0}$-orbits both of length $8$ on $5$-subspaces since $G$
consists of isometries of $V$ with respect to $Q$, and $G_{0}<\Omega
_{8}^{-}(2)$ acts irreducibly on $V$. Also, $N_{GO_{8}^{-}(2)}(G_{0})$ fuses 
$W_{1}^{G_{0}}$ and $W_{2}^{G_{0}}$. By using \cite{GAP}, one can see that

\begin{itemize}
\item[(i)] the intersection of any two distinct elements in $W_{1}^{G_{0}}$,
or $W_{2}^{G_{0}}$, is a $2$-space;

\item[(ii)] the intersection of any element in $W_{1}^{G_{0}}$ with any
element in $W_{2}^{G_{0}}$ is a $3$-subspace, unless they are stabilized by
the same Frobenius subgroup of $G_{0}$ of order $21$, in which case their
intersection is a $2$-space.  
\end{itemize}

\bigskip 

All the previously introduced symbols will have that fixed meaning
throughout this section.

\bigskip

\begin{lemma}
\label{Proex}Let $G=T:G_{0}$, where $T$ is the translation group of $V$ and $%
G_{0}=PSL_{2}(7)$, and let $\mathcal{P}=\left\{ W_{1}^{\beta }+x:\beta \in
G_{0},x\in V\right\} $ and $\Sigma =\Delta ^{G}$, where $\Delta
=V/W_{1}=\left\{ W_{1}+x:x\in V\right\} $. Then the following hold:
\end{lemma}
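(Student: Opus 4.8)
The plan is to analyze the combinatorial structure of $\mathcal{P}$, $\Sigma$, and the action of $G = T:G_0$ on them, establishing that $\mathcal{D} = (\mathcal{P}, \mathcal{B})$ — for a suitable block set $\mathcal{B}$ derived from the geometry — is a symmetric $2$-$(64,28,12)$ design with $G$ flag-transitive and point-imprimitive. Since the statement is truncated here, I interpret the conclusion as asserting: (a) $|\mathcal{P}| = 64$ and $G$ acts on $\mathcal{P}$; (b) $\Sigma$ is a $G$-invariant partition of $\mathcal{P}$ into $8$ classes of size $8$ (so $G$ is point-imprimitive); (c) the natural incidence structure attached to this configuration yields the desired $2$-design on which $G$ is flag-transitive.

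First I would pin down the point set. Since $W_1$ is a $5$-subspace of $V = V_8(2)$, the cosets $W_1 + x$ fall into $|V|/|W_1| = 2^8/2^5 = 8$ parallel classes, and $\Delta = V/W_1$ has $8$ elements; as $G_0$ permutes the $8$ conjugates $W_1^\beta$ in its $2$-transitive orbit $W_1^{G_0}$ of length $8$ (established in the preamble), the set $\mathcal{P} = \{W_1^\beta + x : \beta \in G_0, x \in V\}$ is acted on by $G$ with $T$ acting by translation and $G_0$ permuting the "directions". The key counting step is to show $|\mathcal{P}| = 64$: there are $8$ choices of direction $W_1^\beta$ and, for each, $8$ cosets, and I must check these $8 \times 8 = 64$ affine $5$-subspaces are pairwise distinct — equivalently that $W_1^\beta + x = W_1^{\beta'} + x'$ forces $\beta = \beta'$ (up to the stabilizer) and $x \equiv x'$; distinctness of directions is immediate from $|W_1^{G_0}| = 8$, and within a fixed direction the cosets are distinct by definition. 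Then $\Sigma = \Delta^G$ is the orbit of the partition $\Delta$ (the parallel class structure for direction $W_1$) under $G$: its members are the $8$ parallel-class partitions $\{W_1^\beta + x : x \in V\}$, one per direction, and I would verify each is a partition of $\mathcal{P}$ into $8$ blocks of size $8$, so $G$ preserves the "partition of partitions" and hence acts point-imprimitively (with two distinct block systems coming from $W_1^{G_0}$ and $W_2^{G_0}$, matching Line 7's claim of exactly two isomorphism classes).

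Next I would address the flag-transitivity and the $2$-design property, which is where the geometric facts (i) and (ii) do the real work. The blocks of $\mathcal{D}$ should be the $G$-translates of a carefully chosen subset built from the intersection pattern of the $W_i^{G_0}$: a block meets each parallel class of $\Sigma$ in a controlled number of points, and facts (i)–(ii) — intersections of size $2^2$ within a single orbit and $2^3$ (generically) or $2^2$ (in the Frobenius-$21$ case) across the two orbits — are precisely what force the pairwise balance $\lambda = 12$ and block size $k = 28 = 64 - 36$. Concretely: $28 = 4\cdot 7$, and a block will hit $7$ of the $8$ classes in a sub-affine-plane of size $4$; pairwise incidences split into same-class pairs (counted via $\mathcal{D}_0$, the induced design on a class) and different-class pairs (counted via the orbit structure on $\Sigma$ and the intersection sizes). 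I would set up the double count of flags, $|\mathcal{B}| \cdot k = |\mathcal{P}| \cdot r$, together with the $\lambda(v-1) = r(k-1)$ relation to confirm $(v,k,\lambda,r,b) = (64,28,12,28,64)$, and deduce flag-transitivity from the facts that $T$ is transitive on $\mathcal{P}$ and that the point stabilizer $G_0$ (or a point-stabilizer subgroup) is transitive on the blocks through that point — the latter reducing to transitivity of $G_0$ on the $8$ directions minus one, i.e. on a $7$-element set, which holds since $|G_0| = 168 = 8 \cdot 21$ and the relevant stabilizer has the right index.

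The main obstacle I anticipate is the verification that the chosen block set actually gives constant $\lambda$ — i.e. genuinely extracting $\lambda = 12$ from the mix of "generic" $3$-space intersections and the exceptional $2$-space intersections tied to the Frobenius subgroups of order $21$ in case (ii). This requires bookkeeping over the $G_0$-orbit structure on pairs $(W_1^\beta, W_2^\gamma)$: how many cross-pairs are "exceptional", how the two invariant partitions $W_1^{G_0}, W_2^{G_0}$ interlock, and how translations redistribute intersection points among cosets. A clean way to handle this is to work entirely inside the representation: fix the Sylow $7$-subgroup $S$, use $N_{G_0}(S) = S{:}C$ and the decomposition $V = V_1 \oplus \langle a_1, a_2\rangle \oplus V_2$ to make all intersections explicit as subspaces, and then let $G_0$-conjugation and $T$ act to propagate the count — invoking \texttt{GAP} only as a final consistency check, exactly as the paper signals it does for (i) and (ii). I also expect a minor subtlety in confirming that $2^8{:}PSL_2(7)$ is the \emph{full} automorphism group and that the two designs are non-isomorphic; that likely is deferred to later lemmas, so for this lemma I would stop once the $2$-design parameters, point-imprimitivity, and flag-transitivity of $G$ are in hand.
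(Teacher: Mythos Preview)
You were only shown the lemma up to ``Then the following hold:'', so you had to guess the conclusions. Your guess is off in an important way. The lemma actually asserts four concrete facts: (1) $\Sigma$ is a $G$-invariant partition of $\mathcal{P}$ into $8$ classes of size $8$; (2) $G_{W_1}=T_{W_1}{:}(S{:}C)$ and $G$ has rank $3$ on $\mathcal{P}$ with subdegrees $1,7,56$; (3) for the subgroup $K=T_{W_2^g}{:}(S^g{:}C)$ (where $S^g$ is the second Sylow $7$-subgroup normalized by $C$), the $K$-orbits on $\mathcal{P}$ have lengths $8,28,28$; (4) the two $K$-orbits $B_1,B_2$ of length $28$ satisfy $G_{B_h}=K$, hence $|B_h^G|=64$. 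The $2$-design property is \emph{not} part of this lemma; it is established separately in the next result (Example~\ref{Ex1}) using the rank $3$ subdegrees from (2).

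Your plan captures (1) in spirit, but your description of $\Sigma$ is garbled: an element of $\Sigma$ is an $8$-element subset of $\mathcal{P}$ (the cosets of a single $W_1^\beta$), not itself a partition of $\mathcal{P}$; $\Sigma$ is a single partition, not a ``partition of partitions'', and only $W_1^{G_0}$ is used to build it --- the two isomorphism classes in Line~7 come from the two $28$-orbits $B_1,B_2$, not from two block systems. More seriously, you never identify the block construction: the paper does \emph{not} build blocks from the intersection pattern (i)--(ii) and then verify $\lambda=12$ by a double count. Instead, it \emph{defines} the candidate block as a $K$-orbit of length $28$, and the entire content of (3)--(4) is the explicit computation of $K$-orbits and of $G_{B_h}$, using facts (i)--(ii) only to pin down $|W_1\cap W_2^g|$ and $|W_1^g\cap W_1|$ inside index calculations. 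Without naming $K$ and computing its orbits you cannot reach the paper's conclusion, and your proposed route --- extracting $\lambda=12$ directly from the subspace-intersection bookkeeping --- is both harder and not what the lemma claims.
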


\begin{enumerate}
\item $G$ acts imprimitively on $\mathcal{P}$. In particular, $\Sigma $ is a 
$G$-invariant partition of $\mathcal{P}$ in $2^{3}$ classes of each of size $%
2^{3}$, and $\left\vert \mathcal{P}\right\vert =2^{6}$.

\item $G_{W_{1}}=T_{W_{1}}:(S:C)$. In particular, $G$ has rank $3$ and its
subdegrees are $1,7$ and $56$

\item Let $S^{g}$, with $g\in G_{0}$, be the other Sylow $7$-subgroup
normalized by $C$, and let $K=T_{W_{2}^{g}}:(S^{g}:C)$. Then $%
W_{1}^{gK},W_{1}^{K},(W_{1}+x_{0})^{K}$, where $W_{1}+x_{0}$ is some
suitable element of $\mathcal{P}$ fixed by $C$, are $K$-orbits of length $%
8,28,28$. The three $K$-orbits form a partition of $\mathcal{P}$.   

\item Set $B_{1}=W_{1}^{K}$ and $B_{2}=(W_{1}+x_{0})^{K}$. Then $%
G_{B_{1}}=G_{B_{2}}=K$ and $\left\vert B_{1}^{G}\right\vert =\left\vert
B_{2}^{G}\right\vert =64$.
\end{enumerate}

\begin{proof}
Since $W_{1}^{\beta }+x=W_{1}^{\beta \phi }$, where $\beta \in
G_{0}$ and $\phi :V\longrightarrow
V,u\longmapsto u+x$, it follows that $G$ acts transitively on $\mathcal{P}$.
Then $\Sigma =\Delta ^{G}$, where $\Delta
=V/W_{1}=\left\{ W_{1}+x:x\in V\right\} $, is a covering of $\mathcal{P}$. Assume that $%
\Delta \cap \Delta ^{\gamma }\neq \varnothing $ for some $\gamma \in G$.
Hence, there are $x,y \in V$ such that such that $\left( W_{1}+x\right)
^{\gamma }=W_{1}+y$. Now, $\gamma =\alpha \tau $ for some $\alpha \in G_{0}$
and $\tau \in T$. Hence, $\left( W_{1}+x\right) ^{\gamma }=W_{1}^{\alpha
}+x^{\alpha }+z$ for some $z\in V$. Therefore, one has%
\[
W_{1}^{\alpha }+x^{\alpha }+z=W_{1}+y\text{.}
\]%
Since $\alpha \in G_{0}$, it follows that $0\in W_{i}^{\alpha }$, and hence $%
x^{\alpha }+z+y\in W_{1}$. Then $W_{1}^{\alpha }=W_{1}$ and hence $\Delta
^{\gamma }=\Delta ^{\tau }=\Delta $ since $\Delta =V/W_{1}$. Therefore, $%
\Sigma $ is a $G$-invariant partition of $\mathcal{P}$ in classes each of
size $2^{3}$. Since $T$ stabilizes $\Delta $ and $T\vartriangleleft G$, it
follows that $T$ stabilizes any element of $\Sigma $. Moreover, $S:C$ fixes $%
W_{1}$ and hence stabilizes $\Delta $. Therefore $T:(S:C)$ stabilizes $%
\Delta $, and actually $G_{\Delta }=T:(S:C)$ since $T:(S:C)$ is maximal in $G
$. Therefore $\left\vert \Sigma \right\vert =2^{3}$, and hence $\left\vert 
\mathcal{P}\right\vert =\left\vert \Sigma \right\vert \left\vert \Delta
\right\vert =2^{6}$. This proves (1).

Let $H=T_{W_{1}}:(S:C)$, then $H\leq G_{W_{1}}$. Furthermore, $\left\vert
G_{W_{1}}\right\vert =2^{5}\cdot 3\cdot 7$ since $G$ acts transitively on $%
\mathcal{P}$, $\left\vert G\right\vert =2^{11}\cdot 3\cdot 7$ and $%
\left\vert \mathcal{P}\right\vert =2^{6}$. Thus, $G_{W_{1}}=H$. 

Clearly, $G_{W_{1}}$ preserves $\Delta \setminus \left\{ W_{1}\right\} $,
which has size $7$. Let $\varphi \in S$, $\varphi \neq 1$, be\ such that $%
\left( W_{1}+x\right) ^{\varphi }=W_{1}+x$ with $x\notin W_{1}$. Then $%
S=\left\langle \varphi \right\rangle $ fixes a non-zero vector lying in $%
W_{1}+x$ since $\left\vert W_{1}+x\right\vert =2^{5}$. Then $\left(
W_{1}+x\right) \cap W_{1}\neq \varnothing $, since the unique vectors of $V$
fixed by $S$ are those lying in $\left\langle a_{1},a_{2}\right\rangle
\subset W_{1}$ as we have at the beginning of this section that. So $%
W_{1}+x=W_{1}$, whereas $x\notin W_{1}$. Thus, $\Delta \setminus \left\{
W_{1}\right\} $ is a $G_{W_{1}}$-orbit of length $7$.

The actions of $G_{0}$ on $W_{1}^{G_{0}}$ and on $PG_{1}(8)$ are equivalent,
then there is $g\in G_{0}$ such that $W_{1}^{gC}=W_{1}^{g}$ since $C$ is
cyclic of order $3$. Let $\vartheta \in G_{W_{1}}$, such that $%
W_{1}^{g\vartheta }=W_{1}^{g}$. Then $\vartheta =\eta \varsigma \tau $ with $%
\eta \in C$, $\varsigma \in S$ and an element $\tau :V\longrightarrow
V,u\longmapsto u+w$ with $w\in W_{1}$, and hence $W_{1}^{g}=W_{1}^{g%
\varsigma \tau }$ since $W_{1}^{gC}=W_{1}^{g}$. Then $W_{1}^{g\varsigma
}+w=W_{1}^{g}$ and hence $w\in W_{1}^{g}$ and $W_{1}^{g\varsigma }=W_{1}^{g}$
since $0\in W_{1}^{g\varsigma }$. So $\varsigma =1$ since $\varsigma \in S$
and any non-trivial element of $S$ fixes a unique element of $W_{1}^{G_{0}}$%
, namely $W_{1}$, and $W_{1}^{g}\neq W_{1}$. Then $\tau \in
T_{W_{1}^{g}}\cap T_{W_{1}}=T_{W_{1}^{g}\cap W_{1}}$, and so $%
G_{W_{1},W_{1}^{g}}\leq T_{W_{1}^{g}\cap W_{1}}C$. Conversely, it is obvious to see that
that $T_{W_{1}^{g}\cap W_{1}}C\leq G_{W_{1},W_{1}^{g}}$. Thus, $%
G_{W_{1},W_{1}^{g}}=T_{W_{1}^{g}\cap W_{1}}C$ and hence $\left\vert
W_{1}^{gG_{W_{1}}}\right\vert =56$ since $\left\vert W_{1}^{g}\cap
W_{1}\right\vert =4$ by (i), being $W_{1}^{g}\neq W_{1}$. Thus, we have
proven that $G$ is point-imprimitive rank $3$ on $\mathcal{P}_{i}$ with
subdegrees $1,7$ and $56$. This proves (2).

Let $g$ be defined as in (2), then $S^{g}$ is the other Sylow $7$-subgroup
of $G_{0}$ normalized by $C$. Then $W_{1}^{gK}=W_{1}^{gT_{W_{2}}}$ since $%
K=T_{W_{2}^{g}}:(S^{g}:C)$ and  $S^{g}:C$ preserves $W_{1}^{g}$. Now, $%
W_{1}^{g\tau }=W_{1}^{g}$ for some $\tau \in T_{W_{2}^{g}}$ if and only if $%
\tau \in T_{W_{1}^{g}}\cap T_{W_{2}^{g}}=T_{W_{1}^{g}\cap W_{2}^{g}}$ and
hence $\left\vert W_{1}^{gK}\right\vert =8$ since\ $W_{1}^{g}\cap
W_{2}^{g}=\left\langle a_{1},a_{2}\right\rangle ^{g}$ or, more simply, by (ii).
Thus, $W_{1}^{gK}=V/W_{1}^{g}$ has length $8$.

Let $\psi \in K$ such that $W_{1}^{\psi }=W_{1}$. Hence, $\psi =\delta
\sigma \tau ^{\prime }$ with $\delta \in C$, $\sigma \in S^{g}$ and $\tau
^{\prime }\in T_{W_{2}^{g}}$. Therefore $W_{1}^{\sigma \tau }=W_{1}$, and
hence $W_{1}^{\sigma }+w^{\prime }=W_{1}$ for some $w^{\prime }\in W_{2}^{g}$%
. Then $w^{\prime }\in W_{1}$, and hence $W_{1}^{\sigma }=W_{1}$. So $\sigma =1$ since 
$\sigma \in S^{g}$, and hence $\tau \in T_{W_{1}}\cap
T_{W_{2}^{g}}=T_{W_{1}\cap W_{2}^{g}}$ and $\psi \in T_{W_{1}\cap W_{2}^{g}}C
$. Thus, $K_{W_{1}} \leq T_{W_{1}\cap W_{2}^{g}}C$. On the
other hand, $T_{W_{1}\cap W_{2}^{g}}C\leq K_{W_{1}}$. Hence,  $K_{W_{1}} = T_{W_{1}\cap W_{2}^{g}}C$ and $\left\vert
W_{1}^{K}\right\vert =28$.

Now, $\mathcal{P}_{i}\setminus (W_{1}^{K}\cup W_{1}^{gK})$ has size $28$ and
is a union of $K$-orbits and, in particular, there is at least an element of 
$\mathcal{P}_{i}\setminus (W_{1}^{K}\cup W_{1}^{gK})$ which is fixed by $C$.
Such an element is of the form $W_{1}+x_{0}$ for some $x_{0}\in V$ since $%
W_{1}^{gK}=V/W_{1}^{g}$. Let $\xi =\delta _{1}\sigma _{1}\tau _{1}$ with $%
\delta _{1}\in C$, $\sigma _{1}\in S^{g}$ and $\tau _{1}\in T_{W_{2}^{g}}$
such that $\left( W_{1}+x_{0}\right) ^{\xi }=W_{1}+x_{0}$. Then 
\[
W_{1}+x_{0}=\left( W_{1}+x_{0}\right) ^{\xi }=\left( W_{1}+x_{0}\right)
^{\sigma _{1}\tau _{1}}=W_{1}^{\sigma _{1}}+x_{0}+y_{0}
\]%
for some $y_{0}\in W_{2}^{g}$, being $x_{0}$ fixed by $C$. Then $%
W_{1}=W_{1}^{\sigma _{1}}+y_{0}$. Since $0\in W_{1}^{\sigma }$, it follows
that $y_{0}\in W_{1}$ and so $W_{1}^{\sigma }=W_{1}$. Then $\sigma _{1}=1$
since $\sigma _{1}\in S^{g}$, and so $y_{0}\in W_{1}$. Then $\tau ^{\prime
}\in T_{W_{1}}\cap T_{W_{2}^{g}}=T_{W_{1}\cap W_{2}^{g}}$. So $\left\vert
\left( W_{1}^{g}+x_{0}\right) ^{K}\right\vert =28$ since $%
K_{W_{1}+x_{0}}=T_{W_{1}\cap W_{2}^{g}}:C$ with $\left\vert W_{1}\cap
W_{2}^{g}\right\vert =8$ by (ii). Therefore, $%
W_{1}^{gK},W_{1}^{K},(W_{1}+x_{0})^{K}$ are $K$-orbits of length $8,28,28$, respectively, and form a partition of $\mathcal{P}$, which is (3).

Set $B_{1}=W_{1}^{K}$ and $B_{2}=(W_{1}+x_{0})^{K}$. The group $G_{B_{h}}$, $h=1,2$, contains $K$ and hence permutes transitively
the $7$ elements of $\Sigma $ intersecting $B_{h}$ in a non-empty. This
forces $G_{B_{h}}/(G_{B_{h}}\cap T)\cong S^{g}:C$ since%
\[
S:C\cong K/T_{W_{2}^{g}}=K/(K\cap T)\cong KT/T\leq G_{B_{h}}T/T\cong
G_{B_{h}}/(G_{B_{h}}\cap T)\leq PSL_{2}(7)
\]%
and a Frobenius group of order $21$ is maximal in $PSL_{2}(7)$. Hence, $G_{B_{h}}=\left( G_{B_{h}}\cap T\right) :(S^{g}:C)$ by \cite[%
Theorem 6.2.1(i)]{Go}. Moreover, $G_{B_{h}}$ preserve $W_{1}^{gK}=V/W_{1}^{g}
$, which is the remaining element of $\Sigma $ since $\left\vert \Sigma
\right\vert =8$. Therefore $G_{B_{1}}=G_{B_{2}}=G_{V/W_{1}^{g}}$. Since $%
T_{W_{2}}<G_{B_{h}}\cap T$, the actions of $S^{g}$ on $\left(
G_{B_{h}}\cap T\right) /T_{W_{2}}$, $V/W_{2}$ and on $V_{2}$ are all
equivalent. Also, the action of $S^{g}$ on $V_{2}$ is irreducible. Thus,
either $G_{B_{h}}=T:(S^{g}:C)$ or $G_{B_{h}}=K$. The former implies that $%
B_{h}$ contains each element of $\Sigma $ intersecting $B_{h}$ in a
non-empty set since $T$ acts transitively on each element of $\Sigma $. So $%
\left\vert B_{h}\right\vert =56$, which is a contradiction. Therefore, $%
G_{B_{h}}=K$ and so $\left\vert B_{h}^{G}\right\vert =64$, which is (4).
\end{proof}

\bigskip 

\begin{example}
\label{Ex1}The incidence structures $\mathcal{D}^{(h)}=(\mathcal{P}%
,B_{h}^{G})$, $h=1,2$, are two non-isomorphic symmetric $2$-$(64,28,12)$
designs admitting $G=2^{8}:PSL_{2}(7)$ as flag-transitive, point-imprimitive
automorphism group. In particular, $G$ is the full automorphism of $\mathcal{%
D}^{(h)}$, $h=1,2$.
\end{example}

\begin{proof}
Both the incidence structures $\mathcal{D}^{(h)}$ admit $G$ as a
flag-transitive, point-imprimitive automorphism group by their definition
and by Lemma \ref{Proex}(1). Moreover, they are symmetric $1$-design with
parameters $(v,k)=(64,28)$ by \cite[1.2.6]{Demb} and Lemma \ref{Proex}%
(1)(4). Let $x$ be any point on $B_{h}$, and let $\mathcal{O}_{1}$ and $%
\mathcal{O}_{2}$ be the $G_{x}$-orbits of length $7$ and $56$, respectively
(see Lemma \ref{Proex}(2)). Now, let $n_{1h}=\left\vert B_{h}\cap \mathcal{O}%
_{1}\right\vert $ and $\lambda _{1h}$ the number of blocks lying in $%
B_{h}^{G_{x}}$ and containing any fixed element of $\mathcal{O}_{1}$. The
integer $n_{2h}$ and $\lambda _{2h}$ are defined similarly. Clearly, $%
n_{1h}+n_{2h}=\left\vert B_{h}\setminus \left\{ x\right\} \right\vert =27$
with $1\leq n_{1h}\leq \left\vert \mathcal{O}_{1}\right\vert =7$ and $1\leq
n_{1h}\leq \left\vert \mathcal{O}_{1}\right\vert =56$.

Let $C$ be a cyclic subgroup of $G_{B_{h}}$. It follows from \cite{At} that, 
$C$ fixes exactly $4$ vectors in $V$ including $0$. Moreover, we deduce from
Lemma \ref{Proex}(3), that $C$ preserves both $W_{1}$ and $W_{1}+x_{0}$ and
any of these has size $2^{5}$. Thus $C$ fixes exactly $2$ vectors in $W_{1}$
and $2$ in $W_{1}+x_{0}$. Consequently, $W_{1}$ and $W_{1}+x_{0}$ are the
unique elements of $\mathcal{P}$ fixed by $C$. Now, $C<G_{B_{h}}$, and the
previous argument implies that, $C$ fixes a point on $B_{h}$ and acts
semiregularly on the remaining $27$ points of $B_{h}$. It follows that $n_{1h}\equiv 0%
\pmod{3}$ and $n_{2h}\equiv 0\pmod{3}$. Hence, $n_{1h}=3$ or $6$ since 
$1\leq n_{1h}\leq 7$.

Since $\left( \mathcal{O}_{1},B_{h}^{G}\right) $ and $\left( \mathcal{O}%
_{2},B_{h}^{G}\right) $ are two $1$-designs with by \cite[1.2.6]{Demb}, it
follows that $7\lambda _{1h}=28n_{1h}$ and $56\lambda _{2h}=28n_{2h}$ and so 
$n_{2h}=2\lambda _{2h}$. Then $n_{2h}$ is even, and hence $n_{1h}$ is odd
since $n_{1h}+n_{2h}=27$. Consequently, $n_{1h}=3$. Then $n_{2h}=24$ and so $%
\lambda _{1h}=4n_{1h}=12$ and $\lambda _{2h}=n_{2h}/2=12$. Thus $\mathcal{D}%
^{(h)}$ is flag-transitive, point-imprimitive symmetric $2$-$(64,28,12)$
design.

Let $X_{h}=Aut(\mathcal{D}^{(h)})$. Suppose that $X_{h}$ acts
point-primitively on $\mathcal{D}^{(h)}$, then $X_{h}$ acts point-$2$%
-homogeneously on $\mathcal{D}^{(h)}$ by \cite[Theorem 1.2]{TCZ} since $%
G\leq X_{h}$ and $G$ has order divisible by $7$. Actually, $X_{h}$ acts
point-$2$-transitively on $\mathcal{D}^{(h)}$ by \cite[Theorem 1]{Ka72}, and hence $%
\mathcal{D}^{(h)}\cong \mathcal{S}^{-}(3)$ and $X_{h}\cong 2^{6}:Sp_{6}(2)$
by \cite[Theorem]{Ka85} and \cite[Theorem 1]{Ka75}. Then $%
G_{B_{h}}\leq \left( X_{h}\right) _{B_{h}}\cong GO_{6}^{-}(2)$ by \cite[%
Corollary 3]{Ka75}, and we reach a contradiction since the order of $%
G_{B_{h}}$ is divisible by $7$, whereas the order of $GO_{6}^{-}(2)$ is not.
Thus $X_{h}$ acts point-imprimitively on $\mathcal{D}^{(h)}$, and therefore $%
X_{h}=G$ by Lemma \ref{ftinoporo}(3) since $%
PSL_{2}(7)<G\leq X_{h}$. 

Finally, assume that $\varphi $ is an isomorphism between $\mathcal{D}^{(1)}$
and $\mathcal{D}^{(2)}$. Then $\varphi $ normalizes $G$ since $G=Aut(%
\mathcal{D}^{(1)})=Aut(\mathcal{D}^{(2)})$. Then $\varphi $ normalizes $%
T=Soc(G)$, and hence $G\left\langle \varphi \right\rangle \cong
2^{6}:PGL_{2}(7)$ by \cite{At}. Moreover $B_{1}^{G\left\langle \varphi
\right\rangle }=B_{1}^{G}\cup B_{2}^{G}$, and so $G\left\langle \varphi
\right\rangle $ is a flag-transitive automorphism group of the $2$-$%
(64,56,24)$ design $(\mathcal{P}_{i},B_{1}^{G\left\langle \varphi
\right\rangle })$. Then $G\left\langle \varphi \right\rangle $ acts
point-primitively on by Theorem \ref{TheList}. Moreover, $G\left\langle \varphi
\right\rangle $ is of rank $3$ with subdegrees $1$, $7$ and $56$ since $G$
is rank $3$ with subdegrees $1$, $7$ and $56$ and $\left\vert G\left\langle
\varphi \right\rangle :G\right\vert =2$, but this contradicts \cite[Theorem]{Lieb}. Thus, $\mathcal{D}^{(1)}$ and $\mathcal{D}^{(2)}$ are
not isomorphic. This completes the proof.
\end{proof}

\bigskip 
For reader's convenience a system of generators of $G$ and a base block for the $2$-designs $\mathcal{D}^{(1)}$ and $\mathcal{D}^{(2)}$ is provided in Table \ref{smeagol} by using the package $\texttt{DESIGN}$ \cite{DifSet} of $\texttt{GAP}$ \cite{GAP}. The table also contains another way to construct such a $2$-designs as a development of McFarland $(64,28,12)$ difference sets of fourteen $2$-subgroups of $G$ (of order $64$) acting regularly on $\mathcal{P}$ (e.g. see \cite[Definition VI.1.5 and Theorem VI.1.6]{BLJ}). The computation of which groups of order $2^{6}$ admit a $(64,28,12)$ difference set whose development is $\mathcal{D}^{(1)}$ or $\mathcal{D}^{(2)}$ is carried out by using the package $\texttt{DIFSET}$ \cite{DifSet} of $\texttt{GAP}$ \cite{GAP}. These groups are denoted in Table \ref{smeagol} as in the \texttt{GAP} library \cite{GAP}.

 \bigskip

\begin{sidewaystable} 
\caption{system of generators of $G$, a base block for the the $2$-designs $\mathcal{D}^{(1)}$ and $\mathcal{D}^{(2)}$, and difference sets $2$-subgroups of $G$ (of order $64$) acting regularly on $\mathcal{P}$ for which $\mathcal{D}^{(1)}$ and $\mathcal{D}^{(2)}$ are developments.}
\label{frodo}
\tiny
{\tiny\setlength{\tabcolsep}{2pt}
\begin{tabular}{|rl|}
\hline
\tiny
& \textbf{System of generators of $G$}\\
\hline
$g_{1}=$&$(1,4)(2,3)(5,8)(6,7)(9,13)(10,14)(11,15)(12,16)(17,22)(18,21)(19,24)(20,23)(25,31)(26,32)(27,29)(28,30)(33,40)(34,39)(35,38)(36,37)(57,61)(58,62)(59,63)(60,64)$;\\
$g_{2}=$&$(1,3)(2,4)(5,7)(6,8)(17,23)(18,24)(19,21)(20,22)(25,31)(26,32)(27,29)(28,30)(33,40)(34,39)(35,38)(36,37)(41,44)(42,43)(45,48)(46,47)(49,50)(51,52)(53,54)(55,56)$;\\
$g_{3}=$&$(1,9,33)(2,16,37)(3,12,40)(4,13,36)(5,11,38)(6,14,34)(7,10,35)(8,15,39)(19,21,23)(20,22,24)(27,31,29)(28,32,30)(41,49,57)(42,55,62)(43,56,63)(44,50,60)$\\
&$(45,53,58)(46,51,61)(47,52,64)(48,54,59)$;\\
$g_{4}=$&$(1,57,2,61)(3,59,4,63)(5,64,6,60)(7,62,8,58)(9,51)(10,53)(11,50)(12,56)(13,54)(14,52)(15,55)(16,49)(17,31,19,29)(18,32,20,30)(21,27,23,25)(22,28,24,26)(33,46,37,41)$\\
&$(34,47,38,44)(35,45,39,42)(36,48,40,43)$\\
$g_{5}=$&$(1,4)(2,3)(5,8)(6,7)(17,24)(18,23)(19,22)(20,21)(33,37)(34,38)(35,39)(36,40)(41,44)(42,43)(45,48)(46,47)(49,51)(50,52)(53,55)(54,56)(57,60)(58,59)(61,64)(62,63)$;\\
$g_{6}=$&$(2,4,3)(5,6,7)(9,25,41)(10,30,48)(11,32,42)(12,27,47)(13,31,44)(14,28,45)(15,26,43)(16,29,46)(18,24,21)(20,22,23)(33,49,57)(34,55,59)(35,53,62)(36,51,64)(37,52,60)$\\
&$(38,54,58)(39,56,63)(40,50,61)$;\\
$g_{7}=$&$(17,20)(18,19)(21,24)(22,23)(25,29)(26,30)(27,31)(28,32)(33,40)(34,39)(35,38)(36,37)(41,46)(42,45)(43,48)(44,47)(49,52)(50,51)(53,56)(54,55)(57,64)(58,63)(59,62)(60,61)$;\\
$g_{8}=$&$(1,4)(2,3)(5,8)(6,7)(17,20)(18,19)(21,24)(22,23)(25,32)(26,31)(27,30)(28,29)(41,42)(43,44)(45,46)(47,48)(49,54)(50,53)(51,56)(52,55)(57,58)(59,60)(61,62)(63,64)$;\\
$g_{9}=$&$(1,4)(2,3)(5,8)(6,7)(9,12)(10,11)(13,16)(14,15)(25,31)(26,32)(27,29)(28,30)(33,37)(34,38)(35,39)(36,40)(41,46)(42,45)(43,48)(44,47)(49,50)(51,52)$\\
& $(53,54)(55,56)(57,64)(58,63)(59,62)(60,61)$.\\
\hline
\hline
& \textbf{Base blocks of $\mathcal{D}^{(1)}$ and $\mathcal{D}^{(2)}$ }\\
\hline
 $B_{1}=$&$\{ 9, 11, 13, 15, 17, 20, 22, 23, 25, 26, 31, 32, 33, 35, 38, 40, 41, 42, 43, 44, 49, 50, 53, 54, 57, 58, 61, 62\}$\\
 $B{2}=$&$\{10, 12, 14, 16, 18, 19, 21, 24, 27, 28, 29, 30, 34, 36, 37, 39, 45, 46, 47, 48, 51, 52, 55, 56, 59, 60, 63, 64\}$\\
 \hline
 \hline
 & \textbf{\texttt{SmallGroups(64,i)} having $B_{1}$ and $B_{2}$ as difference sets and $\mathcal{D}^{(1)}$ and $\mathcal{D}^{(2)}$ as their developments}\\
 \hline
 $i=$ &  $18, 23, 32, 33, 34, 35, 90, 102, 134, 136, 138, 139, 215, 256$\\
 \hline
\end{tabular}}
\end{sidewaystable} 
\normalsize
\section{Preliminaries}\label{Sec3}
In this section, we provide some useful results in both design theory and group theory. First, we give the following theorems which allow us to reduce the analysis of the flag-transitive automorphism group $G$ of a $2$-$(v,k,\lambda)$ design $\mathcal{D}$.

\bigskip

\begin{lemma}\label{L0}
 The parameters $v$, $b$, $k$, $r$, $\lambda$ of $\mathcal{D}$ satisfy $vr=bk$, $\lambda (v-1)=r(k-1)$, and $k \leq r$.   
\end{lemma}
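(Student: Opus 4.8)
The plan is to obtain the two equalities by double counting and the inequality $k\leq r$ from Fisher's inequality. First I would prove $vr=bk$ by counting in two ways the number of flags $(x,B)$ of $\mathcal{D}$: grouping by the point $x$ gives $vr$, since each of the $v$ points lies in exactly $r$ blocks; grouping by the block $B$ gives $bk$, since each of the $b$ blocks is a $k$-subset of $\mathcal{P}$. For $\lambda(v-1)=r(k-1)$, I would fix a point $x$ and count the pairs $(y,B)$ with $y$ a point distinct from $x$ and $B$ a block containing both $x$ and $y$: summing over the $v-1$ choices of $y$ gives $\lambda(v-1)$ by the defining property of a $2$-design, while summing over the $r$ blocks through $x$ gives $r(k-1)$, since each such block contains $k-1$ points other than $x$.

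For $k\leq r$, let $N$ be the $v\times b$ incidence matrix of $\mathcal{D}$, with rows indexed by points and columns by blocks. The $2$-design axiom is equivalent to $NN^{\top}=(r-\lambda)I_{v}+\lambda J_{v}$, a matrix whose eigenvalues are $r+\lambda(v-1)=rk$ with multiplicity $1$ and $r-\lambda$ with multiplicity $v-1$. Since $\mathcal{D}$ is non-trivial we have $k<v$, and combined with the second identity this gives $(r-\lambda)(k-1)=\lambda(v-k)>0$, hence $r>\lambda$. Therefore $\det(NN^{\top})=rk\,(r-\lambda)^{v-1}\neq 0$, so $N$ has rank $v$, and in particular $b\geq v$. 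Substituting $b\geq v$ into $vr=bk$ yields $r=bk/v\geq k$.

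The only step that is more than bookkeeping is the last one: the inequality $b\geq v$ is Fisher's inequality, and its proof crucially uses the non-triviality hypothesis $v>k$ to guarantee $r>\lambda$, i.e. that $NN^{\top}$ is invertible over $\mathbb{Q}$. There is no purely combinatorial shortcut here, so I would present exactly this incidence-matrix argument, which is short and self-contained; alternatively one may simply cite a standard reference on $2$-designs for all three assertions.
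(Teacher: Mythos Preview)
Your proof is correct and is exactly the standard argument; the paper itself does not prove this lemma but simply cites Dembowski \cite[1.3.8 and 2.1.5]{Demb}, where precisely the double-counting and Fisher-inequality arguments you give are recorded. Your closing remark that one may instead cite a standard reference is in fact what the paper does.
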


\bigskip

For a proof see \cite[1.3.8 and 2.1.5]{Demb}.

\bigskip

\begin{theorem}[Camina-Zieschang]\label{CamZie}
Let $\mathcal{D}=(\mathcal{P}, \mathcal{B})$ be a $2$-$(v,k,\lambda)$ design admitting a
flag-transitive, point-imprimitive automorphism group $G$ preserving a nontrivial 
partition $\Sigma $ of $\mathcal{P}$ with $v_1$ classes of size $v_0$. Then $v=v_{0}v_{1}$ and the following hold:
\begin{enumerate}
    \item[(1)] There is a constant $k_0 \geq 2$ such that $\left\vert B\cap \Delta \right\vert =0$ or $%
k_0 $ for each $B\in \mathcal{B}$ and $\Delta \in \Sigma $. The parameter $k_0 $ divides $k$. Moreover,
\begin{equation}\label{rel1}
\frac{v-1}{k-1}=\frac{v_{0}-1}{k_{0}-1}\text{,}    
\end{equation}
and the following hold:
\begin{enumerate}
\item[(i)] For each $\Delta \in \Sigma $, let $\mathcal{B}_{\Delta }=\left\{ B\cap \Delta
\neq \varnothing :B\in \mathcal{B}\right\} $. Then the set $$\mathcal{R}_{\Delta}=\{(B,C) \in \mathcal{B}_{\Delta } \times \mathcal{B}_{\Delta }:  B\cap \Delta =\C\cap \Delta \}$$ is an equivalence relation on $\mathcal{B}_{\Delta }$ with each equivalence class of size $\mu$.
\item[(ii)] The incidence structure $\mathcal{D}_{\Delta }=(\Delta ,%
\mathcal{B}_{\Delta })$ is either a symmetric $1$-design with $k_{0}=v_{0}-1$, or a $2$-$(v_0,k_0 ,\lambda
_{0})$ design with $\lambda_{0}=\frac{\lambda}{\theta}$.
\item[(iii)] The group $G_{\Delta}^{\Delta}$ acts flag-transitively on $\mathcal{D}_{\Delta }$.
\end{enumerate}
\item[(2)] For each block $B$ of $\mathcal{D}$ the set $B(\Sigma)=\{\Delta \in \Sigma: B \cap \Delta \neq \varnothing\}$ has a constant size $k_{1}=\frac{k}{k_{0}}$. Moreover, 
\begin{equation}\label{rel2}
\frac{v_{1}-1}{k_{1}-1}=\frac{k_{0}(v_{0}-1)}{v_{0}(k_{0}-1)}    
\end{equation}
and the following hold:
\begin{enumerate}
\item[(i)] The set $$\mathcal{R}=\{(C,C^{\prime}) \in \mathcal{B} \times \mathcal{B}:C(\Sigma)=C^{\prime}(\Sigma)\}$$ is an equivalence relation on $\mathcal{B}$ with each class of size $\mu$;
\item[(ii)] Let $\mathcal{B}^{\Sigma}$ be the quotient set defined by $\mathcal{R}$, and for any block $C$ of $\mathcal{D}$ denote by $C^{\Sigma}$ the $\mathcal{R}$-equivalence class containing $C$. Then the incidence structure $\mathcal{D}^{\Sigma}=\left(\Sigma, \mathcal{B}^{\Sigma}, \mathcal{I} \right)$ with $\mathcal{I}=\{(\Delta,C^{\Sigma}) \in \Sigma \times \mathcal{B}^{\Sigma}: \Delta \in C(\Sigma) \}$ is either a symmetric $1$-design with $k_{1}=v_{1}-1$, or a $2$-$(v_{1},k_{1} ,\lambda_{1})$ design with $\lambda_{1}=\frac{v_{0}^{2}\lambda}{k_{0}^{2}\mu}$.
\item[(iii)] The group $G^{\Sigma}$ acts flag-transitively on $\mathcal{D}^{\Sigma}$.
\end{enumerate}
\end{enumerate}
\end{theorem}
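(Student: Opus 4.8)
The plan is to derive everything from the standard consequences of flag-transitivity — that $G$ is also transitive on points, on blocks, on the invariant partition $\Sigma$, and on incident pairs (block, block-class) — and then to run a short sequence of orbit-counting and double-counting arguments.

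The first step is to produce the constant $k_0$. Given flags $(x,B)$ and $(x',B')$ with $x$ in a class $\Delta$ and $x'$ in a class $\Delta'$, pick $g\in G$ with $(x,B)^g=(x',B')$; as $g$ preserves $\Sigma$ and sends $x$ to $x'$ it sends $\Delta$ to $\Delta'$, so it restricts to a bijection $B\cap\Delta\to B'\cap\Delta'$. Hence $|B\cap\Delta|$ takes a single value $k_0$ over all incident pairs, and $k_0\geq 2$ since two points of one class lie in $\lambda\geq1$ common blocks, each meeting that class in at least those two points. Block-transitivity makes $k_1:=|B(\Sigma)|$ constant, and because the nonempty traces $B\cap\Delta$ partition $B$ we get $k=k_0k_1$, in particular $k_0\mid k$. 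For \eqref{rel1} I would fix $x\in\Delta$ and double count the pairs $(y,B)$ with $y\in\Delta\setminus\{x\}$ and $B$ a block through $x$ and $y$: summing over $y$ gives $\lambda(v_0-1)$, summing over $B$ gives $r(k_0-1)$ (every block through $x$ meets $\Delta$ in exactly $k_0$ points), and dividing this by $\lambda(v-1)=r(k-1)$ yields \eqref{rel1}; then \eqref{rel2} follows formally from \eqref{rel1} together with $v=v_0v_1$ and $k=k_0k_1$, or by the analogous count one level down.

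Next I would handle the two induced incidence structures. Both $\mathcal{R}_\Delta$ and $\mathcal{R}$ are evidently equivalence relations and are $G$-invariant (with $G_\Delta$ acting in the first case), so within each relation the classes are permuted transitively, using transitivity of $G$ on blocks and on $\Sigma$, and therefore all have a single common size — the constant $\mu$ of the statement. For the flag-transitivity of $G_\Delta^\Delta$ on $\mathcal{D}_\Delta$ and of $G^\Sigma$ on $\mathcal{D}^\Sigma$: a flag of $\mathcal{D}_\Delta$ is a pair consisting of a point of $\Delta$ and a trace on $\Delta$, which lifts to a flag of $\mathcal{D}$; an element of $G$ carrying one lift to another necessarily fixes $\Delta$ setwise, because it fixes a point of $\Delta$, and so descends to the required automorphism — the same device works for $\mathcal{D}^\Sigma$. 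The design parameters come from one more double count: two points of $\Delta$ lie in $\lambda$ blocks of $\mathcal{D}$, each carrying one trace through the pair and each such trace arising from exactly $\mu$ blocks, so the pair lies in $\lambda/\mu$ traces, making $\mathcal{D}_\Delta$ a $2$-$(v_0,k_0,\lambda/\mu)$ design unless $k_0=v_0-1$, the degenerate symmetric $1$-design case; and for two classes $\Delta_1,\Delta_2$, counting the pairs $(x_1,x_2)\in\Delta_1\times\Delta_2$ lying on a common block by pairs gives $v_0^2\lambda$ and by blocks gives $k_0^2$ times the number $N$ of blocks of $\mathcal{D}$ meeting both, so $N=v_0^2\lambda/k_0^2$ is independent of the two classes, and dividing by $\mu$ gives $\lambda_1=v_0^2\lambda/(k_0^2\mu)$, making $\mathcal{D}^\Sigma$ a $2$-$(v_1,k_1,\lambda_1)$ design unless $k_1=v_1-1$.

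The main obstacle I anticipate is the bookkeeping around the multiplicity $\mu$: one must regard $\mathcal{B}_\Delta$ as the family of nonempty traces counted with the multiplicity inherited from $\mathcal{B}$, verify carefully that $G$-invariance really forces all classes of $\mathcal{R}_\Delta$ (and all classes of $\mathcal{R}$) to share a common size and that this single constant is what governs $\lambda_0$ and $\lambda_1$, and then cleanly peel off the degenerate cases $k_0=v_0-1$ and $k_1=v_1-1$, in which the induced structure is only a symmetric $1$-design. The transitivity inputs of the first step and the three double counts are otherwise routine.
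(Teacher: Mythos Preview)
The paper does not give its own proof of this theorem; it simply refers the reader to \cite[Propositions 2.1 and 2.3]{CZ}. Your outline is the standard argument and is correct in almost every step: the derivation of the constant $k_0$, the double counts yielding \eqref{rel1} and \eqref{rel2}, the flag-transitivity of the induced actions, and the computation of $\lambda_1$ are all sound.

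There is one genuine slip, precisely at the point you flag as the ``main obstacle''. You treat the class size of $\mathcal{R}_\Delta$ and the class size of $\mathcal{R}$ as a \emph{single} constant $\mu$, and accordingly obtain $\lambda_0=\lambda/\mu$. These two multiplicities are different in general. The $\mathcal{R}_\Delta$-class size --- the number of blocks of $\mathcal{D}$ inducing a fixed trace on $\Delta$ --- is the $\theta$ of part~(1)(ii), and your own double count correctly gives $\lambda_0=\lambda/\theta$; the $\mathcal{R}$-class size --- the number of blocks with a fixed shadow on $\Sigma$ --- is the $\mu$ of part~(2), and that is the constant entering $\lambda_1=v_0^2\lambda/(k_0^2\mu)$. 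The paper's tables carry both parameters separately: for instance in the symmetric $2$-$(15,8,4)$ case (line~1 of Table~\ref{smaug}) one has $\theta=4$ but $\mu=3$. Admittedly the wording of (1)(i) in the stated theorem is confusing, reusing the symbol $\mu$ for what is really $\theta$, but your final paragraph explicitly commits to ``this single constant'', which is false. The repair is trivial --- keep the two multiplicities distinct --- and then everything in your outline goes through unchanged.
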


\bigskip
For a proof see \cite[Propositions 2.1 and 2.3]{CZ}.

\bigskip

We refer to $\mathcal{D}^{\Sigma}$ simply as $\mathcal{D}_{1}$. Moreover, as mentioned in the introduction, the designs corresponding to distinct classes $\Delta,\Delta'\in\Sigma$ are isomorphic under elements of $G$ mapping $\Delta$ to $\Delta'$, we refer to $\mathcal{D}_{\Delta }$ as $\mathcal{D}_{0}$. The parameters of $\mathcal{D}_{0}$ and of $\mathcal{D}_{1}$ will be indexed by $0$ and $1$, respectively. Hence, the conclusions of Lemma \ref{L0} hold for $\mathcal{D}_{i}$ when this one is a $2$-$(v_{i},k_{i},\lambda_{i})$ design. That is, $v_{i}r_{i}=b_{i}k_{i}$, $\lambda (v_{i}-1)=r_{i}(k-1)$, and $k_{i} \leq r_{i}$, where $r_{i}$ and $b_{i}$ are the replication number and the number of blocks of $\mathcal{D}_{i}$, respectively.

\bigskip

\begin{lemma}\label{bounds}
$v_{0}>k_{0}\geq 2$. Moreover, one of the following holds:

\begin{enumerate}
\item $k_{0}=2$, $v=(v_{0}-1)(2k_{1}-1)+1$ and $G_{\Delta }^{\Delta }$ is
point-$2$-transitive on $\mathcal{D}_{0}$;

\item $3\leq k_{0}\leq v_{0}-2$ and $\mathcal{D}_{0}$ is a $2$-design;

\item $3\leq k_{0}=v_{0}-1$, $\mathcal{D}_{0}$ is a $1$-design, $%
k=t(v_{0}-2)+1$ and $v=t(v_{0}-1)+1$ for some $t\geq 2$.
\end{enumerate}
\end{lemma}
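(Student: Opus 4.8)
The plan is to derive everything from the Camina--Zieschang relations in Theorem~\ref{CamZie}, most notably \eqref{rel1}, together with the inequality $k_{0}\mid k$ and $k_{0}\geq 2$. The first assertion $v_{0}>k_{0}$ is immediate: since $\Sigma$ is a nontrivial partition we have $v_{0}<v$ and $k_{0}\leq k$, and moreover $k_{0}=v_{0}$ is impossible because then some block would contain an entire class $\Delta$, yet by part~(1) of Theorem~\ref{CamZie} a block meets each class in $0$ or $k_{0}$ points, so a block containing $\Delta$ would (by flag-transitivity and the fact that $G$ permutes $\Sigma$ transitively while $G_{\Delta}^{\Delta}$ is flag-transitive on $\mathcal{D}_{0}$) force $\mathcal{D}_{0}$ to be trivial in a way incompatible with $2<k<v$; more directly, $k_{0}\le v_{0}-1$ is built into the dichotomy of part~(1)(ii). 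Combined with $k_{0}\geq 2$ this gives $v_{0}>k_{0}\geq 2$.

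Next I would split into cases according to the value of $k_{0}$. If $k_{0}=2$, then $k_{1}=k/k_{0}=k/2$ and from \eqref{rel1} we get $\frac{v-1}{k-1}=\frac{v_{0}-1}{1}=v_{0}-1$, i.e. $v-1=(v_{0}-1)(k-1)=(v_{0}-1)(2k_{1}-1)$, which is exactly the displayed formula in case~(1). The point-$2$-transitivity of $G_{\Delta}^{\Delta}$ on $\mathcal{D}_{0}$ follows because $\mathcal{D}_{0}=(\Delta,\mathcal{B}_{\Delta})$ has block size $k_{0}=2$, so its blocks (the ``lines'') are $2$-subsets, and by Theorem~\ref{CamZie}(1)(iii) $G_{\Delta}^{\Delta}$ is flag-transitive on $\mathcal{D}_{0}$; a flag in a design with $k_{0}=2$ is a point together with a pair through it, and flag-transitivity on such an object combined with the fact that $\mathcal{D}_{0}$ is a $1$-design (indeed here necessarily a $2$-design or the degenerate complete design, both of which have all pairs as blocks or are $2$-transitive) yields $2$-transitivity on $\Delta$. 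I would phrase this last implication carefully, invoking that when $k_{0}=2$ the structure $\mathcal{D}_{0}$ either is the complete graph on $v_{0}$ vertices (all $2$-subsets) or a proper $2$-design with $\lambda_{0}\geq 1$ and $k_{0}=2$, which forces $\lambda_{0}(v_{0}-1)=r_{0}$ and again all pairs appear; flag-transitivity then upgrades to point-$2$-transitivity.

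For the remaining cases suppose $k_{0}\geq 3$. If $k_{0}\leq v_{0}-2$ then by part~(1)(ii) of Theorem~\ref{CamZie} the structure $\mathcal{D}_{0}$ is not the degenerate symmetric $1$-design with $k_{0}=v_{0}-1$, hence it is a genuine $2$-$(v_{0},k_{0},\lambda_{0})$ design; this is case~(2). The last possibility is $k_{0}=v_{0}-1$ with $k_{0}\geq 3$, i.e. $v_{0}\geq 4$; this is case~(3), where $\mathcal{D}_{0}$ is the symmetric $1$-design branch of part~(1)(ii). Here I would feed $k_{0}=v_{0}-1$ into \eqref{rel1}: $\frac{v-1}{k-1}=\frac{v_{0}-1}{k_{0}-1}=\frac{v_{0}-1}{v_{0}-2}$, so $(v_{0}-2)(v-1)=(v_{0}-1)(k-1)$. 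Writing $v-1=t(v_{0}-1)$ — which is forced since $\gcd(v_{0}-1,v_{0}-2)=1$ implies $(v_{0}-1)\mid(v-1)$ — we obtain $v=t(v_{0}-1)+1$ and then $k-1=t(v_{0}-2)$, i.e. $k=t(v_{0}-2)+1$, exactly as claimed. Finally $t\geq 2$: if $t=1$ then $v=v_{0}$, contradicting that $\Sigma$ is nontrivial (it would have a single class), and $t=0$ is absurd; so $t\geq 2$.

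The main obstacle I anticipate is the clean justification of the point-$2$-transitivity claim in case~(1): one must argue that flag-transitivity of $G_{\Delta}^{\Delta}$ on a design with block size $2$, together with the relation \eqref{rel1} guaranteeing that $\mathcal{D}_{0}$ has the ``right'' number of blocks, actually forces every $2$-subset of $\Delta$ to be a block (so that flag-transitivity becomes transitivity on ordered pairs). This is where the Camina--Zieschang machinery must be used with care rather than just quoted; everything else is elementary number theory extracted from \eqref{rel1} and the divisibility $k_{0}\mid k$, $k_{1}=k/k_{0}$.
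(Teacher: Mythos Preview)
Your approach is essentially the paper's: everything is extracted from \eqref{rel1} and Theorem~\ref{CamZie}(1)(ii), and your derivations of the formulas in cases (1) and (3) match the paper's exactly. Your extended discussion of why $k_{0}=2$ forces point-$2$-transitivity is more careful than the paper, which simply asserts that (1) and (2) ``immediately follow'' from \eqref{rel1} and Theorem~\ref{CamZie}(1)(ii); your reasoning there is sound.

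The one genuine soft spot is your handling of $v_{0}>k_{0}$. Your fallback --- that $k_{0}\le v_{0}-1$ is ``built into the dichotomy of part~(1)(ii)'' --- is not valid as stated: nothing in the theorem excludes the degenerate $2$-$(v_{0},v_{0},\lambda_{0})$ possibility, and your primary argument (``trivial in a way incompatible with $2<k<v$'') is too vague to stand on its own. The paper closes this cleanly: if $k_{0}=v_{0}$, then every block through a point $x_{0}\in\Delta$ contains all of $\Delta$; hence for any $y_{0}\in\Delta\setminus\{x_{0}\}$ the set of blocks through $x_{0}$ coincides with the set of blocks through $x_{0}$ and $y_{0}$, giving $r=\lambda$, a contradiction. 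You should replace your hand-wave with this one-line counting argument.
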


\begin{proof}
Let $\Delta \in \Sigma $. By Theorem \ref{CamZie}(1), $k_0\geq 2$. Suppose $v_0=k_0$ and let $x_0,y_0\in\Delta$ be two distinct points, and $z$ be any point of $\mathcal{D}$ not in $\Delta$. Let $B\in \mathcal{B}(x_0,y_0)$ and $B^\prime\in \mathcal{B}(x_0,z)$. Since $|B^\prime\cap\Delta|=|B\cap\Delta|=k_0=v_0$, we have that $y_0\in B^\prime$. Hence, $\mathcal{B}(x_0,z)=\mathcal{B}(x_0,y_0)$ for any point of $z$ of $\mathcal{D}$ not in $\Delta$. Thus, $r=|\mathcal{B}(x_0,y_0)|=\lambda$, which is a contradiction. Therefore, $v_0>k_0$. 



Now, (1) and (2) immediately follow from (\ref{rel1}) and Theorem \ref{CamZie}(1.ii). Finally, assume that $3\leq k_{0}=v_{0}-1$. Then $%
(v-1)(v_{0}-2)=(v_{0}-1)(k-1)$ again from from (\ref{rel1}), and hence $%
k=t(v_{0}-2)+1$ and $v=t(v_{0}-1)+1$ for some $t\geq 1$. Actually, $t\geq 2$
since $v_{0}<v$. This proves (3).
\end{proof}

\bigskip

\begin{proposition}\label{k2orTrivial}
If either $k_{0}=2$ or $3\leq k_{0}=v_{0}-1$, then parameters $\mathcal{D}%
_{0},\mathcal{D}%
_{1}$ and $\mathcal{D}$ and the possibilities for $G_{\Delta }^{\Delta }$ and $G^{\Sigma}$ are as in Table \ref{smaug}.
\begin{sidewaystable} 
\caption{Admissible parameters for $\mathcal{D}%
_{0}, \mathcal{D}%
_{1}, \mathcal{D}%
$ and $G_{\Delta }^{\Delta }$ and $G^{\Sigma
}$ for either $k_{0}=2$ or $3\leq k_{0}=v_{0}-1$.}
\label{smaug}
\tiny
{\tiny\setlength{\tabcolsep}{2pt}
	\begin{tabular}{|c|ccccc|c|c|ccccc|c|ccccc|c|c|}
		\hline
		Line & $v_{0}$ & $k_{0}$ & $\lambda _{0}$ & $r_{0}$ & $b_{0}$ & $\theta$ &  $G_{\Delta
		}^{\Delta }$ & $v_{1}$ & $k_{1}$ & $\lambda _{1}$ & $r_{1}$ & $b_{1}$ & $%
		G^{\Sigma }$ & $v$ & $k$ & $\lambda $ & $r$ & $b$ & Conditions on $\mu $ & $%
		\mu _{S}$ \\
		\hline
		\hline
		1 & $3$ & $2$ & $1$ & $2$ & $3$ & $4\mu /3$ & $S_{3}$ & $5$ & $4$ & $3$ & $4$ & $5$ & $%
		AGL_{1}(5),A_{5},S_{5}$ & $15$ & $8$ & $4\mu /3$ & $8\mu /3$ & $5\mu $ & $\mu \equiv 0%
		\pmod{3}$ & $3$ \\ 
		2 & $3$ & $2$ & $1$ & $2$ & $3$ & $28\mu /3$ &  & $9$ & $7$ & $21$ & $28$ & $36$ & $%
		PSL_{2}(8),P\Gamma L_{2}(8),A_{9},S_{9}$ & $27$ & $14$ & $28\mu /3$ & $%
		56\mu /3$ & $36\mu $ & $\mu \equiv 0\pmod{3}$ & - \\ 
		3 & $3$ & $2$ & $1$ & $2$ & $3$ & $220\mu /3$ &  & $13$ & $10$ & $165$ & $220$ & $286$ & $%
		A_{13},S_{13}$ & $39$ & $20$ & $220\mu /3$ & $440\mu /3$ & $286\mu $ & $\mu
		\equiv 0\pmod{3}$ & - \\ 
		4 & $3$ & $2$ & $1$ & $2$ & $3$ & $1820\mu /3$ &  & $17$ & $13$ & $1365$ & $1820$ & $2380$
		& $A_{17},S_{17}$ & $51$ & $26$ & $1820\mu /3$ & $3640\mu /3$ & $2380\mu $ & 
		$\mu \equiv 0\pmod{3}$ & - \\ 
		5 & $3$ & $2$ & $1$ & $2$ & $3$ & $16\mu /3$ &  & $21$ & $16$ & $12$ & $16$ & $21$ & $%
		PSL_{3}(4):\epsilon $ with $\epsilon \mid 6$ & $63$ & $32$ & $16\mu /3$ & $%
		32\mu/3 $ & $21\mu$ & $\mu \equiv 0\pmod{3}$ & $3$ \\ 
		6 & $3$ & $2$ & $1$ & $2$ & $3$ & $5168\mu $ &  & $21$ & $16$ & $11628$ & $15504$ & $20349
		$ & $A_{21},S_{21}$ & $63$ & $32
		$ & $5168\mu $ & $10336\mu $ & $20349\mu $ &  & - \\ 
		7 & $3$ & $2$ & $1$ & $2$ & $3$ & $134596\mu /3$ &  & $25$ & $19$ & $100947$ & $134596$ & $%
		177100$ & $A_{25},S_{25}$ & $75$ & $38$ & $134596\mu /3$ & $269192\mu /3$ & $%
		177100\mu $ & $\mu \equiv 0\pmod{3}$ & - \\ 
		8 & $3$ & $2$ & $1$ & $2$ & $3$ & $394680\mu $ &  & $29$ & $22$ & $888030$ & $1184040$ & $%
		1560780$ & $A_{29},S_{29}$ & $87$ & $44$ & $394680\mu $ & $789360\mu $ & $%
		1560780\mu $ &  & - \\ 
		9 & $3$ & $2$ & $1$ & $2$ & $3$ & $3506100\mu $ &  & $33$ & $25$ & $7888725$ & $10518300$ & $%
		13884156$ & $A_{33},S_{33}$ & $99$ & $50$ & $3506100\mu $ & $7012200\mu $ & $%
		13884156\mu $ &  & - \\ 
		10 & $4$ & $2$ & $1$ & $3$ & $6$ & $\mu /2$ & $A_{4},S_{4}$ & $4$ & $3$ & $2$ & $3$ & $4
		$ & $A_{4},S_{4}$ & $16$ & $6$ & $\mu /2$ & $3\mu /2$ & $4\mu $ & $\mu
		\equiv 0\pmod{2}$ & $4$ \\ 
		11 & $4$ & $2$ & $1$ & $3$ & $6$ & $5\mu /2$ &  & $7$ & $5$ & $10$ & $15$ & $21$ & $%
		A_{7},S_{7}$ & $28$ & $10$ & $5\mu /2$ & $15\mu /2$ & $21\mu $ & $\mu \equiv
		0\pmod{2}$ & - \\ 
		12 & $4$ & $2$ & $1$ & $3$ & $6$ & $14\mu $ &  & $10$ & $7$ & $56$ & $84$ & $120$ & $%
		A_{10},S_{10}$ & $40$ & $14$ & $14\mu $ & $42\mu $ & $120\mu $ &  & - \\ 
		13 & $4$ & $2$ & $1$ & $3$ & $6$ & $3\mu /2$ &  & $13$ & $9$ & $6$ & $9$ & $13$
		& $PSL_{3}(3)$ & $52$ & $18$ & $3\mu /2$ & $9\mu /2$ & $%
		13\mu $ & $\mu \equiv 0\pmod{2}$ & $4$ \\ 
		14 & $4$ & $2$ & $1$ & $3$ & $6$ & $165\mu /2$ &  & $13$ & $9$ & $330$ & $495$ & $715$
		& $A_{13},S_{13}$ & $52$ & $18$ & $165\mu /2$ & $495\mu /2$ & $%
		715\mu $ & $\mu \equiv 0\pmod{2}$ & - \\ 
		15 & $4$ & $2$ & $1$ & $3$ & $6$ & $1001\mu /2$ &  & $16$ & $11$ & $2002$ & $3003$ & $4368$
		& $A_{16},S_{16}$ & $64$ & $22$ & $1001\mu /2$ & $3003\mu /2$ & $4368\mu $ & 
		$\mu \equiv 0\pmod{2}$ & - \\ 
		16 & $4$ & $2$ & $1$ & $3$ & $6$ & $3094\mu $ &  & $19$ & $13$ & $12376$ & $18564$ & $%
		27132$ & $A_{19},S_{19}$ & $76$ & $26$ & $3094\mu $ & $9282\mu $ & $27132\mu 
		$ &  & - \\ 
		17 & $4$ & $2$ & $1$ & $3$ & $6$ & $19380\mu $ &  & $22$ & $15$ & $77520$ & $116280$ & $%
		170544$ & $A_{22},S_{22}$ & $88$ & $30$ & $19380\mu $ & $58140\mu $ & 
		$170544\mu $ &  & - \\
		18 & $4$ & $2$ & $1$ & $3$ & $6$ & $20\mu$  &  & $22$ & $15$ & $80$ & $120$ & $%
		176$ & $M_{22}$ & $88$ & $30$ & $20\mu$  & $60\mu$ & 
		$176 \mu$ &  & - \\
		19 & $4$ & $2$ & $1$ & $3$ & $6$ & $40\mu$ &  & $22$ & $15$ & $160$ & $240$ & $%
		352$ & $M_{22}:2$ & $88$ & $30$ & $40\mu$ & $120\mu$ & 
		$352\mu$ &  & - \\
		20 & $4$ & $2$ & $1$ & $3$ & $6$ & $140\mu$ &  & $22$ & $15$ & $560$ & $840$ & $%
		1232$ & $M_{22}, M_{22}:2, A_{22},S_{22}$ & $88$ & $30$ & $140\mu$ & $420\mu$ & 
		$1232\mu$ &  & - \\
		21 & $5$ & $2$ & $1$ & $4$ & $10$ & $4\mu /5$ & $AGL_{1}(5),A_{5},S_{5}$ & $9$ & $6$ & $%
		5$ & $8$ & $12$ & $G^{\Sigma} \leq AGL_{2}(3)$ & $45$ & $12$
		& $4\mu /5$ & $16\mu /5$ & $12\mu $ & $\mu \equiv 0\pmod{5}$ & - \\
        22 & $5$ & $2$ & $1$ & $4$ & $10$ & $24\mu /5$ & $AGL_{1}(5),A_{5},S_{5}$ & $9$ & $6$ & $%
		30$ & $48$ & $72$ &$ASL_2(3), AGL_2(3)$ & $45$ & $12$
		& $24\mu /5$ & $96\mu /5$ & $72\mu $ & $\mu \equiv 0\pmod{5}$ & - \\
		23 & $5$ & $2$ & $1$ & $4$ & $10$ & $28\mu /5$ & $AGL_{1}(5),A_{5},S_{5}$ & $9$ & $6$ & $%
		35$ & $56$ & $84$ & $PSL_{2}(8),P\Gamma L_{2}(8)$,$A_{9},S_{9}$ & $45$ & $12
		$ & $28\mu /5$ & $112\mu /5$ & $84\mu $ & $\mu \equiv 0\pmod{5}$ & - \\
		24 & $5$ & $2$ & $1$ & $4$ & $10$ & $4004\mu /5$ &  & $17$ & $11$ & $5005$ & $8008$ & $12376
		$ & $A_{17},S_{17}$ & $85$ & $22$ & $4004\mu /5$ & $16016\mu /5$ & $12376\mu$ & 
		$\mu \equiv 0\pmod{5}$ & - \\ 
		25 & $6$ & $2$ & $1$ & $5$ & $15$ & $2\mu /3$ & $A_{5},S_{5},A_{6},S_{6}$ & $6$ & $4$ & $%
		6$ & $10$ & $15$ & $S_{5},A_{6},S_{6},$ & $36$ & $8$ & $2\mu /3$ & $%
		10\mu /3$ & $15\mu $ & $\mu \equiv 0\pmod{3}$ & - \\ 
		26 & $6$ & $2$ & $1$ & $5$ & $15$ & $14\mu $ &  & $11$ & $7$ & $126$ & $%
		210$ & $330$ & $A_{11},S_{11}$ & $66$ & $14$ & $14\mu $ & $70\mu $ & $330\mu 
		$ &  & - \\ 
		27 & $6$ & $2$ & $1$ & $5$ & $15$ & $1001\mu /3$ &  & $16$ & $10$ & $3003$ & $5005$ & $8008$
		& $A_{16},S_{16}$ & $96$ & $20$ & $1001\mu /3$ & $5005\mu /3$ & $8008\mu $ & 
		$\mu \equiv 0\pmod{3}$ & - \\ 
		28 & $7$ & $2$ & $1$ & $6$ & $21$ & $24\mu/7$ & $AGL_{1}(7),PSL_{2}(7),A_{7},S_{7}$ & $13
		$ & $8$ & $42$ & $72$ & $117$ & $PSL_{3}(3)$ & $91$ & $16$ & $24\mu/7$ & 
		$144\mu/7$ & $117\mu$ & $\mu \equiv 0 \pmod{7}$ & - \\ 
		29 & $7$ & $2$ & $1$ & $6$ & $21$ & $264\mu/7 $& $AGL_{1}(7),PSL_{2}(7),A_{7},S_{7}$ & $13
		$ & $8$ & $462$ & $792$ & $1287$ & $A_{13},S_{13}$ & $91$ & $16$ & $264\mu/7 $& $1584\mu/7$ & $1287\mu$ & $\mu \equiv 0 \pmod{7}$ & - \\ 
		30 & $8$ & $2$ & $1$ & $7$ & $28$ & $5\mu /4$ & $AGL_{1}(8),A\Gamma
		L_{1}(8),AGL_{3}(2)$ & $8$ & $5$ & $20$ & $35$ & $56$ & $%
		A_{8},S_{8}$ & $64$ & $10$ & $5\mu /4$ & $35\mu /4$ & $56\mu $ & $\mu \equiv
		0\pmod{4}$ & - \\ 
		& & & & & & & $PSL_{2}(7),PGL_{2}(7),A_{8},S_{8}$ & & & & & & & & & & & & & \\
		31 & $4$ & $3$ & $2$ & $3$ & $4$ & $9\mu /4$ & $A_{4},S_{4}$ & $10$ & $9$ & $8$ & $9$ & $%
		10$ & $PSL_{2}(9),PGL_{2}(9),P \Sigma L_{2}(9)$ & $40$ & $27$ & $9\mu /2$ & $27\mu /4$ & $%
		10\mu $ & $\mu \equiv 0\pmod{4}$ & $4$ \\ 
		& & & & & & & & & & & & & $M_{10},P\Gamma L_{2}(9),A_{10},S_{10}$ & & & & & & & \\
		32 & $4$ & $3$ & $2$ & $3$ & $4$ & $153\mu /4$ & $A_{4},S_{4}$ & $19$ & $17$ & $136$ & $153
		$ & $171$ & $A_{19},S_{19}$ & $76$ & $51$ & $153\mu /2$ & $459\mu /4$ & $%
		171\mu $ & $\mu \equiv 0\pmod{4}$ & - \\ 
		33 & $5$ & $4$ & $3$ & $4$ & $5$ & $48\mu /15$ & $AGL_{1}(5),A_{5},S_{5}$ & $17$ & $16$ & $15$ & $16$
		& $17$ & $AGL_{1}(17),A_{17},S_{17}$ & $85$ & $64$ & $48\mu /5$ & $64\mu /5$ & $17\mu $ & $\mu
		\equiv 0\pmod{15}$ & -\\
		& & & & & & & & & & & & & $PSL_{2}(2^{4}):2^{\varepsilon },0\leq \varepsilon \leq
		2$ & & & & & & & \\
		\hline 
\end{tabular}}
\end{sidewaystable}
\end{proposition}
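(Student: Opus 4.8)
The plan is to obtain Table \ref{smaug} by a short arithmetic enumeration followed by an identification of the resulting designs and groups, working under the standing assumptions that $\mathcal{D}$ is a $2$-$(v,k,\lambda)$ design with $v<100$ admitting a flag-transitive, point-imprimitive group $G$ preserving the nontrivial partition $\Sigma$. I would first invoke Lemma \ref{bounds}: the hypothesis $k_0=2$ or $3\le k_0=v_0-1$ excludes case (2) of that lemma, so we are in case (1) or case (3). In either case $\mathcal{D}_0$ is forced — it is the unique $2$-$(v_0,2,1)$ design when $k_0=2$, and the symmetric $1$-design on $\Delta$ whose blocks are the point-complements when $3\le k_0=v_0-1$ — and in both cases a flag of $\mathcal{D}_0$ amounts to an ordered pair of distinct points of $\Delta$, so flag-transitivity of $G_\Delta^\Delta$ is equivalent to $G_\Delta^\Delta$ being $2$-transitive of degree $v_0$ (this being part of Lemma \ref{bounds}(1) in the first case).

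Next I would exploit the arithmetic of Theorem \ref{CamZie}: \eqref{rel1} gives $(v_0-1)\mid(v-1)$ and $k=1+(v-1)/(v_0-1)$, while \eqref{rel2} fixes the ratio $\tfrac{v_1-1}{k_1-1}=\tfrac{k_0(v_0-1)}{v_0(k_0-1)}$, which equals $\tfrac{2(v_0-1)}{v_0}$ in case (1) and $\tfrac{(v_0-1)^2}{v_0(v_0-2)}$ in case (3). Combined with $v=v_0v_1<100$ and $k_1=k/k_0\le v_1$, this leaves only finitely many tuples $(v_0,k_0,v_1,k_1)$; in fact $v_0\le 8$ when $k_0=2$ and $v_0\le 5$ when $3\le k_0=v_0-1$. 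For each admissible tuple the remaining data $v,k,\lambda,r,b$ and $r_i,b_i$, $\lambda_0=\lambda/\theta$, $\lambda_1=v_0^{2}\lambda/(k_0^{2}\mu)$ are expressed through the single free parameter $\mu$, and I would record the divisibility conditions on $\mu$ ensuring that all parameters are integers, together with the value $\mu_S$ (when one exists) for which $b=v$, i.e.\ for which $\mathcal{D}$ is symmetric.

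It then remains, for each surviving tuple, to determine $G_\Delta^\Delta$, $\mathcal{D}_1$ and $G^\Sigma$. Since $v_0\le 8$, the possibilities for $G_\Delta^\Delta$ are read off from the classification of $2$-transitive groups of degree $v_0$. For $\mathcal{D}_1$, Theorem \ref{CamZie}(2.ii) gives two options: either $\mathcal{D}_1$ is a symmetric $1$-design with $k_1=v_1-1$, in which case $G^\Sigma$ is again $2$-transitive of degree $v_1$; or $\mathcal{D}_1$ is a genuine $2$-$(v_1,k_1,\lambda_1)$ design carrying the flag-transitive group $G^\Sigma$, and then — since $v_1<100$ and $(v_1,k_1)$ is fixed — one must list all flag-transitive $2$-designs with these parameters (over all admissible $\lambda_1$) together with their flag-transitive groups. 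This last enumeration is exactly the content of the appendix (the final section), carried out via the classification of primitive groups of degree $<100$ in \cite[Table B.4]{DM}, the geometry of the classical groups, and a few \texttt{GAP} computations; the designs that arise are complete designs $\binom{[v_1]}{k_1}$, complements of small projective or affine spaces (such as $\overline{PG_2(3)}$, $\overline{PG_2(4)}$, $\overline{AG_2(3)}$), and a handful of designs related to $M_{22}$. Assembling these data over the finitely many tuples yields precisely the rows of Table \ref{smaug}. I expect the main obstacle to be this last identification — proving that for each prescribed pair $(v_1,k_1)$ the list of flag-transitive $2$-$(v_1,k_1,\lambda_1)$ designs is complete — which is where essentially all of the work lies and is why the appendix is required; by contrast the arithmetic reduction, the determination of $\mathcal{D}_0$ and $G_\Delta^\Delta$, and the bookkeeping of the $\mu$-conditions are routine.
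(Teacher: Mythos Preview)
Your proposal is correct and follows essentially the same approach as the paper: enumerate the finitely many tuples $(v_0,k_0,v_1,k_1)$ via the arithmetic constraints of Theorem~\ref{CamZie} and $v<100$, read off $G_\Delta^\Delta$ from the $2$-transitive groups of small degree, and determine $(\mathcal{D}_1,G^\Sigma)$ using the Appendix lemmas. The only noteworthy difference is in the case $3\le k_0=v_0-1$: the paper introduces auxiliary parameters $t,\ell,m,\ell'$ (via $k=t(v_0-2)+1$, $t=\ell v_0(v_0-1)+1$, etc.) and works through several divisibility steps, whereas you go directly through the ratio $\tfrac{v_1-1}{k_1-1}=\tfrac{(v_0-1)^2}{v_0(v_0-2)}$ from \eqref{rel2}, which is a bit more streamlined but yields the same three tuples $(v_1,k_1)=(10,9),(19,17),(17,16)$.
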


\begin{proof} 
If $k_{0}=2$, then $v=(v_{0}-1)(2k_{1}-1)+1=(2k_{1}-1)v_{0}-2\left(
k_{1}-1\right) $. As $v_{0}$ divides $v$, we obtain $av_{0}=2\left(
k_{1}-1\right) $ for some $a\geq 1$. Therefore,%
\begin{equation}
v_{1}=a\left( v_{0}-1\right) +1\text{, }k_{1}=\frac{av_{0}}{2}+1\text{, }%
v=  v_{0}\left( a\left( v_{0}-1\right) +1\right) \text{ and }%
k=av_{0}+2\text{.}  \label{eq1}
\end{equation}%
Since $v_0^2\leq v<100$, it follows that $3\leq v_{0}\leq 9$. From this, since $v=v_0v_1$, we have that $v_1\leq 33$, and hence $1\leq a\leq 16$.
Substituting these values in (\ref{eq1}), and bearing in mind that $%
G_{\Delta }^{\Delta }$ is point-$2$-transitive on $\mathcal{D}_{0}$, one
obtains the admissible vales for $v_{0},k_{0},v_{1},k_{1},v,k$, and $%
G_{\Delta }^{\Delta }$ by \cite[Table B.4]{DM}. Now, exploiting that $k_{0}=2$, we see that either $(\lambda_{0},r_{0},b_{0})=(1,v_{0}-1,v_{0}(v_{0}-1)/2)$. Therefore, we obtain Columns 2--6, 8, 15--16 in Table \ref{smaug}. Now, for each admissible, previously computed, pair $(v_{1},k_{1})$, we determined the all the corresponding flag-transitive $2$-$(v_{1},k_{1},\lambda_{1})$-designs in the Appendix, and we use these information to compute all the admissible pairs $(\mathcal{D}_{1},G^{\Sigma})$. This allows us to obtain $\lambda_{1}$, $r_{1}$, $b_{1}$ and $G^{\Sigma}$, and hence Columns 11--14 in Table \ref{smaug}. At this point,
\begin{equation}
\lambda =\frac{k_{0}^{2}\cdot \mu}{v_{0}^{2}\cdot \lambda_{1}}\text{, } \quad %
r=\frac{k_{0}\cdot \mu}{v_{0}\cdot r_{1}}\text{, } \quad %
b=\frac{v \cdot r}{k}\quad \text{ and } \quad
\theta=\frac{\lambda}{\lambda_{0}}\text{.} \label{equaz1}
\end{equation}%
are obtained by Theorem \ref{CamZie}(2).(i)--(ii). Therefore, we obtain Columns 7, 17--21 in Table \ref{smaug}. Note that $\mu_{S}$ is the value of $\mu$ for which $\mathcal{D}$ is symmetric.

If $3\leq k_{0}=v_{0}-1$, $\mathcal{D}_{0}$ is a $1$-design, $k=t(v_{0}-2)+1$
and $v=t(v_{0}-1)+1$ for some $t\geq 2$ by (\ref{rel1}). Since $r=\frac{\left(
v_{0}-1\right) \lambda }{v_{0}-2}$, it follows that $\lambda =m(v_{0}-2)$, hence $r=m(v_{0}-1)$, for some $m\geq 1$. Since $k=t(v_{0}-2)+1$ and $%
v=t(v_{0}-1)+1$ for some $t\geq 2$, and $k_{0}\mid k$, with $k_{0}=v_{0}-1$
and $v_{0}\mid v$ one has $v_{0}-1\mid t-1$ and $v_{0}\mid t-1$. Therefore, $%
t=\ell v_{0}(v_{0}-1)\,+1$\ for some $\ell \geq 1$. Now, $kb=vr$ implies%
\begin{equation*}    
b\left[ t(v_{0}-2)+1\right] =\left[ t(v_{0}-1)+1\right] \left(
v_{0}-1\right) m=\left[ t(v_{0}-2)+1\right] \left( v_{0}-1\right) m+t\left(
v_{0}-1\right) m 
\end{equation*}
and hence $t(v_{0}-2)+1\mid t\left( v_{0}-1\right) m$, and so $\left[ \ell
v_{0}(v_{0}-1)\,+1\right] (v_{0}-2)+1\mid \left( v_{0}-1\right) m$ since $%
t=\ell v_{0}(v_{0}-1)\,+1$\ for some $\ell \geq 1$. Thus $m=\ell ^{\prime }%
\left[ \ell v_{0}\,(v_{0}-2)+1\right] \,$\ for $\ell ^{\prime }\geq 1$.%
\begin{eqnarray*}
r &=&\left( v_{0}-1\right) \ell ^{\prime }\left[ \ell v_{0}\,(v_{0}-2)+1%
\right] \\
\lambda &=&\left( v_{0}-2\right) \ell ^{\prime }\left[ \ell
v_{0}\,(v_{0}-2)+1\right] \\
v &=&\ell v_{0}(v_{0}-1)^{2}+v_{0} \\
k &=&\ell v_{0}(v_{0}-1)(v_{0}-2)+v_{0}-1
\end{eqnarray*}%
Now, $v<100$ and $v_{0}>k_{0}=3$ lead to $(v_{0},\ell )=(4,1),(4,2),(5,1)$.
Therefore, $(v_{0},k_{0})=(4,3),(5,4)$. Then $\mathcal{D}_{0}$ is a complete $2$-design and hence $\lambda _{0}=2,3$ respectively.
Furthermore, $v_{1}=\ell (v_{0}-1)^{2}+1$ and $k_{1}=\ell v_{0}(v_{0}-2)+1$
imply $(v_{1},k_{1})=(10,9),(19,17)$ or $(17,16)$. Then $\mathcal{D}_{1}$ is a symmetric complete $2$-design and $G^{\Sigma }$ is known by Lemma \ref{LA53}. Finally, we use (\ref{equaz1}) to determine the remaining parameters of $\mathcal{D}$.
\end{proof}

\bigskip

\begin{proposition}\label{3mink0minv0-2}
If $3\leq k_{0}\leq v_{0}-2$, then the parameters of $\mathcal{D}%
_{0},\mathcal{D}%
_{1}$ and $\mathcal{D}$ and the possibilities for $G_{\Delta }^{\Delta }$ and $G^{\Sigma}$ are as in Table \ref{smeagol} or $\ref{sedici}$ according to whether $(v_{0},k_{0})$ is not or is $(16,4)$, respectively.
\begin{sidewaystable} 
\caption{Admissible parameters for $\mathcal{D}%
_{0}, \mathcal{D}%
_{1}, \mathcal{D}%
$ and $G_{\Delta }^{\Delta }$ and $G^{\Sigma
}$ for $3\leq k_{0}\leq
v_{0}-2$.}
\label{smeagol}
\tiny
{\tiny\setlength{\tabcolsep}{2pt}
\begin{tabular}{|c|ccccc|c|c|ccccc|c|ccccc|c|c|}
\hline
Line & $v_{0}$ & $k_{0}$ & $\lambda _{0}$ & $r_{0}$ & $b_{0}$ & $\theta $ & $G_{\Delta
}^{\Delta }$ & $v_{1}$ & $k_{1}$ & $\lambda _{1}$ & $r_{1}$ & $b_{1}$ & $%
G^{\Sigma }$ & $v$ & $k$ & $\lambda $ & $r$ & $b$ & Conditions on $\mu $ & $%
\mu _{S}$ \\ 
\hline
\hline
1 & $5$ & $3$ & $3$ & $6$ & $10$ & $3\mu /5$ & $A_{5},S_{5}$ & $7$ & $6$ & $5$ & $6$ & $7
$ & $AGL_{1}(7),PSL_{2}(7),A_{7},S_{7}$ & $35$ & $18$ & $9\mu /5$ & $18\mu /5$ & $%
 7\mu $ & $\mu \equiv 0\pmod{5}$ & $5$ \\ 
2 & $5$ & $3$ & $3$ & $6$ & $10$ & $33\mu /5$ & $A_{5},S_{5}$ & $13$ & $11$ & $55$ & $66$
& $78$ & $A_{13},S_{13}$ & $65$ & $33$ & $99\mu /5$ & $  198\mu /5$
& $ 78\mu $ & $\mu \equiv 0\pmod{5}$ & - \\ 
3& $5$ & $3$ & $3$ & $6$ & $10$ & $408\mu/5$ & $A_{5},S_{5}$ & $19$ & $16$ & $680$ & $816$
& $969$ & $A_{19},S_{19}$ & $95$ & $48$ & $1224\mu/5$ & $2448\mu/5$
& $969\mu$ & $\mu \equiv 0\pmod{5}$ & - \\ 
4& $6$ & $3$ & $2$ & $5$ & $10$ & $9\mu/2$ & $A_{5}$ & $11$ & $9$ & $36$ & $45$ & $55$
& $M_{11},A_{11},S_{11}$ & $66$ & $27$ & $9\mu $ & $45\mu /2$ & $ 
55\mu $ & $\mu \equiv 0\pmod{2}$ & - \\ 
5& $6$ & $3$ & $4$ & $10$ & $20$ & $9\mu /4$ & $S_{5},A_{6},S_{6}$ & $11$ & $9$ & $36$
& $45$ & $55$ & $M_{11},A_{11},S_{11}$ & $66$ & $27$ & $9\mu $ & $%
45\mu /2$ & $ 55\mu $ & $\mu \equiv 0\pmod{4}$ & -
\\ 
6& $6$ & $3$ & $2$ & $5$ & $10$ & $%
	91\mu /2$ & $A_{5}$ & $16$ & $13$ & $  364$
& $ 455$ & $  560$ & $A_{16},S_{16}$ & $96$ & $39$ & $%
91\mu $ & $  455\mu /2$ & $    560\mu $ & $\mu
\equiv 0\pmod{2}$ & - \\ 
7& $6$ & $3$ & $4$ & $10$ & $20$ & $91\mu/4 $ & $S_{5},A_{6},S_{6}$ & $16$ & $13$ & $%
  364$ & $  455$ & $  560$ & $A_{16},S_{16}$ & 
$96$ & $39$ & $91\mu $ & $455\mu /2$ & $    560\mu $ & $%
\mu \equiv 0\pmod{4}$ & - \\ 
8& $6$ & $4$ & $6$ & $10$ & $15$ & $2\mu/3 $ & $S_{5},A_{6},S_{6}$ & $11$ & $10$
& $9$ & $10$ & $11$ & $AGL_{1}(11),PSL_{2}(11),M_{11}$ & $66$
& $40$ & $4\mu $ & $20\mu /3$ & $  11\mu $ & $\mu \equiv 0\pmod{3}$ & $6$ \\
& & & & & & & & & & & & & $A_{11},S_{11}$ & & & & & & & \\
9& $7$ & $3$ & $1$ & $3$ & $7$ & $36\mu /7$ & $7:3,PSL_{2}(7)$ & $10$ & $8$ & $  28
$ & $36$ & $45$ & $PGL_{2}(9),M_{10},P\Gamma
L_{2}(9),$ & $70$ & $24$ & $36\mu /7$ & $  108\mu /7$
& $  45\mu $ & $\mu \equiv 0\pmod{7}$ & - \\
& & & & & & & & & & & & & $A_{10},S_{10}$ & & & & & & & \\
10& $7$ & $3$ & $2$ & $6$ & $14$ & $18\mu /7$ & $AGL_{1}(7)$ & $10$ & $8$ & $  28
$ & $36$ & $45$ & $PGL_{2}(9),M_{10},P\Gamma
L_{2}(9),$ & $70$ & $24$ & $36\mu /7$ & $  108\mu /7$
& $  45\mu $ & $\mu \equiv 0\pmod{7}$ & - \\
& & & & & & & & & & & & & $A_{10},S_{10}$ & & & & & & & \\
11& $7$ & $3$ & $4$ & $12$ & $28$ & $9\mu/7$ & $PSL_{2}(7)$ & $10$ & $8$ & $  28
$ & $36$ & $45$ & $PGL_{2}(9),M_{10},P\Gamma
L_{2}(9),$ & $70$ & $24$ & $36\mu/7$ & $108\mu/7$
& $45\mu$ & $\mu \equiv 0\pmod{7}$ & - \\
& & & & & & & & & & & & & $A_{10},S_{10}$ & & & & & & & \\
12& $7$ & $3$ & $5$ & $15$ & $35$ & $36\mu /35$ & $A_{7},S_{7}$ & $10$ & $8$ & $%
  28$ & $36$ & $45$ & $PGL_{2}(9),M_{10},P\Gamma
L_{2}(9),$ & $70$ & $24$ & $36\mu /7$ & $%
108\mu /7$ & $    45\mu $ & $\mu \equiv 0\pmod{35}$ & 
- \\
& & & & & & & & & & & & & $A_{10},S_{10}$ & & & & & & & \\
13& $7$ & $4$ & $2$ & $4$ & $7$ & $8\mu /7$ & $PSL_{2}(7)$ & $9$ & $8$ & $7$ & $8$ & $9$
& $PSL_{2}(8),P\Gamma
L_{2}(8)$,$A_{9},S_{9}$ & $63$ & $32$ & $16\mu /7$ & $  32\mu /7$
& $  9\mu $ & $\mu \equiv 0\pmod{7}$ & $7$ \\
& & & & & & & & & & & & & $G^{\Sigma}\leq AGL_{2}(3) $ & & & & & & & \\
14& $7$ & $4$ & $10$ & $20$ & $35$ & $8\mu /35$ & $A_{7},S_{7}$ & $9$ & $8$ & $7$ & $8$
& $9$ & $PSL_{2}(8),P\Gamma L_{2}(8)$,$A_{9},S_{9}$ & $63$ & $32$ & $16\mu /7$ & $ 
32\mu /7 $ & $  9\mu$ & $\mu \equiv 0\pmod{35}$ & - \\
& & & & & & & & & & & & & $G^{\Sigma}\leq AGL_{2}(3) $ & & & & & & & \\
15& $9$ & $5$ & $35$ & $70$ & $126$ & $5\mu /63$ & $A_{9},S_{9}$ & $11$ & $10$ & $9$ & $10$ & $11$ & $%
AGL_{1}(11),PSL_{2}(11),M_{11}$ & $99$ & $50$ & $25\mu /9$ & $%
  50\mu /9$ & $    11\mu $ & $\mu \equiv 0%
\pmod{63}$ & - \\
& & & & & & & & & & & & & $A_{11},S_{11}$ & & & & & & & \\
16& $6$ & $3$ & $2$ & $5$ & $10$ & $\mu/2 $ & $A_{5}$ & $6$ & $5$ & $4$ & $5$ & $6$ & $%
A_{5},S_{5},A_{6},S_{6}$ & $36$ & $15$ & $\mu $ & $5\mu /2$ & $ 
  6\mu $ & $\mu \equiv 0\pmod{2}$ & $6$ \\ 
17& $6$ & $3$ & $4$ & $10$ & $20$ & $\mu/4 $ & $S_{5},A_{6},S_{6}$ & $6$ & $5$ & $4$ & 
$5$ & $6$ & $A_{5},S_{5},A_{6},S_{6}$ & $36$ & $15$ & $\mu $ & $5\mu /2$ & $%
  6\mu $ & $\mu \equiv 0\pmod{4}$ & - \\ 
18 & $8$ & $4$ & $3$ & $7$ & $14$ & $\mu /2$ & $AGL_{1}(8),A\Gamma
L_{1}(8),AGL_{3}(2),PSL_{2}(7)$ & $8$ & $7$ & $6$ & $7$ & $8$ & $%
AGL_{1}(8),A\Gamma L_{1}(8),AGL_{2}(3)$ & $64$ & $28$
& $3\mu /2$ & $7\mu /2$ & $  8\mu $ & $\mu \equiv 0\pmod{2}$ & $%
8$ \\
& & & & & & & & & & & & & $PSL_{2}(7),PGL_{2}(7),A_{8},S_{8}$ & & & & & & & \\
19 & $8$ & $4$ & $6$ & $14$ & $28$ & $\mu /4$ & $PGL_{2}(7)$ & $8$ & $7$ & $6$ & $7$ & $%
8$ & $AGL_{1}(8),A\Gamma L_{1}(8),AGL_{2}(3)$ & $64$
& $28$ & $3\mu /2$ & $7\mu /2$ & $8\mu $ & $\mu \equiv 0\pmod{4}$ & $8$
\\ 
& & & & & & & & & & & & & $PSL_{2}(7),PGL_{2}(7),A_{8},S_{8}$ & & & & & & & \\
20 & $8$ & $4$ & $9$ & $21$ & $42$ & $\mu /6$ & $PSL_{2}(7),PGL_{2}(7)$ & $8$ & $7$ & $6
$ & $7$ & $8$ & $AGL_{1}(8),A\Gamma
L_{1}(8),AGL_{2}(3)$ & $64$ & $28$ & $3\mu /2$ & $%
  7\mu /2$ & $    8\mu $ & $\mu \equiv 0\pmod{%
6}$ & - \\
& & & & & & & & & & & & & $PSL_{2}(7),PGL_{2}(7),A_{8},S_{8}$ & & & & & & & \\
21 & $8$ & $4$ & $12$ & $28$ & $56$ & $\mu /8$ & $AGL_{3}(2)$ & $8$ & $7$ & $6
$ & $7$ & $8$ & $AGL_{1}(8),A\Gamma
L_{1}(8),AGL_{2}(3)$ & $64$ & $28$ & $3\mu /2$ & $%
  7\mu /2$ & $    8\mu $ & $\mu \equiv 0\pmod{%
8}$ & $8$ \\
& & & & & & & & & & & & & $PSL_{2}(7),PGL_{2}(7),A_{8},S_{8}$ & & & & & & & \\
22 & $8$ & $4$ & $15$ & $35$ & $70$ & $\mu /10$ & $A_{8},S_{8}$ & $8$ & $7$ & 
$6$ & $7$ & $8$ & $AGL_{1}(8),A\Gamma
L_{1}(8),AGL_{2}(3)$ & $64$ & $28$ & $3\mu /2$ & $%
7\mu /2$ & $  8\mu $ & $\mu \equiv 0\pmod{10}$ & - \\
& & & & & & & & & & & & & $PSL_{2}(7),PGL_{2}(7),A_{8},S_{8}$ & & & & & & & \\
23 & $9$ & $3$ & $1$ & $4$ & $12$ & $\mu /3$ & $G^{\Delta}_{\Delta} \leq AGL_{2}(3)$ & $5$ & $4$ & $3$ & $4$ & $5$ & $AGL_{1}(5),A_{5},S_{5}$
& $45$ & $12$ & $\mu /3$ & $4\mu /3$ & $  5\mu $ & $\mu \equiv 0%
\pmod{3}$ & $9$ \\ 
24 & $9$ & $3$ & $6$ & $24$ & $72$ & $\mu /18$ & $ASL_{2}(3), AGL_{2}(3)$ & $5$ & $4$ & $3$ & $4$ & $5$ & $AGL_{1}(5),A_{5},S_{5}$
& $45$ & $12$ & $\mu /3$ & $4\mu /3$ & $  5\mu $ & $\mu \equiv 0%
\pmod{18}$ & - \\
25 & $9$ & $3$ & $7$ & $28$ & $84$ & $\mu /21$ & $PSL_{2}(8),P\Gamma L_{2}(8),A_{9},S_{9}
$ & $5$ & $4$ & $3$ & $4$ & $5$ & $AGL_{1}(5),A_{5},S_{5}$ & $45$ & $12$ & $\mu /3$ & $%
4\mu /3$ & $    5\mu $ & $\mu \equiv 0\pmod{21}$ & -
\\ 
26 & $9$ & $3$ & $1$ & $4$ & $12$ & $7\mu /3$ & $G^{\Delta}_{\Delta} \leq AGL_{2}(3)$ & $9$ & $7$ & $21$ & $28$ & $36$ & $%
PSL_{2}(8),P\Gamma L_{2}(8),A_{9},S_{9}$ & $81$ & $21$ & $7\mu /3$ & $28\mu
/3$ & $  36\mu $ & $\mu \equiv 0\pmod{3}$ & $\ $- \\ 
27 & $9$ & $3$ & $6$ & $24$ & $72$ & $7\mu /18$ & $ASL_{2}(3), AGL_{2}(3)$ & $9$ & $7$ & $21$ & $28$ & $36$ & $%
PSL_{2}(8),P\Gamma L_{2}(8),A_{9},S_{9}$ & $81$ & $21$ & $7\mu /3$ & $28\mu
/3$ & $  36\mu $ & $\mu \equiv 0\pmod{18}$ & $\ $- \\ 
28 & $9$ & $3$ & $7$ & $28$ & $84$ & $\mu /3$ & $PSL_{2}(8),P\Gamma L_{2}(8),A_{9},S_{9}
$ & $9$ & $7$ & $21$ & $28$ & $36$ & $PSL_{2}(8),P\Gamma L_{2}(8),A_{9},S_{9}
$ & $81$ & $21$ & $7\mu /3$ & $  28\mu /3$ & $  36\mu $
& $\mu \equiv 0\pmod{3}$ & - \\ 
29 & $10$ & $4$ & $2$ & $6$ & $15$ & $%
	2\mu /5$ & $S_{5},A_{6},S_{6}$ & $7$ & $6
$ & $5$ & $6$ & $7$ & $AGL_{1}(7),PSL_{2}(7),A_{7},S_{7}$ & $70$ & $24$ & $%
4\mu /5$ & $  12\mu /5$ & $  7\mu $ & $\mu \equiv 0\pmod{5}$ & $10$ \\ 
30 & $10$ & $4$ & $4$ & $12$ & $30$ & $\mu /5$ & $M_{10},PGL_{2}(9),P\Gamma
L_{2}(9)$ & $7$ & $6$ & $5$ & $6$ & $7$ & $AGL_{1}(7),PSL_{2}(7),A_{7},S_{7}$
& $70$ & $24$ & $4\mu /5$ & $  12\mu /5$ & $ 7\mu $ & $\mu \equiv 0\pmod{5}$ & $10$ \\ 
31 & $10$ & $4$ & $24$ & $72$ & $180$ & $\mu /30$ & $M_{10},PGL_{2}(9),P\Gamma
L_{2}(9)$ & $7$ & $6$ & $5$ & $6$ & $7$ & $AGL_{1}(7),PSL_{2}(7),A_{7},S_{7}$
& $70$ & $24$ & $4\mu /5$ & $  12\mu /5$ & $ 7\mu $ & $\mu \equiv 0\pmod{30}$ & - \\ 
32 & $10$ & $4$ & $28$ & $84$ & $210$ & $\mu /35$ & $A_{10},S_{10}$ & $7$ & $6$ & $5$ & $%
6$ & $7$ & $AGL_{1}(7),PSL_{2}(7),A_{7},S_{7}$ & $70$
& $24$ & $4\mu /5$ & $12\mu /5$ & $  7\mu $ & $\mu \equiv 0\pmod{%
35}$ & -\\
\hline 
\end{tabular}}
\end{sidewaystable}
\end{proposition}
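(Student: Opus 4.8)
The plan is to follow the scheme of the proof of Proposition \ref{k2orTrivial}, the only difference being that the elementary arithmetic available when $k_0=2$ must now be replaced by the classification of flag-transitive $2$-designs of small order carried out in the Appendix. By Lemma \ref{bounds}(2), in the present regime $\mathcal{D}_0$ is a genuine $2$-$(v_0,k_0,\lambda_0)$ design with $3\le k_0\le v_0-2$, and by Theorem \ref{CamZie}(1.iii) the group $G_\Delta^\Delta$ acts flag-transitively on it; likewise, by Theorem \ref{CamZie}(2.ii)--(2.iii), $\mathcal{D}_1=\mathcal{D}^\Sigma$ is a flag-transitive design that is either a $2$-$(v_1,k_1,\lambda_1)$ design or the complete symmetric $2$-$(v_1,v_1-1,v_1-2)$ design. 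Throughout, $v=v_0v_1$, $k=k_0k_1$ with $k_1\ge 2$, and the identities (\ref{rel1}) and (\ref{rel2}) hold.

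First I would pin down the finite list of admissible parameter tuples $(v_0,k_0,v_1,k_1,v,k)$. The key point is that (\ref{rel2}) expresses $v_1-1$ as $(k_1-1)$ times the fixed rational number $k_0(v_0-1)/(v_0(k_0-1))$, which is strictly greater than $1$ since $k_0<v_0$; clearing denominators and imposing $v=v_0v_1<100$ together with $k_1\ge 2$ and $v_0\ge 5$ (the latter forced by $3\le k_0\le v_0-2$) confines $(v_0,k_0)$ to a short explicit list, all of whose entries satisfy $v_0\le 10$ except for the single pair $(v_0,k_0)=(16,4)$. This exceptional pair is exactly why it is recorded separately in Table \ref{sedici}; one checks immediately that it forces $v_1=6$, $k_1=5$, $v=96$ and $k=20$, in agreement with Lines~8--11 of Table \ref{tavola1}. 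For each surviving $(v_0,k_0)$ one then lists the finitely many $k_1$ permitted by $v<100$ and recovers $v$ and $k$ from $v=v_0v_1$, $k=k_0k_1$ and (\ref{rel1}).

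Next I would fill in the remaining data row by row. For $\mathcal{D}_0$: since $3\le k_0\le v_0-2$ with $v_0$ bounded as above, all flag-transitive $2$-$(v_0,k_0,\lambda_0)$ designs together with their flag-transitive automorphism groups are determined in the Appendix (via \cite[Table B.4]{DM}, specific geometry of classical groups, and \texttt{GAP} \cite{GAP} in a few cases), which supplies the columns $\lambda_0,r_0,b_0$ and $G_\Delta^\Delta$. For $\mathcal{D}_1$: if $k_1=v_1-1$ then $\mathcal{D}_1$ is the complete symmetric design and $G^\Sigma$ is given by Lemma \ref{LA53}; otherwise $\mathcal{D}_1$ and the possibilities for $G^\Sigma$ are once more read from the Appendix, supplying $\lambda_1,r_1,b_1$ and $G^\Sigma$. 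Finally the parameters of $\mathcal{D}$ itself and the admissible values of $\mu$ are obtained from Theorem \ref{CamZie}(2) --- equivalently, from (\ref{equaz1}) --- giving $\lambda$, $r$, $b$ and $\theta=\lambda/\lambda_0$ in terms of $\mu$; requiring all of these to be positive integers yields the congruence condition on $\mu$ in the penultimate column, and solving $b=v$ yields the value $\mu_S$ in the last column. Carrying this out over the whole finite list produces precisely the rows of Table \ref{smeagol} when $(v_0,k_0)\ne(16,4)$ and of Table \ref{sedici} when $(v_0,k_0)=(16,4)$.

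The hard part will be the first step, together with the dependence on the Appendix. Because the rational factor $k_0(v_0-1)/(v_0(k_0-1))$ in (\ref{rel2}) tends to $1$ as $v_0$ and $k_0$ grow, a crude inequality such as $v_0^2\le v$ (which was available when $k_0=2$) is no longer at hand, and one really needs the combined force of $v<100$, $k_1\ge 2$ and $v_1>k_1$ to cut $(v_0,k_0)$ down to the stated list and to isolate $(16,4)$. One must also be sure that no flag-transitive $2$-design of the relevant small orders --- in particular a point-imprimitive one appearing as $\mathcal{D}_0$ --- is missing from the Appendix. Once those ingredients are secured, the rest is routine bookkeeping: clearing denominators in $\lambda,r,b$ to obtain the condition on $\mu$, and checking $b=v$ to obtain $\mu_S$.
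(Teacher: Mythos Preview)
Your proposal is correct and follows the same overall scheme as the paper: reduce to a finite list of tuples $(v_0,k_0,v_1,k_1)$ via the arithmetic of (\ref{rel1})--(\ref{rel2}) together with $v<100$, then for each tuple invoke the Appendix lemmas to determine $(\mathcal{D}_0,G_\Delta^\Delta)$ and $(\mathcal{D}_1,G^\Sigma)$, and finally read off $\lambda,r,b,\theta$ and the condition on $\mu$ from (\ref{equaz1}).

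The one structural difference is in how the enumeration is organized. You observe (correctly) that the crude bound $v_0^2\le v$ is not immediately available here, and you propose to compensate by a direct finite search over all $(v_0,k_0,k_1)$ subject to $v_0\ge 5$, $3\le k_0\le v_0-2$, $k_1\ge 2$ and $v_0v_1<100$. The paper instead splits on $\gcd(v_0,k_0)$: when $\gcd(v_0,k_0)=1$ it quotes \cite[Lemma~2.2(iv)]{CZ} to get $v_0<v_1$, whence $v_0^2<v<100$ and $v_0\le 9$ after all; when $\gcd(v_0,k_0)>1$ it further splits on whether $v_0<v_1$ (same bound) or $v_0\ge v_1$ (whence $v_1\le 9$). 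This dichotomy recovers, in most of the cases, exactly the inequality you thought was lost, and confines the genuinely new search to the branch $\gcd(v_0,k_0)>1$, $v_0\ge v_1$ --- which is where $(v_0,k_0)=(16,4)$ appears. Your brute-force search reaches the same destination; the paper's route is simply shorter and explains cleanly why $(16,4)$ is the sole pair with $v_0>10$.
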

\begin{proof}
\begin{sidewaystable}
\caption{Admissible parameters for $\mathcal{D}%
_{0}, \mathcal{D}%
_{1}, \mathcal{D}$ and $G_{\Delta }^{\Delta }$ and $G^{\Sigma
}$ for $(v_{0},k_{0})=(16,4)$.}\label{sedici}
{\small\setlength{\tabcolsep}{2pt}
\begin{tabular}{|c|ccccc|c|ccccc|c|ccccc|c|c|}
\hline
Line & $v_{0}$ & $k_{0}$ & $\lambda _{0}$ & $r_{0}$ & $b_{0}$ & $G_{\Delta
}^{\Delta }$ & $v_{1}$ & $k_{1}$ & $\lambda _{1}$ & $r_{1}$ & $b_{1}$ & $%
G^{\Sigma }$ & $v$ & $k$ & $\lambda $ & $r$ & $b$ & Conditions on $\mu $ & $%
\mu _{S}$ \\
\hline
\hline
1 & $16$ & $4$ & $1$ & $5$ & $20$ & $2^{4}:5\leq G_{\Delta }^{\Delta }\leq
A\Gamma L_{2}(4)$ & $6$ & $5$ & $4$ & $5$ & $6$ & $A_{5},S_{5},A_{6},S_{6}$
& $96$ & $20$ & $\mu /4$ & $5\mu /4$ & $6\mu $ & $\mu \equiv 0\pmod{4}$ & 
$16$ \\ 
2 & $16$ & $4$ & $2$ & $10$ & $40$ & $2^{4}:(5:4),ASL_{2}(4),A\Sigma L_{2}(4)
$ & $6$ & $5$ & $4$ & $5$ & $6$ & $A_{5},S_{5},A_{6},S_{6}$ & $96$ & $20$ & $%
\mu /4$ & $5\mu /4$ & $6\mu $ & $\mu \equiv 0\pmod{8}$ & $16$ \\ 
3 & $16$ & $4$ & $3$ & $15$ & $60$ & $AGL_{1}(16),AGL_{1}(16):2$ & $6$ & $5$
& $4$ & $5$ & $6$ & $A_{5},S_{5},A_{6},S_{6}$ & $96$ & $20$ & $\mu /4$ & $%
5\mu /4$ & $6\mu $ & $\mu \equiv 0\pmod{12}$ & $-$ \\ 
4 & $16$ & $4$ & $3$ & $15$ & $60$ & $ASL_{2}(4),A\Sigma
L_{2}(4),ASp_{4}(2),A\Gamma Sp_{4}(2)$ & $6$ & $5$ & $4$ & $5$ & $6$ & $%
A_{5},S_{5},A_{6},S_{6}$ & $96$ & $20$ & $\mu /4$ & $5\mu /4$ & $6\mu $ & $%
\mu \equiv 0\pmod{12}$ & $-$ \\ 
5 & $16$ & $4$ & $4$ & $20$ & $80$ & $ASp_{4}(2)$ & $6$ & $5$ & $4$ & $5$ & $%
6$ & $A_{5},S_{5},A_{6},S_{6}$ & $96$ & $20$ & $\mu /4$ & $5\mu /4$ & $6\mu $
& $\mu \equiv 0\pmod{16}$ & $-$ \\ 
6 & $16$ & $4$ & $6$ & $30$ & $120$ & $2^{4}:(15:4),AGL_{2}(4),A\Gamma
L_{2}(4)$ & $6$ & $5$ & $4$ & $5$ & $6$ & $A_{5},S_{5},A_{6},S_{6}$ & $96$ & 
$20$ & $\mu /4$ & $5\mu /4$ & $6\mu $ & $\mu \equiv 0\pmod{24}$ & $-$ \\ 
7 & $16$ & $4$ & $7$ & $35$ & $140$ & $2^{4}:A_{7},AGL_{4}(2)$ & $%
6$ & $5$ & $4$ & $5$ & $6$ & $A_{5},S_{5},A_{6},S_{6}$ & $96$ & $20$ & $\mu
/4$ & $5\mu /4$ & $6\mu $ & $\mu \equiv 0\pmod{28}$ & $-$ \\ 
8 & $16$ & $4$ & $12$ & $60$ & $240$ & $2^{4}:(15:4)$ & $6$ & $5$ & $4$ & $5$
& $6$ & $A_{5},S_{5},A_{6},S_{6}$ & $96$ & $20$ & $\mu /4$ & $5\mu /4$ & $%
6\mu $ & $\mu \equiv 0\pmod{48}$ & $-$ \\ 
9 & $16$ & $4$ & $12$ & $60$ & $240$ & $A\Sigma L_{2}(4),ASp_{4}(2)$ & $6$ & 
$5$ & $4$ & $5$ & $6$ & $A_{5},S_{5},A_{6},S_{6}$ & $96$ & $20$ & $\mu /4$ & 
$5\mu /4$ & $6\mu $ & $\mu \equiv 0\pmod{48}$ & $-$ \\ 
10 & $16$ & $4$ & $36$ & $180$ & $720$ & $A\Gamma L_{2}(4)$ & $6$ & $5$ & $4$
& $5$ & $6$ & $A_{5},S_{5},A_{6},S_{6}$ & $96$ & $20$ & $\mu /4$ & $5\mu /4$
& $6\mu $ & $\mu \equiv 0\pmod{144}$ & $-$ \\ 
11 & $16$ & $4$ & $84$ & $420$ & $1680$ & $2^{4}:A_{7},AGL_{4}(2)$ & $6$ & $5
$ & $4$ & $5$ & $6$ & $A_{5},S_{5},A_{6},S_{6}$ & $96$ & $20$ & $\mu /4$ & $%
5\mu /4$ & $6\mu $ & $\mu \equiv 0\pmod{336}$ & $-$ \\ 
12 & $16$ & $4$ & $91$ & $455$ & $1820$ & $A_{16},S_{16}$ & $6$ & $5$ & $4$
& $5$ & $6$ & $A_{5},S_{5},A_{6},S_{6}$ & $96$ & $20$ & $\mu /4$ & $5\mu /4$
& $6\mu $ & $\mu \equiv 0\pmod{364}$ & $-$\\
\hline
\end{tabular}}
\end{sidewaystable}
If $\gcd(v_{0},k_{0})=1$, then $v_{0}<v_{1}$ by \cite[Lemma 2.2(iv)]{CZ}, and so 
$v=v_{1}v_{0}>v_{0}^{2}$ implies $v_{0}\leq 9$ and so $v_{0}=5,6,7,8$ or $9$%
. Hence, 
\[
(v_{0},k_{0})=(5,3),(7,3),(7,4),(7,5),(8,3),(8,5),(9,4),(9,5),(9,7)
\]%
since $\gcd(v_{0},k_{0})=1$ and $\mathcal{D}_{0}$ is $2$-design with $3\leq
k_{0}\leq v_{0}-2$. For each of the values $v_{0}<v_{1}<100/v_{0}$ we compute%
\[
k_{1}=\frac{-k_{0}+v_{0}-v_{0}v_{1}+k_{0}v_{0}v_{1}}{k_{0}(v_{0}-1)},
\]%
and we have
\[
(v_{0},k_{0},v_{1},k_{1})=(5,3,7,6),(5,3,13,11),(7,3,10,8),(7,4,9,8),(9,5,11,10).%
\text{ }
\]%
If $(v_{0},k_{0},v_{1},k_{1})=(5,3,7,6)$, or $(5,3,13,11)$, then both $\mathcal{D}_{0}$ and $\mathcal{D}_{1}$ are complete $2$-designs, $A_{5}\trianglelefteq
G_{\Delta }^{\Delta }\leq S_{5}$ and $A_{v_{1}}\trianglelefteq
G^{\Sigma }\leq S_{v_{1}}$ by Lemma \ref{LA53}.

If $(v_{0},k_{0},v_{1},k_{1})=(7,3,10,8)$, then either $\mathcal{D}_{0}\cong PG _{2}(2)$ and $G_{\Delta}^{\Delta} \cong 7:3,PSL_{2}(7)$; or $\mathcal{D}_{0}$ is a $2$-$(7,3,2)$ design, union of two copies of $PG _{2}(2)$, and $G_{\Delta}^{\Delta} \cong A GL_{1}(7)$; or $\mathcal{D}_{0}$ is a $2$-$(7,3,4)$ design and $G_{\Delta}^{\Delta} \cong PSL_{2}(7)$; or $\mathcal{D}_{0}$ is the complete $2$-$(7,3,5)$ design and $G_{\Delta}^{\Delta} \cong
A_{7},S_{7}$ by Lemma \ref{LA4}. Moreover, $\mathcal{D}_{1}$ is the complete $2$-$(10,8,28)$ designs, $G^{\Sigma} \cong
PGL_2(9),M_{10},P\Gamma L_2(9),A_{10},S_{10}$ by Lemma \ref{LA53}. 

If $(v_{0},k_{0},v_{1},k_{1})=(7,4,9,8)$, then either $\mathcal{D}_{0}\cong \overline{PG_{2}(2)}$ and $G_{\Delta}^{\Delta} \cong PSL_{2}(7)$, or $\mathcal{D}_{0}$ is the complete $2$-$(7,4,10)$ design and $G_{\Delta}^{\Delta} \cong A_{7},S_{7}$ by Lemma \ref{LA4}. Moreover, $\mathcal{D}_{1}$ is the complete $2$-design $(7,3,5)$ design and $G^{\Sigma} \cong
A_{7},S_{7}$ by Lemma \ref{LA4}. 

Finally, if $(v_{0},k_{0},v_{1},k_{1})=(9,5,11,10)$, then both $\mathcal{D}_{0}$ and $\mathcal{D}_{1}$ are complete $2$-designs and hence $G_{\Delta}^{\Delta} \cong A_{9},S_{9}$ or $G^{\Sigma} \cong AGL_{1}(11), PSL_2(11),M_{11},A_{11},S_{11}$ by Lemma \ref{LA4}.

If $\gcd(v_{0},k_{0})>1$ and $v_{0}<v_{1}$, we obtain $v_{0}\leq 9$ and so $%
v_{0}=5,6,7,8$ or $9$ again and hence $%
(v_{0},k_{0})=(6,3),(6,4),(8,4),(8,6),(9,3),(9,6)$, and hence 
\[
(v_{0},k_{0},\lambda
_{0},r_{0},v_{1},k_{1})=(6,3,i,5i/2,11,9),(6,3,i,5i/2,16,13),(6,4,6,10,11,10).
\]

Assume that $(v_{0},k_{0})=(6,3)$. Then either $\mathcal{D}_{0}$ is a $2$-$(6,3,2)$ design and $G_{\Delta}^{\Delta} \cong A_{5}$, or $\mathcal{D}_{0}$ is the complete $2$-$(6,3,4)$ design and $G_{\Delta}^{\Delta} \cong S_{5},A_{6},S_{6}$ by Lemma \ref{LA1}. Moreover, $\mathcal{D}_{1}$ is the complete $2$-design and $G^{\Sigma}\cong A_{v_{1}},S_{v_{1}}$, or additionally $G^{\Sigma}\cong AGL_{1}(11),PSL_{2}(11),M_{11}$ for $(v_{1},k_{1})=(11,10)$ by Lemma \ref{LA53}.

Assume that $(v_{0},k_{0},\lambda_{0},r_{0},v_{1},k_{1})=(6,4,6,10,11,10)$. Then both $\mathcal{D}_{0}$ and $\mathcal{D}_{1}$ are complete $2$-designs and hence $G_{\Delta}^{\Delta} \cong A_{5},A_{6},S_{6}$ or $G^{\Sigma}$ is isomorphic to one of the groups $AGL_{1}(11), PSL_2(11),M_{11},A_{11},S_{11}$ by Lemma \ref{LA53}.

If $\gcd(v_{0},k_{0})>1$ and $v_{0}\geq v_{1}$, it follows that $%
v_{1}^{2}<v$ and so $3\leq k_{0}<k_{1}<v_{1}\leq 9$, and in particular $%
5\leq v_{1}\leq 9$. Now, since 
\[
k_{1}=\frac{-k_{0}+v_{0}-v_{0}v_{1}+k_{0}v_{0}v_{1}}{k_{0}(v_{0}-1)},
\]%
and we have
\[
(v_{0},k_{0},v_{1},k_{1})=(6,3,6,5),\left( 8,4,8,7\right)
,(9,3,5,4),(9,3,9,7),(10,4,7,6),\left( 16,4,6,5\right).
\]%
We now proceed as above and we determine the pair $(\mathcal{D}_{0},G_{\Delta}^{\Delta})$ by Lemmas \ref{LA1},\ref{LA2}, \ref{LA5}, \ref{LA6}, \ref{LA7} and \ref{LA8}, respectively, whereas $(\mathcal{D}_{1},G^{\Sigma})$ follows from Lemma \ref{LA53}. This completes the proof.
\end{proof}

\section {Proof of the main result}\label{Sec 4}
 In this section, we focus on the case where $\mathcal{D}$ is symmetric. We will use al the results contained in the previous sections as well as those contained in the Appendix, to prove Theorem \ref{main} and Corollary \ref{cor1}.

 \bigskip

\begin{proposition}\label{sinithos}
Let $\mathcal{D}$ be a symmetric $2$-$(v,k,\lambda )$ design admitting $G$
as a flag-transitive point-imprimitive automorphism group. Then then parameters $\mathcal{D}%
_{0},\mathcal{D}_{1}$ and $\mathcal{D}$ and the possibilities for $G_{\Delta }^{\Delta }$ and $G^{\Sigma}$ are as in Table \ref{symm}. In particular, $G$ acts $2$-transitively on $\Sigma$.
\begin{sidewaystable}
\caption{Admissible parameters for $\mathcal{D}%
_{0}, \mathcal{D}%
_{1}, \mathcal{D}%
_{0}$ and $G_{\Delta }^{\Delta }$ and $G^{\Sigma
}$ when $\mathcal{D}$ is symmetric.}\label{symm}
{\small\setlength{\tabcolsep}{2pt}
\begin{tabular}{|c|ccccc|c|c|ccccc|c|c|ccc|}
\hline
Line & $v_{0}$ & $k_{0}$ & $\lambda _{0}$ & $r_{0}$ & $b_{0}$ & $\theta $ & $G_{\Delta
}^{\Delta }$ & $v_{1}$ & $k_{1}$ & $\lambda _{1}$ & $r_{1}$ & $b_{1}$ & $\mu$ & $%
G^{\Sigma }$ & $v$ & $k$ & $\lambda $ \\
\hline
\hline
1 & $3$ & $2$ & $1$ & $2$ & $3$ & $4$ & $S_{3}$ & $5$ & $4$ & $3$ & $4$ & $5$ & $3$ & $%
AGL_1(5),A_{5},S_{5}$ & $15$ & $8$ & $4$ \\ 
2 & $4$ & $2$ & $1$ & $3$ & $6$ & $2$ & $A_{4},S_{4}$ & $4$ & $3$ & $2$ & $3$ & $4$& $4$
& $A_{4},S_{4}$ & $16$ & $6$ & $2$ \\ 
3 & $4$ & $2$ & $1$ & $3$ & $6$ & $6$ & $A_{4},S_{4}$ & $13$ & $9$ & $6$ & $9$ & $13$ & $4$
& $PSL_{3}(3)$ & $52$ & $18$ & $6$ \\ 
4 & $5$ & $3$ & $3$ & $6$ & $10$ & $3$ & $A_{5},S_{5}$ & $7$ & $6$ & $5$ & $6$ & $7
$ & $5$ & $AGL_1(7),PSL_{2}(7),A_{7},S_{7}$ & $35$ & $18$ & $9$ \\ 
5 & $6$ & $4$ & $6$ & $10$ & $15$ & $4$ & $S_{5},A_{6},S_{6}$ & $11$ & $10$
& $9$ & $10$ & $11$ & $6$ & $AGL_{1}(11),PSL_{2}(11),M_{11},A_{11},S_{11}$ & $66$
& $40$ & $24$ \\ 
6 & $7$ & $4$ & $2$ & $4$ & $7$ & $8$ & $PSL_{2}(7)$ & $9$ & $8$ & $7$ & $8$ & $9$ & $3$ & $PSL_{2}(8),P\Gamma L_{2}(8)$,$A_{9},S_{9}$ & $63$ & $32$ & $16$ \\ 
& & & & & & & & & & & & & & $G^{\Sigma}\leq AGL_{3}(2) $ & & &\\
7 & $3$ & $2$ & $1$ & $2$ & $3$ & $16$ & $S_{3}$ & $21$ & $16$ & $12$ & $16$ & $21$ & $7$ & $PSL_{3}(4):\epsilon$ with $\epsilon \mid 6$ & $63$ & $32$ & $16$ \\
8 & $6$ & $3$ & $2$ & $5$ & $10$ & $3$ & $A_{5}$ & $6$ & $5$ & $4$ & $5$ & $6$ & $6$ & $%
A_{5},S_{5},A_{6},S_{6}$ & $36$ & $15$ & $6$ \\ 
9 & $8$ & $4$ & $3$ & $7$ & $14$ & $4$ & $AGL_{1}(8),A\Gamma
L_{1}(8),AGL_{2}(3),PSL_{2}(7)$ & $8$ & $7$ & $6$ & $7$ & $8$ & $8$ & $AGL_{1}(8),A\Gamma
L_{1}(8),AGL_{3}(2)$ & $64$ & $28$
& $12$ \\
& & & & & & & & & & & & & & $PSL_{2}(7),PGL_{2}(7),A_{8},S_{8}$ & & &\\
10 & $8$ & $4$ & $6$ & $14$ & $28$ & $2$ & $PGL_{2}(7)$ & $8$ & $7$ & $6$ & $7$ & $8
$ & $8$ & $AGL_{1}(8),A\Gamma
L_{1}(8),AGL_{3}(2)$ & $64$ & 
$28$ & $12$ \\ 
& & & & & & & & & & & & & & $PSL_{2}(7),PGL_{2}(7),A_{8},S_{8}$ & & &\\
11 & $8$ & $4$ & $12$ & $28$ & $56$ & $1$ & $AGL_{3}(2)$ & $8$ & $7$ & $6
$ & $7$ & $8$ & $8$ & $AGL_{1}(8),A\Gamma
L_{1}(8),AGL_{3}(2)$ & $64$ & $28$ & $12$ \\
& & & & & & & & & & & & & & $PSL_{2}(7),PGL_{2}(7),A_{8},S_{8}$ & & &\\
12 & $9$ & $3$ & $1$ & $4$ & $12$ & $3$ & $G^{\Delta}_{\Delta} \leq AGL_{3}(2)$ & $5$ & $4$ & $3$ & $4$ & $5$ & $9$ & $A_{5},S_{5}$
& $45$ & $12$ & $3$ \\ 
13 & $10$ & $4$ & $2$ & $6$ & $15$ & $4$
	&  $S_{5},A_{6},S_{6}$ & $7$ & $6
$ & $5$ & $6$ & $7$ & $10$ & $AGL_{1}(7),PSL_{2}(7),A_{7},S_{7}$ & $70$ & $24$ & $8$
\\ 
14 & $10$ & $4$ & $4$ & $12$ & $30$ & $2$ & $PGL_{2}(9),M_{10},P\Gamma
L_{2}(9)$ & $7$ & $6$ & $5$ & $6$ & $7$ & $10$ & $AGL_{1}(7),PSL_{2}(7),A_{7},S_{7}$
& $70$ & $24$ & $8$ \\
15 & $16$ & $4$ & $1$ & $5$ & $20$ & $4$ & $2^{4}:5\leq G_{\Delta }^{\Delta }\leq
A\Gamma L_{2}(4)$ & $6$ & $5$ & $4$ & $5$ & $6$ & $16$ & $A_{5},S_{5},A_{6},S_{6}$
& $96$ & $20$ & $4$ \\ 
16 & $16$ & $4$ & $2$ & $10$ & $40$ & $2$& $2^{4}:(5:4),ASL_{2}(4),A\Sigma
L_{2}(4)$ & $6$ & $5$ & $4$ & $5$ & $6$ & $16$ & $A_{5},S_{5},A_{6},S_{6}$ & $96$ & 
$20$ & $4$\\
\hline
\end{tabular}}
\end{sidewaystable}
\end{proposition}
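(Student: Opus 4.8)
The plan is to impose the extra hypothesis that $\mathcal{D}$ be symmetric on the complete list of admissible Camina--Zieschang decompositions already tabulated in Propositions~\ref{k2orTrivial} and~\ref{3mink0minv0-2}, and then to read the action of $G$ on $\Sigma$ off the shape of $\mathcal{D}_{1}$. First I would record the translation of the hypothesis: by Lemma~\ref{L0} the relations $vr=bk$ and $\lambda(v-1)=r(k-1)$ show that $\mathcal{D}$ is symmetric if and only if $b=v$, if and only if $r=k$, if and only if $\lambda(v-1)=k(k-1)$. Since Lemma~\ref{bounds} guarantees that exactly one of $k_{0}=2$, $3\leq k_{0}=v_{0}-1$, $3\leq k_{0}\leq v_{0}-2$ occurs, Propositions~\ref{k2orTrivial} and~\ref{3mink0minv0-2} then say that the whole tuple $(\mathcal{D}_{0},\mathcal{D}_{1},\mathcal{D},G_{\Delta}^{\Delta},G^{\Sigma})$ is one of the lines of Tables~\ref{smaug}, \ref{smeagol}, \ref{sedici}.

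Now comes the bulk of the work. In each such line $v$ and $k$ are fixed integers while $\lambda$, $r$, $b$ (and $\theta$) are fixed rational multiples of the free parameter $\mu$, so imposing $b=v$ is a single linear equation in $\mu$ with a unique rational root; this root is precisely the value $\mu_{S}$ recorded in the last column of those tables. Keeping only the lines for which $\mu_{S}$ is a positive integer satisfying the congruence $\mu\equiv 0\pmod{\cdot}$ attached to that line, a mechanical pass leaves the sixteen tuples of Table~\ref{symm} together with one stray candidate, namely a putative symmetric $2$-$(40,27,18)$ design arising with $(v_{0},k_{0})=(4,3)$, $(v_{1},k_{1})=(10,9)$. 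This last candidate passes Bruck--Ryser--Chowla, so it has to be killed by hand: here $\mathcal{D}_{1}$ would be the complete $2$-$(10,9,8)$ design, forcing $G^{\Sigma}$ to be a $2$-transitive group of degree $10$, while $G_{\Delta}^{\Delta}\in\{A_{4},S_{4}\}$ and $\mu=b/b_{1}=4$; a short order argument on $G$ --- or a direct check with \texttt{GAP}\cite{GAP} and the package \texttt{DESIGN}\cite{Design} --- rules out any flag-transitive, point-imprimitive $G$ on such a design. I expect this elimination, rather than the routine arithmetic, to be the delicate point. What survives is exactly Table~\ref{symm}.

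Finally, for the assertion that $G$ is $2$-transitive on $\Sigma$, I would inspect Table~\ref{symm} line by line. In every line $\mathcal{D}_{1}$ is either the complete symmetric $2$-$(v_{1},v_{1}-1,v_{1}-2)$ design --- note that the ``symmetric $1$-design'' alternative of Theorem~\ref{CamZie}(2.ii) forces $b_{1}=v_{1}$ together with $k_{1}=v_{1}-1$ and so coincides with this complete design, hence $\mathcal{D}_{1}$ is genuinely a $2$-design throughout --- or else one of $\overline{PG_{2}(3)}=2$-$(13,9,6)$ and $\overline{PG_{2}(4)}=2$-$(21,16,12)$, in which case $G^{\Sigma}$ contains $PSL_{3}(3)$ respectively $PSL_{3}(4)$. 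In the first case a flag of $\mathcal{D}_{1}$ is the same thing as an ordered pair of distinct classes of $\Sigma$ (the block being the complement of the second class), so the flag-transitivity of $G^{\Sigma}$ on $\mathcal{D}_{1}$ given by Theorem~\ref{CamZie}(2.iii) is literally $2$-transitivity of $G^{\Sigma}$ on $\Sigma$; in the second case $G^{\Sigma}$ already contains the $2$-transitive group $PSL_{3}(q)$ on the $q^{2}+q+1=v_{1}$ points of $PG_{2}(q)=\Sigma$. Since $G^{\Sigma}$ is the permutation group induced by $G$ on $\Sigma$, in all cases $G$ acts $2$-transitively on $\Sigma$, and the proposition follows.
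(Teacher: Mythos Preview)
Your overall strategy matches the paper's exactly: take the complete list of admissible Camina--Zieschang decompositions from Tables~\ref{smaug}, \ref{smeagol}, \ref{sedici}, impose $b=v$ (equivalently $\mu=\mu_{S}$), and then read off $2$-transitivity of $G^{\Sigma}$ line by line. The paper's proof is in fact just the one-sentence version of this. Your treatment of the $2$-transitivity claim is more explicit than the paper's bare assertion and is correct.

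You are right, however, that Line~31 of Table~\ref{smaug} records $\mu_{S}=4$ (and $\mu_{S}=4$ satisfies the congruence $\mu\equiv 0\pmod 4$ there), giving the parameters $(v,k,\lambda)=(40,27,18)$; this line is silently absent from Table~\ref{symm}, and the paper's proof offers no reason for the omission. So you have caught something the paper glosses over. The problem is that your own proposal does not actually carry out the elimination either: you defer it to ``a short order argument on $G$ --- or a direct check with \texttt{GAP}'' and explicitly flag it as ``the delicate point.'' As written, this is a genuine gap in your argument. To close it you must either produce the promised order argument (for each $2$-transitive $G^{\Sigma}$ of degree~$10$ with $G_{\Delta}^{\Delta}\in\{A_{4},S_{4}\}$, show that no flag-transitive $G$ on a $2$-$(40,27,18)$ design can arise), or report the outcome of the \texttt{DESIGN} computation, or else include the line in Table~\ref{symm} and dispose of it later alongside the other $\lambda>10$ cases. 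Until one of these is done, the proof of the proposition is incomplete --- in your version just as in the paper's.
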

\begin{proof}
Table \ref{symm} arises follows from Tables \ref{smeagol}, \ref{smaug} and \ref{sedici} for $\mu=\mu_{S}$. In all cases of Table \ref{symm}, the group $G$ acts $2$-transitively on $\Sigma$.    
\end{proof}

\bigskip

Let $\Delta \in \Sigma $ and $x \in \Delta $. Since $G_{(\Sigma)} \trianglelefteq G_{\Delta}$ and $G_{(\Delta)} \trianglelefteq G_{x}$, it is immediate to verify that $(G^{\Sigma})_{\Delta}=(G_{\Delta})^{\Sigma}$ and that $\left( G_{\Delta}^{\Delta} \right)_{x}=(G_{x})^{\Delta}$. Hence, in the sequel $(G^{\Sigma})_{\Delta}$ and $\left( G_{\Delta}^{\Delta} \right)_{x}$ will simply be denoted by $G^{\Sigma}_{\Delta}$ and $G_{x}^{\Delta}$, respectively. Moreover, the following holds:
\begin{equation}\label{toulaxiston}
\frac{G^{\Sigma}_{\Delta}}{G_{(\Delta)}^{\Sigma}} \cong \frac{G_{\Delta}}{G_{(\Delta)}G_{(\Sigma)}} \cong \frac{G^{\Delta}_{\Delta}}{G_{(\Sigma)}^{\Delta}}.
\end{equation}

\bigskip

\begin{lemma}\label{soc} One of the following holds:
\begin{enumerate}
    \item $G_{(\Sigma)}=1$, $\mathcal{D} \cong \overline{PG_{3}(2)}$ and $G\cong \Sigma L_{2}(4)$;
    \item $G_{(\Sigma)}\neq 1$ and $Soc(G_{\Delta }^{\Delta })\trianglelefteq G_{(\Sigma )}^{\Delta }$ for any $\Delta \in \Sigma $.
\end{enumerate}
\end{lemma}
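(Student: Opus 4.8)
The plan is to separate the two alternatives according to whether $G_{(\Sigma)}$ is trivial, and I would settle the case $G_{(\Sigma)}\neq 1$ first, since it is essentially formal. Fix $\Delta\in\Sigma$. By Proposition~\ref{sinithos} the pair $(\mathcal{D},G)$ occurs in Table~\ref{symm}, and a direct check of the column $G_{\Delta}^{\Delta}$ of that table shows that in every line $G_{\Delta}^{\Delta}$ is a monolithic group, i.e.\ has a unique minimal normal subgroup, namely $\Soc(G_{\Delta}^{\Delta})$ (in each case it is primitive on $\Delta$, of affine or almost simple type). Since $G_{(\Sigma)}\trianglelefteq G$ and $G_{(\Sigma)}\le G_{\Delta}$, the image $G_{(\Sigma)}^{\Delta}$ is a normal subgroup of $G_{\Delta}^{\Delta}$. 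I would then check that $G_{(\Sigma)}^{\Delta}\neq 1$: if $G_{(\Sigma)}$ acted trivially on $\Delta$, i.e.\ $G_{(\Sigma)}\le G_{(\Delta)}$, then, using that $G_{(\Sigma)}\trianglelefteq G$ and that $G$ is transitive on $\Sigma$, for every $\Delta'\in\Sigma$ one would get $G_{(\Sigma)}=G_{(\Sigma)}^{g}\le G_{(\Delta)}^{g}=G_{(\Delta')}$ for a suitable $g\in G$, whence $G_{(\Sigma)}\le\bigcap_{\Delta'\in\Sigma}G_{(\Delta')}=1$ because $G$ is faithful on $\mathcal{P}$ --- contradicting $G_{(\Sigma)}\neq 1$. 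Thus $1\neq G_{(\Sigma)}^{\Delta}\trianglelefteq G_{\Delta}^{\Delta}$, so $G_{(\Sigma)}^{\Delta}$ contains the unique minimal normal subgroup $\Soc(G_{\Delta}^{\Delta})$ of $G_{\Delta}^{\Delta}$, and the latter, being normal in $G_{\Delta}^{\Delta}$, is a fortiori normal in $G_{(\Sigma)}^{\Delta}$. As $\Delta$ was arbitrary, this proves~(2).

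For the remaining case $G_{(\Sigma)}=1$ the point is that $G\cong G^{\Sigma}$ is then faithful and, by Proposition~\ref{sinithos}, $2$-transitive on the $v_{1}$ classes of $\Sigma$, while still being flag-transitive on the symmetric design $\mathcal{D}$; hence $vk\mid|G|$ and $|G|$ equals the order of one of the groups listed in the column $G^{\Sigma}$ of the relevant line of Table~\ref{symm}. I would go through the lines of Table~\ref{symm}: several are excluded at once because $vk\nmid|G^{\Sigma}|$ for all admissible $G^{\Sigma}$ (for instance the line with $v=16$, $k=6$, $v_{1}=4$, where $vk=96$ but $|G^{\Sigma}|\le24$). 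Every line surviving this divisibility test has $\lambda\le10$ except those yielding a $2$-$(63,32,16)$ or a $2$-$(66,40,24)$ design; in the former range the classification of flag-transitive point-imprimitive symmetric $2$-designs with $\lambda\le10$ of \cite{MS} and \cite{Mo} leaves no design compatible with $G_{(\Sigma)}=1$, and the two remaining parameter sets are ruled out by \cite[Theorem III]{CK} together with Section~\ref{Sec2}. The only line that survives is the one with $v=15$, $k=8$, $\lambda=4$ and $v_{1}=5$: there $vk=120=5!$ forces $G=S_{5}$, and by \cite[Theorem III]{CK} (or \cite{PZ}) the design $\mathcal{D}$ is the unique flag-transitive $2$-$(15,8,4)$ design, namely $\overline{PG_{3}(2)}$, whose point-imprimitive flag-transitive automorphism group is $G\cong S_{5}\cong\Sigma L_{2}(4)$. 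This gives~(1).

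The main obstacle is the case $G_{(\Sigma)}=1$: divisibility of $|G^{\Sigma}|$ by $vk$ does not by itself eliminate every line of Table~\ref{symm} (for example the line with $G^{\Sigma}=PSL_{3}(3)$ of degree $13$ passes the numerical test), so one is forced to fall back on the already available classifications of small flag-transitive symmetric designs, and carrying out this elimination consistently line by line is the delicate part. By contrast the case $G_{(\Sigma)}\neq 1$ is immediate once one records that each $G_{\Delta}^{\Delta}$ occurring in Table~\ref{symm} is monolithic, since a nontrivial normal subgroup of a monolithic group always contains its socle.
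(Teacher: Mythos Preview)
Your treatment of the case $G_{(\Sigma)}\neq 1$ is correct and essentially identical to the paper's argument.

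There is, however, a genuine gap in your handling of $G_{(\Sigma)}=1$. The divisibility condition $vk\mid|G^{\Sigma}|$ leaves surviving (among others) Lines~5, 6 and~7 of Table~\ref{symm}, i.e.\ the parameter sets $(66,40,24)$ and $(63,32,16)$ with $\lambda>10$. You claim these are ``ruled out by \cite[Theorem III]{CK} together with Section~\ref{Sec2}'', but neither reference addresses them: Section~\ref{Sec2} concerns only $(64,28,12)$ designs, and Cameron--Kantor does not furnish a non-existence statement for flag-transitive point-imprimitive actions of $S_{11}$, $M_{11}$, $A_9$, $S_9$ or $P\Gamma L_3(4)$ on a $(66,40,24)$ or $(63,32,16)$ design. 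Nor can you appeal to Lemma~\ref{SixtyThree}, since that lemma is proved later and its proof depends (via Theorem~\ref{TheList}) on the very lemma you are trying to establish.

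What you are missing is the structural constraint the paper exploits at the outset: when $G_{(\Sigma)}=1$ one has $G_{\Delta}=G_{\Delta}^{\Sigma}$, and by~(\ref{toulaxiston}) the group $G_{\Delta}^{\Delta}$ is a \emph{quotient} of the class stabilizer $G_{\Delta}^{\Sigma}$. Comparing the columns $G_{\Delta}^{\Delta}$ and $G^{\Sigma}$ of Table~\ref{symm} shows that this quotient condition is satisfied only in Lines~1, 3, 7, 8, 9 and~13, yielding exactly the six cases~(i)--(vi) the paper lists. This immediately disposes of Line~5 (since $S_6$ is not a quotient of $S_{10}$) and of Line~6 (since $PSL_2(7)$ is not a quotient of $S_8$); only Line~7 with $G\cong P\Gamma L_3(4)$ remains among the $\lambda>10$ cases, and the paper eliminates it by a short geometric argument in $\overline{PG_2(4)}$, while case~(v) is killed by the elementary observation that $|G_B|=21$ cannot act transitively on $28$ points. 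Your divisibility test is strictly weaker than this quotient condition and leaves you with cases you cannot close.
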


\begin{proof}
Assume that $G_{(\Sigma )}=1$. Then $G^{\Sigma }=G$, and hence $G_{\Delta}^{\Sigma }=G_{\Delta}$. Furthermore, $G_{\Delta }^{\Delta }$ is a quotient group of $G_{\Delta}^{\Sigma }$ by (\ref{toulaxiston}). Now, looking at Table \ref{symm}, we see that only the following cases as in Lines 1, 3, 7, 8, 9 and 13 fulfill the previous property. More precisely, we have the following admissible cases:
\begin{enumerate}
    \item[(i).] $\mathcal{D}$ is a $2$-$(15,8,4)$ design, $G\cong S_{5}$, $G_{\Delta}\cong S_{4}$ and $G_{\Delta}^{\Delta}\cong S_{3}$;
    \item[(ii).]$\mathcal{D}$ is a $2$-$(52,18,6)$ design, $G\cong PSL_{3}(3)$, $G_{\Delta}\cong 3^{2}:GL_{2}(3)$ and $G_{\Delta}^{\Delta}\cong S_{4}$;
    \item[(iii).] $\mathcal{D}$ is a $2$-$(63,32,16)$ design, $G\cong P\Gamma L_{3}(4)$, $G_{\Delta}\cong 2^{4}:(3 \times A_{5})\cdot2$ and $G_{\Delta}^{\Delta}\cong S_{3}$;
    \item[(iv).] $\mathcal{D}$ is a $2$-$(36,15,6)$ design, $G\cong A_{6}$, $G_{\Delta}\cong A_{5}$ and $G_{\Delta}^{\Delta}\cong A_{5}$;
     \item[(v).] $\mathcal{D}$ is a $2$-$(64,28,12)$ design, $G\cong AGL_{3}(2)$, $G_{\Delta}\cong PSL_{2}(7)$ and $G_{\Delta}^{\Delta}\cong PSL_{2}(7)$;
     \item[(vi).] $\mathcal{D}$ is a $2$-$(70,24,8)$ design, $G\cong A_{7}:2^{\varepsilon}$, $\varepsilon=0,1$, $G_{\Delta}\cong A_{6}:2^{\varepsilon}$ and $G_{\Delta}^{\Delta}\cong A_{6}:2^{\varepsilon}$.
\end{enumerate}
Cases (ii), (iv) and (vi) cannot occur by \cite[Theorem 1]{MS}. In (v), one has $\left\vert G\right\vert
=2^{6}\cdot 3\cdot 7$, and hence $\left\vert
G_{B}\right\vert = 21$ since $b=v=2^{6}$, where $B$ is any block of $\mathcal{D}$. However, this contradicts $G_{B}$ acting transitively
on $B$ and $k=28$. So, (v) is ruled  out. In (iii), one has $G_{(\Delta)}\cong  2^{4}: A_{5}$, and $A_{5}$ acts irreducibly on the normal subgroup of order $2^{4}$. Thus, if we denote $G_{(\Delta)}$ with $K$,  $K^{\prime}\cong  2^{4}: A_{5}$ fixes each of the $\theta=4$ blocks of $\mathcal{D}$ intersecting $\Delta$ in the same subset. In particular, $K^{\prime}$ fixes each of the $12$ blocks of $\mathcal{D}$ intersecting $\Delta$ in a non-empty set. Then there are two distinct blocks $B_{1}$ and $B_{2}$ of $\mathcal{D}$ such that the blocks $B_{1}^{\Sigma}$ and $B_{2}^{\Sigma}$ are also distinct since $\mu_{S}=3$, and these are also preserved by $K^{\prime}$. Now, both $B_{1}^{\Sigma}$ and $B_{2}^{\Sigma}$ are the complementary sets of two (distinct) lines of $PG_{2}(4)$ since $\mathcal{D}_{1}\cong\overline{PG_{2}(4)}$ by Lemma \ref{LA9}(3). Then $K^{\prime}$ preserves the lines $\ell_{1}$ and $\ell_{2}$ of $PG_{2}(4)$ complementary to $B_{1}^{\Sigma}$ and $B_{2}^{\Sigma}$, respectively, and their intersection point, say $x$. So, $2^{4}: A_{5}\cong K^{\prime}\leq G_{x,\ell_{1},\ell_{2}}$, and we reach a contradiction since the order of $G_{x,\ell_{1},\ell_{2}}$ is not divisible by $5$ being $G\cong P\Gamma L_{3}(4)$ flag-transitive on $PG_{2}(4)$ and $5$ the number of lines incident with any fixed point of $PG_{2}(4)$. Finally, (i) implies (1) by \cite[Lemma 4.3]{PZ}.

Assume that $G_{(\Sigma )} \neq 1$. If there is $\Delta _{0}\in \Sigma $ such that $G_{(\Sigma )}^{\Delta_{0}
}=1$. Then $G_{(\Sigma )}\leq G_{(\Delta )}$ for each $\Delta \in \Sigma$ since $G_{(\Sigma)}\unlhd G$ and $G$ acts transitively on $\Sigma$, and hence $G_{(\Sigma )}=1$, which contrary to our assumption. Thus, $G_{(\Sigma )}^{\Delta} \neq 1$ for each $\Delta \in \Sigma$. Now, recall that $G_{\Delta }^{\Delta }$ acts point primitively on $\mathcal{D}_{0}$ since it acts point-$2$-transitively on $\mathcal{D}_{0}$ when $G_{\Delta }^{\Delta }$ is as in Lines 1--11 and 13--14, and by Lemmas \ref{LA6} and \ref{LA8} when $G_{\Delta }^{\Delta }$ is as in Lines 12 and 15--16, respectively. Hence, $Soc(G_{\Delta }^{\Delta })\trianglelefteq G_{(\Sigma )}^{\Delta }$ for each $\Delta \in \Sigma$ by \cite[Theorem 4.3B]{DM} since $1 \neq G_{(\Sigma )}^{\Delta } \unlhd G_{\Delta}^{\Delta }$, which is (2).
\end{proof}

\bigskip

\begin{theorem}\label{TheList}
Let $\mathcal{D}$ be a symmetric $2$-$(v,k,\lambda )$ design admitting a
flag-transitive point-imprimitive automorphism group $G$. If $G_{(\Sigma)}\neq 1$, then one of the
following holds:

\begin{enumerate}
\item $\mathcal{D}$ is isomorphic to one of the two $2$-$(16,6,2)$ designs as in \cite{ORR}, and $G$ is as in Lines 1 or 2 of Table \ref{telionos}, respectively;

\item $\mathcal{D}$ is isomorphic to the $2$-$(45,12,3)$ design as in \cite{P}, and $G$ is one of the groups $G$ is as in Line 3 of Table \ref{telionos};

\item $\mathcal{D}$ is isomorphic to the $2$-$(15,8,4)$ design as in \cite{CK} or \cite{PZ}, and $\Sigma L_{2}(4) \unlhd G \leq \Gamma L_{2}(4)$;

\item $\mathcal{D}$ is isomorphic to one of the four $2$-$(96,20,4)$ designs as in \cite{LPR}, and $G$ is as in Lines 5--8 of Table \ref{telionos}, respectively;

\item $\mathcal{D}$ is a $2$-$(63,32,16)$ design and the following hold:
\begin{enumerate}
\item $\mathcal{D}_{0}$ is the complete $2$-$(3,2,1)$ design and $%
G_{\Delta }^{\Delta }\cong S_{3}$
\item $\mathcal{D}_{1}\cong \overline{PG_{2}(4)}$ and $PSL_{3}(4) \unlhd G^{\Sigma
} \leq P\Gamma L_{3}(4)$.
\end{enumerate}
\item $\mathcal{D}$ is a $2$-$(64,28,12)$ design and the following hold:
\begin{enumerate}
\item either $\mathcal{D}_{0}\cong AG_{3}(2)$ with all planes as blocks, and $%
G_{\Delta }^{\Delta }$ isomorphic to one of the groups $AGL_{1}(8),A\Gamma
L_{1}(8),AGL_{3}(2)$, or $\mathcal{D}_{0}$ is a $2$-$(8,4,12)$ design and $G_{\Delta }^{\Delta }\cong AGL_{3}(2)$;
\item $\mathcal{D}_{1}$ is the complete $2$-$(8,7,6)$ design, and $G^{\Sigma
}$ isomorphic to one of the group $AGL_{1}(8),A\Gamma
L_{1}(8),PSL_{2}(7),PGL_{2}(7),A_{8},S_{8}$.
\end{enumerate}
\end{enumerate}
\end{theorem}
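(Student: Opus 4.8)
The proof is a case-by-case sift of the sixteen lines of Table \ref{symm}, organized by the hypothesis $G_{(\Sigma)}\neq 1$ together with the conclusion of Lemma \ref{soc}(2), namely $\Soc(G_\Delta^\Delta)\trianglelefteq G_{(\Sigma)}^\Delta$ for every $\Delta\in\Sigma$. First I would note that, since $G$ acts $2$-transitively on $\Sigma$ (Proposition \ref{sinithos}) and $G^\Sigma$ is flag-transitive on the (possibly trivial) $2$-design $\mathcal{D}_1$, the pair $(\mathcal{D}_1,G^\Sigma)$ is one of the entries listed in the last columns of Table \ref{symm}, and the quotient relation \eqref{toulaxiston} forces $G_\Delta^\Delta$ to contain a normal subgroup isomorphic to a section of $G_\Delta^\Sigma$. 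I would then run down the table: Lines 2 and 12 give the two $2$-$(16,6,2)$ designs and the $2$-$(45,12,3)$ design respectively, where the identification with the designs of \cite{ORR} and \cite{P} is forced because the parameters, the imprimitivity data, and the socle type pin down the design up to isomorphism (invoking \cite[Theorem 1]{MS} or \cite[Theorem 1.2]{Mo} for $\lambda\le 10$ to get uniqueness, and citing the known classifications); Lines 15--16 similarly yield the four $2$-$(96,20,4)$ designs of \cite{LPR}, again with $\lambda=4\le 10$ so that the $\lambda\le 10$ classification applies. Line 1 was already shown in Lemma \ref{soc} to fall under $G_{(\Sigma)}=1$, hence is excluded here; the argument there (via \cite[Lemma 4.3]{PZ}) shows that when $G_{(\Sigma)}\ne 1$ the $2$-$(15,8,4)$ design cannot have $G\cong S_5$, but it can still arise with $\Sigma L_2(4)\unlhd G\le\Gamma L_2(4)$ acting with a larger kernel, giving conclusion (3).

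**The hard cases.**
The genuine work is concentrated in the two large-$\lambda$ families: $v=63$ (Lines 6 and 7) and $v=64$ (Lines 9, 10, 11), since these are the cases not covered by the $\lambda\le 10$ literature. For $v=63$ I would argue that Line 6, where $G_\Delta^\Delta\cong PSL_2(7)$ with $(v_0,k_0)=(7,4)$ and $G^\Sigma\le AGL_3(2)$, is incompatible with $G_{(\Sigma)}\neq 1$: here $\Soc(G_\Delta^\Delta)=PSL_2(7)$ must lie in $G_{(\Sigma)}^\Delta$, so $|G_{(\Sigma)}|$ is divisible by $|PSL_2(7)|=168$, and combining this with $|G^\Sigma|\mid |AGL_3(2)|=8\cdot168$ and $|G|=|G^\Sigma|\cdot|G_{(\Sigma)}|$, I would derive a contradiction with the order of a flag-transitive group on a $2$-$(63,32,16)$ design (the replication number $r=32$ and $b=63$ force $|G_B|=|G|/63$ to have controlled $2$- and $3$-parts, incompatible with the required divisibility). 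Line 7, by contrast, survives: here $(v_0,k_0)=(3,2)$ so $\mathcal{D}_0$ is the complete $2$-$(3,2,1)$ design with $G_\Delta^\Delta\cong S_3$, and $\mathcal{D}_1$ is a $2$-$(21,16,12)$ design with $PSL_3(4)\unlhd G^\Sigma\le P\Gamma L_3(4)$; I would identify $\mathcal{D}_1$ with $\overline{PG_2(4)}$ because $2$-$(21,16,12)$ is exactly the complement of $PG_2(4)$ and a flag-transitive group of that order and socle forces this (using the transitivity on $\Sigma$ and Lemma \ref{LA9}(3)), yielding conclusion (5).

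**The $v=64$ analysis and the main obstacle.**
For $v=64$ (Lines 9--11 of Table \ref{symm}) I would note these are the three lines with $(v_0,k_0)=(8,4)$, $\mathcal{D}_1$ the complete $2$-$(8,7,6)$ design, and $G^\Sigma$ among $AGL_1(8),A\Gamma L_1(8),PSL_2(7),PGL_2(7),A_8,S_8$; the hypothesis $G_{(\Sigma)}\ne1$ together with $\Soc(G_\Delta^\Delta)\trianglelefteq G_{(\Sigma)}^\Delta$ rules out nothing new on the $\mathcal{D}_1$ side, and conclusion (6)(b) is just the list of these $G^\Sigma$. On the $\mathcal{D}_0$ side, Line 9 allows $\mathcal{D}_0\cong AG_3(2)$ with planes as blocks (the $2$-$(8,4,3)$ design) and $G_\Delta^\Delta\in\{AGL_1(8),A\Gamma L_1(8),AGL_3(2)\}$, while Lines 10 and 11 allow $\mathcal{D}_0$ to be a $2$-$(8,4,6)$ or $2$-$(8,4,12)$ design. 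I would eliminate the $2$-$(8,4,6)$ case (Line 10, $G_\Delta^\Delta\cong PGL_2(7)$) and reconcile the $2$-$(8,4,12)$ case with $G_\Delta^\Delta\cong AGL_3(2)$ using the socle constraint: $\Soc(PGL_2(7))=PSL_2(7)$ is not a $2$-group, so if $\mathcal{D}_0$ were a $2$-$(8,4,6)$ design we would need $PSL_2(7)\le G_{(\Sigma)}^\Delta$, and I would play this against the action of $G_{(\Sigma)}$ on each of the eight classes simultaneously (the key point being that $G_{(\Sigma)}$ embeds in the product $\prod_{\Delta}G_{(\Sigma)}^\Delta$ which must be compatible with $|G|=2^{a}\cdot3\cdot7$ for a flag-transitive group on $2$-$(64,28,12)$), to force either $\mathcal{D}_0$ to be the $2$-$(8,4,3)$ design of $AG_3(2)$ or the $2$-$(8,4,12)$ design with the full affine group — giving conclusion (6)(a). \textbf{The main obstacle} I anticipate is precisely this $v=64$ bookkeeping: unlike the other lines, here both $\mathcal{D}_0$ and $\mathcal{D}_1$ come in several flavours with overlapping automorphism groups, the design is not determined by $\lambda\le10$ results, and one must carefully track how $G_{(\Sigma)}$ sits inside $G$ and how its action on one class determines (via $2$-transitivity on $\Sigma$) its action on all of them, without yet assuming which of the two $2$-$(64,28,12)$ designs $\mathcal{D}$ is — that identification is the business of the later sections, so this theorem must stop at the structural dichotomy (6)(a)--(b).
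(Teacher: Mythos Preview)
Your case-by-case sift has a genuine gap: you entirely omit Line 5 of Table \ref{symm}, the $2$-$(66,40,24)$ case. Here $\lambda=24>10$, so it is \emph{not} covered by the $\lambda\le 10$ classifications you invoke, and $G_\Delta^\Delta\in\{S_5,A_6,S_6\}$ is almost simple, so it does not fall under your ``$v=63$'' or ``$v=64$'' discussions either. This line must be eliminated, and nothing in your sketch does so. You also silently drop the $PSL_2(7)$ option for $G_\Delta^\Delta$ in Line 9 (it is listed there alongside the three affine groups), and your order-counting argument for Line 6 is not correct as stated: $G^\Sigma$ there can be as large as $A_9$ or $S_9$, so the bound $|G^\Sigma|\mid|AGL_3(2)|$ fails, and the vague ``controlled $2$- and $3$-parts'' claim does not produce a contradiction.

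The paper avoids all four of these difficulties with a single unified argument: if $G_\Delta^\Delta$ is almost simple, embed $G$ in the wreath product $G_\Delta^\Delta\wr G^\Sigma$, set $T=\prod_i T_i$ with $T_i=\Soc(G_{\Delta_i}^{\Delta_i})$, and analyse $L=G_{(\Sigma)}\cap T$. Either some $L^{\Delta_{i_0}}=1$, in which case $L=1$ (by transitivity on $\Sigma$), forcing $G_{(\Sigma)}$ to be solvable and hence $G_{(\Sigma)}^\Delta=1$, contradicting Lemma \ref{soc}(2); or $L^{\Delta_i}=T_i$ for every $i$, whence $L=T$ by \cite[Lemma 4.3.A]{DM}, and then a nontrivial element supported on a single factor fixes $(v_1-1)v_0>v/2$ points, contradicting \cite[Corollary 3.7]{Land}. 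This one argument disposes of Lines 5, 6, 10, and the $PSL_2(7)$ option in Line 9 simultaneously, leaving exactly Lines 7, 9 (affine options only), and 11 --- which is precisely the content of conclusions (5) and (6). Your ad-hoc order computations, even if they could be repaired, would need to be done separately in each case and would still leave Line 5 unaddressed.
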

\begin{proof}
Assertions (1)--(4) follow from \cite[Theorem 1]{MS} and \cite[Theorem 1.2]%
{Mo} for $\lambda \leq 10$. In these cases, $G$ is detemined by using \cite{CK,ORR,LPR,P,PZ} together with aid of \texttt{GAP} \cite{GAP}. Hence, assume that $\lambda >10$. Hence, only
the cases as in lines 5--7 and 9--11 need to be analyzed.

Suppose that $G_{\Delta }^{\Delta }$ is almost
simple. Since $G$ is permutationally isomorphic to a subgroup $G_{\Delta
}^{\Delta }\wr G^{\Sigma }$ by \cite{SP}, we may identify the point set of $%
\mathcal{P}$ with $\Delta \times \Sigma $ and hence $\mathcal{P=}$ $%
\bigcup\limits_{i=1}^{v_{1}}\Delta _{i}$, where $\Delta _{i}=\left( \Delta
,i\right) $. For each $i=1,...,v_{1}$ set $T_{i}=Soc(G_{\Delta _{i}}^{\Delta
_{i}})$ and $T=\prod\limits_{i=1}^{v_{1}}T_{i}$. Then $T\trianglelefteq
G_{\Delta }^{\Delta }\wr G^{\Sigma }$, and $G$ normalizes $T$. Hence, $%
L=G_{(\Sigma )}\cap T$ is a normal subgroup of $G$ since $G_{(\Sigma
)}\vartriangleleft G$. Moreover, $L^{\Delta _{i}}\leq T^{\Delta _{i}}=T_{i}$
for each $i=1,...,v_{1}$. 

Suppose that there is $1 \leq i_{0} \leq v_{1}$
such that $L^{\Delta _{i_{0}}}=1$, then $L\leq G_{(\Delta _{i_{0}})}$ and
hence $L=1$ since $L\vartriangleleft G$ and $G$ acts transitively on $\Sigma 
$. Then $G_{(\Sigma )}\cap T=1$ and so $G_{(\Sigma )}$ is solvable since $%
G_{(\Sigma )}\leq S$ with $S=\prod\limits_{i=1}^{v_{1}}G_{\Delta
_{i}}^{\Delta _{i}}$ and $S/T$ is solvable. Therefore $G_{(\Sigma )}^{\Delta
}$ is solvable and hence $G_{(\Sigma )}^{\Delta }=1$ since $G_{\Delta
}^{\Delta }$ almost simple, but this contradicts Lemma \ref{soc}(2). Thus $L^{\Delta _{i}} \neq 1$ for each $i=1,...,v_{1}$, and hence 
$L^{\Delta_{i}}=T_{i}$ for each $i=1,...,v_{1}$  by \cite[Theorem 4.3B]{DM} since $G$ acts
point-primitively on $\mathcal{D}$, $L^{\Delta _{i}} \unlhd G^{\Delta _{i}}$ and $L^{\Delta _{i}}\leq T_{i}$
for each $i=1,...,v_{1}$.

Let $L_{i}=L\cap K_{i}$, where $K_{i}=T_{1}\times \cdots \times
T_{i-1}\times \left\{ 1\right\} \times T_{i+1}\times \cdots \times T_{v_{1}}$
for each $i=1,...,v_{1}$. Then $L_{i}\trianglelefteq L$ with $%
L_{i}=L_{(\Delta _{i})}$, and $\bigcap\limits_{i=1}^{v_{1}}L_{i}=1$.
Moreover $L_{i}\neq L_{j}$ for $i\neq j$\ and 
\[
L/L_{i}=L/L_{(\Delta _{i})}=L/\left( L\cap G_{(\Delta _{i})}\right) \cong
LG_{(\Delta _{i})}/G_{(\Delta _{i})}=L^{\Delta _{i}}=T_{i}
\]
for each $i=1,...,v_{1}$. Thus, $L=T$ by \cite[Lemma 4.3.A]{DM}. Then any
non-trivial element of $\left\{ 1\right\} \times \cdots \times \left\{
1\right\} \times T_{i}\times \left\{ 1\right\} \times \cdots \times \left\{
1\right\} $  fix $\mathcal{P}\setminus \Delta _{i}$ pointwise, which has size 
$\left( v_{1}-1\right) v_{0}>v_{0}v_{1}/2=v/2$ since $v_{1}>2$, but this is contrary to 
\cite[Corollary 3.7]{Land}. Thus, $G_{\Delta }^{\Delta }$ cannot be almost
simple. Therefore, only the cases  as in Lines 7, 9 or 11 of Table \ref{symm} are admissible, and
we obtain (5) and (6), respectively.
\end{proof}

\begin{table}[h!]
	\centering
	\caption{Symmetric $2$-designs with $\lambda \leq 4$ and related flag-transitive point-imprimitive automorphism groups.}\label{telionos}
\begin{tabular}{lll}
\hline
Line & $(v,k,\lambda)$ & flag-transitive point-imprimitive automorphism group $G
$ \\
\hline
1 & $(16,6,2)$ & $(2^{4}:3):2$, $(2^{4}:2):3$, $\left( (2^{4}:2):2\right) :3$%
, $\left( (2^{4}:2):3\right) :2$, \\ 
 &  & $((4\times 4):3):2$, $((4\times 4):2):3$ (4 classes), $\left(
((4\times 4):3):2\right) :2$ \\ 
 &  & $\left( \left( \left( (2^{4}:2):2\right) :2\right) :2\right) :3$, $%
\left( \left( \left( (2^{4}:2):2\right) :2\right) :3\right) :2$ (4 classes), 
\\ 
 &  & $\left( \left( \left( 2\times (2^{4}:2)\right) :2\right) :3\right) :2$
\\ 
2 & $(16,6,2)$ & $((2\times \left( \left( 4\times 2\right) :2\right) :4):3$, 
$\left( ((2\times \left( \left( 4\times 2\right) :2\right) :4):3\right) :2$
\\ 
3 & $(45,12,3)$ & $(3^{4}:5):8$, $3^{4}:2.A_{5}$,  $3^{4}:2.S_{5}$ \\
4 & $(15,8,4)$ & $\Sigma L_{2}(4)$, $\Gamma L_{2}(4)$\\
5 & $(96,20,4)$ & $2^{8}.(3\times A_{6}).2$, $2^{8}.(3\times A_{6})$, $%
2^{8}.S_{6}$, $2^{8}.\Gamma L_{2}(4)$, $2^{8}.A_{6}$, $2^{8}.GL_{2}(4)$, \\ 
  &             &   $2^{8}.A_{5},2^{8}.S_{5}$ \\
6 & $(96,20,4)$ & $2^{8}.S_{6}$, $2^{8}.A_{6}$, $2^{8}.S_{5}$, $2^{8}.A_{5}$%
, $2^{4}.S_{6}$, $2^{8}.A_{6}$ (4 classes), $2^{4}.S_{5}$ \\ 
7 & $(96,20,4)$ & $2^{6}.(3.A_{6}).2$, $2^{6}.(3.A_{6})$, $2^{6}.\Gamma
L_{2}(4)$, $2^{6}.GL_{2}(4)$, $2^{6}.S_{5}$, $2^{6}.A_{5}$ \\
   &            & $2^{5}.S_{6}$, $%
2^{5}.A_{6}$ (2 classes), $2^{5}.S_{5}$ \\
8 & $(96,20,4)$ & $2^{6}.S_{5},2^{6}.A_{5}$, $2^{5}.S_{5}$, $2^{5}.A_{5}$, $%
2^{4}.S_{5}$ \\
\hline
\end{tabular}
\end{table}

\bigskip

\begin{lemma}\label{SixtyThree}
Let $\mathcal{D}$ be a $2$-$(63,32,16)$ design as in Theorem \ref{TheList}(5). Then $\mathcal{D}\cong \overline{PG_{5}(2)}$ and $%
\Sigma L_{3}(4)\trianglelefteq G\leq \Gamma L_{3}(4)$.
\end{lemma}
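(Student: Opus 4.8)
The plan is to exploit the structural data of Theorem~\ref{TheList}(5): $\Sigma$ partitions $\mathcal{P}$ into $21$ classes of size $3$; the quotient design is $\mathcal{D}_{1}=\mathcal{D}^{\Sigma}\cong\overline{PG_{2}(4)}$ with $PSL_{3}(4)\unlhd G^{\Sigma}\leq P\Gamma L_{3}(4)$ acting on $\Sigma$ as on the points of $PG_{2}(4)$; $\mathcal{D}_{0}$ is the complete $2$-$(3,2,1)$ design; and the Camina--Zieschang relations of Theorem~\ref{CamZie} fix the remaining parameters. From this I want to: (i) identify the normal subgroup $M$ of $G$ lying above $\mathrm{Soc}(G^{\Sigma})$ with $SL_{3}(4)$, which realises $\mathcal{P}$ as the set of nonzero vectors of $V=V_{6}(2)$ carrying its natural $\mathbb{F}_{4}$-structure, and $\Sigma$ as the associated Desarguesian line-spread (equivalently, the set of punctured $\mathbb{F}_{4}$-lines); (ii) force every block of $\mathcal{D}$ to be the complement in $\mathcal{P}$ of a hyperplane of $PG_{5}(2)$, so that $\mathcal{D}\cong\overline{PG_{5}(2)}$; and (iii) determine $G$ inside $\mathrm{Aut}(\overline{PG_{5}(2)})=GL_{6}(2)$.

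For (i), let $M$ be the preimage in $G$ of $PSL_{3}(4)\unlhd G^{\Sigma}$, so $M\unlhd G$ and $M/G_{(\Sigma)}\cong PSL_{3}(4)$. The class stabiliser $M_{\Delta}$ maps onto the parabolic $(PSL_{3}(4))_{\Delta}\cong 2^{4}:A_{5}$, whose only quotient that embeds into $\mathrm{Sym}(\Delta)\cong S_{3}$ is trivial; combined with $\mathrm{Soc}(G_{\Delta}^{\Delta})=C_{3}\leq G_{(\Sigma)}^{\Delta}\leq M_{\Delta}^{\Delta}$ from Lemma~\ref{soc}(2), this shows $M_{\Delta}$ is transitive on $\Delta$, hence $M$ is transitive on the $63$ points. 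The delicate point — the main obstacle — is to show that $G_{(\Sigma)}$ is cyclic of order $3$ and central in $M$. I would first bound $G_{(\Sigma)}$: it lies in $\prod_{\Delta}\mathrm{Sym}(\Delta)$, acts with all orbits of size $3$ (as $G_{(\Sigma)}^{\Delta}\supseteq C_{3}$ for every $\Delta$), a nontrivial $3$-element of $G_{(\Sigma)}\cap G_{B}$ is supported on the at most five classes disjoint from $B$, and $G_{(\Sigma)}$ has orbits of length $\leq 3$ on $\mathcal{B}$; analysing the resulting $\mathbb{F}_{3}$-submodule of the permutation module $\mathbb{F}_{3}[\Sigma]$ for $PSL_{3}(4)$ then forces $G_{(\Sigma)}\cong C_{3}$. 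It remains to exclude the split case $M\cong C_{3}\times PSL_{3}(4)$: there $N:=M'\cong PSL_{3}(4)\unlhd G$ would fix each class pointwise (again since $2^{4}:A_{5}$ has no nontrivial quotient inside $S_{3}$), so its three orbits of size $21$ on $\mathcal{P}$ would form a second nontrivial $G$-invariant partition $\Sigma'$; but Theorem~\ref{CamZie}(1) applied to $\Sigma'$ gives $\tfrac{v-1}{k-1}=\tfrac{v_{0}'-1}{k_{0}'-1}$ with $v_{0}'=21$, forcing $k_{0}'=11$, which contradicts $k_{0}'\mid k=32$. Hence $M$ is the unique non-split central extension of $PSL_{3}(4)$ by $C_{3}$, i.e.\ $M\cong SL_{3}(4)$, with $G_{(\Sigma)}=Z(M)$. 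Since $SL_{3}(4)$ has, up to the graph automorphism (which corresponds to the self-duality of $\overline{PG_{5}(2)}$), a unique faithful transitive action of degree $63$ — on the nonzero vectors of its natural $8$-dimensional $\mathbb{F}_{2}$-module $V$ — I may identify $\mathcal{P}=V\setminus\{0\}$, with $Z(M)$ the scalar group $\mathbb{F}_{4}^{\ast}$ and $\Sigma$ the punctured $\mathbb{F}_{4}$-lines; and since $G$ normalises $M$ and $G^{\Sigma}\leq P\Gamma L_{3}(4)$, it follows that $G\leq\Gamma L_{3}(4)\leq GL_{6}(2)$.

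For (ii), fix a block $B$, let $\ell=\ell_{B}$ be the line of $PG_{2}(4)$ with $B(\Sigma)=\Sigma\setminus\ell$, and let $W\leq V$ be the corresponding $4$-dimensional ($\mathbb{F}_{4}$-)subspace. Since $G_{(\Sigma)}=\mathbb{F}_{4}^{\ast}$ acts fixed-point-freely on $\mathcal{P}$ while fixing each pair $B\cap\Delta$ setwise, no nontrivial element of it lies in $G_{B}$; a short count then yields $G_{B}^{\Sigma}=(G^{\Sigma})_{\ell}$, so $G_{B}$ contains a subgroup $U$ isomorphic to the unipotent radical $2^{4}$ of $(PSL_{3}(4))_{\ell}$, which inside $SL_{3}(4)$ is the group of transvections with axis $W$. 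Thus $U$ fixes $W\setminus\{0\}$ pointwise and acts regularly on each of the three nonzero cosets of $W$; as $U\leq G_{B}$ permutes the $16$ pairs $B\cap\Delta$ ($\Delta\notin\ell$) regularly and $B$ avoids $W$, the block $B$ must be the union of exactly two of those cosets, whence $\mathcal{P}\setminus B=(W\setminus\{0\})\cup(v+W)$ is the nonzero-vector set of the $5$-dimensional subspace $\langle W,v\rangle$, i.e.\ a hyperplane of $PG_{5}(2)$. Therefore every block is a hyperplane-complement, and since $b=63$ equals the number of hyperplanes, $\mathcal{B}$ consists of all of them; that is, $\mathcal{D}\cong\overline{PG_{5}(2)}$.

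For (iii), we have $SL_{3}(4)=M\leq G\leq\Gamma L_{3}(4)$, and up to conjugacy the overgroups of $SL_{3}(4)$ in $\Gamma L_{3}(4)$ are $SL_{3}(4)$, $\Sigma L_{3}(4)$, $GL_{3}(4)$ and $\Gamma L_{3}(4)$. In the natural action of $GL_{3}(4)$ on $V\setminus\{0\}$ the stabiliser of a point $\langle v\rangle$ leaves invariant the value $f(v)\in\mathbb{F}_{4}\setminus\mathbb{F}_{2}$ on each of the $32$ linear functionals $f$ whose kernel-hyperplane avoids $v$; hence $GL_{3}(4)$, and a fortiori $SL_{3}(4)$, has two orbits of length $16$ on the blocks of $\overline{PG_{5}(2)}$ through $\langle v\rangle$ and is not flag-transitive, whereas adjoining the field automorphism (which fixes $\langle v\rangle$ and interchanges the two orbits) restores flag-transitivity. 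So $G\in\{\Sigma L_{3}(4),\Gamma L_{3}(4)\}$, i.e.\ $\Sigma L_{3}(4)\trianglelefteq G\leq\Gamma L_{3}(4)$, which completes the proof. The whole argument hinges on step (i) — bounding $G_{(\Sigma)}$ and pinning down $M$ — and this is where a short \texttt{GAP} verification could be used as a shortcut, the candidate groups being of bounded order.
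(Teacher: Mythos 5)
Your overall architecture matches the paper's (pin down $G_{(\Sigma)}$, identify the preimage of $PSL_{3}(4)$ with $SL_{3}(4)$, realise $\mathcal{P}$ as $V_{6}(2)\setminus\{0\}$ with $\Sigma$ a line spread, identify blocks with hyperplane complements, then discriminate among $SL_{3}(4)\leq G\leq\Gamma L_{3}(4)$ by flag-transitivity), and several of your sub-arguments are sound and genuinely different from the paper's: the exclusion of the split extension $3\times PSL_{3}(4)$ via the forbidden second partition into three $21$-sets (where $\tfrac{62}{31}=\tfrac{20}{k_0'-1}$ forces $k_0'=11\nmid 32$), the identification of each block as a union of two cosets of $W$ using the transvection group with axis $W$ acting regularly on the nontrivial cosets, and the final step via the $(G)_{v}$-invariant value $f(v)\in\mathbb{F}_{4}\setminus\mathbb{F}_{2}$; the paper instead uses the Atlas description of the two degree-$63$ representations and the divisibility of suborbit lengths by $k/\gcd(k,\lambda)=2$. (Minor slip: in the split case it is the class stabiliser in $N=M'$, not $N$ itself, that fixes each class pointwise; the intended conclusion about the three orbits of size $21$ is correct.)

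The genuine gap is exactly where you flag it: the claim $G_{(\Sigma)}\cong C_{3}$ is announced but not proved. You list the right raw constraints (a $3$-element of $G_{(\Sigma)}$ fixing a block $B$ is supported on the at most five classes disjoint from $B$; $G_{(\Sigma)}$-orbits on blocks have length at most $\mu=3$), but you never assemble them, deferring instead to an unspecified analysis of an $\mathbb{F}_{3}$-submodule of $\mathbb{F}_{3}[\Sigma]$ or to \texttt{GAP}. Two things are missing. First, the $\mathbb{F}_{3}[\Sigma]$-module framing presupposes that $G_{(\Sigma)}$ is an elementary abelian $3$-group, but a priori $G_{(\Sigma)}$ is only a subgroup of $\prod_{\Delta}\mathrm{Sym}(\Delta)\cong S_{3}^{21}$ and may contain involutions; you never exclude $G_{(\Sigma)}\cong S_{3}$ (the paper does so by observing that the fixed-point sets of the three involutions would form a $G$-invariant partition into three $21$-sets, which your own Camina--Zieschang computation also kills). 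Second, the elimination of a $3$-subgroup of order $\geq 9$ needs an actual argument: the paper's is that such a subgroup contains a nontrivial element $\psi$ fixing a point, hence fixing some block $B$ setwise by \cite[Theorem 3.1]{Land}, hence fixing each of the sixteen $2$-point sets $B\cap\Delta$ pointwise, so $\psi$ fixes all $32>v/2$ points of $B$, contradicting Lander's fixed-point bound. Your constraints would support an equivalent count, but as written the decisive step of the whole lemma is a plan rather than a proof.
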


\begin{proof}
Let $\mathcal{D}$ be a $2$-$(63,32,16)$ design as Theorem \ref{TheList}(5). Then $%
G_{\Delta }^{\Delta }\cong S_{3}$, and hence either $G_{(\Sigma
)}^{\Delta }$ is cyclic of order $3$, or $G_{(\Sigma )}^{\Delta }\cong S_{3}$ since $G_{(\Sigma )}^{\Delta }\unlhd G_{\Delta }^{\Delta }$. In particular, the order of $G_{(\Sigma)}$ is divisible by $3$.

Suppose that $G_{(\Sigma )}$ contains a subgroup of $K$ of order $9$. Then 
$K_{x}\neq 1$. Then any element $\psi \in K_{x}$, $\psi \neq 1$, the $\psi$ fixes at least a
block of $\mathcal{D}$, say $B$, by \cite[Theorem 3.1]{Land}. Now, $B$ intersects precisely $16
$ elements of $\Sigma $ each of them in $k_{0}=2$ points, and $\psi $ intersects
each such intersections pointwise since $\psi \in K\leq G_{(\Sigma )}$ and $%
\psi $ is a $3$-element. Therefore $\psi $ fixes $B$ pointwise, and hence $%
\psi $ fixes more than $v/2$ points of $\mathcal{D}$ since $k=32$ and $v=63$%
, but this contradicts \cite[Theorem 3.5]{Land}. Thus, $G_{(\Sigma )}$
contains a unique Sylow $3$-subgroup, which is clearly normal in $G_{(\Sigma
)}$ and acts transitively on each element of $\Sigma $ since $G_{(\Sigma )}\leq
\prod_{\Delta \in \Sigma }G_{\Delta }^{\Delta }$. Therefore, no involution in $G_{(\Sigma)}$ centralizes the Sylow $3$-subgroup of $G_{(\Sigma)}$, if there are any. Thus, $G_{(\Sigma )}$
is either cyclic of order $3$ or $S_{3}$. 

Assume that $G_{(\Sigma
)}\cong S_{3}$, Then each of the $3$ involutions of $G_{(\Sigma )}$ fixes
exactly $21$ points. Moreover, no distinct involutions fix the same point
since the normal cyclic subgroup of $G_{(\Sigma )}$ of order $3$ acts point-semiregularly
on $\mathcal{D}$. Therefore, $\left\{ Fix(\sigma _{1}),Fix(\sigma
_{2}),Fix(\sigma _{3})\right\} $, where $\sigma _{1},\sigma_{2},\sigma _{3}$ are
the $3$ involutions of $G_{(\Sigma )}$, is a $G$-invariant partition of $%
\mathcal{D}$ in classes each of size $21$ since $G_{(\Sigma )}\cong S_{3}$ is normal in $G$. However,
this is contrary to Theorem \ref{TheList}. Thus $G_{(\Sigma )}$ is cyclic of order $3$,
and hence $G_{(\Sigma )}$ is central in the preimage $L$ in $G$ of the group 
$PSL_{3}(4)$. Then either $L\cong 3\times PSL_{3}(4)$ or $L\cong SL_{3}(4)$.
In the former case, $PSL_{3}(4)\trianglelefteq G$ and hence the $PSL_{3}(4)$%
-orbits form $G$-invariant partition of $\mathcal{D}$ in $3$ classes each of
size $21$, again contrary to Theorem \ref{TheList}. Thus $L\cong SL_{3}(4)$ and hence $%
SL_{3}(4)\trianglelefteq G\leq \Gamma L_{3}(4)$.
It can be deduced from \cite{At} that the group $G$ has exactly two permutation permutation representations of degree $63$ and, regarding $G$ as a subgroup of $SL_{6}(2)$, these are equivalent
to the action of $G$ on the set of the points and of the set of the hyperplanes of 
of $PG_{5}(2)$, respectively, by \cite[Table 8.3]{BHRD}. These are in turn
equivalent via the inverse-transpose automorphism of $G$, therefore we may
identify the point set of $\mathcal{D}$ with that of $PG_{5}(2)$. Again by \cite[Table 8.3]{BHRD}, the group $G$ lies in a maximal $\mathcal{C}_{3}$-subgroup of $SL_{6}(2)$, and hence it preserves a regular spread of $PG_{5}(2)$, namely $\Sigma$, on which induces $G^{\Sigma}$ in its $2$-transitive permutation representation of degree $21$. Thus, the point-$G_{x}$-orbits on $\mathcal{D}$ have length either $1$, $1$, $1$ and $60$, or $1$, $2$ and $60$ according as $G$ is isomorphic to $SL_{3}(4), GL_{3}(4)$ or $\Sigma L_{3}(4), \Gamma L_{3}(4)$, respectively. On the other hand, we know that $\frac{k}{\gcd(k,\lambda)}=2$ must divide the length of each non-trivial point-$G_{x}$-orbit distinct from $\{x\}$ since $(y^{G_{x}},C^{G_{x}})$, where $y$ is any point of $\mathcal{D}$ distinct from $x$, and $(x,C)$ is flag of $\mathcal{D}$, is a $1$-design by \cite[(1.2.6)]{Demb}. Therefore, $\Sigma L_{3}(4)\trianglelefteq G\leq \Gamma L_{3}(4)$. Moreover, $G_{B}$, where $B$ is any block of $\mathcal{D}$, is not conjugate in $G$ to $G_{x}$. Therefore, $G_{B}$ is the stabilizer a hyperplane since we have seen that the actions of $G$ on the set of the points and the set of hyperplanes of $PG_{5}(2)$ provide the unique permutation representations of $G$ of degree $63$. Since an hyperplane of of $PG_{5}(2)$ and its complementary set consist of $31$ and $32$ points, respectively, and bearing in mind that $G_{B}$ acts transitively on $B$ and $k=32$, it follows that $B$ is the complementary set of a hyperplane of $PG_{5}(2)$. Thus $\mathcal{D}$ is the complementary design of $PG_{5}(2)$. This completes the proof. 
\end{proof}

\bigskip

\begin{lemma}\label{ftinoporo} Let $\mathcal{D}$ be a $2$-$(64,28,12)$ design as in Theorem \ref{TheList}(6). Then the following hold:
\begin{enumerate}
    \item $G_{(\Sigma )}$ is an elementary abelian $2$-group of order $2^{e}$, $e\geq 3$, acting transitively on each $\Delta \in \Sigma$. Moreover, $G_{(\Delta)}\leq G_{(\Sigma)}$ for each $\Delta \in \Sigma$;
    \item  $\mathcal{D}_{0}\cong AG_{3}(2)$ with all planes as blocks;
    \item The following table holds: 
    \begin{table}[h!]
    \centering
    \caption{Admissible $G_{\Delta}^{\Delta}$, $G^{\Sigma}$, $G_{(\Sigma)}$, $G$ and some related quotient groups.}\label{perifanos}
\begin{tabular}{cccccccc}
               \hline
			Line & $G_{\Delta}^{\Delta}$ & $G_{(\Sigma)}^{\Delta}$ & $G^{\Sigma}$ & $G_{\Delta}^{\Sigma}$ & $G_{(\Delta)}^{\Sigma}$ & $G_{(\Sigma)}$ & $G$ \\
		        \hline
		      1& $AGL_{1}(8)$ & $2^{3}$ & $AGL_{1}(8)$ & $7$ & $1$ & $2^{e}$ & $(2^{e}.2^{3}):7$ \\
			2& $A\Gamma L_{1}(8)$ & $2^{3}$ & $A\Gamma L_{1}(8)$ & $7:3$ & $1$ & $2^{e}$ & $(2^{e}.2^{3}):(7:3)$  \\
		    3&    $A\Gamma L_{1}(8)$ & $2^{3}$ & $PSL_{2}(7)$ & $7:3$ & $1$ & $2^{e}$ & $2^{e}.PSL_{2}(7)$\\
		      \hline
		        \end{tabular}
\end{table}
\end{enumerate}
\end{lemma}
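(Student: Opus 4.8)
The plan is to run through the possibilities recorded in Theorem~\ref{TheList}(6), exploiting three ingredients repeatedly: the isomorphisms (\ref{toulaxiston}); the fact from Lemma~\ref{soc}(2) that $\Soc(G_{\Delta}^{\Delta})\trianglelefteq G_{(\Sigma)}^{\Delta}$ for every $\Delta\in\Sigma$; and the Lander-type estimates on the number of fixed points of a nontrivial automorphism of a symmetric design (\cite[Theorems 3.1, 3.5 and Corollary 3.7]{Land}). Throughout $v_{0}=v_{1}=8$, $k_{0}=4$, $k_{1}=7$, and by Theorem~\ref{TheList}(6) the group $G_{\Delta}^{\Delta}$ lies in $\{\AGL_{1}(8),\AGaL_{1}(8),\AGL_{3}(2)\}$, so $T_{0}:=\Soc(G_{\Delta}^{\Delta})\cong 2^{3}$ is the translation subgroup and acts regularly on $\Delta$. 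The first step is immediate: since $T_{0}\leq G_{(\Sigma)}^{\Delta}$ by Lemma~\ref{soc}(2) and $T_{0}$ is regular on $\Delta$, the group $G_{(\Sigma)}$ is transitive on every class of $\Sigma$, and $G_{(\Sigma)}^{\Delta}\geq T_{0}$.

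Next I would prove that $G_{(\Sigma)}$ is a $2$-group. Since $G_{(\Sigma)}$ embeds in $\prod_{\Delta}G_{\Delta}^{\Delta}$ and $|G_{\Delta}^{\Delta}|$ divides $2^{6}\cdot 3\cdot 7$, an element $g$ of odd prime order would have $p\in\{3,7\}$; as $v=64\equiv 1\pmod p$, such $g$ fixes a point and hence (the design being symmetric) a block $B$, which meets $7$ classes in $4$ points each and misses exactly one class $\Delta_{0}$. Because $g\in G_{(\Sigma)}$ stabilises $B$ and every class, it stabilises each intersection $B\cap\Delta$ and the set $\Delta_{0}$. For $p=7$, $g$ acts trivially on each $4$-set, hence fixes $B$ pointwise; then either $g$ is trivial on $\Delta_{0}$ and fixes $36>v/2$ points, excluded by \cite[Theorem 3.5]{Land}, or $g$ fixes exactly one point of $\Delta_{0}$, forcing $Fix(g)=B$, which is absurd since $g$ must fix $8\equiv 1\pmod 7$ points of the disjoint class. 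For $p=3$ the same bookkeeping gives $|Fix(g)|=6a+16$, where $a$ is the number of classes fixed pointwise by $g$; $a\geq 3$ again contradicts the $v/2$-bound, while the remaining values $|Fix(g)|\in\{16,22,28\}$ are to be eliminated by combining the intersection pattern of $g$ on its fixed blocks with the Lander estimates. Hence $G_{(\Sigma)}$ has no odd-order element.

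With that established, $G_{(\Sigma)}^{\Delta}$ is a normal $2$-subgroup of $G_{\Delta}^{\Delta}$ containing $T_{0}$; inspecting the three candidates (in $\AGL_{3}(2)$, $T_{0}$ is the unique proper nontrivial normal $2$-subgroup; in $\AGL_{1}(8)$ and $\AGaL_{1}(8)$ it is the normal Sylow $2$-subgroup) yields $G_{(\Sigma)}^{\Delta}=T_{0}\cong 2^{3}$, regular on $\Delta$. Since $G_{(\Sigma)}$ embeds faithfully, via the product of its actions on the classes, into $\prod_{\Delta\in\Sigma}G_{(\Sigma)}^{\Delta}\cong(2^{3})^{8}$, it is elementary abelian; writing $|G_{(\Sigma)}|=2^{e}$ and noting that $G_{(\Sigma)}/(G_{(\Sigma)}\cap G_{(\Delta)})\cong G_{(\Sigma)}^{\Delta}$ has order $8$ gives $e\geq 3$, which is the group-theoretic content of (1). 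Moreover, $G_{\Delta}^{\Delta}$ now embeds with the translation group stripped off, so the $2$-$(8,4,12)$ option of Theorem~\ref{TheList}(6)(a) will be excluded together with the $\AGL_{3}(2)$ case below, giving $\mathcal{D}_{0}\cong AG_{3}(2)$ with all planes as blocks, which is (2).

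Finally, to reach the table I would feed (\ref{toulaxiston}) the equality $G_{(\Sigma)}^{\Delta}=T_{0}$: then $G^{\Sigma}_{\Delta}/G_{(\Delta)}^{\Sigma}\cong G_{\Delta}^{\Delta}/T_{0}$ has order $|G_{\Delta}^{\Delta}|/8\in\{7,21,168\}$, while $G^{\Sigma}_{\Delta}$ is the point-stabiliser of one of the $2$-transitive groups of degree $8$ permitted in Theorem~\ref{TheList}(6)(b) (of abelianised orders $7,7{:}3,7{:}3,7{:}6,A_{7},S_{7}$). Comparing, the value $168$ forces $G^{\Sigma}\cong\AGL_{3}(2)$ and $G_{\Delta}^{\Delta}\cong\AGL_{3}(2)$; I expect this residual case to be dispatched by observing that the preimage in $G$ of $T_{0}\trianglelefteq\AGL_{3}(2)$ is a normal $2$-subgroup of order $2^{e+3}$ transitive on $\mathcal{P}$, from which one checks that either $\mathcal{D}\cong\mathcal{S}^{-}(3)$ (Line~6 of Table~\ref{tavola1}, treated separately) or a Lander fixed-point contradiction arises — so this case does not survive and $G_{\Delta}^{\Delta}\in\{\AGL_{1}(8),\AGaL_{1}(8)\}$. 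The same order-matching then gives $G^{\Sigma}\cong\AGL_{1}(8)$ when $G_{\Delta}^{\Delta}\cong\AGL_{1}(8)$ (the only quotient of a degree-$8$ point-stabiliser of order $7$), and $G^{\Sigma}\cong\AGaL_{1}(8)$ or $\PSL_{2}(7)$ when $G_{\Delta}^{\Delta}\cong\AGaL_{1}(8)$; in every surviving case $|G^{\Sigma}|=|G_{\Delta}^{\Delta}|$, whence $G_{(\Delta)}^{\Sigma}=1$, i.e. $G_{(\Delta)}\leq G_{(\Sigma)}$ (completing (1)), and $G=G_{(\Sigma)}.G^{\Sigma}$ has the shape $(2^{e}.2^{3}){:}7$, $(2^{e}.2^{3}){:}(7{:}3)$ or $2^{e}.\PSL_{2}(7)$, which is Table~\ref{perifanos} and proves (3). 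The two points I expect to be the main obstacles are the fixed-point bookkeeping for $p=3$ in the second step — ruling out $|Fix(g)|\in\{16,22,28\}$ needs the finer Lander bounds rather than just $|Fix(g)|\leq v/2$ — and the clean separation of the $\AGL_{3}(2)$ case from $\mathcal{S}^{-}(3)$ in the last step.
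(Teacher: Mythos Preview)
Your overall plan tracks the paper's, but the two places you flag as ``main obstacles'' are precisely where you diverge from the paper's argument, and in both cases the paper's route is substantially simpler.

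For the elimination of $3$-elements you propose fixed-point bookkeeping, arriving at $|\mathrm{Fix}(g)|=6a+16$ and leaving the values $16,22,28$ to be killed by ``finer Lander bounds''. None of the standard bounds for a symmetric $(64,28,12)$ design excludes $|\mathrm{Fix}(g)|=16$ (this is far below $v/2$, below $k$, and consistent with $f\equiv v\pmod 3$), so this step does not close as written. The paper avoids the counting entirely: once $7$-elements are excluded from $G_{(\Sigma)}$, the image $G_{(\Sigma)}^{\Delta}$ is a normal subgroup of $G_{\Delta}^{\Delta}$ containing the socle $2^{3}$ but of order coprime to $7$; a glance at the normal-subgroup lattice of each of $\AGL_{1}(8)$, $\AGaL_{1}(8)$, $\AGL_{3}(2)$ shows the only such subgroup is $2^{3}$ itself. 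Hence $G_{(\Sigma)}^{\Delta}=2^{3}$ and so any Sylow $3$-subgroup of $G_{(\Sigma)}$ lies in $G_{(\Sigma)}\cap G_{(\Delta)}$ for every $\Delta$, forcing it to be trivial. This replaces your incomplete case analysis by a two-line structural check; note it simultaneously pins down $G_{(\Sigma)}^{\Delta}$ without the extra inspection you do afterwards.

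For the $\AGL_{3}(2)$ case you write ``the value $168$ forces $G^{\Sigma}\cong\AGL_{3}(2)$'' and then look for a separate argument. But $\AGL_{3}(2)$ is not among the groups allowed for $G^{\Sigma}$ in Theorem~\ref{TheList}(6)(b); with that list the point-stabiliser $G_{\Delta}^{\Sigma}$ lies in $\{7,\,7{:}3,\,7{:}6,\,A_{7},\,S_{7}\}$, none of which has $\PSL_{2}(7)$ as a quotient. So the matching via (\ref{toulaxiston}) already eliminates $G_{\Delta}^{\Delta}\cong\AGL_{3}(2)$ (and with it the $2$-$(8,4,12)$ option for $\mathcal{D}_{0}$) with no further work. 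Your second flagged obstacle therefore does not exist once you read Theorem~\ref{TheList}(6)(b) carefully.

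A minor point: your $p=7$ write-up is muddled (``forcing $\mathrm{Fix}(g)=B$'' is not what happens). The clean observation, which the paper uses, is that a $7$-element fixing the $4$ points of $B\cap\Delta$ in an $8$-set must fix all $8$ (since $f_{i}\equiv 1\pmod 7$ and $f_{i}\geq 4$ forces $f_{i}=8$), giving $|\mathrm{Fix}(g)|\geq 56>v/2$ immediately.
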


\begin{proof}
Let $\mathcal{D}$ be a $2$-$(64,28,12)$ design as in Theorem \ref{TheList}(6). Then $G_{\Delta }^{\Delta }\leq ASL_{3}(2)$, and hence $%
G_{(\Sigma )}$ is a $\left\{ 2,3,7\right\} $-group since $G_{(\Sigma )}\leq \prod_{\Delta \in \Sigma }G_{\Delta }^{\Delta }$.

Assume that $G_{(\Sigma
)}$ contains a non-trivial $7$-element, say $\gamma $. Then $\gamma $ fixes
at least a point on each element of $\Sigma $ since $v_{0}=8$. Let $x\in
\Delta _{1}\in \Sigma $ and $y\in \Delta _{2}\in \Sigma \setminus \left\{
\Delta _{1}\right\} $. Then $\gamma $ fixes at least one of the $\lambda =12$
blocks containing $x$ and $y$, say $B$. Then $\gamma $ fixes pointwise $%
B\cap \Delta $ for any $\Delta \in \Sigma $ such that $B\cap \Delta \neq
\varnothing $ since $k_{0}=4$, and hence $\gamma \in G_{(\Delta )}$ since $G_{\Delta
}^{\Delta }\leq AGL_{3}(2)$ and any Sylow $7$-subgroup of $AG
L_{3}(2)$ fixes a unique point of $\Delta $. Thus, $\gamma $ fixes pointwise
each element of $\Sigma $, a contradiction. Thus, $G_{(\Sigma )}$ is a $%
\left\{ 2,3\right\} $-group. If there is $\Delta _{0}\in \Sigma $ such that $%
3$ divides the order of $G_{(\Sigma )}^{\Delta _{0}}$, then $G_{(\Sigma
)}^{\Delta _{0}}=G_{\Delta _{0}}^{\Delta _{0}}$ since $Soc(G_{\Delta _{0}}^{\Delta _{0}}) \unlhd G_{(\Sigma )}^{\Delta
_{0}}\trianglelefteq G_{\Delta _{0}}^{\Delta _{0}}$ by Lemma \ref{soc} and $G_{\Delta _{0}}^{\Delta _{0}}$ is isomorphic to one of the groups $AGL_{1}(8)$, $A\Gamma L_{1}(8)$ or $AGL_{3}(2)$ by Theorem \ref{TheList}(6.a). So, the
order of $G_{(\Sigma )}^{\Delta _{0}}$, and hence that of $G_{(\Sigma )}$
is divisible by $7$, a contradiction. Thus $7$ does not divide the order of $%
G_{(\Sigma )}^{\Delta }$ for any $\Delta \in \Sigma $. Now, if $3$ divides
the order of $G_{(\Sigma )}$, then the previous argument implies that $%
G_{(\Sigma )}\cap G_{(\Delta )}$ contains all Sylow $3$-subgroups of $%
G_{(\Sigma )}$ for any $\Delta \in \Sigma 
$ since $G_{(\Sigma )}\cap G_{(\Delta )}\trianglelefteq G_{(\Sigma )}$. Hence, any Sylow $3$-subgroup of $G_{(\Sigma)}$ fixes pointwise each element of $\Sigma $, a contradiction. Thus, $%
G_{(\Sigma )}$ is a (possibly trivial) $2$-group.

The group $G^{\Delta}_{(\Sigma )}$ is a $2$-group being a quotient group of the $2$-group $G_{(\Sigma )}$. On the other hand, $Soc(G_{\Delta}^{\Delta}) \unlhd G^{\Delta}_{(\Sigma )} \unlhd G^{\Delta}_{\Delta}$ by Lemma \ref{soc}. Therefore, $G^{\Delta}_{\Delta}/G_{(\Sigma)}^{\Delta}$ is isomorphic to $7$, $7:3$ or $PSL_{2}(7)$ according to whether $G^{\Delta}_{\Delta}$ is isomorphic to $AGL_{1}(8)$, $A\Gamma L_{1}(8)$, or $AGL_{3}(2)$, respectively. Then $G^{\Sigma}_{\Delta}/G_{(\Delta)}^{\Sigma}$ is isomorphic to $7$, $7:3$ or $PSL_{2}(7)$ by (\ref{toulaxiston}). On the other hand, by Theorem \ref{TheList}(6.b), the group $G^{\Sigma}_{\Delta}$ is isomorphic to $7$, $7:3$, $7:3$, $7:6$, $A_{7}$ or $S_{7}$, according to whether $G^{\Sigma}_{\Delta}$ is isomorphic to $AGL_{1}(8)$, $A\Gamma L_{1}(8)$, $PSL_{2}(7)$, $PGL_{2}(7)$, $A_{8}$, $S_{8}$, respectively. Matching the previous information, we see that the unique possibilities are those as in columns 2--6 of Table \ref{perifanos}. Then $G_{(\Delta)}\leq G_{(\Sigma)}$ for each $\Delta \in \Sigma$. Moreover, $\mathcal{D}_{0}\cong AG_{3}(2)$ by Theorem \ref{TheList}(6.a) since $G_{\Delta}^{\Delta}$ is isomorphic to $ A \Gamma L_{1}(8)$ $ A \Gamma L_{1}(8)$. Thus, we obtain (3) and the last part of (1).

Since $G_{(\Sigma )}$ is a $2$-group, $G_{(\Sigma )}\leq \prod_{\Delta \in \Sigma }G_{\Delta }^{\Delta }$ and, $G_{\Delta}^{\Delta}$ is isomorphic to $ A \Gamma L_{1}(8)$ $ A \Gamma L_{1}(8)$, it  follows that $%
G_{(\Sigma )}$ is an elementary abelian $2$-group of order $2^{e}$ with $e \geq 0$. Moreover, $G_{(\Sigma)}$ acts transitively on each $\Delta \in \Sigma$ by Lemma \ref{soc} and Theorem \ref{TheList}(6.a), and hence $e\geq 3$. This completes the proof of (1). Now, $G_{\Sigma}$ and $G$ are as in Columns 7--8 of Table \ref{perifanos}. In particular, $G \cong (2^{e}.2^{3}):7$ or $(2^{e}.2^{3}):(7:3)$ as Lines 1--2 or Table \ref{perifanos} by \cite[Theorem 6.2.1(i)]{Go}. This completes the proof.   
\end{proof}

\bigskip

Throughout the remainder of this section, $G_{(\Sigma)}$ is simply by $V$. Hence, $V$ is an elementary abelian $2$-group of order $2^{e}$, $e\geq 3$, acting transitively on each $\Delta \in \Sigma$. Moreover, $G_{(\Delta)}\leq G_{(\Sigma)}$ for each $\Delta \in \Sigma$.

Finally, $\mathcal{D}_{0}^{(i)}$ will denote the isomorphic copy of $\mathcal{D}_{0}$ having $\Delta_{i}$ as a point set, where $\Delta_{i} \in \Sigma$ for $i=1,...,8$.

\bigskip

\begin{lemma}\label{vdropi}
The group $G$ acts $2$-transitively on the set $\{V_{(\Delta_{i})}:i=1,...,8\}$.
\end{lemma}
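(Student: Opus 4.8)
The plan is to exploit the structure established in Lemma \ref{ftinoporo} together with the fact (from Proposition \ref{sinithos}) that $G$ already acts $2$-transitively on $\Sigma$. Write $\Sigma=\{\Delta_1,\dots,\Delta_8\}$ and, for each $i$, let $V_{(\Delta_i)}=V\cap G_{(\Delta_i)}$; by Lemma \ref{ftinoporo}(1) we have $G_{(\Delta_i)}\leq V$, so $V_{(\Delta_i)}=G_{(\Delta_i)}$. Since $V$ is normal in $G$ and $G$ permutes the classes $\Delta_i$, it permutes the subgroups $\{V_{(\Delta_i)}:i=1,\dots,8\}$, and this action is equivalent to the action of $G$ on $\Sigma$ via $\Delta_i\mapsto V_{(\Delta_i)}$ provided the map $\Delta_i\mapsto V_{(\Delta_i)}$ is injective; injectivity is immediate because $G$ acts transitively (indeed $2$-transitively) on $\Sigma$ and $G_\Delta$ is the \emph{full} stabilizer of $\Delta$, so $V_{(\Delta_i)}=V_{(\Delta_j)}$ would force $G_{\Delta_i}=G_{\Delta_j}$ by maximality/conjugacy considerations. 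Thus the claimed $2$-transitivity on $\{V_{(\Delta_i)}\}$ reduces to the $2$-transitivity of $G$ on $\Sigma$, which is already recorded.

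To make this fully rigorous I would argue as follows. First, observe $V_{(\Delta_i)}=G_{(\Delta_i)}\unlhd G_{\Delta_i}$ and that for $g\in G$ one has $g^{-1}G_{(\Delta_i)}g=G_{(\Delta_i^{\,g})}$; since $V\unlhd G$, conjugation by $g$ also preserves $V$, so $g^{-1}V_{(\Delta_i)}g=V_{(\Delta_i^{\,g})}$. Hence $G$ acts on $\{V_{(\Delta_i)}:i=1,\dots,8\}$ and the natural surjection from $\Sigma$ onto this set of subgroups is $G$-equivariant. Second, I would check injectivity of $\Delta_i\mapsto V_{(\Delta_i)}$: if $V_{(\Delta_i)}=V_{(\Delta_j)}$ for $i\neq j$, then since $V$ acts transitively on each class (Lemma \ref{ftinoporo}(1)), $V_{(\Delta_j)}=G_{(\Delta_j)}$ fixes $\Delta_i$ pointwise as well; but then the normal closure argument from the proof of Lemma \ref{soc} shows $G_{(\Delta_j)}$ is contained in $G_{(\Delta)}$ for every $\Delta\in\Sigma$ (using $2$-transitivity of $G$ on $\Sigma$), forcing $G_{(\Delta_j)}=1$ and hence $V=G_{(\Sigma)}$ to act trivially — contradicting $e\geq 3$. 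Therefore the map is a $G$-equivariant bijection, and the $2$-transitivity of $G$ on $\{V_{(\Delta_i)}:i=1,\dots,8\}$ follows directly from Proposition \ref{sinithos}.

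The step that requires a little care — and which I regard as the only genuine obstacle — is the injectivity claim, i.e.\ ruling out $V_{(\Delta_i)}=V_{(\Delta_j)}$ for distinct $i,j$. The clean way is the one sketched above: a subgroup of $V$ that fixes two distinct classes pointwise, by the transitivity of $V$ on classes and the normality of the relevant pointwise stabilizers together with $2$-transitivity of $G$ on $\Sigma$, must fix \emph{every} class pointwise and hence be trivial, whereas $V$ itself is nontrivial of order $2^{e}$ with $e\geq 3$. Once injectivity is in hand the rest is formal, since $2$-transitivity is a property of the permutation action and transports along $G$-equivariant bijections. I would present the argument in this order: equivariance of the correspondence, injectivity via the fixed-point-propagation argument, and then invocation of Proposition \ref{sinithos}.
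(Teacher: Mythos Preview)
Your overall strategy is exactly the paper's: establish the $G$-equivariance of $\Delta_i\mapsto V_{(\Delta_i)}$, prove injectivity, and then transport the $2$-transitivity of $G$ on $\Sigma$ (Proposition~\ref{sinithos}) across this bijection. The equivariance step and the propagation step are fine.

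The gap is in your injectivity argument. You correctly deduce that if $V_{(\Delta_i)}=V_{(\Delta_j)}$ for some $i\neq j$ then, using normality and the transitivity of $G_{\Delta_{i}}$ on $\Sigma\setminus\{\Delta_i\}$, one gets $V_{(\Delta_i)}\leq V_{(\Delta)}$ for every $\Delta\in\Sigma$, hence $V_{(\Delta_i)}=1$. But your conclusion ``hence $V=G_{(\Sigma)}$ acts trivially --- contradicting $e\geq 3$'' is not right. The vanishing of $V_{(\Delta_i)}$ means $V$ acts \emph{faithfully} (indeed regularly, since $V$ is abelian and transitive) on $\Delta_i$; it does not make $V$ trivial. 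Since $|V:V_{(\Delta_i)}|=|\Delta_i|=8$, you only get $|V|=2^{3}$, i.e.\ $e=3$, which is perfectly compatible with $e\geq 3$ from Lemma~\ref{ftinoporo}(1). (At this point in the paper Lemma~\ref{kalokairi}, which would give $e\geq 5$, is not yet available and in fact is proved after Lemma~\ref{vdropi}.)

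The paper closes this gap by a different counting: from $e=3$ one has $|G|=2^{6}\cdot c\cdot 7$ with $c\in\{1,3\}$ (Table~\ref{perifanos}), hence $|G_B|=c\cdot 7$ for any block $B$ since $b=v=2^{6}$; but then $G_B$ cannot act transitively on $B$ because $k=28$ does not divide $c\cdot 7$. That is the missing contradiction you need to insert in place of ``$V$ acts trivially''.
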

\begin{proof}
Set $\mathcal{V}=\{V_{(\Delta_{i})}:i=1,...,8\}$, then $\left\vert \mathcal{V}\right\vert \leq 8$. If $\left\vert \mathcal{V}\right\vert < 8$, then there are $1\leq i_{0},j_{0} \leq 8$ with $i_{0}\neq j_{0}$ such that $V_{(\Delta_{i_{0}})}=V_{(\Delta_{j_{0}})}$. Since $V_{(\Delta_{i_{0}})} \unlhd G_{\Delta_{i_{0}}}$, and for each $g\in G$ one has $V_{(\Delta_{i_{0}})}^{g}=V_{(\Delta_{i_{0}}^{g})}$, and $G_{\Delta_{i_{0}}}$ acts transitively on $\Sigma \setminus \{\Delta_{i_{0}}\}$, it follows that $V_{(\Delta_{i_{0}})}$ fixes pointwise each element of $\Sigma$. So $V_{(\Delta_{i_{0}})}$ fixes $\mathcal{D}$ pointwise, and hence $V_{(\Delta_{i_{0}})}=1$. Therefore, $\left\vert G\right\vert
=2^{6}\cdot c\cdot 7$ with $c=1$ or $3$ by Lemma \ref{ftinoporo}(3) since $V=G_{(\Sigma)}$ and $\left\vert V\right\vert =2^{e}$ and $\left\vert V_{(\Delta_{i_{0}})}\right\vert =2^{e-3}$. Then $\left\vert
G_{B}\right\vert = c \cdot 7$, where $B$ is any block of $\mathcal{D}$ since $b=v=2^{6}$, and we reach a contradiction since $G_{B}$ acts transitively
on $B$ and $k=28$ because $G$ acts flag-transitively on $\mathcal{D}$. Thus $\left\vert \mathcal{V}\right\vert = 8$. Moreover, since $G$ acts $2$-transitively on $\Sigma$ and for each $g\in G$ one has $V_{(\Delta_{j})}^{g}=V_{(\Delta_{j}^{g})}$ with $j=1,...,8$, it follows that $G$ acts $2$-transitively on $\mathcal{V}$.      
\end{proof}

\bigskip

\begin{lemma}
\label{Anixi}The following hold:

\begin{enumerate}
\item For each $1\leq i\leq 8$, let $\mathcal{B}_{i}$ be the set blocks of $%
\mathcal{D}$ which disjoint from $\Delta _{i}$. Then $\mathcal{B}_{i}$ is $V$%
-orbit of length $2^{3}$.

\item The $\mathcal{B}_{1},...,\mathcal{B}_{8}$ are all the $V$-orbits on
the block set of $\mathcal{D}$.

\item Let $\mathcal{B}_{i}$, then for each $1\leq j\leq 8$ and $j\neq i$
there is a unique parallel class $\mathcal{C}_{ij}=\left\{ \pi _{ij},\pi
_{ij}^{\prime }\right\} $ of $\mathcal{D}_{0}^{(j)}\cong AG_{3}(2)$
consisting of planes such that that for $2^{2}$ elements of $\mathcal{B}_{i}$
the intersection with $\Delta _{j}$ is $\pi _{ij}$, and for the remaining $%
2^{2}$ ones is $\pi _{ij}^{\prime }$.

\item For each $1\leq i\leq 8$, let $\mathcal{C}_{i}=\bigcup_{j=1,j\neq
i}^{8}\mathcal{C}_{ij}$. Then $\mathcal{C}_{s}\cap \mathcal{C}%
_{t}=\varnothing $ for each  $1\leq s,t\leq 8$ with $s\neq t$.

\item Let $B\in \mathcal{B}_{s}$ and $B^{\prime }\in \mathcal{B}_{t}$ with $%
s\neq t$. Then there are precisely $6$ elements $\Delta $ of $\Sigma $ such
that $\left\vert B\cap B^{\prime }\cap \Delta \right\vert =2$.
\end{enumerate}
\end{lemma}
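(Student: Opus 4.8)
The plan is to organise the argument around the partition of the block set $\mathcal{B}$ according to which class of $\Sigma$ a block avoids, and then to push the combinatorics through the action of $V=G_{(\Sigma)}$. Since $\mathcal{D}_{1}$ is the complete $2$-$(8,7,6)$ design (Theorem \ref{TheList}(6)), every block $B$ meets exactly $k_{1}=7$ classes of $\Sigma$, in $k_{0}=4$ points each, hence avoids exactly one; thus $\mathcal{B}=\bigsqcup_{i=1}^{8}\mathcal{B}_{i}$, and since $G$ acts transitively on $\Sigma$, hence on $\{\mathcal{B}_{1},\dots ,\mathcal{B}_{8}\}$, each $\mathcal{B}_{i}$ has size $b/8=2^{3}$. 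For transitivity of $V$ on $\mathcal{B}_{i}$ I would use the shape of $G$ from Table \ref{perifanos}: there $|G|=2^{e+3}c$ with $c\in\{7,21\}$, so $|G_{B}|=|G|/2^{6}=2^{e-3}c$ for every block $B$, whence the $2$-group $|V_{B}|=|V\cap G_{B}|$ divides $2^{e-3}$; as $V$ fixes $\Delta_{i}$ setwise we have $B^{V}\subseteq\mathcal{B}_{i}$, so $|B^{V}|=|V:V_{B}|\geq 2^{e}/2^{e-3}=2^{3}=|\mathcal{B}_{i}|$ forces $B^{V}=\mathcal{B}_{i}$. This is (1), and (2) follows at once because the $\mathcal{B}_{i}$ partition $\mathcal{B}$ and each is a single $V$-orbit.

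For (3) the decisive point is that $V^{\Delta_{j}}=G_{(\Sigma)}^{\Delta_{j}}\cong 2^{3}$ is the normal elementary abelian subgroup of $G_{\Delta_{j}}^{\Delta_{j}}\in\{AGL_{1}(8),A\Gamma L_{1}(8)\}$ (Lemma \ref{ftinoporo}), that is, the translation group of $\mathcal{D}_{0}^{(j)}=AG_{3}(2)$, and that every orbit of this translation group on the $14$ plane-blocks of $AG_{3}(2)$ has size $2$, the orbits being precisely the $7$ parallel classes (translating a $2$-flat by a vector outside its direction produces the parallel $2$-flat). Fix $B\in\mathcal{B}_{i}$ and $j\neq i$; every $B'\in\mathcal{B}_{i}$ meets $\Delta_{j}$ (it avoids only $\Delta_{i}$), so $B'\mapsto B'\cap\Delta_{j}$ is a plane of $\mathcal{D}_{0}^{(j)}$, and this map is $V$-equivariant since $(B'\cap\Delta_{j})^{v}=(B')^{v}\cap\Delta_{j}$ for $v\in V$. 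By (1) its image is a single $V$-orbit of planes of $\Delta_{j}$, which by the above is one of the $7$ parallel classes, $\mathcal{C}_{ij}=\{\pi_{ij},\pi_{ij}'\}$, of size $2$ (giving uniqueness); and since the fibres of this map form a $V$-invariant partition of the transitive $V$-set $\mathcal{B}_{i}$ they all have size $8/2=2^{2}$, so exactly $2^{2}$ blocks of $\mathcal{B}_{i}$ meet $\Delta_{j}$ in $\pi_{ij}$ and the other $2^{2}$ in $\pi_{ij}'$. This is (3).

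Parts (4) and (5) then reduce to a little affine geometry, once I establish, for each fixed $j$, that $i\mapsto\mathcal{C}_{ij}$ ($i\neq j$) is a bijection onto the $7$ parallel classes of $\Delta_{j}$. For this I would observe that, by Theorem \ref{CamZie}(1.i), the $bk_{1}/v_{1}=56$ blocks meeting $\Delta_{j}$ split, according to their intersection with $\Delta_{j}$, into classes of equal size, and since all $14$ planes occur as such intersections the common size is $56/14=4$; by (3) these $4$ all lie in one $\mathcal{B}_{i}$, so the parallel class $\mathcal{C}_{ij}$ determines $i$. Hence $\mathcal{C}_{sj}\neq\mathcal{C}_{tj}$ whenever $s\neq t$, and distinct parallel classes of $AG_{3}(2)$ being disjoint as families of $2$-flats, $\mathcal{C}_{sj}\cap\mathcal{C}_{tj}=\varnothing$. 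For (4): a plane in $\mathcal{C}_{s}\cap\mathcal{C}_{t}$ belongs to $\mathcal{C}_{sj}$ for some $j\neq s$, so is a subset of $\Delta_{j}$, and to $\mathcal{C}_{tj'}$ for some $j'\neq t$, so is a subset of $\Delta_{j'}$; since distinct classes of $\Sigma$ are disjoint this forces $j=j'$, whence $j\notin\{s,t\}$ and the plane lies in $\mathcal{C}_{sj}\cap\mathcal{C}_{tj}=\varnothing$, a contradiction. For (5): with $B\in\mathcal{B}_{s}$ and $B'\in\mathcal{B}_{t}$, the classes met by both are exactly the six $\Delta_{j}$ with $j\notin\{s,t\}$; for each such $j$ one has $B\cap\Delta_{j}\in\mathcal{C}_{sj}$ and $B'\cap\Delta_{j}\in\mathcal{C}_{tj}$ with $\mathcal{C}_{sj}\neq\mathcal{C}_{tj}$, so $B\cap\Delta_{j}$ and $B'\cap\Delta_{j}$ are non-parallel $2$-flats of $AG_{3}(2)$ and hence meet in exactly $2$ points, while $B\cap B'\cap\Delta_{s}=B\cap B'\cap\Delta_{t}=\varnothing$; thus precisely six classes $\Delta$ satisfy $|B\cap B'\cap\Delta|=2$.

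I expect the main obstacle to be the geometric core of (3): verifying that $V^{\Delta_{j}}$ is exactly the translation group of $AG_{3}(2)$ — using that $G_{(\Sigma)}^{\Delta_{j}}$ is the normal $2^{3}$ inside $G_{\Delta_{j}}^{\Delta_{j}}\in\{AGL_{1}(8),A\Gamma L_{1}(8)\}$ by Lemma \ref{ftinoporo} — and pinning down its orbits on $2$-flats; once this is in hand, statements (1), (2), (4) and (5) are essentially bookkeeping with transitive $V$-actions. A secondary, minor point to get right is the uniform extraction of $|G_{B}|$, hence of the bound $|V_{B}|\mid 2^{e-3}$, across the three admissible shapes of $G$ recorded in Table \ref{perifanos}.
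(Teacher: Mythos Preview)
Your proof is correct and tracks the paper's argument closely in parts (3), (4), and (5); in particular, your equivariant map $B'\mapsto B'\cap\Delta_j$ and the bijection $i\mapsto\mathcal{C}_{ij}$ via the count $56/14=4$ are exactly the paper's mechanism (the paper phrases the count as ``$\theta=4$''). The one genuine difference is in (1)--(2): you obtain transitivity of $V$ on each $\mathcal{B}_i$ by the order bound $|V_B|\mid 2^{e-3}$ extracted from $|G_B|=2^{e-3}c$, whereas the paper instead invokes the standard fact for symmetric designs (Hughes--Piper) that an automorphism group has equally many point-orbits and block-orbits, so $V$ has exactly $8$ block-orbits, forcing each $\mathcal{B}_i$ to be one of them. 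Both are short and valid; the paper's route is structure-theoretic and avoids the case split over Table~\ref{perifanos}, while yours is a direct elementary count.
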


\begin{proof}
Let $\Delta _{i}\in \Sigma $ and let $\mathcal{B}_{i}$ be the set blocks of $%
\mathcal{D}$ which disjoint from $\Delta _{i}$. Then $\mathcal{B}_{i}\neq
\varnothing $ since $k_{1}=7$ and $G$ acts transitively on $\Sigma $. Let $%
B\in \mathcal{B}_{i}$, then $B$ intersects each $\Delta _{j}$ in $\Sigma
\setminus \left\{ \Delta _{i}\right\} $ in a non empty set since $k_{1}=7$
and $k_{0}=4$. Thus, $\left\vert \mathcal{B}_{i}\right\vert =8$ since $\mu =8
$. Moreover, $\mathcal{B}_{i}$ is a union $V$-orbits since $V$ preserves $%
\Delta _{i}$, and $\mathcal{B}_{i}\cap \mathcal{B}_{j}=\varnothing $
for $i\neq j$ since $\Delta _{i}\neq \Delta _{j}$. On the other hand, $V$
has exactly $8$ block-orbits on $\mathcal{D}$ by \cite[Theorem 1,46]{HP}
since $V$ has exactly $8$ point-orbits on $\mathcal{D}$, namely the elements
of $\Sigma $. Thus, each $\mathcal{B}_{i}$ is $V$-orbit of length $8$ and $%
\mathcal{B}_{1},...,\mathcal{B}_{8}$ are all the $V$-orbits on the block set
of $\mathcal{D}$, which are the assertion (1) and (2).

Clearly, the group $V$ induces the (full) translation group on each element of $%
\Sigma $. Moreover, each $\mathcal{D}_{0}^{(j)}\cong AG_{3}(2)$ admits a
plane parallelism and each parallel class has size $2$. Thus, if $B\in \mathcal{B%
}_{i}$, then $V$ preserves in $\mathcal{D}_{0}^{(j)}$ the parallel class
determined by the plane $B\cap \Delta _{j}$. Since $(B\cap \Delta _{j})^{V}$
and $\mathcal{B}_{i}$ are both $V$ orbits of length $2$ and $8$,
respectively, it follows from \cite[1.2.6]{Demb} that, exactly $4$ of the blocks in $\mathcal{B}_{i}$
intersect $\Delta _{j}$ in $B\cap \Delta _{j}$, and the remaining $4$
intersect $\Delta _{j}$ in the plane in $\mathcal{D}_{0}^{(j)}$ parallel to $%
B\cap \Delta _{j}$. Therefore, each $V$-orbit $\mathcal{B}_{i}$ determines a
unique parallel class $\mathcal{C}_{ij}=\left\{ \pi _{ij},\pi _{ij}^{\prime
}\right\} $ in $\mathcal{D}_{0}^{(j)}$ for $1\leq j\leq 8$ and $j\neq i$
such that the intersection set of half elements of $\mathcal{B}_{i}$ with $\Delta _{j}$ is the
plane $\pi _{ij}$, and the intersection set of the remaining half ones is $\pi _{ij}^{\prime }$. This
proves (3).

Assume that there are $1\leq s,t\leq 8$ with $s\neq t$ such that $\mathcal{C}%
_{s}\cap \mathcal{C}_{t}\neq \varnothing $. Then there is $1\leq m,n\leq 8$
with $m\neq s$ and $n\neq t$ such that $\pi \in \mathcal{C}_{sm}\cap 
\mathcal{C}_{tn}$. Then $m=n$ by the definition of $\mathcal{C}_{ij}$.
Hence, $\pi \in \mathcal{C}_{sm}\cap \mathcal{C}_{tm}$. Then, by (3) there
are $4$ blocks in $\mathcal{B}_{s}$ and other $4$ in $\mathcal{B}_{t}$ whose
intersection with $\Delta _{m}$ is $\pi $. Then there are at least $8$
blocks of $\mathcal{D}$ such that the intersection of any of them with $%
\Delta _{m}$ is $\pi $, which is not the case since $\theta =4$. This proves
(4).

Finally, assume that $B\in \mathcal{B}_{s}$ and $B^{\prime }\in \mathcal{B}%
_{t}$ with $s\neq t$. Since $\mathcal{C}_{s}\cap \mathcal{C}_{t}=\varnothing 
$ by (4), for each $1\leq j\leq 8$ with $j\neq s,t$, the set $B\cap \Delta
_{j}$ and $B^{\prime }\cap \Delta _{j}$ are planes of $\mathcal{D}%
_{0}^{(j)}\cong AG_{3}(2)$ lying in different parallel classes. Thus $B\cap
B^{\prime }\cap \Delta _{j}$ is a $1$-space of $\mathcal{D}_{0}^{(j)}$ again
since this one is $AG_{3}(2)$. This proves (5). 
\end{proof}

\bigskip

\begin{lemma}
\label{kalokairi}$\left\vert V\right\vert=2^{e}$ with $5\leq e \leq 11$.
\end{lemma}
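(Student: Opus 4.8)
The lower bound is immediate. Since $\mathcal D$ is symmetric, $r=k=28$, so $\mathcal D$ has $vr=2^{6}\cdot 28=2^{8}\cdot 7$ flags; as $G$ is flag-transitive, $2^{8}\cdot 7$ divides $|G|$, and by Lemma~\ref{ftinoporo}(3) we have $|G|\in\{2^{e+3}\cdot 7,\ 2^{e+3}\cdot 21\}$, whence $2^{8}\mid 2^{e+3}$ and $e\ge 5$.

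For the upper bound the plan is to argue via fixed-point counts. By Lemma~\ref{ftinoporo}(1), $V$ is elementary abelian and acts on each $\Delta\in\Sigma$ as the regular group of translations of $\Delta\cong\mathbb F_{2}^{3}$; hence for $1\ne g\in V$ the set $\mathrm{Fix}_{\mathcal P}(g)$ is exactly the union of those classes $\Delta_{i}$ with $g\in V_{(\Delta_{i})}$, so $|\mathrm{Fix}_{\mathcal P}(g)|=8\cdot|\{i:g\in V_{(\Delta_{i})}\}|$. Since a nonidentity automorphism of $\mathcal D$ fixes at most $v/2=32$ points (\cite[Theorem~3.5]{Land}), no $g\ne 1$ can lie in five of the subgroups $V_{(\Delta_{i})}$; equivalently $\bigcap_{i\in I}V_{(\Delta_{i})}=1$ whenever $|I|=5$, so $V$ embeds in $\bigoplus_{i\in I}V/V_{(\Delta_{i})}\cong\mathbb F_{2}^{15}$, which already yields $e\le 15$.

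To improve this I would bring in the block orbits $\mathcal B_{1},\dots,\mathcal B_{8}$ of Lemma~\ref{Anixi}. Fix $s$; since $V_{B}$ is one and the same subgroup $U_{s}\le V$ for all $B\in\mathcal B_{s}$, and $|\mathcal B_{s}|=2^{3}$, we get $[V:U_{s}]=2^{3}$. A flag count then shows $|U_{s}^{\Delta}|=2^{2}$ for every $\Delta\ne\Delta_{s}$: for $x\in\Delta$ exactly $r/7=4$ of the $28$ blocks through $x$ miss $\Delta_{s}$ (using that $G^{\Sigma}_{\Delta}$ is transitive on $\Sigma\setminus\{\Delta\}$ in every case of Table~\ref{perifanos}), and all four meet $\Delta$ in the unique plane of $\mathcal D_{0}$ through $x$ having the direction of the parallel class $\mathcal C_{sj}$ (where $\Delta=\Delta_{j}$). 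As the stabiliser in $G_{x}^{\Delta}$ of such a plane is trivial or a $3$-cycle, while $V_{(\Delta)}$ is a $2$-group, comparing $|G_{x,B}|=|G_{x}|/28$ with $|G_{x,B}\cap V_{(\Delta)}|$ gives $[V_{(\Delta)}:V_{(\Delta)}\cap U_{s}]=2^{2}$. Moreover $U_{s}=V_{B}$ stabilises $B$, so $U_{s}^{\Delta}$ is the group of translations of $\Delta$ preserving the plane $B\cap\Delta$ and, having order $|B\cap\Delta|=4$, acts regularly on it.

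Hence $U_{s}$ embeds into the direct sum of the seven groups $U_{s}^{\Delta}$ ($\Delta\ne\Delta_{s}$), each of order $2^{2}$, and—applying the fixed-point bound to $U_{s}\le V$ exactly as above—the sum over any five of these classes is already faithful, so $|U_{s}|\le 2^{10}$ and $e\le 13$. The final step, which I expect to be the main obstacle, is to push faithfulness down to four classes: one must rule out a nonidentity $g\in U_{s}$ fixing four classes pointwise (hence exactly $v/2$ points) while stabilising every block of $\mathcal B_{s}$. I anticipate that such a $g$ can be shown to fix too large a configuration of points and blocks simultaneously—combining the intersection pattern of blocks from distinct $V$-orbits in Lemma~\ref{Anixi}(5) with \cite[Corollary~3.7]{Land}—which then forces $|U_{s}|\le 2^{8}$, i.e. $e\le 11$.
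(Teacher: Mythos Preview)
Your lower bound $e\ge 5$ is correct and matches the paper. Your route to $e\le 15$ via the fixed-point bound is also the paper's, and your improvement to $e\le 13$ using a single block stabiliser $U_s=V_B$ is sound: $[V:U_s]=2^3$, $U_s^{\Delta}$ preserves the plane $B\cap\Delta$ so has order at most $2^2$, and faithfulness on any five classes then gives $|U_s|\le 2^{10}$.

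The gap is exactly where you flag it: you do not prove $e\le 11$. Pushing faithfulness down to four classes means excluding a nonidentity element fixing precisely $v/2=32$ points, which is the borderline case of \cite[Corollary 3.7]{Land}, and you give only an expectation, not an argument. The paper avoids this difficulty entirely by a different device: instead of trying to sharpen the faithfulness bound, it intersects \emph{two} block stabilisers from \emph{different} $V$-orbits. For $B\in\mathcal{B}_s$ and $B'\in\mathcal{B}_t$ with $s\ne t$, Lemma~\ref{Anixi}(5) says there are six classes $\Delta$ with $|B\cap B'\cap\Delta|=2$; on each of these, $V_{B,B'}$ preserves a $2$-set (a line of $AG_3(2)$), so its image in the translation group has order at most $2$, not $2^2$. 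Taking five such classes and using the already-established faithfulness on five classes gives $|V_{B,B'}|\le 2^5$, while $[V:V_{B,B'}]\le 2^6$; hence $|V|\le 2^{11}$ directly. The point is that passing to $V_{B,B'}$ costs an extra factor $2^3$ in the index but gains a factor $2$ per class in the image bound, which over five classes is a net gain of $2^2$---precisely what you are missing.
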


\begin{proof}
Let $\Delta _{1}, \Delta _{2},...,\Delta_{h}$ be distinct elements of $\Sigma$, where $1\leq h \leq 8$, then
\begin{equation*}
\left\vert V:V_{(\Delta _{1}\cup \Delta _{2} \cup...\cup\Delta
_{j})}\right\vert \leq 2^{3h}\text{.}
\end{equation*}
If $V_{(\Delta _{1}\cup \Delta _{2} \cup...\cup\Delta
_{h})}\neq 1$, then $2^{3}\cdot h\leq 2^{5}$ by \cite[Corollary 3.7]{Land} since 
\begin{equation*}
\left\vert\mathrm{Fix}(V_{(\Delta _{1}\cup \Delta _{2} \cup...\cup\Delta
_{h})}) \right\vert \geq 2^{3}\cdot h\text{.}
\end{equation*}
Therefore $h \leq 4$, and hence $\left\vert V \right\vert = 2^{e}$ with $e \leq 15$.

Let $B\in \mathcal{B}_{s}$ and $B^{\prime }\in \mathcal{B}_{t}$ with $s\neq t
$. Then there are precisely $6$ elements $\Delta $ of $\Sigma $ such that $%
\left\vert B\cap B^{\prime }\cap \Delta \right\vert =2$ by Lemma \ref{Anixi}%
(5). Moreover, $\left\vert V:V_{B,B^{^{\prime }}}\right\vert \leq 2^{6}$
since $\left\vert \mathcal{B}_{s}\right\vert =\left\vert \mathcal{B}%
_{t}\right\vert =2^{3}$ Lemma \ref{Anixi}(1)--(2). Then $\left\vert
V:V_{B,B^{^{\prime }},(\Delta _{1}\cup ...\cup \Delta _{5})}\right\vert \leq
2^{11}$ by choosing distinct $\Delta _{1},...,\Delta _{5}$ among the $6$
elements $\Delta $ of $\Sigma $ such that $\left\vert B\cap B^{\prime }\cap
\Delta \right\vert =2$. Then $\left\vert V\right\vert \leq 2^{11}$ since $%
V_{B,B^{^{\prime }},(\Delta _{1}\cup ...\cup \Delta _{5})}\leq V_{(\Delta
_{1}\cup ...\cup \Delta _{5})}=1$. Thus, $e \leq 11$.

Finally, $\left\vert G\right\vert
=2^{e+3}\cdot c\cdot 7$ with $c=1$ or $3$ by Lemma \ref{ftinoporo}(3) since $V=G_{(\Sigma)}$ and $\left\vert V\right\vert =2^{e}$. Then $\left\vert
G_{B}\right\vert =2^{e-3}\cdot c \cdot 7$ since $b=v=2^{6}$, and hence $e-3 \geq 2$ since $G_{B}$ acts transitively
on $B$ and $k=28$. Thus $5 \leq e \leq 11$, which is the assertion.
\end{proof}

\bigskip

\begin{proposition}\label{Notos}
Let $\mathcal{D}$ be a symmetric $2$-$(64,28,12)$ design admitting a
flag-transitive point-imprimitive automorphism group $G$ preserving a point-partition $\Sigma$ of $\mathcal{D}$. If $G^{\Sigma}\cong PSL_{2}(7)$, then one of the following holds:
\begin{enumerate}
    \item $\mathcal{D}$ is one of the two $2$-designs constructed in Section \ref{Sec2}, and $G=Aut(\mathcal{D})\cong 2^{8}:PSL_{2}(7)$;
    \item $\mathcal{D}$ is the $2$-design $\mathcal{S}^{-}(3)$ constructed in \cite{Ka75}, $G$ is one of the groups $2^{3}:(2^{3}:PSL_{2}(7))$, $2^{6}:PSL_{2}(7)$, $2^{6}.PSL_{2}(7)$, and $Aut(\mathcal{D})\cong 2^{6}:Sp_{6}(2)$.   
\end{enumerate}
\end{proposition}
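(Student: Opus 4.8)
The plan is to study $V:=G_{(\Sigma)}$ as a module for $G/V\cong\PSL_{2}(7)$, pin down its dimension, identify $\mathcal{D}$, and then recover $G$ and $\Aut(\mathcal{D})$. Since $G^{\Sigma}\cong\PSL_{2}(7)$, Lemma~\ref{ftinoporo} places us in Line~3 of Table~\ref{perifanos}: $G_{\Delta}^{\Delta}\cong A\Gamma L_{1}(8)$, $\mathcal{D}_{0}\cong\AG_{3}(2)$ with the planes as blocks, $V$ is elementary abelian of order $2^{e}$ acting regularly on each $\Delta\in\Sigma$, $G\cong 2^{e}.\PSL_{2}(7)$, and $5\le e\le 11$ by Lemma~\ref{kalokairi}. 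First I would show $C_{G}(V)=V$: as $V$ is abelian and normal, $C_{G}(V)/V\trianglelefteq G/V\cong\PSL_{2}(7)$ is simple, so $C_{G}(V)\in\{V,G\}$, and $C_{G}(V)=G$ would make the $G$-conjugate subgroups $V_{(\Delta_{i})}$ all coincide, forcing $V_{(\Delta_{1})}=\bigcap_{i}V_{(\Delta_{i})}=1$ and $2^{e}=8$, against $e\ge5$. Likewise $C_{V}(G)=1$, since $G_{\Delta_{i}}$ centralises $C_{V}(G)$ while $Z(G_{\Delta_{i}}^{\Delta_{i}})=Z(A\Gamma L_{1}(8))=1$. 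Hence $V$ is a faithful $\mathbb{F}_{2}[\PSL_{2}(7)]$-module with no trivial submodule; since $H^{1}(\PSL_{2}(7),U)=0$ for $U$ trivial, natural $N$, dual $N^{*}$, or Steinberg $\mathrm{St}$ (the last projective), a short socle-series argument shows $V$ has no trivial composition factor at all, so its factors lie among $N$, $N^{*}$, $\mathrm{St}$, of dimensions $3,3,8$ (see \cite{AtMod}). Thus $e\in\{6,8,9,11\}$, and in dimension $8$ necessarily $V\cong\mathrm{St}$ (a single factor).

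Next I would exclude $e=9$ and $e=11$. Writing $\Sigma=\{\Delta_{1},\dots,\Delta_{8}\}$ and $M_{i}:=V/V_{(\Delta_{i})}\cong\mathbb{F}_{2}^{3}$, the map $v\mapsto(vV_{(\Delta_{i})})_{i}$ is a $G$-equivariant embedding of $V$ into $\bigoplus_{i}M_{i}\cong\mathrm{Ind}_{G_{\Delta_{1}}}^{G}M_{1}$, where $M_{1}$ is inflated from the $3$-dimensional Singer module of $G_{\Delta_{1}}/V\cong 7:3$; by Lemma~\ref{vdropi} the eight codimension-$3$ subspaces $V_{(\Delta_{i})}$ are permuted $2$-transitively by $G$ and intersect trivially. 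Using the explicit submodule lattices of $N,N^{*},\mathrm{St}$ and their extensions, the Land-type fixed-point bounds exactly as in the proof of Lemma~\ref{kalokairi}, and the fact that $G_{B}$ (of order $|G|/64=2^{e-3}\cdot 21$) acts transitively on a block of size $28$, I would check — assisted by a short \texttt{GAP}~\cite{GAP} computation with the $\mathbb{F}_{2}[\GL_{3}(2)]$-modules — that no admissible such configuration exists when $e\in\{9,11\}$. This leaves $e=6$ or $e=8$.

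Suppose $e=8$, so $V\cong\mathrm{St}$ is projective, whence $H^{2}(\PSL_{2}(7),V)=0$ and $G\cong 2^{8}:\PSL_{2}(7)$. Identifying $V$ with the orthogonal space $V_{8}(2)$ of $\GO_{8}^{-}(2)$ and $G/V$ with the subgroup $G_{0}<\Omega_{8}^{-}(2)$ fixed in Section~\ref{Sec2}, the action of $G$ on $\Sigma$ matches that on $\Delta^{G}$ of Lemma~\ref{Proex}(1), and $\mathcal{P}$ is identified with the point set constructed there. The structural constraints of Lemmas~\ref{Anixi} and \ref{kalokairi}, together with flag-transitivity, then force the block set of $\mathcal{D}$ to be $B_{1}^{G}$ or $B_{2}^{G}$ in the notation of Lemma~\ref{Proex}; hence $\mathcal{D}\cong\mathcal{D}^{(1)}$ or $\mathcal{D}^{(2)}$, and $G=\Aut(\mathcal{D})\cong 2^{8}:\PSL_{2}(7)$ by Example~\ref{Ex1}. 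This is conclusion~(1).

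Finally, suppose $e=6$, so $|G|=2^{6}\cdot168$. For the identification of $\mathcal{D}$ I would argue as in the proof of Example~\ref{Ex1}: if $\Aut(\mathcal{D})$ is point-primitive it is point-$2$-transitive by \cite[Theorem~1.2]{TCZ} and \cite[Theorem~1]{Ka72} (as $7\mid|\Aut(\mathcal{D})|$), and then Kantor's classification \cite[Theorem]{Ka85}, \cite[Theorem~1]{Ka75} yields $\mathcal{D}\cong\mathcal{S}^{-}(3)$ with $\Aut(\mathcal{D})\cong 2^{6}:\Sp_{6}(2)$; if instead $\Aut(\mathcal{D})$ is point-imprimitive then, since $\Aut(\mathcal{D})\geq G$ cannot lie in a $2^{8}:\PSL_{2}(7)$ (the Steinberg module being irreducible, $2^{6}.\PSL_{2}(7)\not\leq 2^{8}:\PSL_{2}(7)$), $\Aut(\mathcal{D})$ is again of the shape of Lemma~\ref{ftinoporo} with $e=6$, hence $\Aut(\mathcal{D})=G$, and a direct analysis (checked with the package \texttt{DESIGN}~\cite{Design} of \texttt{GAP}~\cite{GAP}) shows the resulting design is $\mathcal{S}^{-}(3)$, whose automorphism group is in fact $2^{6}:\Sp_{6}(2)$ by \cite{Ka75}. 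It then remains to enumerate, up to conjugacy, the flag-transitive point-imprimitive subgroups $G\leq 2^{6}:\Sp_{6}(2)$ with $G^{\Sigma}\cong\PSL_{2}(7)$ and $G_{(\Sigma)}\cong 2^{6}$: keeping track of the $\mathbb{F}_{2}[\GL_{3}(2)]$-module structure of $V$ (the surviving possibilities being $N\oplus N^{*}$ split, $N\oplus N^{*}$ non-split, and $N\oplus N$), these are exactly $2^{6}:\PSL_{2}(7)$, $2^{6}.\PSL_{2}(7)$ and $2^{3}:(2^{3}:\PSL_{2}(7))=2^{3}:\AGL_{3}(2)$, which is conclusion~(2). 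The main obstacle is the module-and-configuration analysis of the second paragraph — excluding $e=9,11$ and showing the module is forced in dimensions $6$ and $8$ — together with verifying in the last step that precisely those three non-isomorphic extensions arise and that each genuinely acts flag-transitively and point-imprimitively on $\mathcal{S}^{-}(3)$.
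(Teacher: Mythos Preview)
Your module-theoretic reduction contains a factual error that undermines the key step. You assert that $H^{1}(\PSL_{2}(7),N)=0$ for the natural $3$-dimensional module $N$, but in fact $H^{1}(\PSL_{2}(7),N)\cong\mathbb{F}_{2}$. One sees this concretely inside $\AGL_{3}(2)=2^{3}{:}\GL_{3}(2)$: besides the eight point-stabilisers there is a second $2^{3}$-conjugacy class of complements, namely the copies of $\PSL_{2}(7)$ acting $2$-transitively on the eight affine points (this is exactly the copy recorded in Lemma~\ref{LA5}(1)). Equivalently, the projective cover of the trivial module has Loewy layers $k\,/\,(N\oplus N^{*})\,/\,k$, so $\mathrm{Ext}^{1}(k,N)\cong\mathrm{Ext}^{1}(k,N^{*})\cong\mathbb{F}_{2}$. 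Hence your ``short socle-series argument'' that $V$ has no trivial composition factor fails, and with it your deduction that $e\in\{6,8,9,11\}$ and that $e=8$ forces $V\cong\mathrm{St}$.

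The paper reaches the same restriction on $e$ by a different route that sidesteps this issue: it first proves that $G$ is \emph{perfect} with $Z(G)=1$ (Steps~(I)--(II)), and then invokes the Holt--Plesken library of perfect groups \cite{Holple} to list every perfect centre-free extension $2^{e}.\PSL_{2}(7)$ with $5\le e\le 11$, obtaining $e\in\{6,8,9,11\}$ together with an explicit finite list of candidates (Table~\ref{Finsymm}). For $e\in\{6,8,9\}$ it then runs a uniform \texttt{GAP}/\texttt{DESIGN} search over index-$64$ subgroups and their length-$28$ orbits; for $e=11$ it gives a bespoke geometric argument exploiting the two $\PSL_{2}(7)$-orbits of $5$-subspaces of the Steinberg module. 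By contrast, your handling of $e=8$ (``the block set is forced to be $B_{1}^{G}$ or $B_{2}^{G}$'') is only asserted, and your $e=6$ analysis does not engage with the fact that there are \emph{seven} non-isomorphic perfect groups at $e=6$, of which only three actually admit a flag-transitive $2$-$(64,28,12)$ design (Table~\ref{Finsymm}, lines~1--7). If you want to retain a module-theoretic flavour, you should first establish perfectness of $G$ and then control trivial composition factors through the Schur multiplier of $\PSL_{2}(7)$ rather than through the incorrect $H^{1}$-vanishing; even then, expect to fall back on machine computation at essentially the same points the paper does.
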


\begin{proof}
We are going to prove the assertion in a series of steps.

\smallskip
\noindent \textbf{(I).} $C_{G}(V) = V$, $G/V=G^{\Sigma}\cong PSL_{2}(7)$ is isomorphic to a subgroup of $GL(V)$.

Let $C=C_{G}(V)$ and suppose that $V<C$. Then $C^{\Sigma
}\neq 1$, and hence $C/V=C^{\Sigma } \cong PSL_{2}(7)$ since $C \unlhd G$ and $G^{\Sigma }\cong PSL_{2}(7)$ by Lemma \ref{ftinoporo}(2). Thus $V=Z(C)$, and hence $C$ is a perfect covering of $PSL_{2}(7)$. Then $\left\vert V\right \vert \leq 2$ by \cite{At}, whereas $\left\vert V\right \vert \geq 2^{3}$ by Lemma \ref{ftinoporo}(1). Thus $C=V$, and hence $G^{\Sigma}=G/V$ is isomorphic to a subgroup of $GL(V)$ since $V=G_{(\Sigma )}$.

\smallskip
\noindent \textbf{(II).} $G$ is a perfect group and $Z(G)=1$.

It follows immediately from (I) that $G^{\prime}=G$, and hence $G$ is perfect. Assume that $C_{V}(G)\neq 1$. Let $\xi \in C_{V}(G)$, $\xi \neq 1$, and let $%
\gamma $ be a $7$-element of $G$. Clearly, $\gamma $ fixes exactly one
element $x$ of $\mathcal{D}$ which is necessarily fixed by $\xi $. Then $\xi 
$ fixes $\Delta $ pointwise since $\xi \in V$ and $V$ ia abelian and acts transitively on $\Delta$. Now, let $g\in G$ such that $\Delta^{g}\neq \Delta$. Then $\xi$ fixes $\Delta^{g}$ pointwise since $\xi$ and $g$ commute. Then $\xi$ fixes pointwise each element of $\Sigma$ since it commutes with $\left\langle \xi \right\rangle$ since this one is transitive on $\Sigma\setminus \{\Delta\}$, a contradiction. Thus $C_{V}(G)= 1$ and, in particular, $Z(G)=1$. 

\smallskip
\noindent \textbf{(III).} Determination of $G$ and $\mathcal{D}$. 

Since $G$ is a perfect non-central extension of $PSL_{2}(7)$ by an elementary abelian $2$-group of order $2^{e}$ with $5 \leq e \leq 11$, by \cite[Section 5.8]{Holple}, $G$ is a perfect group as in Table \ref{Finsymm}. In particular, $e=6,8,9$ or $11$, and in $G$ is one of the perfect groups contained in the library $\texttt{PERFECT GROUPS}$ (based on \cite{Holple}) of $\texttt{GAP}$ \cite{GAP}. 

\begin{sidewaystable}[p!]
\caption{flag-transitive point-imprimitive symmetric $2$-$(64,28,12)$ designs.}\label{Finsymm}
{\small\setlength{\tabcolsep}{2pt}
\begin{tabular}{|c|c|c|c|c|c|c|}
\hline
Line &$e$ & Type & Structure Description of $G$ & $\mathcal{D}$ & Isomorphism
classes of $\mathcal{D}$ & $Aut(\mathcal{D})$ \\
\hline
\hline
1 & $6$ & $(10752,1)$ & $2^{3}:(2^{3}:PSL_{2}(7))$ & No &  &    \\ 
2 &      & $(10752,2)$ & $2^{3}:(2^{3}.PSL_{2}(7))$ & No &  &    \\ 
3 &      & $(10752,5)$ & $2^{3}:(2^{3}:PSL_{2}(7))$ & $\mathcal{S}^{-}(3)$ & $1$ & $2^{6}:Sp_{6}(2)$   \\ 
4 &      & $(10752,6)$ & $2^{3}:(2^{3}.PSL_{2}(7))$ & No &  &    \\ 
5 &      & $(10752,7)$ & $2^{6}.PSL_{2}(7)$ & No &  &    \\ 
6 &      &$(10752,8)$ & $2^{6}:PSL_{2}(7)$ & $\mathcal{S}^{-}(3)$ & $1$ & $2^{6}:Sp_{6}(2)$   \\ 
7 &      &$(10752,9)$ & $2^{6}.PSL_{2}(7)$ & $\mathcal{S}^{-}(3)$ & $1$ & $2^{6}:Sp_{6}(2)$   \\
\hline
8 &  $8$ & $(43008,25)$ & $2^{8}:PSL_{2}(7)$ & See Section \ref{Sec2}& $2$ & $2^{8}:PSL_{2}(7)$   \\
\hline
9  & $9$ & $(86016,1)$ & $2^{3}:(2^{3}:(2^{3}:PSL_{2}(7)))$ & No &  &    \\ 
10 &     &$(86016,2)$ & $2^{3}:(2^{3}:(2^{3}.PSL_{2}(7)))$ & No &  &    \\ 
11 &     &$(86016,4)$ & $2^{3}:(2^{3}:(2^{3}:PSL_{2}(7)))$ & No &  &    \\ 
12 &     &$(86016,5)$ & $2^{3}:(2^{6}:PSL_{2}(7))$ & No &  &    \\ 
13 &     &$(86016,6)$ & $2^{3}:(2^{3}:(2^{3}.PSL_{2}(7)))$ & No &  &    \\ 
14 &     &$(86016,11)$ & $2^{3}:(2^{3}:(2^{3}.PSL_{2}(7)))$ & No &  &    \\ 
15 &     &$(86016,12)$ & $2^{6}:(2^{3}.PSL_{2}(7))$ & No &  &    \\ 
16 &     &$(86016,13)$ & $2^{3}:(2^{6}:PSL_{2}(7))$ & No &  &    \\ 
17 &     &$(86016,14)$ & $2^{3}:(2^{6}.PSL_{2}(7))$ & No &  &    \\ 
18 &     & $(86016,29)$ & $2^{3}:(2^{6}:PSL_{2}(7))$ & No &  &    \\ 
19 &     & $(86016,31)$ & $2^{3}:(2^{6}.PSL_{2}(7))$ & No &  &    \\ 
20 &     & $(86016,33)$ & $2^{6}:(2^{3}.PSL_{2}(7))$ & No &  &    \\ 
21 &     & $(86016,35)$ & $2^{9}.PSL_{2}(7)$ & No &  &    \\ 
22 &     & $(86016,38)$ & $2^{9}:PSL_{2}(7)$ & No &  &    \\
\hline
23 & $11$ & $(344064,35)$ & $(2^{3}\times 2^{8}):PSL_{2}(7)$ & No &  &    \\ 
24 &    & $(344064,36)$ & $2^{8}:(2^{3}.PSL_{2}(7))$ & No &  &   \\
\hline
\end{tabular}}
\end{sidewaystable}
If $e \neq 11$, we constructed an algorithm that works as follows
\begin{itemize}
    \item We select, up to conjugation, all the subgroups of $G$ of index $2^{6}$ and for each of these we evaluate the action on the right cosets of such subgroups. In this way we obtain all the inequivalent transitive permutation representations of $G$ of degree $2^{6}$;
    \item In any of the previous representation, up to conjugation we determine all possible subgroups having at least one orbit of length $28$, and we record each such orbit.
    \item Now, having the transitive permutation representations of $G$ of degree $2^{6}$, and for each of these all orbits of length $28$ of possibly some subgroups of $G$, we use the package $\texttt{DESIGN}$ \cite{Design} of $\texttt{GAP}$ \cite{GAP} to determine, up to isomorphism, all $2$-$(64,28,12)$ design admitting $G$ as a flag-transitive transitive automorphism, and evaluate the corresponding full automorphism group.  
\end{itemize}
The output is that $G$, $\mathcal{D}$ and isomorphism classes of this one are as in the statement of the proposition.

If $e=11$, then $G$ is the perfect group of type $(344064,35)$ or $(344064,36)$ by \cite[Section 5.8]{Holple}. Then $G$ is $(W \times U):PSL_{2}(7)$ or $W:(U.PSL_{2}(7))$, where $W$ and $U$ are elementary abelian of order $2^8$ or $2^3$, respectively. Moreover, in both cases $V=UV$ is abelian of order $2^{11}$, $W$ is $G$-invariant and the group induced on $W$ by $G$ is $PSL_{2}(7)$ in its (linear) absolutely irreducible $8$-dimensional representation. Finally, denote by $M$ the group $U:PSL_{2}(7)$ or $U.PSL_{2}(7)$ according to whether $G$ is $(W \times U):PSL_{2}(7)$ or $W:(U.PSL_{2}(7))$, respectively. 

To simplify the computation we operate the following reduction to identify the candidate subgroups of $G$ to be the stabilizers in $G$ of a point or a block of $\mathcal{D}$.

Let $(x,B)$ be a any flag of $\mathcal{D}$, and let $\Delta_{i} \in \Sigma$ such that $x \in \Delta_{i}$, and $\mathcal{B}_{j}$ be one of the the block-$V$-orbits, defined in Lemma \ref{Anixi}(1),(2), such that $B \in \mathcal{B}_{j}$. Then $G_{x}=V_{(\Delta_{i_{1}})}:(7:3)$ and $G_{B}=V_{(\mathcal{B}_{j})}:(7:3)$. Furthermore, $G_{x} \cap G_{B} \cong 2^{5}:3$ by \cite[Lemmas 1 and 2]{HM}.

Note that, $\{V_{(\Delta_{i})}:1\leq i \leq 8\}$ is a set of at most eight $8$-subspaces of $V$, and $G$ acts on $\Sigma$ inducing $G/V \cong PSL_{2}(7)$ in its $2$-transitive representation of degree $8$ by Lemma \ref{vdropi}. This forces $W \neq V_{(\Delta_{i})}$ for each $i=1,...,8$. Then the well-known Grassmann identity implies that $\left\vert W \cap V_{(\Delta_{i})}\right\vert \geq 2^{5}$. Now, $W^{\Delta} \unlhd G_{\Delta}^{\Delta}$ since $W$ is a normal subgroup of $G$ contained in $V$ for any $\Delta \in \Sigma$. Also, $W^{\Delta}\neq 1$ since, we have seen that, $W \neq V_{(\Delta)}$. Then $2^{3} \cong Soc(G_{\Delta}^{\Delta}) \unlhd W^{\Delta}$ since $G_{\Delta}^{\Delta} \cong A\Gamma L_{1}(8)$ acts primitively on $\Delta_{i}$  by Theorem \ref{TheList}(6.a) and Lemma \ref{ftinoporo}(3). Thus $\left\vert W \cap V_{(\Delta_{i})}\right\vert = 2^{5}$. Then $\{W \cap V_{(\Delta_{i})}:1\leq i \leq 8\}$ is a set of exactly $8$ distinct $5$-subspaces of $W$ on which $M$ acts inducing $PSL_{2}(7)$ in its $2$-transitive representation of degree $8$.

A similar reasoning shows that $\{V_{(\mathcal{B}_{j})}:1\leq j \leq 8\}$ is a set of exactly $8$ distinct $G$-invariant $8$-subspaces of $V$, on which $G$ acts inducing $G/V \cong PSL_{2}(7)$ in its $2$-transitive representation of degree $8$. Moreover, $\{W \cap V_{(\mathcal{B}_{j})}:1\leq j \leq 8\}$ is a set of exactly $8$ distinct $5$-subspaces of $W$ on which $M$ acts inducing $PSL_{2}(7)$ in its $2$-transitive representation of degree $8$.

We have seen in Section \ref{Sec2} that $PSL_{2}(7)$ has exactly two orbits of $5$-subspaces of $W$. Let $X_{1}$ and $X_{2}$ be two $5$-subspaces of $W$, representatives of the aforementioned $2$ orbits, respectively. Then $X_{1}=W \cap V_{(\Delta_{i_{0}})}$ and $X_{1}=W \cap V_{(\mathcal{B}_{j_{0}})}$ for some $1 \leq i_{0},j_{0} \leq 8$. Starting from this, we use $\texttt{GAP}$ \cite{GAP} to proceed:

\begin{itemize}
    \item We search for all $8$-subspaces of $V$ that are distinct from $W$ and contain $X_{1}$ or $X_{2}$, and we found $1395$ candidates containing $X_{1}$ and $1395$ containing $X_{2}$. One of these $8$-subspaces of $V$ is eligible to be $ V_{(\Delta_{i_{0}})}$ and the other $V_{(\mathcal{B}_{j_{0}})}$ for suitable $1 \leq i_{0},j_{0}\leq 8$;   
    \item We filter the $8$-subspaces of $V$ obtained in the previous step with respect to the property that their conjugacy class in $G$ has length $8$, and we found exactly $3$ such conjugacy classes, say $Y_{h}^{G}$ with $h=1,2,3$. Now, from the normalizer in $G$ of $Y_{h}$, we determine the subgroup $Y_{h}:(7:3)$. These groups are the only eligible ones to be the stabilizer of a point, or a block, or possibly both. However, that action of $G$ on the set of the right cosets of $Y_{h}:(7:3)$ is not faithful for two of them say those for $h=1$ or $2$, and the reason is that the normal subgroups $U$ of $G$ is contained in each member of $Y_{h}^{G}$. Hence, the group $G$ has only one admissible transitive permutation representation of degree $2^{6}$, namely the one on the set of the right cosets of $Y_{3}:(7:3)$, and its subdegrees are $1,7,56$. 
\end{itemize}    
From the previous computation we deduce that, if $x$ and $B$ are any point and block of $\mathcal{D}$, then $G_{x}$ and $G_{B}$ to lie in the same conjugacy $G$-class since the actions of $G$ on the point set and the block set of $\mathcal{D}$ are both transitive of degree $2^{6}$. So, $G_{x}$ is also the stabilizer in $G$ of some block of $\mathcal{D}$. This is a contradiction since the subdegrees of $G$ in its on the set of the right cosets of $Y_{3}:(7:3)$ are $1,7,56$. This completes the proof.
\end{proof}

\bigskip

\begin{theorem}\label{Himonas}
Let $\mathcal{D}$ be a symmetric $2$-$(64,28,12)$ design admitting a
flag-transitive point-imprimitive automorphism group $G$. Then one of the following holds:
\begin{enumerate}
    \item $\mathcal{D}$ is one of the two $2$-designs constructed in Section \ref{Sec2}, and $G=Aut(\mathcal{D})\cong 2^{8}:PSL_{2}(7)$;
    \item $\mathcal{D}$ is the $2$-design $\mathcal{S}^{-}(3)$ constructed in\cite{Ka75}, $G$ is one of the groups $2^{3}:(2^{3}:PSL_{2}(7))$, $2^{6}:PSL_{2}(7)$, $2^{6}.PSL_{2}(7)$, $(2^{6}.2^{3}):7$, or $(2^{6}.2^{3}):(7:3)$, and $Aut(\mathcal{D})\cong 2^{6}:Sp_{6}(2)$.   
\end{enumerate}
\end{theorem}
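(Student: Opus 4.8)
The plan is to reduce Theorem~\ref{Himonas} to the work already carried out in this section, so that essentially nothing new needs to be proved. First I would invoke Theorem~\ref{main}'s preparatory results: by Theorem~\ref{CamZie} the design $\mathcal{D}$, being symmetric, flag-transitive and point-imprimitive, admits a nontrivial $G$-invariant partition $\Sigma$, and by Proposition~\ref{sinithos} the admissible parameters for $\mathcal{D}_0$, $\mathcal{D}_1$, $\mathcal{D}$ and for $G_\Delta^\Delta$, $G^\Sigma$ are confined to the lines of Table~\ref{symm}; moreover $G$ acts $2$-transitively on $\Sigma$. Since we are assuming $\mathcal{D}$ has parameters $(64,28,12)$, only Lines 9, 10, 11 of Table~\ref{symm} survive, and these are precisely the cases covered by Theorem~\ref{TheList}(6) once we check $G_{(\Sigma)}\neq 1$; the alternative $G_{(\Sigma)}=1$ is excluded by Lemma~\ref{soc}(1), which under $G_{(\Sigma)}=1$ forces $\mathcal{D}\cong\overline{PG_3(2)}$, a $2$-$(15,8,4)$ design, not a $2$-$(64,28,12)$ design. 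Hence $G_{(\Sigma)}\neq 1$ and Theorem~\ref{TheList}(6) applies.

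Next I would feed this into Lemma~\ref{ftinoporo}: it tells us $V:=G_{(\Sigma)}$ is elementary abelian of order $2^e$ with $e\geq 3$ acting transitively on each class, $\mathcal{D}_0\cong AG_3(2)$ with all planes as blocks, and the triples $(G_\Delta^\Delta,G^\Sigma,G)$ are restricted to the three lines of Table~\ref{perifanos}. In particular $G$ is one of $(2^e.2^3):7$, $(2^e.2^3):(7:3)$, or $2^e.PSL_2(7)$; combining with Lemma~\ref{kalokairi} we get $5\le e\le 11$. I would then split according to $G^\Sigma$. When $G^\Sigma\cong PSL_2(7)$, Proposition~\ref{Notos} gives exactly conclusions (1) and (2) of the present theorem — except that the groups $(2^6.2^3):7$ and $(2^6.2^3):(7:3)$ appearing in the statement of Theorem~\ref{Himonas}(2) are not listed in Proposition~\ref{Notos}(2); these arise from the cases $G^\Sigma\cong AGL_1(8)$ (Line 1 of Table~\ref{perifanos}) and $G^\Sigma\cong A\Gamma L_1(8)$ (Line 2 of Table~\ref{perifanos}), so I must handle those two remaining cases and show they yield only $\mathcal{S}^{-}(3)$.

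For $G^\Sigma\cong AGL_1(8)$ or $A\Gamma L_1(8)$, I would argue that $PSL_2(7)\not\le G^\Sigma$ but $G$ still contains a subgroup inducing the relevant structure, and invoke the uniqueness results already used in Example~\ref{Ex1}: namely that a flag-transitive point-$2$-transitive automorphism group forces $\mathcal{D}\cong\mathcal{S}^{-}(3)$ with $\operatorname{Aut}(\mathcal{D})\cong 2^6:Sp_6(2)$ by \cite[Theorem]{Ka85} and \cite[Theorem 1]{Ka75}, while point-imprimitivity is what we are in; here the relevant observation is that $\mathcal{S}^{-}(3)$ does admit flag-transitive point-imprimitive subgroups of the form $(2^6.2^3):7$ and $(2^6.2^3):(7:3)$ inside $2^6:Sp_6(2)$ (these are the two extra lines listed in Line 6 of Table~\ref{tavola1}), and conversely, running the same structural analysis as in Proposition~\ref{Notos} — $C_G(V)=V$, $G$ acts on $V$ through $G^\Sigma$ inside $GL(V)$ — together with the bound $e=6$ forced by order considerations ($|G_B|=2^{e-3}\cdot c\cdot 7$ must act transitively on a block of size $28$) pins down $G$ to the listed groups and $\mathcal{D}$ to $\mathcal{S}^{-}(3)$. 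Assembling: in every case either (1) or (2) holds, and the theorem follows.

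The main obstacle I expect is precisely the bookkeeping in the $G^\Sigma\in\{AGL_1(8),A\Gamma L_1(8)\}$ cases: one must verify that the groups $(2^6.2^3):7$ and $(2^6.2^3):(7:3)$ genuinely occur as flag-transitive point-imprimitive automorphism groups of $\mathcal{S}^{-}(3)$ and do not yield any new design, and that no value of $e$ other than $6$ survives. This is the kind of step that in practice is discharged by a short \texttt{GAP} \cite{GAP} computation (analogous to the one in the proof of Proposition~\ref{Notos} for $e\neq 11$), checking transitive permutation representations of degree $2^6$ of the candidate perfect-type or soluble-by-$PSL_2(7)$ extensions and the orbits of length $28$; once that is in hand, the identification with $\mathcal{S}^{-}(3)$ and the computation $\operatorname{Aut}(\mathcal{D})\cong 2^6:Sp_6(2)$ are immediate from \cite[Corollary 3]{Ka75} and the earlier arguments.
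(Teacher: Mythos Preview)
Your reduction to Lines 9--11 of Table~\ref{symm}, the exclusion of $G_{(\Sigma)}=1$ via Lemma~\ref{soc}(1), and the appeal to Proposition~\ref{Notos} for the case $G^{\Sigma}\cong PSL_{2}(7)$ are all correct and match the paper. The gap is in your treatment of the remaining cases $G^{\Sigma}\cong AGL_{1}(8)$ or $A\Gamma L_{1}(8)$, where two of the ingredients you import do not actually work.

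First, the argument for $C_{G}(V)=V$ in Proposition~\ref{Notos} relies on $G/V\cong PSL_{2}(7)$ being simple: if $V<C$ then $C/V\cong PSL_{2}(7)$, so $C$ is a perfect central extension and the Schur multiplier bounds $|V|\leq 2$. When $G^{\Sigma}$ is solvable this reasoning collapses; the paper instead argues that if $V<C$ then, by $2$-transitivity on $\Sigma$, the Sylow $2$-subgroup $S$ of $G$ lies in $C$, whence $V\leq Z(S)$ and $V_{(\Delta)}=1$, contradicting $e\geq 5$. Second, and more seriously, your claim that ``$e=6$ is forced by order considerations'' is wrong: the divisibility $28\mid |G_{B}|=2^{e-3}\cdot c\cdot 7$ yields only the \emph{lower} bound $e\geq 5$, which is already Lemma~\ref{kalokairi}. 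To obtain $e\leq 6$ the paper carries out a genuine structural argument: one shows $Z(S)\cong 2^{3}$ with $V=Z(S)\times V_{(\Delta_{i})}$, then that $C_{V}(\beta)=Z(S)$ for every $\beta\in S\setminus V$, and finally that for a suitable pair $s\neq t$ one has $V_{(\Delta_{s})}\cap V_{(\Delta_{t})}=1$, giving $2(e-3)\leq e$. Without this, you are left with $5\leq e\leq 11$, and a blind \texttt{GAP} search over all extensions $(2^{e}.2^{3}):7$ and $(2^{e}.2^{3}):(7{:}3)$ in that range is neither the short computation you envisage nor what the paper does. After reducing to $e\in\{5,6\}$, the paper further identifies $\{V_{(\Delta_{j})}\}\cup\{Z(S)\}$ as a regular $3$-spread of $V$ to place $G/V$ inside $P\Gamma L_{2}(8)$, and only then is the residual \texttt{GAP} check small.
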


\begin{proof} Let $\Sigma$ be the $G$-invariant point-partition of $\mathcal{D}$. If $G^{\Sigma}\cong PSL_{2}(7)$, the assertion immediately follows from Proposition \ref{Notos}. Hence, in order to complete the proof, we need to settle the cases (1) and (2) of Table \ref{perifanos} in Lemma \ref{ftinoporo}(3). Here, $G=S.U$, where $S$ is a Sylow $2$-subgroup of $G$, $S/V\cong 2^{3}$ and $U$ is either $7$ or $7:3$, respectively. 

Set $C=C_{G}(V)$. Then $V\leq C$ by Lemma \ref{ftinoporo}(1). If $V \neq C$ then $C^{\Sigma}\neq 1$, and hence $C^{\Sigma}\neq 1$  then $C^{\Sigma}$ contains the Sylow $2$-subgroup of $G^{\Sigma}$ by \cite[Theorem 4.3B]{DM} since $G$ acts $2$-transitively on $\Sigma$, being $AGL_{1}(8) \leq G^{\Sigma} \leq A \Gamma L_{1}(8)$ Lemma \ref{ftinoporo}(3). Then $S\leq C$, and hence $V\leq Z(S)$ since $V$ is abelian by Lemma \ref{ftinoporo}(1). Then $V_{(\Delta)}=1$ for each $\Delta\in \Sigma$ since $S$ acts point-transitively on $\mathcal{D}$. Thus $e=3$ since $\left\vert V_{(\Delta)}\right\vert=2^{e-3}$, but this contradict Lemma \ref{kalokairi}. Thus $C=V$, and hence $G^{\Sigma}=G/V\leq GL(V)$.

One has $Z(S)\leq V$ Since $Z(S)\cap V\neq 1$, being $%
V\vartriangleleft S$. Note that, $Z(S)\cap V_{(\Delta _{i})}=1$ for each $%
\Delta _{i}\in \Sigma $ since $S$ acts point-transitively on $\mathcal{D}$.
Then $1\neq Z(S)\cong Z(S)^{\Delta _{i}}\trianglelefteq G_{\Delta }^{\Delta }
$, and hence $Z(S)$ acts transitively on $\mathcal{D}_{0}^{(i)}$ by \cite[%
Theorem 4.3B]{DM} since $AGL_{1}(8)\leq G_{\Delta _{i}}^{\Delta _{i}}\leq
A\Gamma L_{1}(8)$ acts point-primitively on $\mathcal{D}_{0}^{(i)}$.
Actually, $Z(S)$ acts point-regularly on $\mathcal{D}_{0}^{(i)}$ for each $%
i=1,...,8$ since $Z(S)$ is abelian, and so $Z(S)\cong 2^{3}$. Therefore, $%
V=Z(S)V_{(\Delta _{i})}$ with $Z(S)\cap V_{(\Delta _{i})}=1$ for each $%
i=1,...,8$ since $\left\vert Z(S)\right\vert =2^{3}$, $\left\vert V_{(\Delta
_{i})}\right\vert =2^{e-3}$ and $\left\vert V\right\vert =2^{e}$.

Let $\beta \in S\setminus V$ and suppose that $Z(S)<C_{V}(\beta )$. Then is $%
w\in V\setminus Z(S)$ commuting with $\beta $. Now, $w=zy$ for some $z\in
Z(S)$ and $y\in V_{(\Delta _{i_{0}})}$, with $y\neq 1$ and  $1\leq i_{0}\leq
8$, since $V=Z(S)V_{(\Delta _{i_{0}})}$ with $Z(S)\cap V_{(\Delta _{i_{0}})}=1
$. Hence, $w^{\beta }=w=zy$ and $w^{\beta }=\left( zy\right) ^{\beta
}=z^{\beta }y^{\beta }=zy^{\beta }$ since $z\in Z(S)$. Therefore, $y^{\beta
}=y$. Then $\beta $ preserves $\Delta _{i_{0}}$ and hence $\left\vert
S:V\left\langle \beta \right\rangle \right\vert =\left\vert
S/V:V\left\langle \beta \right\rangle /V\right\vert <8$ since $\beta \in
S\setminus V$, whereas $S$ acts transitively on $\Sigma$. Therefore, $C_{V}(\beta )=Z(S)$ for each $\beta \in
S\setminus V$. 

Recall that $G/V=G^{\Sigma }$ and $AGL_{1}(8)\leq G^{\Sigma }\leq A\Gamma
L_{1}(8)$ acts $2$-transitively on $\Sigma $ and on $\left\{ V_{(\Delta
_{j})}:j=1,...,8\right\} $ via conjugation by Lemma \ref{vdropi}. Therefore, $\left\langle \beta
\right\rangle $ has order $4$ and acts$\left\{ V_{(\Delta
_{j})}:j=1,...,8\right\} $ inducing a fixed point free involution. Let $V_{(\Delta _{s})}
$ and $V_{(\Delta _{t})}=V_{(\Delta _{s})}^{\beta }$. Then $V_{(\Delta
_{t})}^{\beta }=V_{(\Delta _{s})}$, and hence $\left( V_{(\Delta _{t})}\cap
V_{(\Delta _{s})}\right) ^{\beta }=V_{(\Delta _{t})}^{\beta }\cap V_{(\Delta
_{s})}^{\beta }=V_{(\Delta _{t})}\cap V_{(\Delta _{s})}$. Now, $\left\vert
V_{(\Delta _{t})}\cap V_{(\Delta _{s})}\right\vert =2^{m}$ for some $0\leq
m\leq e$. If $m>0$, then there is $x\in V_{(\Delta _{t})}\cap V_{(\Delta
_{s})}$, $x\neq 1$, such that $x^{\beta }=x$. Then $x\in Z(S)$ since $%
C_{V}(\beta )=Z(S)$, and hence $w=1$ since $V_{(\Delta _{t})}\cap Z(S)=1$, a
contradiction. Thus $V_{(\Delta _{t})}\cap V_{(\Delta _{s})}=1$ and so $%
2(e-3)\leq e$, and hence $e\leq 6$. Actually, $e=5$ or $6$ since $e\geq 5$
by Lemma \ref{kalokairi}. By using \texttt{GAP} \cite{GAP}, we see that there are no subgroups of $GL_{2}(5)$
isomorphic to $AGL_{1}(8)$ or $A\Gamma L_{1}(8)$ stabilizing both a plane of $%
PG_{4}(2)$ and a (non maximal) line spread of size $8$. Thus $e=6$, and so $G$ is either $(2^{6}.2^{3}):7$ or $(2^{6}.2^{3}):(7:3)$.

 The set $\left\{ V_{(\Delta _{j})}:j=1,...,8\right\} \cup \left\{
Z(S)\right\} $ is a $3$-spread of $V$, and it is regular as the arising translation
plane is Desarguesian as it is order $8$ (see \cite[Theorem 1.4(a)]{Lu}). Therefore, $G/V$ is a
subgroup of $GL_{6}(2)$ stabilizing a regular $3$-spread of $V$, and so $%
G/V\leq \Gamma L_{2}(8)\cong P\Gamma L_{2}(8)\times 7$ by \cite[Theorem 1.10]{Lu}, and hence $G/V\leq
P\Gamma L_{2}(8)$ with $AGL_{1}(8)\leq G/V\leq A\Gamma L_{1}(8)$. Now,
the possible extensions of $V$ by $AGL_{1}(8)$ or $A\Gamma L_{1}(8)$,
which are determined by using \texttt{GAP} \cite{GAP}, are precisely $6$. Clearly, $G$ is one among such extensions. Nevertheless, again by using \texttt{GAP} \cite{GAP}, only one yields a flag-transitive
point-imprimitive $2$-$(64,28,12)$ design, and such $2$-design is isomorphic to $%
\mathcal{S}^{-}(3)$. This completes the proof.

\end{proof}

\bigskip

\begin{proof}[Proof of Theorem \ref{main}]
The assertion immediately follows from Lemma \ref{soc} or Theorem \ref{TheList}, Lemma \ref{SixtyThree} and Theorem \ref{Himonas}, according to whether $G_{(\Sigma)}$ is or is not trivial, respectively.   
\end{proof}

\bigskip

\begin{proof}[Proof of Corollary \ref{cor1}]
The assertion immediately follows from \cite{BGMV} or Theorem \ref{main} according to whether the flag-transitive automorphism of the $2$-design is or is not point-primitive.   
\end{proof}

\section{Appendix}\label{Sec5}
In this section, we prove a series of classification results for the pair $(\mathcal{S},\Gamma)$, where $\mathcal{S}$ is a $2$-$(v,k,\lambda)$ design with specific values of the parameters $v$ and $k$, and $\Gamma$ is flag-transitive automorphism group $\mathcal{S}$. The importance of such results is next explained: a $2$-design $\mathcal{D}$ admitting a flag-transitive automorphism group $G$ 'decomposes' into two remarkable flag-transitive (possibly trivial) $2$-designs $\mathcal{D}_{0}$ and $\mathcal{D}_{1}$ with specific parameters by Theorem \ref{CamZie}. Then $\mathcal{D}_{0}$ and $\mathcal{D}_{1}$ are completely classified when their parameters coincide with those of some $2$-design in this section. This plays a key role in recovering $\mathcal{D}$ from the knowledge of both $\mathcal{D}_{0}$ and $\mathcal{D}_{1}$.

\bigskip

\begin{lemma}
\label{LA1}Let $\mathcal{S}$ be a $2$-$(6,3,\lambda )$ design admitting $%
\Gamma $ as a flag-transitive automorphism group. Then one of the following
holds:

\begin{enumerate}
\item $\mathcal{S}$ is a $2$-$(6,3,2)$ design and $\Gamma \cong A_{5}$;

\item $\mathcal{S}$ is the complete $2$-$(6,3,4)$ design and $\Gamma \cong
S_{5},A_{6},S_{6}$.
\end{enumerate}
\end{lemma}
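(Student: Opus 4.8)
The plan is to classify all flag-transitive automorphism groups of a $2$-$(6,3,\lambda)$ design by first constraining $\lambda$ and then analyzing the possible groups. First I would use Lemma \ref{L0}: since $\lambda(v-1)=r(k-1)$ gives $5\lambda = 2r$, so $\lambda$ is even, say $\lambda = 2m$, and $r = 5m$; then $b = vr/k = 10m$. Since $\mathcal{S}$ is non-trivial we have $k < v$, and the complete design on $6$ points with block size $3$ has $\lambda = \binom{4}{1} = 4$, so $\lambda \in \{2,4\}$ (the case $\lambda=2$ is a potential proper subdesign, $\lambda=4$ is the complete design). This already pins down the parameter possibilities to $2$-$(6,3,2)$ with $(r,b)=(5,10)$ and the complete $2$-$(6,3,4)$ with $(r,b)=(10,20)$.

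Next I would bring in flag-transitivity to control $\Gamma$. Flag-transitivity forces $\Gamma$ to be transitive on points, so $6 \mid |\Gamma|$, and the flag count $vr = 6r$ divides $|\Gamma|$; moreover $\Gamma_x$ acts transitively on the $r$ blocks through $x$. By a theorem of the type used elsewhere in the paper (flag-transitive $\Rightarrow$ point-primitive is false in general, but one can invoke that $\Gamma$ acting on $6$ points with the relevant order divisibility is severely restricted), I would simply run through the transitive permutation groups of degree $6$: these are $C_6$, $S_3$, $D_{12}$, $A_4$, $S_4$ (two classes), $A_4\times C_2$, $S_4 \times C_2$, $A_5\cong PSL_2(5)$, $S_5\cong PGL_2(5)$, $A_6$, $S_6$. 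For each candidate I would check whether it can act flag-transitively on a design with the two admissible parameter sets, using that $|\Gamma_x| = |\Gamma|/6$ must be divisible by $r \in \{5,10\}$. This divisibility by $5$ immediately eliminates every group except $A_5$, $S_5$, $A_6$, $S_6$ (for $\lambda = 2$, $r=5$ needs $5 \mid |\Gamma_x|$, i.e. $30 \mid |\Gamma|$; for $\lambda = 4$, $r = 10$ needs $60 \mid |\Gamma|$).

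Then I would handle the two cases separately. For the complete $2$-$(6,3,4)$ design, every $3$-subset is a block, so \emph{every} point-transitive group preserves it; combined with the divisibility $60 \mid |\Gamma|$ and flag-transitivity (here automatic once point-$2$-transitive, and $A_5, S_5, A_6, S_6$ are all at least $2$-transitive on $6$ points except one should check $A_5$ on $6$ points is $2$-transitive — it is, via $PSL_2(5)$ on the projective line), we get $\Gamma \in \{A_5, S_5, A_6, S_6\}$; but wait, $A_5$ has order $60$, so $|\Gamma_x| = 10 = r$, meaning $\Gamma_x$ would act \emph{regularly} on the $10$ blocks through $x$ — this needs checking whether $A_5$ actually flag-transitive, and one finds the flag stabilizer would be trivial, which is consistent, but one must verify $A_5$ is flag-transitive on the complete design: since $A_5$ is $2$-transitive on $6$ points it is transitive on ordered pairs hence on the $\binom{6}{1}\cdot\binom{5}{2}/\!\sim$ — actually flag-transitivity on the complete design follows from $2$-homogeneity, and $A_5$ is $2$-transitive, so it works; hmm, but the statement lists only $S_5, A_6, S_6$ for case (2), so I must recheck — $A_5 \cong PSL_2(5)$ acting on $6$ points is sharply $3$-transitive? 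No, $PSL_2(5)$ on the projective line $\mathrm{PG}_1(5)$ has $6$ points and order $60 = 6\cdot 5\cdot 2$, so it is $2$-transitive but not $3$-transitive; flag-transitivity on the complete $2$-$(6,3,4)$ design requires transitivity on (point, $3$-set through it) pairs, equivalently on $3$-subsets with a marked point, which has $6\cdot\binom{5}{2} = 60$ elements — so $A_5$ would need to act \emph{regularly}, hence the stabilizer of a flag is trivial, which is possible in principle, but in $PSL_2(5)$ the stabilizer of $3$ points is trivial while the stabilizer of a point and an unordered pair of others has order $2$ — so $A_5$ is \emph{not} transitive on such flags (orbit sizes don't match), confirming $A_5$ is excluded and leaving exactly $S_5, A_6, S_6$. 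For the $2$-$(6,3,2)$ design I would check that among $A_5, S_5, A_6, S_6$ only $A_5$ admits such a design flag-transitively: the $2$-$(6,3,2)$ design is unique (it is the unique such design, related to $\mathrm{PG}_2(2)$ doubling or to the edge-set structure) and its full automorphism group is $S_5$ acting on $6$ points, but flag-transitivity singles out $A_5$ — I would verify via orbit-counting that $S_5$ is \emph{not} flag-transitive (the $S_5$-orbit on flags splits) whereas $A_5$ is. The main obstacle I anticipate is precisely this last verification: pinning down the exact automorphism group of the $2$-$(6,3,2)$ design and checking the subtle flag-transitivity question for $A_5$ versus $S_5$, which requires either an explicit model of the design or a careful counting argument; everything else is routine divisibility and enumeration.
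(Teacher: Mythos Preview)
Your overall strategy---pin down $\lambda\in\{2,4\}$ by arithmetic, enumerate transitive groups of degree $6$, and use $r\mid |\Gamma_x|$ to force $5\mid |\Gamma|$---is sound and lands on the same four candidates $A_5,S_5,A_6,S_6$ as the paper. The paper reaches this list differently: it first invokes point-primitivity (from \cite[2.3.7(e)]{Demb}) and then reads off the primitive groups of degree $6$. Your route is more elementary; the paper's is shorter.

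However, your endgame for the $2$-$(6,3,2)$ case contains a genuine error. You assert that the full automorphism group of the $2$-$(6,3,2)$ design is $S_5$ and propose to rule out $S_5$ by showing its flag-orbit splits. But $S_5\cong PGL_2(5)$ on $6$ points is sharply $3$-transitive, hence transitive on all $\binom{6}{3}=20$ triples; it therefore cannot preserve \emph{any} $10$-block subdesign, let alone act on it with split flag-orbits. The same $3$-homogeneity argument immediately excludes $A_6$ and $S_6$ from case~(1). So the full automorphism group of the $2$-$(6,3,2)$ design is $A_5$, not $S_5$, and your proposed verification is aimed at a situation that does not exist.

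The paper's treatment of $A_5$ versus $S_5$ is cleaner and worth adopting: identify the points with $PG_1(5)$, observe that any block $B$ must be an orbit of an order-$3$ subgroup (since $\Gamma_B$ is transitive on $|B|=3$ points and every order-$3$ element of $PGL_2(5)$ acts as two $3$-cycles), then compute $\Gamma_B\cong S_3$ in both cases. This gives $b=|A_5|/6=10$ (so $\lambda=2$) and $b=|S_5|/6=20$ (so $\lambda=4$) directly, with no need to analyse flag-orbits of the point stabiliser. Your eventual exclusion of $A_5$ from the complete design via the two pair-orbits of $D_{10}$ is correct, but the paper's block-stabiliser computation gets both exclusions at once.
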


\begin{proof}
Let $\mathcal{S}$ be a $2$-$(6,3,\lambda )$ design admitting $\Gamma $ as a
flag-transitive automorphism group. Then $\Gamma $ acts point-primitively on $%
\mathcal{S}$ by \cite[2.3.7(e)]{Demb}, and hence either $\Gamma \leq
AGL_{1}(5)$ or $\Gamma $ is isomorphic to one of the groups $%
A_{5},S_{5},A_{6},S_{6}$ by \cite[Table B.4]{DM}. The former is ruled out
since it contradicts that the order $\Gamma $ must be divisible by $3$. Moreover, the
assertion (2) follows when $\Gamma \cong A_{6},S_{6}$ since any of these groups acts
point-$3$-transitively on $\mathcal{S}$.

Finally, assume that $\Gamma \cong A_{5}$ or $S_{5}$. Clearly, we may
identify the points of $\mathcal{S}$ with $PG _{1}(5)$. Now, $\Gamma $
has a unique conjugacy class of cyclic subgroups of order $3$. Any such
subgroup partitions $PG_{1}(5)$ into two orbits of length $3$. Hence, a
block $B$ is any of them. If $\Gamma \cong A_{5}$, then $\left( \Gamma
\right) _{B}\cong S_{3}$ and so $b=10$, $r=5$ and $\lambda =2$.
Hence, we obtain (1) in this case. If $\Gamma \cong S_{5}$, then $\left(
\Gamma \right) _{B}\cong S_{3}<2\times S_{3}$ which switches the two $S_{3}$%
-orbits of length $3$. Thus $b=20$ and hence $r=10$ and $\lambda =4$.
Therefore, we obtain (2) in this case.
\end{proof}

\bigskip

\begin{lemma}
\label{LA2}Let $\mathcal{S}$ be a $2$-$(v,k,\lambda )$ design admitting $%
\Gamma $ as a flag-transitive automorphism group. If $v=k+1$ or $k+2$, then

\begin{enumerate}
\item $\mathcal{S}$ is the complete $2$-$(v,k,\lambda )$ design, and $\Gamma $
acts point-$2$-transitively on $\mathcal{S}$;

\item $k$ is odd, $\Gamma $ is a point-primitive rank $3$ group on $\mathcal{%
S}$ with subdegrees $1,\frac{k+1}{2},\frac{k+1}{2}$. 
\end{enumerate}
\end{lemma}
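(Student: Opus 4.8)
The statement concerns a $2$-$(v,k,\lambda)$ design $\mathcal{S}$ with $v=k+1$ or $v=k+2$ admitting a flag-transitive automorphism group $\Gamma$. The plan is to first dispose of the case $v=k+1$ quickly, and then concentrate on the genuinely interesting case $v=k+2$. When $v=k+1$ every block is a $k$-subset of a $(k+1)$-set, so there is a unique block through any fixed pair of points together with a third chosen point — more precisely, a block is the complement of a single point, so $b\leq v$ and a counting argument forces $\mathcal{S}$ to be the complete design (all $k$-subsets), in which $\Gamma$ is flag-transitive if and only if $\Gamma$ is $2$-transitive on points. So (1) holds in that case and (2) is vacuous.

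\textbf{The case $v=k+2$.} Here blocks are complements of $2$-subsets, so we may think of $\mathcal{B}$ as (the complements of) a set of pairs from $\mathcal{P}$, i.e. as the edge set of a graph $\Gamma_{\mathcal{S}}$ on $v$ vertices which is $\Gamma$-edge-transitive and $\Gamma$-vertex-transitive (the latter because flag-transitivity forces point-transitivity). Flag-transitivity of $\mathcal{S}$ translates into the complement graph being $\Gamma$-arc-transitive or at least edge- and vertex-transitive; in fact, since a flag of $\mathcal{S}$ is a (point, block)-pair with the point in the block, i.e. a (non-adjacent vertex, edge)-pair in the graph, flag-transitivity of $\mathcal{S}$ says $\Gamma$ is transitive on the set of (vertex, non-incident edge) pairs of $\Gamma_{\mathcal{S}}$. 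First I would use the standard relation $\lambda(v-1)=r(k-1)$ from Lemma \ref{L0} together with $vr=bk$ to pin down the arithmetic: with $v=k+2$ one gets $r=\lambda(k+1)/(k-1)$, so $(k-1)\mid 2\lambda$, and $b = v r / k$. If the graph is the complete graph $K_v$ we are in the complete-design case (1), so assume it is a proper $\Gamma$-edge-transitive graph. The key point is that a $2$-design structure forces the complement graph to be \emph{strongly regular} with very restricted parameters: any two points lie in a constant number $\lambda$ of blocks, which says any two vertices of $\Gamma_{\mathcal{S}}$ are jointly non-adjacent to a constant number of... — more cleanly, in the complement graph $\bar\Gamma_{\mathcal{S}}$ (whose edges are the $2$-subsets that ARE complements of blocks), every pair of vertices lies in a constant number of edges, forcing $\bar\Gamma_{\mathcal{S}}$ to be a union of $\lambda$ perfect matchings or, when counted with multiplicity handled correctly, to be regular; combined with edge-transitivity and vertex-transitivity of $\Gamma$ one deduces $\bar\Gamma_{\mathcal{S}}$ is a regular graph, and then a short eigenvalue/counting argument shows it must be a disjoint union of complete graphs or, in the connected case, strongly regular with $\lambda_{\mathrm{srg}}$-type parameters that collapse — and in the end one finds the complement graph is a perfect matching $\frac{v}{2}K_2$, forcing $v$ even and the valency of $\Gamma_{\mathcal{S}}$ to be $v-2$, i.e. $k=v-2=$ odd, giving that $\Gamma$ acts as a rank $3$ group.

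\textbf{Filling in the rank $3$ conclusion.} Once we know $k=v-2$ is forced to be odd (equivalently $v$ is even) and that the blocks are the complements of a perfect matching $M$ on $\mathcal{P}$, the automorphism group $\Gamma$ permutes the $v/2 = (k+1)/2$ pairs of $M$ transitively (edge-transitivity) and the point stabilizer $\Gamma_x$ fixes the unique $M$-partner $x'$ of $x$ and permutes the remaining $v-2=k-1$ points; flag-transitivity gives transitivity on ordered (point, non-partner point) pairs, so $\Gamma_x$ has orbits $\{x\}$, $\{x'\}$, and $\mathcal{P}\setminus\{x,x'\}$ of sizes $1,1,k-1$. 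But that is a rank $3$ action only if we check the sizes match the claim $1,\frac{k+1}{2},\frac{k+1}{2}$ — so I must be more careful: the claimed subdegrees $1,\frac{k+1}{2},\frac{k+1}{2}$ with $v = k+2$ means $1 + \frac{k+1}{2} + \frac{k+1}{2} = k+2 = v$, which checks out. This means the correct picture is NOT a perfect matching but rather that the complement graph is a connected strongly regular graph of valency $\frac{k+1}{2}$ on $v=k+2$ vertices with $\Gamma$ acting as a rank $3$ group — in fact such a graph on $n$ vertices of valency $(n-1)/2$ is a \emph{conference graph}, and indeed the only $2$-designs with $v=k+2$ and non-complete block set are the ones whose blocks are complements of the edges of a conference graph (Paley-type). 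So the revised plan for this half: show $\bar\Gamma_{\mathcal{S}}$ is regular of valency $d$; derive from the $2$-design equations that $d = \frac{v-1}{2} = \frac{k+1}{2}$; conclude $\Gamma$ is vertex- and edge-transitive on a regular graph of valency $(v-1)/2$, hence (by a standard argument, e.g. via the fact that an edge- and vertex-transitive graph of valency $(v-1)/2$ on $v$ vertices with a constant number of common neighbours is strongly regular) $\Gamma$ is rank $3$ with subdegrees $1,\frac{k+1}{2},\frac{k+1}{2}$; and note $k = 2d - 1$ is odd.

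\textbf{Main obstacle.} The arithmetic reductions and the translation to graphs are routine; the delicate point is rigorously establishing that edge-transitivity plus the $2$-design identity forces the complement graph to be \emph{regular} and then \emph{strongly regular of conference type}, rather than, say, a bipartite biregular configuration — one has to rule out the possibility that a flag of $\mathcal{S}$ distributes unevenly, and this is exactly where flag-transitivity (not merely block-transitivity) is essential. I expect this regularity/strong-regularity step to be the crux, and I would handle it by a double-counting of flags $(x, B)$ with $x \notin B$, i.e. incident point–block pairs of $\mathcal{S}$, combined with the orbit-counting forced by flag-transitivity, to pin down $|B(\Sigma)|$-type constants and hence the valency. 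Everything else — oddness of $k$, the explicit subdegrees — then drops out of the eigenvalue relations for a strongly regular graph with $\mu = \lambda_{\mathrm{srg}}+1$ on $2d+1$ vertices.
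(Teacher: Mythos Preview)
Your graph-theoretic approach is genuinely different from the paper's, but it has a real gap and, where you try to close it, you end up recreating the paper's argument anyway. The paper simply invokes \cite[Corollary~4.6]{Ka69}: flag-transitivity forces $r/\gcd(r,\lambda)$ to divide every non-trivial subdegree of $\Gamma$; since the subdegrees sum to $v-1$ and $r/\gcd(r,\lambda)=(k+1)/\gcd(k-1,2)$ when $v=k+2$, one gets at most two non-trivial suborbits, each of size $(k+1)/2$, and the oddness of $k$ falls out of the same arithmetic. Two-transitivity then makes $\Gamma$ transitive on all $(v-k)$-subsets (since $v-k\le 2$), hence on all $k$-subsets, so $\mathcal{S}$ is complete. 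Your treatment of $v=k+1$ is fine; the problem is entirely in $v=k+2$.

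The gap is the implication ``strongly regular complement graph $\Rightarrow$ $\Gamma$ has rank $3$''. Strong regularity is a property of the graph, not of the group: a vertex- and edge-transitive subgroup of the automorphism group of a strongly regular graph can have arbitrarily high rank, because nothing forces it to be transitive on non-edges. Your claim that the valency satisfies $d=(k+1)/2$ ``from the $2$-design equations'' is likewise unfounded: the degree of the simple complement graph is $b-r=2\lambda(k+1)/\bigl(k(k-1)\bigr)$, which equals $(k+1)/2$ only for the particular value $\lambda=k(k-1)/4$, not a priori. The ``double-counting of flags'' you propose as the fix is exactly Kantor's divisibility argument---and once you carry it out, it yields the subdegree structure directly, so the graph-theoretic detour (together with the false start on perfect matchings) becomes superfluous. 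You also never address point-primitivity, which is part of conclusion~(2).
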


\begin{proof}
If $v=k+1$, then $\gcd (v-1,k-1)=1$ and hence $\Gamma $ acts point-$2$%
-transitively on $\mathcal{S}$ by \cite[Corollary 4.6]{Ka69}; if $v=k+2$,
then $\gcd (v-1,k-1)=2$ and hence $\Gamma $ is either point-$2$%
-transitively or primitive rank $3$ on $\mathcal{S}$ with $1$,$\frac{v-1}{%
2}$,$\frac{v-1}{2}$ as subdegrees again by \cite[Corollary 4.6]{Ka69}. In the
latter case, $r=\frac{k+1}{\gcd \left( k-1,2\right) }\cdot \frac{\lambda }{%
(k-1/\gcd \left( k-1,2\right) )}$ and hence $\gcd \left( r,\lambda \right) =%
\frac{\lambda }{(k-1/\gcd \left( k-1,2\right) )}$. Since $\frac{r}{\gcd
\left( r,\lambda \right) }=\frac{k+1}{\gcd \left( k-1,2\right) }$ divides $%
\frac{v-1}{2}=\frac{k+1}{2}$ it follows that $k$ is odd. This proves (2). 

If $\Gamma $ acts point-$2$-transitively on $\mathcal{S}$, then $b=\binom{v%
}{k}$ and hence $\mathcal{S}$ is the complete $2$-$(v,k,\lambda )$ design,
which is (1).
\end{proof}

\bigskip 

Let 
\small
\begin{eqnarray*}
I_{1} &=&\left\{
(4,3),(5,3),(5,4),(6,5),(7,6),(8,7),(9,8),(10,9),(11,10),(17,16),\right\} 
\\
I_{2} &=&\left\{(6,4),(7,5), 
(9,7),(10,8),(11,9),(13,11),(19,17)\right\} 
\\
I_{3} &=&\left\{(8,5),(10,7),(13,10),(16,13)\right\}
\\
I_{4} &=&\left\{(16,11),(11,7),(17,13)\right\}
\\
I_{\geq 5} &=&\left\{(16,10),(16,11),(17,11),(19,13),(25,19),(22,15),(29,22),(33,25)\right\}
\\
I&=&I_{1} \cup I_{1} \cup I_{3} \cup I_{4} \cup I_{\geq 5}
\end{eqnarray*}

\bigskip

\normalsize
\begin{lemma}\label{LA53}
Let $\mathcal{S}$ be a $2$-$(v,k,\lambda )$ design, admitting $%
\Gamma $ as a flag-transitive automorphism group. If $\left( v,k\right) \in
I$, then $\Gamma $ acts point-$2$-transitively on $\mathcal{S}$%
. Moreover, one of the following holds:

\begin{enumerate}
\item $\mathcal{S}$ is complete, and either $\Gamma \cong A_{v},S_{v}$ or one
of the following holds:

\begin{enumerate}
\item[(i)]  $\left( v,k\right) =(6,4)$ and $\Gamma \cong $ $S_{5},A_{6},S_{6}$;

\item[(ii)] $\left( v,k\right) =(9,7)$ and $\Gamma \cong PSL_{2}(8),P\Gamma
L_{2}(8),A_{9},S_{9}$;

\item[(iii)] $\left( v,k\right) =(10,8)$ and $\Gamma \cong
PGL_2(9),M_{10},P\Gamma L_2(9),A_{10},S_{10}$;

\item[(iv)] $\left( v,k\right) =(11,10)$ and $\Gamma \cong AGL_{1}(11),
PSL_{2}(11),M_{11},A_{11},S_{11}$;

\item[iv)] $\left( v,k\right) =(17,16)$ and $\Gamma \cong AG
L_{1}(17),PSL_{2}(2^{4}):2^{\varepsilon },0\leq \varepsilon \leq
2,A_{17},S_{17}$.
\end{enumerate}

\item $\left( v,k\right) =(22,15)$ and one of the following holds:

\begin{enumerate}
\item[(i)] $\mathcal{S}$ is a $2$-$(22,15,80)$ design and $\Gamma \cong
M_{22}$;
\item[(ii)] $\mathcal{S}$ is a $2$-$(22,15,160)$ design union of two copies
of the design as in (i), and $\Gamma \cong M_{22}:{2}$;
\item[(iii)] $\mathcal{S}$ is a $2$-$(22,15,560)$ complete design and $\Gamma \cong
M_{22}$,$M_{22}:{2},A_{22},S_{22}$.
\end{enumerate}
\end{enumerate}
\end{lemma}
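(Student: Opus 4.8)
The plan is to combine the structural constraints already obtained—namely that $\Gamma$ acts point-primitively on $\mathcal{S}$ (Lemma \ref{LA2}(1) applies whenever $v=k+1$ or $v=k+2$, which covers most entries of the list; the remaining pairs require the sharper \cite[Corollary 4.6]{Ka69} or a direct check)—with the classification of finite primitive groups of small degree in \cite[Table B.4]{DM}. The first step is to observe that every pair $(v,k)\in I$ has $v\leq 33$, so the point group $\Gamma$ is one of the primitive groups of degree $v\leq 33$ listed in \cite[Table B.4]{DM}. The second step is to show that $\Gamma$ must in fact be $2$-transitive. For this I would argue as in Lemma \ref{LA2}(2): when $v-k\leq 2$, the only primitive but not $2$-transitive option is a rank $3$ group with subdegrees $1,\frac{v-1}{2},\frac{v-1}{2}$, and the divisibility relation $\frac{r}{\gcd(r,\lambda)}\mid \frac{v-1}{2}$ forces $k$ odd; for the pairs with $v-k\geq 3$ one inspects the (short) list of non-$2$-transitive primitive groups of degree $v$ and eliminates them by the same divisibility constraint $\frac{k}{\gcd(k,\lambda)}\mid$ (subdegree), using that a flag-transitive group has $\gcd$-controlled subdegrees by \cite[(1.2.6)]{Demb}.

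The third step is to go through the $2$-transitive groups of each relevant degree $v$ (again from \cite[Table B.4]{DM} or the standard list of $2$-transitive groups) and, for each candidate $\Gamma$, determine which flag-transitive $2$-designs with the prescribed $(v,k)$ it can act on. The governing equation is $b=|\Gamma|/|\Gamma_B|$ together with $vr=bk$ and $\lambda(v-1)=r(k-1)$, so once $\Gamma$ and a block stabilizer $\Gamma_B$ (equivalently, a $\Gamma$-orbit of $k$-subsets) are fixed, all parameters are determined. For the affine and almost simple groups of degree $v\leq 33$ this is a finite check: one lists the orbits of $\Gamma$ on $k$-subsets of the point set, keeps those on which $\Gamma$ (hence $\Gamma_B$) acts transitively on the $k$ points of the block, and records the resulting $\lambda$. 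When $\Gamma\in\{A_v,S_v\}$ every $k$-subset is a block and $\mathcal{S}$ is the complete design; the special pairs $(6,4),(9,7),(10,8),(11,10),(17,16)$ are exactly those for which there is an additional $2$-transitive group (namely $S_5$ inside $S_6$ on $6$ points, $P\Gamma L_2(8)$ on $9$ points, $M_{10},P\Gamma L_2(9)$ on $10$ points, $M_{11},PSL_2(11),AGL_1(11)$ on $11$ points, $AGL_1(17),P\Gamma L_2(16)$ on $17$ points), and in each of these the block stabilizer forces the design to be complete as well—this is where one uses that these groups act $k$-homogeneously or that their orbit on $k$-sets is the full one. For $(v,k)=(22,15)$ the only non-symmetric-group $2$-transitive candidate is $M_{22}$ (and $M_{22}{:}2$); here $\Gamma_B$ has order $|M_{22}|/b$, and analysing the action of $M_{22}$ on $15$-subsets of its $22$ points (equivalently on complements of $7$-subsets, i.e. on the blocks of the Steiner system $S(3,6,22)$ and their unions) yields $\lambda=80$ for $M_{22}$, $\lambda=160$ for $M_{22}{:}2$ acting on the union of the two $M_{22}$-orbits, and the complete design with $\lambda=560$ for $M_{22}$, $M_{22}{:}2$, $A_{22}$, $S_{22}$; this gives precisely conclusion (2).

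The main obstacle I anticipate is the bookkeeping in the third step: for several mid-range degrees (say $v=16,17,19$) there are a few $2$-transitive groups besides $A_v,S_v$—$AGL_4(2)$, $2^4{:}A_7$, $PSL_2(2^4)$, $PSL_2(17)$, $PSL_2(19)$, $PGL_2(19)$, etc.—and one must verify case by case that on the relevant $k$-subsets either they are not transitive on the block (so produce no design) or they are $k$-homogeneous (so produce only the complete design), so that no extra non-complete design slips through; this is routine but needs care, and in a couple of borderline cases a short \texttt{GAP} \cite{GAP} computation is the cleanest way to confirm the orbit structure. Everything else—the reduction to primitivity, the passage from primitive to $2$-transitive, and the parameter arithmetic—follows directly from Lemmas \ref{L0} and \ref{LA2}, \cite[Corollary 4.6]{Ka69}, and \cite[Table B.4]{DM}.
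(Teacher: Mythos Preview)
Your overall strategy matches the paper's: establish point-primitivity, then invoke \cite[Table B.4]{DM} together with the divisibility constraint $k\mid |\Gamma|$ to pin down $\Gamma$, and finish the $(22,15)$ case with a short \texttt{GAP} check. Where your outline is weaker is precisely the reduction to primitivity for the pairs in $I_3\cup I_4\cup I_{\geq 5}$, i.e.\ those with $v-k\geq 3$. You write that these ``require the sharper \cite[Corollary 4.6]{Ka69} or a direct check'', but \cite[Corollary 4.6]{Ka69} is a statement about subdegrees of a group already known to be primitive; it does not by itself exclude an imprimitive flag-transitive action. Nor does Lemma~\ref{LA2}, which only handles $v-k\leq 2$.

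The paper closes this gap with three separate tools you do not invoke: for the pairs with $v$ prime it cites \cite[p.~99]{DM}; for those with $k$ prime it uses \cite[2.3.7(e)]{Demb}; and for those with $\gcd(v-1,k-1)\leq 4$ it appeals to the recent result \cite[Theorems~1.2--1.4]{ZZ}. These together cover every pair in $I_3\cup I_4\cup I_{\geq 5}$ except $(22,15)$ and $(33,25)$, for which the paper applies the Camina--Zieschang decomposition (Theorem~\ref{CamZie} and Lemma~\ref{bounds}) to $\mathcal{S}$ itself: one checks that no factorisations $v=v_0'v_1'$, $k=k_0'k_1'$ satisfying \eqref{rel1} exist, so imprimitivity is impossible. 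Once primitivity is secured, the rest of your plan---filtering \cite[Table B.4]{DM} by $k\mid|\Gamma|$, reading off the extra $2$-transitive groups at the five exceptional degrees, and the $M_{22}$ analysis---agrees with what the paper does, including the use of \texttt{GAP} for $(22,15)$.
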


\begin{proof}
If $\left( v,k\right) \in I_{1} \cup I_{2}$ then $\Gamma $ acts point-$2$-transitively
on $\mathcal{S}$ possibly except for $\left( v,k\right)
=(5,3),(7,5),(9,7),(11,9),(13,11),(19,17)$ and $\Gamma $ acting as
point-primitive rank 3 group on $\mathcal{S}$. In the exceptional cases, if $\left(
v,k\right) \neq (9,7)$, then $v$ is a prime number and so either $\Gamma $ acts point-$2$-transitively on $\mathcal{S}$, or $\Gamma
\leq AGL_{1}(v)$ by \cite[p. 99]{DM}. In the latter $\left\vert \Gamma
\right\vert \mid (k+2)(k+1)$, whereas $\left\vert \Gamma \right\vert $ is
divisible\ by $k>2$ by the flag-transitivity. If $\left( v,k\right) =(9,7)$
then either $\Gamma \leq AGL_{2}(3)$ or $\Gamma \cong
PSL_{2}(8),P\Gamma L_{2}(8),A_{9},S_{9}$ by \cite[Table B.4]{DM}. However,
only the latter cases occur since $\left\vert \Gamma \right\vert $ is divisible by $k=7$, and these groups are $2$-transitive. Therefore, $\Gamma $ acts point-$2$%
-transitively on $\mathcal{S}$ for each $\left( v,k\right) \in I_{1} \cup I_{2}$.
Still, the possibilities are listed in \cite[Table B.4]{DM} which compared
with the fact that $\left\vert \Gamma \right\vert $ is divisible by $k$
implies either $\Gamma \cong A_{v},S_{v}$, or one of the cases (1.i)--(1.iv) holds.

If $\left( v,k\right) \in I_{3} \cup I_{4} \cup I_{\geq 5}$ and $\left( v,k\right) \neq (22,15),(33,25)$,
then either $v$ is a prime, or $k$ is a prime, or $\gcd (v-1,k-1)\leq 4$. In either case, $\Gamma $ acts point-primitively on $\mathcal{S}$ by \cite[p. 99]{DM}, \cite[2.3.7(e)]{Demb} or \cite[Theorems 1.2, 13 and 1.4]{ZZ}. We may apply
Lemma \ref{bounds} to $\mathcal{S}$ to get $v\geq 3$ and since $v=v_{0}^{\prime}\cdot v_{1}^{\prime
}$ and $k=k_{0}^{\prime }\cdot k_{1}^{\prime }$ with $k_{0}^{\prime }\leq v$
and $k_{1}^{\prime }\leq v_{1}^{\prime }$ by \cite[Proposition 2.1]{CZ}, we
see that $\Gamma $ acts point-primitively on $\mathcal{S}$ when $\left(
v,k\right) =(22,15),(33,25)$. Therefore, $\Gamma $ acts point-primitively on 
$\mathcal{S}$ in each case. Therefore, the possibilities are listed in \cite[%
Table B.4]{DM} and, bearing in mind that $\left\vert \Gamma \right\vert $ is
divisible \ by $k$, we obtain either $\Gamma \cong A_{v},S_{v}$ \ or $\left(
v,k\right) =(22,15)$ and $\Gamma \cong M_{22},M_{22}:{2},A_{22},S_{22}$. In the
former case, $\mathcal{S}$ is complete since both $A_{v}$ and $S_{v}$ are point-$k$%
-transitive on $\mathcal{S}$. In the latter case, one can see that the
unique flag-transitive $2$-designs are exactly those listed in
(2.i)--(2.iii) by using \cite{GAP}. 
\end{proof}

\bigskip

\begin{lemma}
\label{LA4}Let $\mathcal{S}$ be a $2$-$(7,k,\lambda )$ design, with $k=3,4$,
admitting $\Gamma $ as a flag-transitive automorphism group. Then $\Gamma $
acts point-primitively on $\mathcal{S}$ and one of the following holds:

\begin{enumerate}
\item $\mathcal{S}\cong PG _{2}(2)$ and $\Gamma \cong 7:3,PSL_{2}(7)$;

\item $\mathcal{S}$ is a $2$-$(7,3,2)$ design, union of two copies of $PG _{2}(2)$, and $\Gamma \cong A GL_{1}(7)$;

\item $\mathcal{S}$ is a $2$-$(7,3,4)$ design and $\Gamma \cong PSL_{2}(7)$;

\item $\mathcal{S}$ is the complete $2$-$(7,3,5)$ design and $\Gamma \cong
A_{7},S_{7}$;

\item $\mathcal{S}\cong \overline{PG_{2}(2)}$ and $\Gamma \cong PSL_{2}(7)$;

\item $\mathcal{S}$ is the complete $2$-$(7,4,10)$ design and $\Gamma \cong
A_{7},S_{7}$;
\end{enumerate}
\end{lemma}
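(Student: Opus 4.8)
The plan is to classify all flag-transitive $2$-$(7,k,\lambda)$ designs for $k=3,4$ by reducing to point-primitivity and then enumerating. First I would invoke Lemma \ref{LA2}: since $v=7$ is prime, $\gcd(v-1,k-1)$ is $\gcd(6,k-1)$, which is $2$ when $k=3$ and $3$ when $k=4$; in both cases it is small, so by \cite[Corollary 4.6]{Ka69} (as used in the proof of Lemma \ref{LA2}) the group $\Gamma$ acts either point-$2$-transitively or as a point-primitive rank $3$ group on $\mathcal{S}$. In either situation $\Gamma$ is point-primitive. Since $v=7$ is prime, the point-primitive groups of degree $7$ are, by \cite[p.\ 99]{DM}, either subgroups of $AGL_1(7)$ or one of $PSL_2(7)$, $\mathrm{PGL}_2(7)$(${}\cong$ nothing new here since it is not a subgroup of $S_7$ of degree $7$), $A_7$, $S_7$; more precisely the transitive groups of degree $7$ are $C_7$, $D_{14}$, $7{:}3$, $AGL_1(7)=7{:}6$, $PSL_2(7)$, $A_7$, $S_7$, of which the primitive ones are $7{:}3$, $7{:}6$, $PSL_2(7)$, $A_7$, $S_7$ (and $C_7$, $D_{14}$ are already enough to be transitive but $C_7,D_{14}$ give $|\Gamma|$ not divisible by $k\ge 3$). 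Flag-transitivity forces $k \mid |\Gamma|$, so $3 \mid |\Gamma|$ in all cases, ruling out $C_7$ and $D_{14}$, leaving $\Gamma \in \{7{:}3,\ AGL_1(7),\ PSL_2(7),\ A_7,\ S_7\}$.

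Next I would run through each candidate group and each value of $k$, computing the block stabilizer and hence $b = |\Gamma|/|\Gamma_B|$, $r = bk/v$, $\lambda = r(k-1)/(v-1)$. For $k=3$: the group $7{:}3$ has order $21$; a block is a $3$-subset, and a Sylow $3$-subgroup (cyclic of order $3$, fixing one point and permuting the other six in two $3$-cycles) has its two orbits of length $3$ as the only $\Gamma$-blocks up to the action, giving $\Gamma_B \cong 3$, $b=7$, $r=3$, $\lambda=1$, i.e.\ $\mathcal{S} \cong PG_2(2)$ — this is case (1). For $PSL_2(7)$ of order $168$ acting $2$-transitively on $PG_1(7)$: the point stabilizer is $7{:}3$ of order $24$; the orbit of $3$-subsets under the natural action — using that $PSL_2(7)$ has a subgroup $S_4$ of order $24$ stabilizing suitable triples (the Fano-plane lines) giving $b=7$, $\lambda=1$ (again $PG_2(2)$, case (1)), and also a longer orbit of $3$-subsets with trivial-ish stabilizer $C_2$ giving $b = 168/2 = 84$, hence $r=36$, $\lambda = 12$... wait, that exceeds the $2$-design parameters — I must check which $3$-subset orbits actually form $2$-designs: the orbit giving $b=28$ yields $r=12$, $\lambda=4$, which is case (3); I would verify via \cite{GAP} or by direct counting of orbits of $PSL_2(7)$ on $3$-subsets of $PG_1(7)$ that these are the only $2$-design orbits. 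For $AGL_1(7)$ of order $42$: the union-of-two-$PG_2(2)$ design $2$-$(7,3,2)$ arises with $b=14$, $r=6$, $\Gamma_B \cong 3$, giving case (2); here I would note $AGL_1(7)$ does not preserve a single $PG_2(2)$ (the line orbit splits), forcing the doubled design. For $A_7, S_7$: these act $3$-transitively on $7$ points, hence on all $\binom{7}{3}=35$ triples, so the only flag-transitive $2$-$(7,3,\lambda)$ design is the complete one with $\lambda=5$, case (4). An analogous pass with $k=4$: complementation sends a $2$-$(7,4,\lambda)$ design to a $2$-$(7,3,\lambda')$ design, so $\mathcal{S}$ flag-transitive forces its complement flag-transitive; applying the $k=3$ classification, $\overline{\mathcal{S}}$ is one of the above, and only $\overline{PG_2(2)}$ (from $PG_2(2)$ with $\Gamma=PSL_2(7)$; the $7{:}3$ does \emph{not} act flag-transitively on $\overline{PG_2(2)}$ because a $4$-subset-block has stabilizer of order not dividing into transitivity on $4$ points with a group of order $21$ — I would check $21/|\Gamma_B| = 7$ blocks but $4 \nmid 21$, ruling it out) and the complete $2$-$(7,4,10)$ (from $A_7,S_7$) survive, giving cases (5) and (6).

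The main obstacle I anticipate is the careful bookkeeping for $PSL_2(7)$ acting on $3$-subsets of $PG_1(7)$: one must identify \emph{all} orbits of $PSL_2(7)$ (and of $7{:}3$) on unordered triples, determine the stabilizer of a representative in each orbit, and then check for each which orbits constitute a $2$-design (i.e.\ which orbits of triples hit every pair the same number of times). A $2$-transitive group on $v$ points acting on $\binom{v}{2}$ pairs transitively makes every point-orbit of $k$-subsets automatically a $2$-design, so in fact this last check is automatic once point-$2$-transitivity is established — this simplifies matters considerably and is the key observation that makes the enumeration clean: for $PSL_2(7)$ (which \emph{is} $2$-transitive) every triple-orbit gives a $2$-design, and I just enumerate the orbits ($PG_2(2)$ lines with $\lambda=1$, their complements inside each... no — I would instead list: the $28$ orbit giving $2$-$(7,3,4)$ and the $7$ orbit giving $2$-$(7,3,1)$, and check $\binom{7}{3}=35 = 7 + 28$, confirming there are exactly two orbits). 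For $AGL_1(7)$, which is \emph{not} $2$-transitive (it is $\frac{3}{2}$-transitive, sharply... no, $AGL_1(7)$ is $2$-transitive actually — $AGL_1(q)$ is always $2$-transitive), so again every triple-orbit is a $2$-design: the orbits of $AGL_1(7)$ on triples have sizes summing to $35$, and I would identify the orbit of size $14$ giving $\lambda=2$. I would double-check all these arithmetic identities and orbit decompositions, citing \cite{GAP} where a hand computation is unwieldy, and then assemble the six cases exactly as in the statement.
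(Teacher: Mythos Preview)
Your overall strategy mirrors the paper's: reduce to point-primitivity, enumerate the primitive groups of degree $7$, and use divisibility by $k$ to filter. However, there is a genuine gap in your treatment of $k=4$. You assert that ``$\mathcal{S}$ flag-transitive forces its complement flag-transitive,'' but this is false in general: flag-transitivity on $\mathcal{S}$ means $\Gamma_B$ is transitive on $B$, whereas flag-transitivity on $\overline{\mathcal{S}}$ requires $\Gamma_B$ to be transitive on $\mathcal{P}\setminus B$, and these conditions are independent. For instance, the $2$-$(7,3,2)$ design with $\Gamma\cong AGL_1(7)$ has $\left\vert\Gamma_B\right\vert=3$, which is transitive on the $3$-element block but cannot be transitive on its $4$-element complement. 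Consequently you cannot reduce the $k=4$ classification to the $k=3$ list via complementation, and your case check (which only explicitly discards $7{:}3$) leaves $AGL_1(7)$ and the complement of the $2$-$(7,3,4)$ design unaddressed. The paper instead argues directly: since $4\mid\left\vert\Gamma\right\vert$ is required, only $\Gamma\cong PSL_2(7),A_7,S_7$ survive; for $PSL_2(7)$ one then uses the $2$-transitive action on $PG_2(2)$ to see that the unique flag-transitive orbit of $4$-subsets consists of the line-complements, giving $\overline{PG_2(2)}$.

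A secondary issue is that you repeatedly place $PSL_2(7)$ on ``$PG_1(7)$,'' but $PG_1(7)$ has $8$ points; the degree-$7$ action of $PSL_2(7)$ is as $PSL_3(2)$ on the points of $PG_2(2)$, and this is the identification the paper uses. The confusion surfaces in your claim that ``the point stabilizer is $7{:}3$ of order $24$'' (in the degree-$7$ action the point stabilizer is $S_4$) and in the muddled orbit counts that follow. With the correct model in hand, the $3$-subsets split cleanly as $7$ lines and $28$ triangles of $PG_2(2)$, which is exactly the dichotomy the paper exploits for $k=3$.
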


\begin{proof}
If $k=3$ or $4$, then $\Gamma $ acts point-primitively on $\mathcal{S}$ by 
\cite[2.3.7(e)]{Demb} or \cite[Theorem 1.4]{ZZ}, respectively. Then either $%
\Gamma \leq AG L_{1}(7)$, or $\Gamma \cong PSL_{2}(7)$, or $A_{7}\trianglelefteq \Gamma
\leq S_{7}$ by \cite[Table B.4]{DM}. In the latter case, $\mathcal{S}$ is the complete $2$-design and we obtain (4) or (6) according to whether $k=3$ or $4$, respectively. Hence, we can assume that either $\Gamma \leq AGL_{1}(7)$ or $\Gamma \cong PSL_{2}(7)$.

Suppose that $k=3$. If either $\Gamma \cong 7:3$ or $\Gamma \cong PSL_{2}(7)$, we may identify the point-set of $\mathcal{S}$ with $PG _{2}(2)$. Since the blocks are $3$-subsets of $PG _{2}(2)$, they are either lines or triangles. In the former case, we have $%
\mathcal{S}\cong PG _{2}(2)$, which is (1). In the latter, $\Gamma \cong PSL_{2}(7)$ since $%
r=3\lambda$ (with $\lambda>1$) divides the order of $\Gamma $, and hence we obtain (3). Finally, if $\Gamma \cong AGL_{1}(7)$, then $b=14$, $r=6$ and $\lambda =2$, and we have (2). 

Now suppose that $k=4$. Then $\Gamma \cong PSL_{2}(7)$ since $k$ divides the order of $\Gamma $ and $\Gamma \ncong A_{7},S_{7}$ by our assumption. Thus, $\mathcal{S}$ is a $2$-$(7,4,2)$ complementary design of $PG_{2}(2)$ since $\Gamma $ acts point-$2$-transitively on $\mathcal{S}$, which is (5).
\end{proof}

\bigskip

\begin{lemma}
\label{LA5}Let $\mathcal{S}$ be a $2$-$(8,4,\lambda )$ design admitting $%
\Gamma $ as a flag-transitive automorphism group. Then $\Gamma $ acts
point-primitively on $\mathcal{S}$ and one of the following holds:

\begin{enumerate}
\item $\mathcal{S}\cong AG_{3}(2)$ and $\Gamma $ is one
of the groups $AGL_{1}(8),A\Gamma L_{1}(8),AGL_{3}(2),PSL_{2}(7)$;

\item $\mathcal{S}$ is a $2$-$(8,4,6)$ design, union of two copies of $%
AG _{3}(2)$, and $\Gamma \cong PG L_{2}(7)$;

\item $\mathcal{S}$ is a $2$-$(8,4,9)$ design and $\Gamma \cong
PSL_{2}(7),PGL_{2}(7)$;

\item $\mathcal{S}$ is a $2$-$(8,4,12)$ design and $\Gamma \cong AGL_{3}(2)$;

\item $\mathcal{S}$ is the complete $2$-$(8,4,15)$ design and $\Gamma \cong
A_{8},S_{8}$.
\end{enumerate}
\end{lemma}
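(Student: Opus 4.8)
The plan is to cut down to a finite list of candidate groups via point-primitivity and then run a uniform orbit analysis governed by flag-transitivity together with the numerology of Lemma \ref{L0}. Since $k=4$, the group $\Gamma$ acts point-primitively on $\mathcal{S}$ by \cite[Theorem 1.4]{ZZ} (exactly as in the proof of Lemma \ref{LA4}), so $\Gamma$ is a primitive permutation group of degree $8$ and by \cite[Table B.4]{DM} it is one of $AGL_{1}(8)$, $A\Gamma L_{1}(8)$, $PSL_{2}(7)$, $PGL_{2}(7)$, $AGL_{3}(2)$, $A_{8}$, $S_{8}$. Now flag-transitivity forces block-transitivity, so $\mathcal{B}=B^{\Gamma}$ for a single $4$-subset $B$, and moreover $\Gamma_{B}$ acts transitively on the four points of $B$, whence $4\mid|\Gamma_{B}|=|\Gamma|/b$; combined with $b=2r$ and $3r=7\lambda$ (from $v=8$, $k=4$), the parameters $(b,r,\lambda)$ are determined as soon as the $\Gamma$-orbit of $B$ is identified, and the $2$-design property is automatic because each candidate $\Gamma$ is $2$-transitive on the $8$ points. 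So the task reduces to listing, for each $\Gamma$, the $\Gamma$-orbits of $4$-subsets whose setwise stabilizer is transitive on the contained four points.

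For the affine candidates I would identify the $8$ points with $AG_{3}(2)$: the $4$-subsets split into the $14$ planes (cosets of $2$-dimensional subspaces) and the $56$ affinely independent $4$-sets, and $AGL_{3}(2)$ is transitive on each family. If $\Gamma\in\{AGL_{1}(8),A\Gamma L_{1}(8)\}$ a Sylow $2$-subgroup of $\Gamma$ is the elementary abelian translation group, so $\Gamma$ has no element of order $4$; hence the transitive-on-$B$ subgroup of $\Gamma_{B}$ is a fours-group of translations, whose orbits are precisely the cosets of a $2$-subspace, forcing $B$ to be a plane, and since $AGL_{1}(8)$, $A\Gamma L_{1}(8)$ are both transitive on the $14$ planes we obtain case (1): $\mathcal{S}\cong AG_{3}(2)$, $\lambda=3$. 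For $\Gamma=AGL_{3}(2)$ we get either the planes (case (1) again) or the $56$ frames, with $\Gamma_{B}\cong S_{4}$ transitive on $B$, $b=56$, $\lambda=12$ — case (4); no other $\lambda$ occurs, since $AGL_{3}(2)$ has no orbit of $4$-subsets of size $28$, $42$ or $70$.

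For the projective candidates I would identify the $8$ points with $PG_{1}(7)$ and classify $4$-subsets by cross-ratio; a short computation shows $PGL_{2}(7)$ has exactly two orbits on $4$-subsets, the harmonic ones (size $42$, setwise stabilizer $\cong D_{8}$ of order $8$) and the equianharmonic ones (size $28$, setwise stabilizer $\cong A_{4}$), and in both the stabilizer contains a fours-group transitive on the four points, giving the two flag-transitive designs with $\lambda=9$ and $\lambda=6$, i.e. cases (3) and (2). Restricting to $PSL_{2}(7)$, the equianharmonic orbit splits into two orbits of size $14$ (since $A_{4}\le PSL_{2}(7)$), each carrying a $2$-$(8,4,3)$ design on which $A_{4}$ is flag-transitive, while the harmonic orbit stays a single orbit of size $42$ (a representative setwise stabilizer $\cong D_{8}$ contains $x\mapsto 1/x$, which acts as an odd permutation of the $8$ points and hence lies outside $PSL_{2}(7)$; also an orbit of size $21$ would force $r=21/2\notin\mathbb{Z}$), on which the even part $V_{4}\le D_{8}$ is still transitive; so $PSL_{2}(7)$ contributes $\lambda=3$ (case (1), once one checks the resulting $2$-$(8,4,3)$ design is $AG_{3}(2)$) and $\lambda=9$ (case (3)). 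Finally $A_{8}$ and $S_{8}$ are $4$-homogeneous, so $\mathcal{B}$ is the set of all $\binom{8}{4}=70$ $4$-subsets, the complete $2$-$(8,4,15)$ design — case (5). Collecting the cases gives the lemma.

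The part I expect to require genuine care is the projective case: establishing the $PGL_{2}(7)$-orbit structure on $4$-subsets, tracking how those orbits break up over $PSL_{2}(7)$, and checking in each sub-case that the setwise block-stabilizer is transitive on its points (so that the design is really \emph{flag}-transitive). A secondary point, which I would likely confirm with a short \texttt{GAP} \cite{GAP} computation in the spirit of the rest of the paper, is the isomorphism bookkeeping for the small designs: that each $2$-$(8,4,3)$ design occurring here is $AG_{3}(2)$, and that the $\lambda=6$ design decomposes as a union of two copies of $AG_{3}(2)$.
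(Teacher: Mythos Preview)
Your approach is essentially the same as the paper's: reduce to the list of primitive groups of degree $8$, then for each candidate enumerate the orbits on $4$-subsets whose setwise stabilizer is transitive on the four points. The paper cites \cite[2.3.7(e)]{Demb} rather than \cite[Theorem 1.4]{ZZ} for primitivity, and in the projective case it enumerates directly the $PSL_{2}(7)$-conjugacy classes of subgroups with a $4$-orbit (one class of cyclic groups of order $4$, two classes of $A_{4}$'s, fused by $PGL_{2}(7)$) rather than passing through the cross-ratio classification in $PGL_{2}(7)$; but these are organisational rather than substantive differences. One small slip: for a harmonic quadruple the intersection $D_{8}\cap PSL_{2}(7)$ is the cyclic group $C_{4}$, not $V_{4}$ (for instance $x\mapsto (x-1)/(x+1)$ has determinant $2$, a square modulo $7$, and has order $4$); this is harmless for your argument since $C_{4}$ is equally regular on the four points, but it is worth correcting.
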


\begin{proof}
If $(v,k)=(8,4)$, then $\Gamma $ acts point-primitively on $\mathcal{S}$ by 
\cite[2.3.7(e)]{Demb}. Moreover, by \cite[Table B.4]{DM}, and bearing in mind that $r=7\frac{\lambda }{3%
}$, either $\Gamma $
is isomorphic to one of the groups $AGL_{1}(8)$, $A\Gamma
L_{1}(8)$,$AGL_{3}(2),PSL_{2}(7),PGL_{2}(7)$, or $\Gamma $ is isomorphic
to one of the groups $A_{8},S_{8}$ and in this case $\mathcal{S}$ is the complete $2$-$%
(8,4,15)$ design. The latter is the assertion (5), whereas, in the remaining cases, $5$ does not divide the order of $G$, and so $\mathcal{S}$ is the $2$-$%
(8,4,\lambda )$ design with $\lambda =3,6,9,12$ and $r=7\frac{\lambda }{3%
}$ since $\lambda < \binom{6}{2}=15$. 

If $\Gamma$ is one of the group $AGL(3,2)$, $AGL_{1}(8)$ or $A\Gamma L_{1}(8)$, then the translation group of $G$ acts point-regularly on $\mathcal{D}$ since $\Gamma$ acts point-primitively on $\mathcal{S}$. Thus we may identify the point set of $\mathcal{S}$ with that of $AG_{3}(2)$ in a way that the actions of $\Gamma$ on the point-sets of these structures are equivalent. Now, note that $AGL(3,2)$, as well as any of the groups $AG L_{1}(8),A\Gamma L_{1}(8)$, act
point-$2$-transitively and flag-transitively on $AG_{3}(2)$. Also, $%
AGL_{3}(2)$ contains a copy of $PSL_{2}(7)$, and it acts point-$2$-transitively
and flag-transitively on $AG _{3}(2)$ (see \cite[Theorem 2.14]{KanLib}).

Let $B$ be any block of $\mathcal{S}$, then $B$ is a $4$-subset of $AG
_{3}(2)$. Since $AG_{3}(2)$ admits a plane parallelism and each
parallel class consists of exactly two planes, either there is plane $\pi 
$ of $AG_{3}(2)$ such that $B=\pi $, or $\left\vert B\cap \pi ^{\prime
}\right\vert =2$ for any plane $\pi ^{\prime }$ of $AG_{3}(2)$. In the
former case, one has $\mathcal{S}=AG _{3}(2)$, and hence (1) holds true.

If $\left\vert B\cap \pi ^{\prime }\right\vert =2$ for any plane $\pi
^{\prime }$ of $AG_{3}(2)$, then $AGL_{3}(2)_{B}\cong S_{4}$, being $AGL_{3}(2)$ the full collineation group of $AG _{3}(2)$, and hence 
$\mathcal{S}$ is a $2$-$(8,4,12)$ design. Clearly, none of the groups $%
AGL_{1}(8),A\Gamma L_{1}(8),$ or $PSL_{2}(7)$ acts transitively on $%
\mathcal{S}$ since the stabilizer of a point in any of these groups has
order not divisible by $4$. Thus, we obtain (4).

Now, consider $PSL_{2}(7)$ in its natural $2$-transitive action on $PG
_{1}(7)$. By \cite{At}, the subgroups of $PSL_{2}(7)$ having one orbit of length $4$ are
precisely those lying in the conjugacy class of the cyclic subgroups of
order $4$ or in any representative of any of the two conjugacy classes
subgroups isomorphic to $A_{4}$. Let $H_{1}$,$H_{2}$,$H_{3}$ be
representative of these classes. Then each $H_{i}$ partitions $PG
_{1}(7)$ into two orbits of length $4$, and these are switched by $%
N_{PSL_{2}(7)}(H_{i})=D_{8},S_{4},S_{4}$ respectively. Let $B_{i}$ be any of
the two $H_{i}$-orbit of length $4.$ Then $H_{i}=PSL_{2}(7)_{B_{i}}$ for $%
i=1,2,3$, and hence $(PG _{1}(7),B_{i}^{PSL_{2}(7)})$ is a $2$-$%
(8,4,\theta _{i})$ design, with $\theta _{i}=9,3,3$, respectively, since $%
PSL_{2}(7)$ acts $2$-transitively on $PG _{1}(7)$. The action of $%
PSL_{2}(7)$ on $PG _{1}(7)$ is the unique one of degree $8$, hence $%
(PG _{1}(7),B_{2}^{PSL_{2}(7)})\cong (PG_{1}(7),B_{3}^{PSL_{2}(7)})\cong AG_{3}(2)$. Now $PGL_{2}(7)$
acts on $PG_{1}(7)$, stabilizes the $H_{1}^{PSL_{2}(7)}$ and fuses the
classes $H_{2}^{PSL_{2}(7)}$ and $H_{3}^{PSL_{2}(7)}$, and this implies that 
$(PG_{1}(7),B_{1}^{PSL_{2}(7)})=(PG_{1}(7),B_{1}^{PG
L_{2}(7)})$ is a $2$-$(8,4,9)$ design and that $(PG
_{1}(7),B_{1}^{PG L_{2}(7)})=(PG_{1}(7),B_{1}^{PS
L_{2}(7)}\cup B_{3}^{PSL_{2}(7)})$ is a $2$-$(8,4,6)$ design union of two copies of $AG_{2}(3)$ and both admit $%
PGL_{2}(7)$ as a flag-transitive automorphism group. Thus, we obtain (2) and (3). This completes the proof.
\end{proof}

\bigskip

\begin{lemma}
\label{LA6}Let $\mathcal{S}$ be a $2$-$(9,k,\lambda )$ design, with $k=3,5,6$%
,$\,$ admitting $\Gamma $ as a flag-transitive automorphism group. Then $%
\Gamma $ acts point-primitively on $\mathcal{S}$ and one of the following
holds:

\begin{enumerate}
\item $\mathcal{S}\cong AG_{2}(3)$ and $\Gamma \leq AGL_{2}(3)$;

\item $\mathcal{S}$ is a $2$-$(9,3,6)$ design and $\Gamma \cong ASL_{2}(3), AGL_{2}(3)$;

\item $\mathcal{S}$ is the complete $2$-$(9,3,7)$ design and $\Gamma \cong
PSL_{2}(8),P\Gamma L_{2}(8),A_{9},S_{9}$;

\item $\mathcal{S}$ is the complete $2$-$(9,5,35)$ design and $\Gamma \cong A_{9},S_{9}$.

\item $\mathcal{S}\cong \overline{AG_{2}(3)}$ and $\Gamma \leq AGL_{2}(3)$;

\item $\mathcal{S}$ is a $2$-$(9,6,30)$ design complementary to the $2$-design in (2), and $\Gamma$ is one of the groups $ ASL_{2}(3)$, $ AGL_{2}(3)$;

\item $\mathcal{S}$ is the complete $2$-$(9,6,35)$ design and $\Gamma \cong
PSL_{2}(8),P\Gamma L_{2}(8),A_{9},S_{9}$.
\end{enumerate}
\end{lemma}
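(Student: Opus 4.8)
The plan is to argue exactly as in Lemmas \ref{LA1}, \ref{LA4} and \ref{LA5}, now in degree $v=9$. First I would show that $\Gamma$ acts point-primitively on $\mathcal{S}$: for $k=3$ this is \cite[2.3.7(e)]{Demb}; for $k=5$ one has $\gcd(v-1,k-1)=\gcd(8,4)=4$, so point-primitivity follows from \cite[Theorems 1.2, 1.3 and 1.4]{ZZ}; and for $k=6$ one has $\gcd(v-1,k-1)=\gcd(8,5)=1$, so $\Gamma$ is in fact point-$2$-transitive by \cite[Corollary 4.6]{Ka69}. Then I would invoke the list of primitive permutation groups of degree $9$ in \cite[Table B.4]{DM}: apart from the affine groups, which are all contained in $\AGL_{2}(3)$, the group $\Gamma$ is isomorphic to $\PSL_{2}(8)$, $\PGaL_{2}(8)$, $A_{9}$ or $S_{9}$. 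Throughout, flag-transitivity will be used in the form $k\mid|\Gamma_{B}|$ (since $\Gamma_{B}$ is transitive on the $k$ points of a block $B$), hence $k\mid|\Gamma|$, together with $\lambda<\binom{v-2}{k-2}$ when $\mathcal{S}$ is not complete.

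The non-affine cases and the case $k=5$ are then immediate. If $k=5$, then $5\mid|\Gamma|$ forces $\Gamma\cong A_{9}$ or $S_{9}$, both point-$2$-transitive, so $\mathcal{S}$ is the complete $2$-$(9,5,35)$ design, which is (4). If $\Gamma\cong\PSL_{2}(8),\PGaL_{2}(8),A_{9},S_{9}$, then $\Gamma$ is point-$2$-transitive; identifying $\mathcal{P}$ with $PG_{1}(8)$ when $\Gamma$ is one of the first two, I would note that the block stabilizer acts transitively on a block: for $k=3$ the $3$-point stabilizer in $\PSL_{2}(8)$ is the anharmonic group $S_{3}$, which is transitive on any $3$-subset, and for $k=6$ this same $S_{3}$ is transitive on the complementary $6$-subset because $PG_{1}(8)$ carries neither harmonic nor equianharmonic points (as $\mathbb{F}_{4}\not\subseteq\mathbb{F}_{8}$). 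Hence the block orbit is the whole of $\binom{9}{k}$, so $\mathcal{S}$ is the complete $2$-$(9,3,7)$ design (case (3)) or the complete $2$-$(9,6,35)$ design (case (7)).

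For $\Gamma$ affine, $\Gamma\leq\AGL_{2}(3)$, point-primitivity forces the translation subgroup to act regularly on $\mathcal{P}$, so I would identify $\mathcal{P}$ $\Gamma$-equivariantly with the point set of $AG_{2}(3)$. Then a block $B$ is a $k$-subset of $AG_{2}(3)$: for $k=3$ it is a line or a non-collinear triple, and for $k=6$ the complement $\mathcal{P}\setminus B$ is a line or a non-collinear triple. In the line subcases one gets $\mathcal{S}\cong AG_{2}(3)$ (case (1)) or $\mathcal{S}\cong\overline{AG_{2}(3)}$ (case (5)). In the triangle subcases I would use that $\AGL_{2}(3)$ is transitive on the $72$ non-collinear triples, with the setwise stabilizer of such a triple equal to $S_{3}$, while $\mathrm{ASL}_{2}(3)$ is transitive on them with stabilizer $C_{3}$; since a flag-transitive $\Gamma$ must be transitive on the block set (of size a multiple of $12$ dividing $72$) and have $\Gamma_{B}$ transitive on $B$, an orbit–stabilizer analysis — supplemented, for the precise list of admissible subgroups, by a routine \texttt{GAP} \cite{GAP} check — pins down $\Gamma$ and identifies $\mathcal{S}$ as the design in (2) or, in the $k=6$ subcase, as the design in (6).

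I expect the affine triangle subcase to be the main obstacle: one must rule out flag-transitive actions on the various "partial" designs one could build out of triangles for each primitive subgroup of $\AGL_{2}(3)$, and determine exactly which subgroups induce a transitive group on a block (and, for $k=6$, on the complement of a block). By contrast, the non-affine cases, the case $k=5$, and the line subcases all follow at once from point-$2$-transitivity.
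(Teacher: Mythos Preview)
Your overall strategy is correct and closely parallels the paper's proof: point-primitivity via \cite{Demb}, \cite{ZZ}, and \cite{Ka69}; the degree-$9$ primitive list from \cite[Table~B.4]{DM}; immediate reduction when $\Gamma$ contains $A_9$ or when $k=5$; and the line/triangle dichotomy in the affine case. Your cross-ratio argument for $\PSL_2(8)$ (using that $\mathbb{F}_8$ has no harmonic or equianharmonic points) is a pleasant variant of the paper's direct orbit computation, and your use of \cite[Corollary~4.6]{Ka69} for $k=6$ actually gives $2$-transitivity outright, which the paper only obtains implicitly.

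The one place where the paper is sharper is the affine triangle subcase. Rather than a \texttt{GAP} search over the primitive subgroups of $\AGL_2(3)$, the paper argues as follows: if $B$ is a triangle then the translation group $T$ satisfies $T_B=1$, so $9\mid b$; combined with $r=4\lambda$ (for $k=3$) this forces $3\mid\lambda$, hence $\lambda\in\{3,6\}$ since $\lambda<\binom{7}{1}=7$. The case $\lambda=3$ is excluded by \cite[Theorem~1.1]{Mo0}, leaving $\lambda=6$, $r=24$, and thus $24\mid|\Gamma_x|\mid 48$, which pins down $\Gamma\in\{\mathrm{ASL}_2(3),\AGL_2(3)\}$ without any case-by-case subgroup enumeration. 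The $k=6$ triangle subcase is then handled by passing to $\overline{\mathcal{S}}$ and invoking the same argument. This avoids your anticipated ``main obstacle'' entirely; you may wish to adopt this divisibility-plus-\cite{Mo0} shortcut in place of the computer check.
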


\begin{proof}
If $k=3$, then $\Gamma $ acts point-primitively on $\mathcal{S}$ by \cite[%
2.3.7(e)]{Demb} since $v=9$. Then either $3^{2}\trianglelefteq \Gamma \leq
AGL_{2}(3)$, or $\Gamma $ is isomorphic to one of the groups $%
PSL_{2}(8),P\Gamma L_{2}(8),A_{9},S_{9}$ by \cite[Table B.4]{DM}. The $7$%
-transitivity of $A_{9}$ or $S_{9}$ implies that $\mathcal{S}$ is a complete 
$2$-$(9,3,7)$ design, which is (3).

Assume that $3^{2}\trianglelefteq \Gamma \leq
AGL_{2}(3)$. Set $T=Soc(AGL_{2}(3))=3^{2}$
and let $B$ be any block of $\mathcal{S}$. Leet $B$ be any block of $\mathcal{S}$. If $T_{B} \neq 1$, then $\mathcal{S}\cong AG_{2}(3)$ and $\Gamma \leq AGL_{2}(3)$, which is (1); if $T_{B}=1$, then $B$ is triangle in $AG_{2}(3)$, $1 \leq \lambda \leq 7$, $9\mid b$, $r=3\frac{b}{v}$ and $3^{2}:3 \leq
\Gamma \leq AG L_{2}(3)$. On the other hand, $r=4\lambda$ implies $3 \mid \lambda$. Therefore, $\lambda=3$ or $6$ since $1 \leq \lambda \leq 7$. The former is ruled out by \cite[Theorem 1.1]{Mo0}, then $\lambda=6$, $r=24$ and hence $AS L_{2}(3) \leq \Gamma \leq AG L_{2}(3)$. Thus, by the point-$2$-transitivity of $\Gamma$ on $AG_{2}(3)$ we have that $\mathcal{D}$ is the $2$-$(9,3,6)$ design whose points and blocks are respectively the points and triangles of $AG_{2}(3)$, and we obtain (2).

Assume that $PSL_{2}(8)\trianglelefteq \Gamma \leq P\Gamma L_{2}(8)$. Note
that, $PSL_{2}(8)$ has unique conjugacy class of cyclic subgroups of order $3
$ and each of these splits $PG_{1}(9)$ into three orbits of length $3$
permuted transitively by a \ cyclic subgroup of $PSL_{2}(8)$ of order $9$.
The stabilizer in $PSL_{2}(8)$ of one of these, say $B$, is $S_{3}$, which
fuses the remaining two orbits of length $3$. Thus $(PG
_{1}(9),B^{PSL_{2}(8)})$ is the complete $2$-$(9,3,7)$ design, and so the
same conclusions hold for $P\Gamma L_{2}(8),A_{9}$, or $S_{9}$. Thus, we obtain (3) also in this case.

If $k=5$, then $\Gamma $ acts point-primitively on $\mathcal{S}$ by \cite[%
Theorem 1.4]{ZZ} since $v=9$. Then $\Gamma $ is isomorphic to one of the
groups $A_{9},S_{9}$ by \cite[Table B.4]{DM} since $k=5$ must divide the
order of $\Gamma $. The point-$5$-transitivity of $A_{9}$ or $S_{9}$ on $%
\mathcal{S}$ implies that $\mathcal{S}$ is a complete $2$-$(9,5,35)$ design,
which is (4).

Assume that $k=6$, and recall that either $\Gamma \leq AGL_{2}(3)$ ot $\Gamma $ is isomorphic to one of the groups $%
PSL_{2}(8),P\Gamma L_{2}(8),A_{9},S_{9}$. Moreover, $r=8\frac{\lambda}{5}$. Suppose that $\Gamma \leq AGL_{2}(3)$. Let $B$ be any block off $\mathcal{D}$. If $T_{B}\neq 1$, the $T_{B}$ is cyclic of order $3$ since $k=6$. Moreover, $B$ is the union of two distinct $T_{B}$-orbits since $G_{B}$ acts transitively on $B$, and $T_{B} \unlhd G_{B}$. Therefore, $B$ is the union of two (distinct) parallel $1$-spaces of $AG_{2}(3)$. Hence, $\mathcal{S}\cong \overline{AG_{2}(3)}$ and $\Gamma \leq AGL_{2}(3)$, which is (5).

If $T_{B}=1$, then $v=9$ divides $b$ and so $r=6\frac{b}{9}$. Then, $8\frac{\lambda}{5}=6\frac{b}{9}$ implies that $15 \mid \lambda$, $24 \mid r$ and $ASL_{2}(3) \unlhd \Gamma \leq AGL_{2}(3)$. Actually, $\lambda=15$ or $30$ since $\lambda \leq \binom{9-2}{6-2}=35$. Then $\overline{\mathcal{S}}$ is either a $2$-$(9,3,3)$ design, or a $2$-$(9,3,6)$ design, and in both cases $\Gamma$ is a point-$2$-transitive automorphism group of $\overline{\mathcal{S}}$. Actually, the above argument implies that $\overline{\mathcal{S}}$ is the $2$-$(9,3,6)$ design whose points and blocks are respectively the vectors and the triangles of $AG_{2}(3)$. Hence, $\mathcal{S}$ is a $2$-$(9,6,30)$ design complementary to the $2$-design in (2), and $ASL_{2}(3) \unlhd \Gamma \leq AGL_{2}(3)$, which is (6).  

Finally, assume that $\Gamma $ is isomorphic to one of the groups $%
PSL_{2}(8),P\Gamma L_{2}(8),A_{9},S_{9}$ by \cite[Table B.4]{DM}. Then $\overline{\mathcal{S}}$ is a $2$-$(9,3,\lambda )$ design admitting $%
\Gamma $ as a point-$2$-transitively on $\mathcal{S}$. Moreover, the $3$%
-subgroup acting semiregularly on a block of $\overline{\mathcal{S}}$ acts
regularly in its complementary set. Therefore, $\overline{\mathcal{S}}$ is a $2$-$%
(9,3,7)$ design admitting $\Gamma $ acts flag-transitive automorphism
group as in (3). Hence, $\mathcal{S}$ and $\Gamma $ are as in (7). 
\end{proof}

\bigskip

\begin{lemma}
\label{LA7}Let $\mathcal{S}$ be a $2$-$(10,4,\lambda )$ design admitting $%
\Gamma $ as a flag-transitive automorphism group. Then $\Gamma $ acts
point-primitively on $\mathcal{S}$ and one of the following holds:

\begin{enumerate}

\item $\mathcal{S}$ is the $2$-$(10,4,2)$ designs and $\Gamma \cong S_{5},A_{6}, S_{6}$;

\item $\mathcal{S}$ is a $2$-$(10,4,4)$ design, union of two copies of a $2$-design as in (1), and $\Gamma \cong PGL_{2}(9),M_{10},P\Gamma
L_{2}(9)$;

\item $\mathcal{S}$ is a $2$-$(10,4,24)$ design and $\Gamma \cong PGL_{2}(9),M_{10},P\Gamma
L_{2}(9)$;

\item $\mathcal{S}$ is the complete $2$-$(10,4,28)$ design and $\Gamma
\cong A_{10},S_{10}$.
\end{enumerate}
\end{lemma}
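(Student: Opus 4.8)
The plan is to argue exactly as in Lemmas~\ref{LA5} and \ref{LA6}: first use the known list of primitive groups of degree $10$ to cut $\Gamma$ down to a short list, then identify the point set with a concrete geometry and read off the admissible block-orbits.

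First I would establish point-primitivity: since $k=4$, this is immediate from \cite[Theorem~1.4]{ZZ}. Next, combining \cite[Table~B.4]{DM} with the two arithmetic constraints that $k=4$ must divide $|\Gamma|$ and that $\lambda(v-1)=r(k-1)$ forces $r=3\lambda$, the group $\Gamma$ must be one of $S_{5}$, $A_{6}$, $S_{6}$, $PGL_{2}(9)$, $M_{10}$, $P\Gamma L_{2}(9)$, $A_{10}$, $S_{10}$ (here $A_{6}\cong PSL_{2}(9)$, and $S_{6}$, $PGL_{2}(9)$, $M_{10}$ are the index-$2$ overgroups of $A_{6}$ inside $P\Gamma L_{2}(9)$, each in its degree-$10$ action). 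When $\Gamma\cong A_{10}$ or $S_{10}$ the point-$4$-transitivity forces $\mathcal{S}$ to be the complete $2$-$(10,4,28)$ design, which is conclusion (4).

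For $\Gamma\cong S_{5}$ I would identify the $10$ points with the edges of $K_{5}$ (the unique primitive action of $S_{5}$ of degree $10$). Then $|\Gamma_{x}|=12$, so $r=3\lambda$ divides $12$ and, since $b=\tfrac{15\lambda}{2}$ must be an integer, $\lambda\in\{2,4\}$. Running through the subgroups of $S_{5}$ with an orbit of length $4$ on the edge set, a block is (up to conjugacy) the edge set of a $4$-cycle of $K_{5}$, with $\Gamma_{B}\cong D_{8}$, so $b=15$ and $\lambda=2$; a short direct count shows the $15$ $4$-cycles of $K_{5}$ do form a $2$-$(10,4,2)$ design on which $S_{5}$ is flag-transitive, and $\lambda=4$ is excluded for $S_{5}$. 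This yields part of conclusion (1). Finally, for $A_{6}\trianglelefteq\Gamma\le P\Gamma L_{2}(9)$ I would identify the points with $PG_{1}(9)$. Because $A_{6}\cong PSL_{2}(9)$ is sharply $3$-transitive, hence $2$-transitive, on $PG_{1}(9)$, any $\Gamma$-orbit of $4$-subsets is automatically a $2$-design with $\lambda=\tfrac{2b}{15}$ determined by its length; and the cross-ratio invariant shows $PSL_{2}(9)$ has exactly three orbits on $4$-subsets of $PG_{1}(9)$: two of length $15$ consisting of the harmonic $4$-subsets (with block stabilizer $S_{4}\le PSL_{2}(9)$) and one of length $180$. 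These give, respectively, two copies of a $2$-$(10,4,2)$ design, their union as a $2$-$(10,4,4)$ design, and a $2$-$(10,4,24)$ design. I would then decide for each of $A_{6}$, $S_{6}$, $PGL_{2}(9)$, $M_{10}$, $P\Gamma L_{2}(9)$ which of these it preserves and acts flag-transitively on.

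The main obstacle is this last bookkeeping step: determining, among the index-two overgroups of $A_{6}$, which coset representative fixes each length-$15$ orbit and which swaps the two (so that $A_{6}$ and $S_{6}$ keep the two designs separate, giving (1), while $PGL_{2}(9)$, $M_{10}$, $P\Gamma L_{2}(9)$ fuse them into the $2$-$(10,4,4)$ design, giving (2), and are flag-transitive on the $2$-$(10,4,24)$ design, giving (3)), together with checking—by a flag count against group orders—that $A_{6}$ is genuinely excluded from (2) and (3) and that flag-transitivity holds in each surviving case. As elsewhere in the Appendix, this verification can if necessary be carried out with \texttt{GAP} \cite{GAP}.
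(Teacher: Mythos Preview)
Your outline follows the paper's proof closely: both obtain point-primitivity from \cite{ZZ}, read off the primitive groups of degree $10$ from \cite[Table~B.4]{DM}, dispose of $A_{10},S_{10}$ by $4$-transitivity, and then classify the admissible block-orbits for the remaining groups, ending with the same fusion bookkeeping among the index-$2$ overgroups of $A_{6}$. The difference is only in how that orbit classification is carried out. The paper works inside an explicit Sylow $2$-subgroup $\langle\alpha,\beta,\gamma\rangle$ of $P\Gamma L_{2}(9)$, writes down the two types of $4$-orbits by hand, and then invokes \cite{COT} and Atlas fusion data to decide which overgroups of $A_{6}$ preserve or swap the two length-$15$ orbits and which act flag-transitively on the length-$180$ one. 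Your cross-ratio argument reaches the same three $PSL_{2}(9)$-orbits more conceptually; just be aware that the cross-ratio is a $PGL_{2}$-invariant, so it only gives two $PGL_{2}(9)$-orbits (harmonic of length $30$ and non-harmonic of length $180$), and you need the extra remark that the stabilizer $PGL_{2}(3)\cong S_{4}$ of a harmonic quadruple already lies in $PSL_{2}(9)$ (every element of $\mathbb{F}_{3}^{*}$ is a square in $\mathbb{F}_{9}^{*}$) to see the harmonic orbit split into two of length $15$. Two small points: the paper cites \cite[Lemma~2.7]{ZZ} rather than Theorem~1.4 for primitivity in this case, and $A_{5}$ is not excluded by the divisibility constraint ($4\mid 60$) but because its only $4$-orbit on edges of $K_{5}$ (the edges through a fixed vertex) has stabilizer $A_{4}$, forcing $b=5$ and non-integral $\lambda$.
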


\begin{proof}
If $(v,k)=(10,4)$, then $\Gamma $ acts
point-primitively on $\mathcal{S}$ by \cite[Lemma 2.7]{ZZ}, hence $\Gamma $ is isomorphic to one of the groups $%
A_{5}$, $S_{5}$, $PSL_{2}(9)$, $PGL_{2}(9)$, $P\Sigma L_{2}(9)$, $M_{10}$, $P\Gamma
L_{2}(9)$, $A_{10}$, $S_{10}$ by \cite[Table B.4]{DM}. Clearly, both $A_{10}$ and $S_{10}$ lead $\mathcal{S}$ to be the
complete $2$-$(10,4,28)$ design, and we obtain (4). If $\Gamma \cong A_{5},S_{5}$, only the second one admits one subgroup with an orbit of length $4$, namely $S_{4}$, leading $\mathcal{S}$ to be a $2$-$(10,4,2)$ design. Thus, we obtain (1) in this case.

Assume now that $\Gamma $ is isomorphic to one of the
groups $PSL_{2}(9)$, $PGL_{2}(9)$, $P\Sigma
L_{2}(9) \cong S_{6}$, $M_{10}$, $P\Gamma L_{2}(9)$. By \cite{At}, each of these groups has primitive permutation representation of degree $10$, namely the one on $PG_{1}(9)$. Let $\mathcal{O}$ be an orbit of length $4$ of some subgroup of $\Gamma$. Since any Sylow $2$-subgroup of $\Gamma_{\mathcal{O}}$ is contained in a Sylow $2$-subgroup $P\Gamma L_{2}(9)$, we may assume that a Sylow $2$-subgroup of $\Gamma_{\mathcal{O}}$ is contained in the Sylow $2$-subgroup $\left\langle \alpha, \beta , \gamma \right\rangle \cong 8:2^{2}$ of $P\Gamma L_{2}(9)$, where $\alpha: x \rightarrow \omega x$ with $\omega$ a primitive element of $\mathbb{F}_{9}$, $\beta: x \rightarrow x^{-1}$ and $\gamma: x \rightarrow x^{3}$. Thus, we obtain one of the following possibilities:
\begin{enumerate}
    \item[(i)] $\mathcal{O}$ is one of the sets $\{\omega,\omega^{3},\omega^{5},\omega^{7}\}$ or $\{1,\omega^{2},\omega^{4},\omega^{6}\}$, and $\left\langle \alpha, \beta , \gamma \right\rangle_{\mathcal{O}}=\left\langle \alpha^{2}, \beta , \gamma \right\rangle \cong D_{8} \times 2$;
\item[(ii)] $\mathcal{O}$ is an orbit $\{1,\omega^{3},\omega^{4},\omega^{7}\}$ under $\left\langle \alpha, \beta,\gamma  \right\rangle_{\mathcal{O}}=\left\langle \alpha\beta, \alpha\gamma  \right\rangle \cong D_{8}$.
\end{enumerate}Assume that (i) holds. Then $\Gamma_{\mathcal{O}}\cap \left\langle \alpha, \beta , \gamma \right\rangle$ is $\left\langle \alpha^{2}, \beta \right\rangle \cong D_{8}$ or $\left\langle \alpha^{2}, \beta , \gamma \right\rangle$ according to whether $\Gamma$ is $PSL_{2}(9),PGL_{2}(9),M_{10}$, or $P \Sigma L_{2}(9), P\Gamma L_{2}(9)$, respectively. By \cite[Lemmas 6(iv) and 10(v)]{COT}, each of the sets $\{\omega,\omega^{3},\omega^{5},\omega^{7}\}$  or $\{1,\omega^{2},\omega^{4},\omega^{6}\}$ is one orbit under a unique subgroup of $PSL_{2}(9)$ isomorphic to $S_{4}$, and these $S_{4}$, both containing $\left\langle \alpha^{2}, \beta \right\rangle$, belong to two distinct conjugacy $PSL_{2}(9)$-classes, respectively. Now, by \cite{At}, the two conjugacy $PSL_{2}(9)$-classes of subgroups isomorphic to $S_{4}$ are fused in $PGL_{2}(9),M_{10},P\Gamma L_{2}(9)$ but not in $P\Sigma L_{2}(9) \cong S_{6}$. Again by \cite{At}, the stabilizer of $\{\omega,\omega^{3},\omega^{5},\omega^{7}\}$  or $\{1,\omega^{2},\omega^{4},\omega^{6}\}$ is one orbit under a subgroup of $P \Sigma L_{2}(9)$ isomorphic to $S_{4} \times 2$, and these $S_{4}\times 2$ belong to two distinct conjugacy $P \Sigma L_{2}(9)$-classes fused in $P\Gamma L_{2}(9)$. Thus, $\mathcal{S}=(PG_{1}(9),\mathcal{O}^\Gamma)$ is a $2$-$(10,4,\lambda)$ design, where $\lambda = 2$ or $4$ according to whether $\Gamma \cong A_{6}, S_{6}$ or $\Gamma \cong PGL_{2}(9), M_{10}, P\Gamma L_{2}(9)$, respectively, since $\Gamma$ acts $2$-transitively on $PG_{1}(9)$. Thus, we obtain (1) and (2).  

Assume that (ii) holds. Then $\Gamma_{\mathcal{O}}\cap \left\langle \alpha, \beta , \gamma \right\rangle$ is $\left\langle \alpha^{4} \right\rangle \cong 2$ for $\Gamma \cong PSL_{2}(9)$ or $P\Sigma L_{2}(9)$, $\left\langle \alpha^{4}, \alpha\beta \right\rangle \cong 2^{2}$ for $\Gamma \cong PGL_{2}(9)$, $\left\langle \alpha\gamma \right\rangle \cong 4$ for $\Gamma \cong M_{10}$, and $\left\langle \alpha\beta, \alpha\gamma  \right\rangle \cong D_{8}$ for  $\Gamma \cong P\Gamma L_{2}(9)$. Hence, $\Gamma$ is neither isomorphic to $PSL_{2}(9)$ nor to $P\Sigma L_{2}(9)$ since the size of $\mathcal{O}$ is $4$. Moreover, the order of $\Gamma_{\mathcal{O}}$ is not divisible by $5$ since any nontrivial $5$-element in $PSL_{2}(9)$ acts f.p.f. on $PG_{1}(9)$. Assume that the order of $\Gamma_{\mathcal{O}}$ is divisible by $3$. So it is the order of $PSL_{2}(9)_{\mathcal{O}}$. Then $PSL_{2}(9)_{\mathcal{O}} \cong S_{3}$ or $3^{2}:2$ \cite[Theorem 2]{COT} since $PSL_{2}(9) \cap \left\langle \alpha, \beta , \gamma \right\rangle=\left\langle \alpha^{4} \right\rangle \cong 2$. Then  $PSL_{2}(9)_{\mathcal{O}}$ must contain an element $\tau:x \rightarrow x+c$ or $\tau^{\prime}:x \rightarrow \frac{x}{ex+f}$ for some suitable $c,e,f \in \mathbb{F}_{9}$, but none of these fixes an element in $\mathcal{O}$. Thus $\Gamma_{\mathcal{O}} \leq \left\langle \alpha, \beta , \gamma \right\rangle$, and hence $\Gamma_{\mathcal{O}}$ is $\left\langle \alpha^{4}, \alpha\beta \right\rangle \cong 2^{2}$ for $\Gamma \cong PGL_{2}(9)$, $\left\langle \alpha\gamma \right\rangle \cong 4$ for $\Gamma \cong M_{10}$, and $\left\langle \alpha\beta, \alpha\gamma  \right\rangle \cong D_{8}$ for  $\Gamma \cong P\Gamma L_{2}(9)$. Therefore, in this case, $\mathcal{S}=(PG_{1}(9),\mathcal{O}^\Gamma)$ is a $2$-$(10,4,24)$ design by the $2$-transitivity of $\Gamma$ on $PG_{1}(9)$, and we obtain (3). This completes the proof.
\end{proof}

\bigskip

\begin{lemma}
\label{LA8}Let $\mathcal{S}$ be a $2$-$(16,4,\lambda )$ design admitting $%
\Gamma $ as a flag-transitive automorphism group. Then $\Gamma $ acts
point-primitively on $\mathcal{S}$ and one of the following holds:

\begin{enumerate}
\item $\mathcal{S}\cong AG _{2}(4)$ and $\Gamma \cong
2^{4}:5,2^{4}:(5:2),2^{4}:(5:4),AG L_{1}(16),AG
L_{1}(16):2,A\Gamma L_{1}(16),ASL_{2}(4),A\Sigma L_{2}(4),A\Gamma L_{2}(4)$;

\item $\mathcal{S}$ is a $2$-$(16,4,2)$ design and $\Gamma \cong
2^{4}:(5:4),ASL_{2}(4),A\Sigma L_{2}(4)$

\item $\mathcal{S}$ is a $2$-$(16,4,3)$ design and $\Gamma \cong AG
L_{1}(16),AGL_{1}(16):2$;

\item $\mathcal{S}$ is a $2$-$(16,4,3)$ design and $\Gamma \cong
ASL_{2}(4),A\Sigma L_{2}(4),ASp_{4}(2),A\Gamma Sp_{4}(2)$;

\item $\mathcal{S}$ is a $2$-$(16,4,4)$ design and $\Gamma \cong ASp_{4}(2)$;

\item $\mathcal{S}$ is a $2$-$(16,4,6)$ design and $\Gamma \cong
2^{4}:(15:4),AGL_{2}(4),A\Gamma L_{2}(4)$;

\item $\mathcal{S}$ is a $2$-$(16,4,7)$ design and $\Gamma
\cong 2^{4}:A_{7},AGL_{4}(2)$;

\item $\mathcal{S}$ is a $2$-$(16,4,12)$ design and $\Gamma \cong
2^{4}:(15:4)$;

\item $\mathcal{S}$ is a $2$-$(16,4,12)$ design and $\Gamma \cong A\Sigma
L_{2}(4),ASp_{4}(2);$

\item $\mathcal{S}$ is a $2$-$(16,4,36)$ design and $\Gamma \cong A\Gamma
L_{2}(4)$

\item $\mathcal{S}$ is a $2$-$(16,4,84)$ design and $\Gamma \cong
2^{4}:A_{7},AGL_{4}(2)$;

\item $\mathcal{S}$ is the complete $2$-$(16,4,91)$ design and $\Gamma \cong
A_{16},S_{16}$.
\end{enumerate}
\end{lemma}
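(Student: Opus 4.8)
The plan is to force point-primitivity first, then to run through the primitive permutation groups of degree $16$, disposing of the symmetric/alternating case immediately and reducing the affine case to an orbit analysis on $4$-subsets of $AG_{4}(2)$.

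First I would show that $\Gamma$ is point-primitive. If it were not, let $\Sigma$ be a nontrivial $\Gamma$-invariant partition with classes of size $v_{0}$; Theorem \ref{CamZie}(1) applies, and relation (\ref{rel1}) with $(v,k)=(16,4)$ gives $\frac{v_{0}-1}{k_{0}-1}=\frac{v-1}{k-1}=5$, hence $v_{0}=5k_{0}-4$. But $v_{0}$ is a proper divisor of $16$, so $v_{0}\in\{2,4,8\}$, while $5k_{0}-4$ is never $2$, $4$ or $8$ for an integer $k_{0}\geq 2$; contradiction. Thus $\Gamma$ is a primitive group of degree $16$, and by \cite[Table B.4]{DM} either $\Gamma\cong A_{16}$ or $S_{16}$, or $\Gamma=V:H$ is affine with $V=AG_{4}(2)$ the natural module and $H\leq GL_{4}(2)\cong A_{8}$ irreducible. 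If $\Gamma\cong A_{16}$ or $S_{16}$, then $\Gamma$ is $4$-transitive, hence transitive on all $\binom{16}{4}$ four-subsets of $\mathcal{P}$, so the only invariant block set is the full one and $\mathcal{S}$ is the complete $2$-$(16,4,91)$ design of (12).

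In the affine case I would identify $\mathcal{P}$ with $V=AG_{4}(2)$, so that the translation group $T\cong 2^{4}$ lies in $\Gamma$ and $\Gamma/T\cong H$. A $\Gamma$-invariant block set is a union of $T$-orbits on $4$-subsets; such an orbit has length $16$ unless the block is an affine $2$-flat of $AG_{4}(2)$, in which case it has length $4$. Flag-transitivity means $\Gamma$ is transitive on the chosen orbit, and whenever $\Gamma$ is moreover $2$-transitive on $V$ — which holds for every admissible $H$ except $C_{5}$, $D_{10}$ and $F_{20}$, whose point-stabilisers have orbit lengths $1,5,5,5$ resp.\ $1,5,10$ resp.\ $1,5,10$ on $V$ — every $\Gamma$-orbit on $4$-subsets automatically yields a $2$-design. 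So the problem reduces to listing, for each $H$, the $\Gamma$-orbits $\mathcal{O}$ on $4$-subsets of $V$, retaining those that are $2$-designs, and computing $\lambda=\lvert\mathcal{O}\rvert/20$ (since $b=20\lambda$ when $v=16$, $k=4$). The guiding geometry is that a $4$-subset of $V$ is either an affine $2$-flat of $AG_{4}(2)$ ($140$ of them, exactly $20$ being the lines of an $\mathbb{F}_{4}$-plane structure $AG_{2}(4)$ on $V$ stabilised by a subgroup $A\Gamma L_{2}(4)\leq AGL_{4}(2)$) or it spans an affine $3$-flat ($1680$ of them). For $\Gamma\leq A\Gamma L_{2}(4)$ the $20$ lines of $AG_{2}(4)$ form one $\Gamma$-orbit, giving $\mathcal{S}\cong AG_{2}(4)$ with $\lambda=1$, and the flag-transitive subgroups of $\mathrm{Aut}(AG_{2}(4))=A\Gamma L_{2}(4)$ are exactly the nine groups of (1); for $\Gamma\in\{2^{4}:A_{7},AGL_{4}(2)\}$ the two orbits consisting of the $140$ affine $2$-flats and of the $1680$ $3$-flat-spanning $4$-sets give the designs of (7) ($\lambda=7$) and (11) ($\lambda=84$). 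The remaining irreducible subgroups — the Singer-type groups $C_{15}$, $C_{15}:2$, $\Gamma L_{1}(16)$, $F_{20}$, the $SL_{2}(4)$-type groups $SL_{2}(4)$, $\Sigma L_{2}(4)$, $GL_{2}(4)$, $\Gamma L_{2}(4)$, and the symplectic group $Sp_{4}(2)$, together with the non-$2$-transitive $C_{5}$ and $D_{10}$ — I would treat by enumerating their $4$-subset orbits (with \cite{GAP} for the bookkeeping), checking the $2$-design condition where $\Gamma$ is not $2$-transitive, reading off $\lambda\in\{2,3,4,6,12,36\}$, and matching with (2)--(6) and (8)--(10); along the way one records which orbits (or which of these groups, noting that an abstract group may carry several non-isomorphic flag-transitive designs, whence the recurrences of $2^{4}:(15:4)$ and the like) give isomorphic designs, in particular checking that the two $2$-$(16,4,3)$ designs of (3), (4) and the two $2$-$(16,4,12)$ designs of (8), (9) are pairwise non-isomorphic, and that groups such as $2^{4}:A_{6}$ (primitive of degree $16$, but neither $2$-transitive on $V$ nor contained in $A\Gamma L_{2}(4)$) support no such design and hence do not appear.

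The reduction to primitivity and the enumeration of the primitive degree-$16$ groups are routine; the main obstacle is the final orbit analysis — computing the $4$-subset orbits for the dozen or so relevant $H$, verifying the $2$-design property for the three groups that are not $2$-point-transitive (where most orbits fail it), and pinning down exactly when two orbits or two groups yield the same design, which is the point at which a direct combinatorial identification or a \cite{GAP} computation becomes indispensable.
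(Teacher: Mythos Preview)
Your approach is essentially the paper's: establish point-primitivity, read off the primitive groups of degree $16$, dispose of $A_{16},S_{16}$ by $4$-transitivity, and in the affine case enumerate the $\Gamma$-orbits on $4$-subsets (the paper does this last step by citing \cite{ZZ} for primitivity, \cite{TCZ,Mo0} for $\lambda\leq 4$, and \texttt{GAP} for $\lambda>4$, whereas you spell out more of the geometry before invoking \texttt{GAP}). Your primitivity argument via Theorem~\ref{CamZie} is a perfectly valid alternative to the paper's citation of \cite[Theorem~1.4]{ZZ} (which uses $\gcd(v-1,k-1)=3$).

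One point to correct: your parenthetical about $2^{4}{:}A_{6}$ is off. The only irreducible $A_{6}\leq GL_{4}(2)$ is (up to conjugacy) $Sp_{4}(2)'\cong A_{6}$, which \emph{is} transitive on the $15$ nonzero vectors, so $2^{4}{:}A_{6}$ is $2$-transitive on $V$; moreover it acts flag-transitively on the totally-isotropic $2$-flat design (the translations already act transitively on each block). Whether this group is genuinely missing from the list in the statement or is absorbed into one of the listed entries is something your \texttt{GAP} enumeration will settle, but the reason you give for discarding it is not the right one.
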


\begin{proof}
Since $(v-1,k-1)=3$, then $\Gamma $ acts point-primitively on $\mathcal{S}$
by \cite[Theorem 1.4]{ZZ}. Then either $\Gamma \leq AGL_{4}(2)$ or $%
A_{16}\trianglelefteq \Gamma \leq S_{16}$. \ In the latter case, the point-$4
$-transitivity of $\Gamma $ on $\mathcal{S}$ implies that $\mathcal{S}$ is
the complete $2$-$(16,4,91)$ design, which is (12).

Assume that $\Gamma \leq AGL_{4}(2)$. The order of the group $\Gamma $ is
divisible by $5$ since $r_{0}=5\lambda $. If $\lambda =1$, then $\mathcal{S}%
\cong AG_{2}(4)$, and it is easy to see that $\Gamma \cong
2^{4}:5,2^{4}:(5:2),2^{4}:(5:4),AGL_{1}(16),AG
L_{1}(16):2,A\Gamma L_{1}(16),ASL_{2}(4),A\Sigma L_{2}(4),A\Gamma L_{2}(4)$
acts flag-transitively on $\mathcal{S}$. If $\lambda =2,3$ or $4$, then
(2),(3),(4) or (5) by \cite[Theorem 1.2]{TCZ} and \cite[Theorem 1.1]{Mo0}.
Finally, if $\lambda >4$, then (6)--(12) occur by \cite{GAP}.
\end{proof}

\bigskip

\begin{lemma}
\label{LA9} Let $\mathcal{S}$ be a $2$-$(v,k,\lambda )$ design with $%
(v,k)=(13,8)$, $(13,9)$ or $(21,16)$ admitting $\Gamma $ as a flag-transitive
automorphism group. Then $\Gamma $ acts point-$2$-transitively on $\mathcal{S}$ and one of the following holds:

\begin{enumerate}
\item $(v,k)=(13,8)$ and one of the following holds:
\begin{enumerate}
    \item $\mathcal{S}$ is a $2$-$(13,8,42)$ design and $\Gamma\cong PSL_{3}(3)$;
     \item $\mathcal{S}$ is the complete $2$-$(13,8,462)$ design and $\Gamma \cong A_{13},S_{13}$.
\end{enumerate}
\item $(v,k)=(13,9)$ and one of the following holds:
\begin{enumerate}
\item $\mathcal{S}\cong \overline{PG_{2}(3)}$ and $\Gamma\cong PSL_{3}(3)$;
\item $\mathcal{S}$ is the complete $2$-$(13,9,330)$ design and $\Gamma \cong A_{13},S_{13}$.
\end{enumerate}
\item $(v,k)=(21,16)$ and one of the following holds:
\begin{enumerate}
\item $\mathcal{S}\cong  \overline{PG_{2}(4)}$ and $PSL_{3}(4)\trianglelefteq \Gamma\leq P\Gamma L_{3}(4)$;
\item $\mathcal{S}$ is the complete $2$-$(21,16,11628)$ design and $\Gamma \cong A_{21},S_{21}$.
\end{enumerate}
\end{enumerate}
\end{lemma}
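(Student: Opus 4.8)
The plan is to force $\Gamma$ to act point-$2$-transitively, read off the short list of possibilities from \cite[Table B.4]{DM}, and then identify $\mathcal{S}$ inside the relevant projective plane. Flag-transitivity gives $vr\mid|\Gamma|$ and $k\mid|\Gamma_{B}|$ for any block $B$, together with the integrality of $r$ and $b$ (Lemma \ref{L0}); in particular $2^{3}\mid|\Gamma|$ in the first two cases and $2^{4}\mid|\Gamma|$ in the third. When $(v,k)=(13,8)$ or $(13,9)$ the degree $13$ is prime, so by \cite[p.~99]{DM} either $\Gamma$ is $2$-transitive or $\Gamma\leq AGL_{1}(13)$, and the latter is impossible since $8\nmid|AGL_{1}(13)|=156$ (respectively $9\nmid 156$). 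When $(v,k)=(21,16)$ I would first rule out point-imprimitivity using relation (\ref{rel1}): a $\Gamma$-invariant partition into classes of size $v_{0}\in\{3,7\}$ with $2\leq k_{0}\leq v_{0}-1$ (Lemma \ref{bounds}) would force $\tfrac{v_{0}-1}{k_{0}-1}=\tfrac{v-1}{k-1}=\tfrac{4}{3}$, which has no solution, so $\Gamma$ is point-primitive. Consulting \cite[Table B.4]{DM} and discarding every candidate whose order is not divisible by the required power of $2$ (so $AGL_{1}(13)$ for degree $13$ and $A_{7}$ for degree $21$), one is left, in each case, with $A_{v},S_{v}$, and---in addition---$PSL_{3}(3)$ for degree $13$ and $PSL_{3}(4)\trianglelefteq\Gamma\leq P\Gamma L_{3}(4)$ for degree $21$; the two further primitive groups of degree $21$ passing the $2$-part test, namely $S_{7}$ on pairs and $PGL_{2}(7)$ on the cosets of a dihedral group of order $16$, are excluded by noting that neither is $16$-homogeneous on $21$ points (so $\mathcal{S}$ is not complete) and that their orbit structure does not permit a flag-transitive $2$-$(21,16,\lambda)$ design---for $S_{7}$ because the complement in $K_{7}$ of a $5$-edge graph can never be edge-transitive (its degree sequence is neither regular nor biregular bipartite of the needed shape), and for $PGL_{2}(7)$ by a short \texttt{GAP} check. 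In particular $\Gamma$ is $2$-transitive, and if $\Gamma\cong A_{v}$ or $S_{v}$ then $\Gamma$ is $(v-k)$-homogeneous, hence $k$-homogeneous, so $\mathcal{S}$ is the complete $2$-$(13,8,462)$, $2$-$(13,9,330)$ or $2$-$(21,16,11628)$ design of parts (1)(b), (2)(b), (3)(b).

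It then remains to treat $\Gamma\cong PSL_{3}(3)$ on the $13$ points of $PG_{2}(3)$ (recall $PGL_{3}(3)=PSL_{3}(3)$) and $PSL_{3}(4)\trianglelefteq\Gamma\leq P\Gamma L_{3}(4)$ on the $21$ points of $PG_{2}(4)$. Here a block is a $(v-k)$-subset of the plane: a $5$-subset when $(v,k)=(13,8)$ or $(21,16)$, and a $4$-subset when $(v,k)=(13,9)$. Since $v-k=q+1$ for $(13,9)$ (with $q=3$) and for $(21,16)$ (with $q=4$), I would single out the orbit of lines: the complement of a line is the affine plane $AG_{2}(q)$, on which the line stabiliser in $PSL_{3}(q)$ induces a group containing the full translation group and hence acts transitively; so the line-complements form the symmetric design $\overline{PG_{2}(q)}$, giving the $2$-$(13,9,6)$ design $\overline{PG_{2}(3)}$ of part (2)(a) and the $2$-$(21,16,12)$ design $\overline{PG_{2}(4)}$ of part (3)(a). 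Flag-transitivity of $\Gamma$ on $\overline{PG_{2}(q)}$ amounts to transitivity on antiflags of $PG_{2}(q)$, which $PSL_{3}(q)$ already has; its automorphism group as a permutation group on the $q^{2}+q+1$ points is $PSL_{3}(3)$ (respectively $P\Gamma L_{3}(4)$), and $PSL_{3}(q)$ is its unique minimal flag-transitive subgroup, which yields exactly the stated groups. For $(v,k)=(13,8)$ I would run through the finitely many $PSL_{3}(3)$-orbits on $5$-subsets of $PG_{2}(3)$, classified by their collinearity pattern; the only one whose stabiliser acts transitively on the complementary $8$-set is the orbit of the $117$ sets $\ell\cup\{p\}$ with $p\notin\ell$ (equivalently $AG_{2}(3)\setminus\{p\}$, on which the stabiliser induces $GL_{2}(3)$), producing the $2$-$(13,8,42)$ design of part (1)(a), with full automorphism group $PSL_{3}(3)$. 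All the other $\Gamma$-orbits on such small subsets---for instance the orbit of conics in $PG_{2}(4)$, whose stabiliser in $PSL_{3}(4)$ is $A_{5}$, of order not divisible by $16$---fail the transitivity requirement; these are dismissed by elementary order arguments, with the residual subcases checked in \texttt{GAP} \cite{GAP}. Assembling the three steps gives the statement.

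The hard part is this last step. Unlike the reduction to $2$-transitivity, there is no uniform argument pinning $\mathcal{S}$ down among the $\Gamma$-orbits on $(v-k)$-subsets of $PG_{2}(q)$: one has to enumerate those orbits and decide, orbit by orbit, whether a point stabiliser is transitive on the complementary block. The clean cases (lines and antiflag-complements) have transparent descriptions, but eliminating the remaining orbits---in particular over the whole chain $PSL_{3}(4)\leq\Gamma\leq P\Gamma L_{3}(4)$ and for the conic-type and mixed-type subsets---is where I expect the argument to rely on explicit computation, exactly as for the analogous small designs handled in the other lemmas of this section.
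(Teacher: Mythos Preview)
Your reduction to point-$2$-transitivity follows the same lines as the paper (prime degree for $v=13$; relation~(\ref{rel1}) for $v=21$), and you correctly compute $(v-1)/(k-1)=4/3$. The genuine difference is in how the block $B$ is pinned down once $\Gamma$ contains $PSL_3(t)$.

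The paper does not enumerate $\Gamma$-orbits on $(v-k)$-subsets at all. It observes instead that a $t^2$-subset of $PG_2(t)$ cannot be a blocking set \cite[Corollary~13.12(i)]{Hir}, so some line $\ell$ satisfies either $\ell\subset B$ or $B\cap\ell=\varnothing$. A short double-count on the $1$-design $(\ell^{\Gamma_B},x^{\Gamma_B})$ with $x\in\ell$ eliminates $\ell\subset B$, forcing $B$ to be the complement of a line and hence $\mathcal{S}\cong\overline{PG_2(t)}$. Exactly the same blocking-set and counting device handles $(13,8)$ with $\Gamma\cong PSL_3(3)$: the $8$-set $B$ misses a line $\ell$, and then $|B|=8$ leaves a unique point $x\notin\ell$ outside $B$, so $B=PG_2(3)\setminus(\ell\cup\{x\})$, $\Gamma_B\cong GL_2(3)$, and $\mathcal{S}$ is the $2$-$(13,8,42)$ design. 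This is uniform across the three projective-plane cases and needs no computer check; it is precisely the ``uniform argument'' whose absence you flagged as the hard part.

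Your orbit-enumeration route would also reach the conclusion, but trades the blocking-set lemma for casework and \texttt{GAP}. Two small points: your assertion that $2^{3}\mid|\Gamma|$ ``in the first two cases'' fails for $(v,k)=(13,9)$, though your actual exclusion of $AGL_1(13)$ via $9\nmid 156$ is correct. You are right to treat $S_7$ on $21$ points explicitly; the paper passes over it in silence. Conversely, the paper disposes of $PGL_2(7)$ without computation: $|\Gamma_B|=16$ forces $\Gamma_B$ to fix a point $x$, but the $\Gamma_x$-orbits have lengths $1,4,8,8$, so $\Gamma_B\leq\Gamma_x$ can have no orbit of length $16$.
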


\begin{proof}
Suppose that $(v_{1},k_{1})=(21,16)$ and that $\Gamma$ acts point-imprimitively
on $\mathcal{S}$. Then, by 
Theorem \ref{CamZie}, we have that $v=v_{0}^{\prime}\cdot v_{1}^{\prime }$, $k=k_{0}^{\prime }\cdot k_{1}^{\prime
}$ with $2\leq k_{0}^{\prime }\leq v$, $k_{1}^{\prime }\leq v_{1}^{\prime }$ and $(v_0^\prime - 1)/(k_0^\prime-1)=(v - 1)/(k-1)=5/4$, which leads to contradiction since the only possibilities for $v_0^\prime$ are $3$ and $7$. Therefore, $\Gamma $
acts point primitively on $\mathcal{S}$. Moreover, when $(v,k)=(13,8)$ or $(13,9)$ the group $\Gamma$ acts point-primitively on $\mathcal{S}$ since $v$ is a prime. Thus, $\Gamma $ acts point-primitively on $%
\mathcal{S}$ in any case.

If $\Gamma \cong A_{v}$ or $S_{v}$, then $\Gamma$ acts point-$k$-transitively on $\mathcal{S}$ since $k \leq v-2$, and this forces $\mathcal{S}$ to be
the complete $2$-$(v,k,\lambda )$ design, which is (3). Hence, assume that $\Gamma$ is not isomorphic to any of the groups $A_{v}$ or $S_{v}$
respectively. By \cite[Table B.4]{DM} and \cite{At}, and bearing in mind
that $k$ divides the order of $\Gamma$, either $(v,k)=(13,8),(13,9)$ and 
$\Gamma\cong PSL_{3}(3)$, or $(v,k)=(21,16)$ and either $\Gamma
\cong PGL_{2}(7)$ or $PSL_{3}(4)\trianglelefteq \Gamma
\leq P\Gamma L_{3}(4)$. If $(v,k)=(21,16)$ and $\Gamma \cong PG L_{2}(7)$, then $\left\vert \Gamma_{B}\right\vert=16$, where $B$ is any block of $\mathcal{S}$, and hence $\Gamma_{B}$ fixes a point, say $%
x$. So $\Gamma_{B}\leq \Gamma_{x}$, and then each 
$\Gamma_{x}$-orbit is a union of $\Gamma_{B}$%
-orbits. Now, the $\Gamma _{x}$-orbits distinct from $\left\{ x\right\} 
$ have length $4,8,8$, whereas the $\Gamma _{B}$ must have $B$
as an orbit of length $16$, which is a contradiction. Therefore, either $(v,k)=(13,8)$ and $\Gamma \cong PSL_{3}(3)$ or $%
(v,k)=(t^{2}+t+1,t^{2})$ and $PSL_{3}(t)\trianglelefteq \Gamma \leq P\Gamma L_{3}(t)$ with $t=\sqrt{k}=3$ or $4$. 

Assume that the latter occurs. In each case, the
action of $\Gamma$ on the point-set of $\mathcal{S}$ is $2$%
-transitive. Therefore, we may identify the point-set of $\mathcal{S}$ with that of $PG_{2}(t)$, hence $B$ is a suitable $k$-subset of 
$PG_{2}(t)$. In particular, $B$ is not a blocking set of $PG_{2}(t)$ by 
\cite[Corollary 13.12(i)]{Hir}, and hence there is a line $\ell $ of $%
PG_{2}(t)$ such that either $B\cap \ell =\varnothing $ or $\ell
\subset B$. In the latter case, $\ell \subset B=x^{\Gamma _{B}}$ with $x\in B
$. Since $(\ell ^{\Gamma _{B}},x^{\Gamma _{B}})$ is a $1$-design by \cite[1.2.6]{Demb}, it follows
that $\left\vert \ell ^{\Gamma _{B}}\right\vert \cdot (t+1)=\left\vert
x^{\Gamma }\right\vert \cdot u=t^{2}\cdot u$, where $u$ is the number of
lines of $PG_{2}(t)$ containing $x$ and contained in $B$. Then $%
t^{2}\mid \left\vert \ell ^{\Gamma _{B}}\right\vert $ and $5\mid u$, and
actually $\left\vert \ell ^{\Gamma _{B}}\right\vert =t^{2}$ and $u=t+1$
since $\left\vert \ell ^{\Gamma _{B}}\right\vert \leq t^{2}+t+1$. Then, $%
x^{\Gamma _{B}}$ contains all the $t+1$ lines of $PG_{2}(4)$
containing $x$, say $\ell _{1},...,\ell _{t+1}$. Let $\ell ^{\prime }$ be
any line of $PG_{2}(t)$, clearly $\ell ^{\prime }\subset
\bigcup_{i=1}^{t+1}\ell _{i}\subseteq B$, and so $B$ contains all
the $t^{2}+t+1$ lines of $PG_{2}(t)$. Therefore $B$ contains all the $%
t^{2}+t+1$ points of $PG_{2}(t)$, which is not the case since $%
\left\vert B\right\vert =t^{2}$. Thus, $B\cap \ell =\varnothing $ and so $%
\mathcal{S}\cong \overline{PG_{2}(t)}$, hence we obtain (1) and (2) for $t=3$ or $4$, respectively.

Finally, assume that $(v,k)=(13,8)$ and $\Gamma \cong PSL_{3}(3)$. Let $B$ any block of $\mathcal{S}$ Arguing as above, $\Gamma$ acts point-$2$-transitively on $\mathcal{S}$, we may identify the point-set of $\mathcal{S}$ with that of $PG_{2}(3)$, and $B$ is a suitable $8$-subset of $PG_{2}(3)$ that is not a blocking set. Then there is a line $\ell $ of $PG_{2}(3)$ such that either $B\cap \ell =\varnothing $ or $\ell \subset B$. In the latter case, $\ell \subset B=x^{\Gamma _{B}}$ with $x\in \ell$. Since $(\ell ^{\Gamma _{B}},x^{\Gamma _{B}})$ is a $1$-design by \cite[1.2.6]{Demb}, it follows
that $\left\vert \ell ^{\Gamma _{B}}\right\vert \cdot 4=\left\vert
x^{\Gamma }\right\vert \cdot u=8\cdot u$, where $u$ is the number of
lines of $PG_{2}(3)$ containing $x$ and contained in $B$. So, $\left\vert \ell ^{\Gamma _{B}}\right\vert=2u$. If $u=1$, then $B$ consists of two lines and each point of $B$ lies on exactly one of them. So $\left\vert B\right\vert \leq 7$, since any two distinct lines of $PG_{2}(3)$ have always a point in common, a contradiction. Then $u \geq 2$, and hence $B$ contains three lines that are either concurring in a point or lying in a triangular configuration. So, $\left\vert B\right\vert \geq 9$, which is not the case. Thus $B\cap \ell =\varnothing $, and hence there is a unique point $x$ in $PG_{2}(3)$ not in $\ell$ such that $B$ is the complementary set of $\ell \cup \{x\}$ since $\left\vert B\right\vert = 8$. Now, $\Gamma_{B}$ preserves $\ell$ and fixes $x$. Therefore $\Gamma_{B}$ is $GL_{2}(3)$, hence $\mathcal{S}$ is a $2$-$(13,8,462)$ design since $\Gamma \cong PSL_{3}(3)$ acts point $2$-transitively on $\mathcal{S}$. This completes the proof.
\end{proof}

\end{document}